\newtheorem{theorem}{Theorem}
\newtheorem*{theoremAnn}{Theorem}
\numberwithin{theorem}{section}
\theoremstyle{plain}
\newtheorem*{acknowledgement}{Acknowledgement}
\newtheorem*{assumption}{Assumption}
\newtheorem{corollary}[theorem]{Corollary}
\newtheorem*{heuristic}{Heuristic}
\newtheorem{definition}[theorem]{Definition}
\newtheorem{example}[theorem]{Example}
\newtheorem{lemma}[theorem]{Lemma}
\newtheorem{proposition}[theorem]{Proposition}
\theoremstyle{remark}
\newtheorem{remark}[theorem]{Remark}
\newtheorem{aside}[theorem]{Aside}
\numberwithin{equation}{section}
\begin{document}
\title[Torsion homology growth]{Torsion homology growth beyond asymptotics}
\author{Oliver Braunling}
\curraddr{University of Freiburg, Eckerstrassee 1, 79104 Freiburg im Breisgau, Germany}
\email{oliver.braeunling@math.uni-freiburg.de}
\thanks{The author was supported by the DFG GK1821 \textquotedblleft Cohomological
Methods in Geometry\textquotedblright.}

\begin{abstract}
We show that (under mild assumptions) the generating function of log homology
torsion of a knot exterior has a meromorphic continuation to the entire
complex plane. As corollaries, this gives new proofs of (a) the
Silver--Williams asymptotic, (b) Fried's theorem on reconstructing the
Alexander polynomial (c)\ Gordon's theorem on periodic homology. Our results
generalize to other rank $1$ growth phenomena, e.g. Reidemeister--Franz
torsion growth for higher-dimensional knots. We also analyze the exceptional
cases where the meromorphic continuation does not exist.

\end{abstract}
\date{{\today}}
\maketitle

Let $K\subset S^{3}$ be a knot and $X_{K}:=S^{3}-K$ its knot complement. Write
$X_{r}$ for the $r$-th cyclic covering of $X_{K}$. The Silver--Williams
theorem asserts that%
\begin{equation}
\underset{r\rightarrow\infty}{\lim}\frac{\log\left\vert H_{1}(X_{r}%
,\mathbf{Z})_{\operatorname*{tor}}\right\vert }{r}=\log\mathcal{M}(\Delta
_{K})\text{,}\label{li_E1}%
\end{equation}
if $\mathcal{M}(\Delta_{K})>1$ is the Mahler measure of the Alexander
polynomial $\Delta_{K}$ of the knot. Instead of just asking about the
asymptotic behaviour of torsion homology growth in $H_{1}$, we could ask about
\textit{all} values $H_{1}(X_{r},\mathbf{Z})_{\operatorname*{tor}}$.\ Define%
\begin{equation}
E(z):=\sum_{r\geq1}\log\left\vert H_{1}(X_{r},\mathbf{Z})_{\operatorname*{tor}%
}\right\vert \cdot z^{r}\text{.}\label{li_E2}%
\end{equation}
This is a power series around $z=0$. A heuristic argument shows that the
Silver--Williams asymptotics suggest that $E$ might have a meromorphic
continuation beyond the unit circle with a pole of order $1$ or $2$ at $z=1$.
Indeed, whenever $E$ has said property, the asymptotics of Equation
\ref{li_E1} are an immediate consequence. Inspired by this, we seek to
understand whether $E$ has such a meromorphic continuation.

Let us call a root $\beta$ of $\Delta_{K}$ \textsl{diophantine} if it lies on
the unit circle, but is not a root of unity.

\begin{theoremAnn}
Suppose the Alexander polynomial $\Delta_{K}$ of a knot has no diophantine
roots. Then $E$ admits a meromorphic continuation to the entire complex plane.

\begin{enumerate}
\item The pole locus is%
\[
\{\beta^{n}\mid\Delta_{K}(\beta)=0\text{ and }n\text{ an integer}%
\}\setminus(\text{open unit disc})\text{.}%
\]
For each pole, its residue encodes the multiplicity of $\beta$ as a root of
$\Delta_{K}$.

\item At $z=1$, it has a pole of order $1$ or $2$. All other poles have order
$1$.
\end{enumerate}
\end{theoremAnn}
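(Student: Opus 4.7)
The plan is to begin from the classical product formula (Fox, Mayberry--Murasugi) expressing
\[
|H_{1}(X_{r},\mathbf{Z})_{\operatorname*{tor}}| \;=\; |c|^{r}\prod_{\beta:\Delta_{K}(\beta)=0,\ \beta^{r}\neq 1}|\beta^{r}-1|^{m_{\beta}},
\]
where $c$ is the leading coefficient of $\Delta_{K}$ and $m_{\beta}$ the multiplicity of a root $\beta$. Taking logarithms, the coefficient of $z^{r}$ in $E(z)$ splits into a term $r\log|c|$ plus, for each root $\beta$, a contribution $m_{\beta}\log|\beta^{r}-1|$ (interpreted as $0$ when $\beta^{r}=1$). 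Since the first piece contributes the elementary $\log|c|\cdot z/(1-z)^{2}$, the problem reduces to producing a meromorphic continuation of
\[
F_{\beta}(z)\;:=\;\sum_{r\geq 1,\ \beta^{r}\neq 1}\log|\beta^{r}-1|\,z^{r}
\]
for each root $\beta$ of $\Delta_{K}$, and assembling the results.

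I then split into three cases according to $|\beta|$. For $|\beta|>1$, I write $\log|\beta^{r}-1|=r\log|\beta|+\log|1-\beta^{-r}|$, expand $\log|1-w|=-\tfrac{1}{2}\sum_{k\geq 1}(w^{k}+\bar{w}^{k})/k$ for $|w|<1$, and exchange summations in $r$ and $k$ to obtain
\[
F_{\beta}(z)=\log|\beta|\,\frac{z}{(1-z)^{2}}-\tfrac{1}{2}\sum_{k\geq 1}\frac{1}{k}\left(\frac{\beta^{-k}z}{1-\beta^{-k}z}+\frac{\bar{\beta}^{-k}z}{1-\bar{\beta}^{-k}z}\right).
\]
Because $|\beta^{-k}z|$ decays geometrically in $k$ on any compact subset of $\mathbb{C}$, truncating at some level $N$ leaves a tail converging uniformly to a holomorphic function, while the head is a finite sum of simple rational functions with poles at $z=\beta^{k}$ and $z=\bar{\beta}^{k}$ for $k\leq N$. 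This yields a meromorphic extension to all of $\mathbb{C}$ with simple poles lying in $\{\beta^{k},\bar{\beta}^{k}:k\geq 1\}$, plus the double pole at $z=1$ from the $r\log|\beta|$ piece. The case $|\beta|<1$ is strictly analogous, without the double-pole contribution, giving simple poles at $\{\beta^{-k},\bar{\beta}^{-k}:k\geq 1\}$. For $|\beta|=1$, the diophantine hypothesis forces $\beta$ to be a primitive root of unity of some order $d$; then $a_{r}:=\log|\beta^{r}-1|$ (with $a_{r}:=0$ when $d\mid r$) is $d$-periodic in $r$, so $F_{\beta}$ is a rational function of the form $P(z)/(1-z^{d})$ whose poles sit at the $d$th roots of unity, all simple, and the classical identity $\prod_{j=1}^{d-1}(1-\beta^{j})=d$ pins down the residue at $z=1$.

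Summing over all roots, and exploiting the reciprocal symmetry $\Delta_{K}(t)=\pm t^{2g}\Delta_{K}(t^{-1})$ (which pairs $\beta\leftrightarrow\beta^{-1}$), the combined pole locus is exactly
\[
\{\beta^{n}:\Delta_{K}(\beta)=0,\ n\in\mathbf{Z}\}\setminus(\text{open unit disc}).
\]
Each pole at $\beta^{k}$ acquires a factor $m_{\beta}/k$, so its residue records the multiplicity, as claimed. The only candidate for a higher-order pole is $z=1$; its double part is $(\log|c|+\sum_{|\beta|>1}m_{\beta}\log|\beta|)\cdot z/(1-z)^{2}=\log\mathcal{M}(\Delta_{K})\cdot z/(1-z)^{2}$, which gives order $2$ when $\mathcal{M}(\Delta_{K})>1$ and order $1$ otherwise, matching the dichotomy in part~(2).

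The main technical obstacle is to make the term-by-term manipulations rigorous: one must show the double series is absolutely convergent in a neighborhood of $z=0$ (so that Fubini justifies the rearrangement) and bound the tail series uniformly on compact subsets of $\mathbb{C}$ after excising finitely many small disks around poles. Both come out of the geometric decay of $|\beta^{\pm k}|$, but care is required for the root-of-unity case (where one must interpret the Fox/Mayberry--Murasugi formula with $\beta^{r}=1$ indices omitted), and one must also verify that accidental collisions $\beta^{n}=(\beta^{\prime})^{n^{\prime}}$ between distinct roots merely add residues rather than creating spurious higher-order poles.
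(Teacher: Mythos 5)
Your proposal is correct and follows the same overall skeleton as the paper -- Fox's formula, the splitting of $E$ into $\log|a|\cdot z/(z-1)^{2}$ plus one series per Alexander root, the reduction of $|\beta|>1$ to $|\beta|<1$ which produces the Mahler-measure double pole, and the explicitly rational treatment of roots of unity with residue $\tfrac{1}{m}\log\tfrac{1}{m}$ at $z=1$ -- but your central analytic step is genuinely different. You continue $F_{\beta}$ by expanding $\log|1-w|=-\tfrac12\sum_{k\geq1}(w^{k}+\bar w^{k})/k$ and swapping sums, arriving at the Lambert-type series $-\tfrac12\sum_{k\geq1}k^{-1}\bigl(\tfrac{x^{k}z}{1-x^{k}z}+\tfrac{\bar x^{k}z}{1-\bar x^{k}z}\bigr)$, which is visibly meromorphic on all of $\mathbf{C}$ with simple poles at $x^{-k},\bar x^{-k}$ and transparent residues; the paper instead continues $Q_{x}(w)=\sum_{n}\log(1-x^{n})e^{-wn}$ via a modified Abel--Plana contour (Prop.~\ref{prop_TinyBoxAbelPlana} through Prop.~\ref{prop_AnalyticContinuationForQx}, culminating in Theorem~\ref{thm_AnalyticContinuationForRx}) and then descends along $z=e^{-w}$. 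Your route is shorter and makes the residue claim in part (1) essentially immediate: away from the unit circle every contribution to the residue at a pole $p$ has the form $m_{\beta}\,p/(2k)$, a positive multiple of $p$, so coincidences between conjugate or distinct roots can only add and never cancel -- which is exactly the cleanest way to close the collision issue you flag at the end. What the paper's heavier machinery buys is an explicit closed formula $\tilde q_{x}$ for the continuation (used for the numerical plots), not anything needed for this theorem. Two points you should still write out to make the argument complete: the torsion formula when $\Delta_{K}$ has roots of unity among its roots is not literally Fox's theorem (which assumes no such roots), so the primed product with the $\beta^{r}=1$ factors omitted needs its own justification; and the claim ``order $1$ otherwise'' at $z=1$ requires observing that $\mathcal{M}(\Delta_{K})=1$ forces (by $|a|=1$, $\Delta_{K}(0)\neq0$ and Kronecker) all roots to be roots of unity of order $\geq 2$ (as $\Delta_{K}(1)=\pm1$), whose residues $\tfrac1m\log\tfrac1m$ at $z=1$ are all strictly negative and hence cannot sum to zero, so a genuine simple pole survives.
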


This is our first main result (Theorem \ref{thm_A_for_knots}). If the (rather
mild) assumptions of the theorem are met, it affirms our heuristic about a
pole at $z=1$, and in fact it gives a new proof of the Silver--Williams
asymptotic. But it implies more. A theorem of Fried says that the knowledge of
the torsion orders $\left\vert H_{1}(X_{r},\mathbf{Z})_{\operatorname*{tor}%
}\right\vert $ for all $r$ allows us to reconstruct the Alexander polynomial
of the knot. However, this also follows at once from the above theorem because
we just need to look at the poles of the meromorphic continuation.

The theorem is not just a theoretical result. We can explicitly compute this
analytic continuation. For example, for the knot \textquotedblleft%
\textrm{K8}$_{256}$\textquotedblright\footnote{census tabulation along size of
triangulation as used in SnapPea, \cite{MR3298204}}\ we get%
\[
{\includegraphics[
natheight=3.657700in,
natwidth=3.657700in,
height=1.8968in,
width=1.8968in
]%
{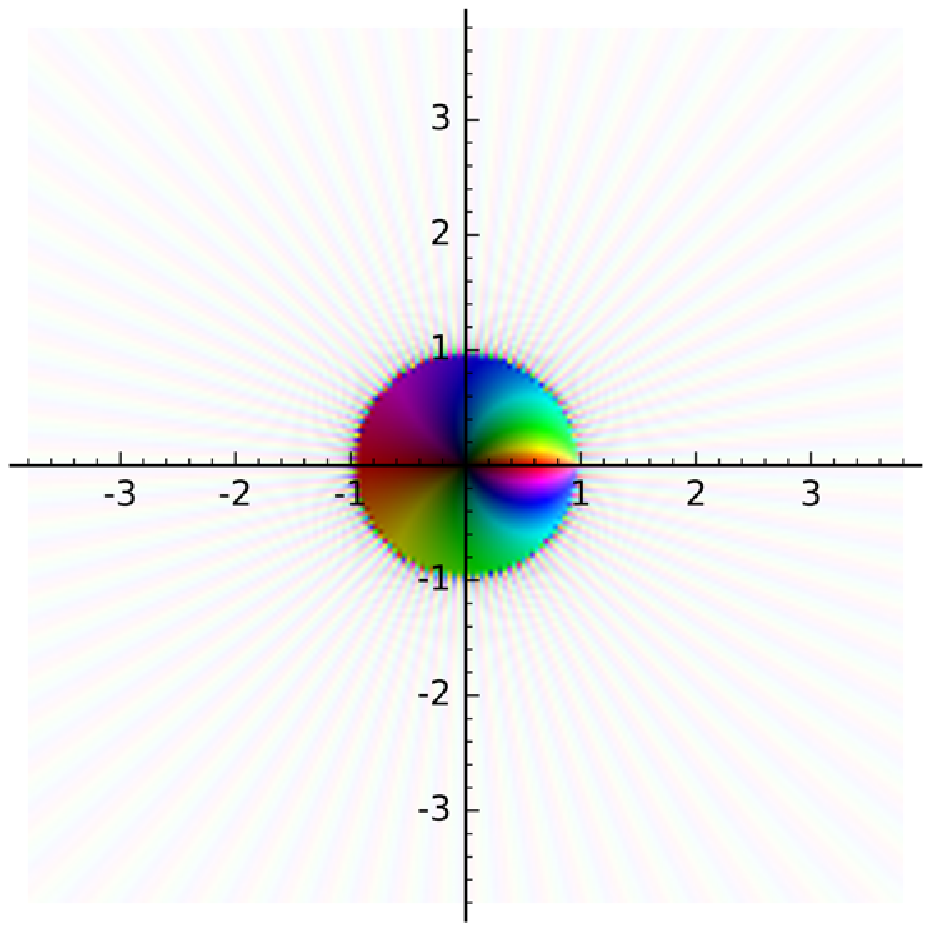}%
}
\qquad\qquad%
{\includegraphics[
natheight=3.657700in,
natwidth=3.657700in,
height=1.9042in,
width=1.9042in
]%
{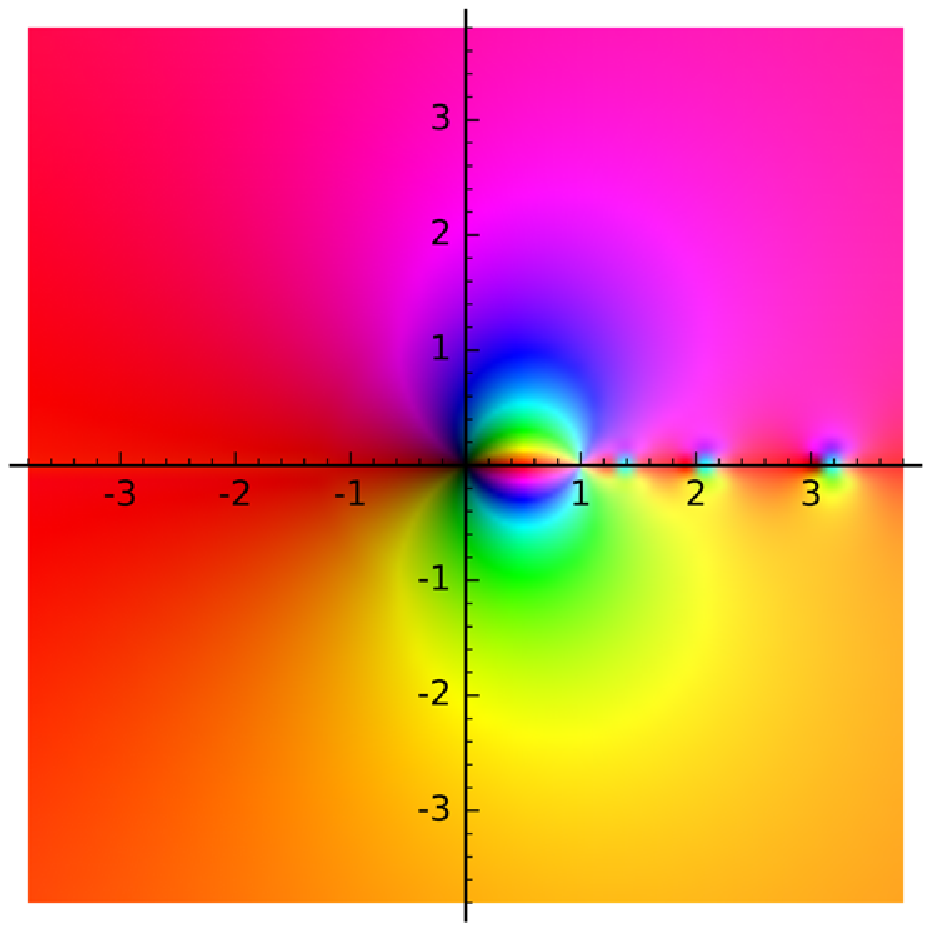}%
}
\]
On the left, we see the evaluation of $E$ as the power series of line
\ref{li_E2}. One can clearly see how the series $E$ diverges outside the unit
circle (as is forced by the pole at $z=1$). On the right, we see our analytic
continuation. The Alexander polynomial has roots at $\frac{2}{3}$ and
$\frac{3}{2}$, and its smallest integer powers outside the unit disc are at
$1.5$, $2.25$, $3.37\ldots$, as one can also read off the plot.

We also obtain a strengthening of Gordon's classical result on periodic
torsion homology \cite{MR0295327}:

\begin{theoremAnn}
For a given knot, the following are equivalent:

\begin{enumerate}
\item The values $\left\vert H_{1}(X_{r},\mathbf{Z})_{\operatorname*{tor}%
}\right\vert $ are periodic in $r$.

\item All Alexander roots are roots of unity.

\item $E$ is a rational function.

\item $E$ has an analytic continuation to the entire complex plane with only
finitely many poles.

\item The values $\log\left\vert H_{1}(X_{r},\mathbf{Z})_{\operatorname*{tor}%
}\right\vert $ satisfy a linear recurrence equation.
\end{enumerate}
\end{theoremAnn}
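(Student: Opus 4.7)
The plan is to close the logical cycle $(1) \Rightarrow (3) \Rightarrow (4) \Rightarrow (2) \Rightarrow (1)$, together with the standard equivalence $(3) \Leftrightarrow (5)$; Theorem~\ref{thm_A_for_knots} will enter essentially only at the step $(4) \Rightarrow (2)$.

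First I would dispatch the easy implications. The equivalence $(3) \Leftrightarrow (5)$ is the classical fact that a formal power series over $\mathbf{C}$ is a rational function if and only if its coefficient sequence satisfies a homogeneous linear recurrence with constant coefficients. The implication $(1) \Rightarrow (3)$ is elementary: if $a_r := \log|H_1(X_r,\mathbf{Z})_{\operatorname*{tor}}|$ is periodic with period $p$, then
\[
E(z) \;=\; \frac{1}{1-z^{p}}\sum_{r=1}^{p} a_{r}\, z^{r},
\]
which is rational. The implication $(3) \Rightarrow (4)$ is immediate since rational functions are meromorphic on $\mathbf{C}$ with only finitely many poles. Finally, $(2) \Rightarrow (1)$ is Gordon's classical theorem \cite{MR0295327}, the very result being strengthened here; alternatively one can deduce it directly from Theorem~\ref{thm_A_for_knots}, since under $(2)$ the pole locus $\{\beta^{n}\}\setminus(\text{open unit disc})$ is finite and consists of roots of unity, so a partial-fraction expansion of the (now rational) $E$ gives a periodic coefficient sequence once one checks that the pole at $z=1$ is simple in this case.

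The real content is $(4) \Rightarrow (2)$, which I would split into two cases. If $\Delta_K$ has no diophantine root, Theorem~\ref{thm_A_for_knots} applies and identifies the pole locus as $\mathcal{P} = \{\beta^{n}\mid \Delta_{K}(\beta)=0,\ n\in\mathbf{Z}\}\setminus(\text{open unit disc})$. By hypothesis $(4)$ this set is finite. If some Alexander root satisfied $|\beta|>1$, then $\beta,\beta^{2},\beta^{3},\ldots$ would be infinitely many distinct elements of $\mathcal{P}$, a contradiction. Hence $|\beta|\leq 1$ for every root $\beta$; by the reciprocality of the Alexander polynomial, $\beta^{-1}$ is also a root, and applying the same argument to $\beta^{-1}$ forces $|\beta|\geq 1$. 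Thus every root lies on the unit circle, and since none is diophantine, every root is a root of unity, giving $(2)$. If instead $\Delta_K$ does have a diophantine root, I would invoke the exceptional-case analysis promised in the abstract and carried out later in the paper, which should show that diophantine roots obstruct the existence of a global meromorphic continuation with only finitely many poles; consequently $(4)$ already forces the no-diophantine hypothesis, and we are reduced to the previous case.

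The main obstacle is exactly this diophantine half of $(4) \Rightarrow (2)$: every other step is either a one-line generating-function manipulation or a direct reading of the pole-locus formula of Theorem~\ref{thm_A_for_knots}, whereas ruling out diophantine roots from hypothesis $(4)$ requires genuine input from the exceptional-case analysis. The expected mechanism is that the powers $\beta^{n}$ of a diophantine root $\beta$ lie densely on the unit circle, producing either infinitely many poles of $E$ or an accumulation giving a natural boundary, both incompatible with $(4)$.
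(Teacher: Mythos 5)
Your proposal is correct and follows essentially the same route as the paper's proof of Theorem \ref{thm_refined_gordon_thm}: the elementary generating-function and linear-recurrence equivalences are dispatched the same way, $(2)\Rightarrow(1)$ is Gordon's theorem, and the substantive direction (analytic continuation with finitely many poles $\Rightarrow$ all roots are roots of unity) is obtained exactly as in the paper, by combining the pole locus of Theorem \ref{thm_A_for_knots} (to exclude roots off the unit circle) with the natural-boundary statement of Theorem \ref{thm_A_for_knots_NatBdryCase} (to exclude diophantine roots). The only differences are cosmetic bookkeeping, e.g.\ you close a cycle of implications where the paper proves pairwise equivalences with condition (2) as the hub, and you use reciprocality of $\Delta_K$ where the paper simply notes that a root inside the unit circle already contributes infinitely many poles via its negative powers.
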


This will be Theorem \ref{thm_refined_gordon_thm}. The equivalence
$(1)\Leftrightarrow(2)$ is Gordon's classical result.

So far, we have described results under the assumption that no root of the
Alexander polynomial is diophantine. What happens in the rare case if there is
a diophantine root? In this case, everything changes drastically. We prove:

\begin{theoremAnn}
Suppose the Alexander polynomial $\Delta_{K}$ of a knot has at least one
diophantine root. Then $E$ has the unit circle as its natural boundary, i.e.
no analytic continuation is possible. Moreover,%
\[
\operatorname*{limrad}_{z\rightarrow p}\,(1-\left\vert z\right\vert )E(z)\neq0
\]
if $\left\vert p\right\vert =1$ lies in the multiplicative span of the
diophantine roots, and it is zero if $p$ is multiplicatively independent of
all diophantine roots.
\end{theoremAnn}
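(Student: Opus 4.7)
The plan is to isolate the diophantine part of $E$, expand it by a Fourier series to expose its singularities, and then read off both the natural boundary and the radial limit behaviour. Using the standard identity $\log|H_{1}(X_{r},\mathbf{Z})_{\mathrm{tor}}|=\sum_{\beta}\log|\beta^{r}-1|$ over Alexander roots with multiplicity (established earlier in the paper), I would decompose
\begin{equation*}
E(z) \;=\; M(z) + \sum_{\beta\text{ diophantine}} m_{\beta}\,E_{\beta}(z),\qquad E_{\beta}(z)\;:=\;\sum_{r\geq 1}\log|1-\beta^{r}|\,z^{r},
\end{equation*}
where $M$ collects the contributions from roots off the unit circle and from roots of unity. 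By the methods behind Theorem~\ref{thm_A_for_knots}, $M$ is meromorphic on all of $\mathbf{C}$ and has no poles on $S^{1}$ at the points $p$ permitted in the statement, so $(1-s)M(sp)\to 0$ and all the interesting behaviour sits in the $E_{\beta}$.

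For $\beta=e^{2\pi i\alpha}$ with $\alpha\notin\mathbf{Q}$, using $\log|1-\beta^{r}|=\log|2\sin\pi r\alpha|$ and Mercator's series $-\log(1-\beta^{r})=\sum_{k\geq 1}\beta^{kr}/k$ (conditionally convergent by Dirichlet) together with its complex conjugate, then summing against $z^{r}$, one obtains the iterated-sum representation
\begin{equation*}
E_{\beta}(z) \;=\; -\tfrac{1}{2}\sum_{k\geq 1}\tfrac{1}{k}\Bigl(\tfrac{\beta^{k}z}{1-\beta^{k}z}+\tfrac{\beta^{-k}z}{1-\beta^{-k}z}\Bigr),\qquad |z|<1.
\end{equation*}
As $z$ approaches $\beta^{\mp k_{0}}\in S^{1}$ radially from inside the disc, the $k_{0}$-th mode of the corresponding series blows up like $s/(1-s)$ while the rest stays bounded in a punctured neighborhood (isolate the resonant term and control the tail by the known convergence of the full series). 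Hence $E_{\beta}$ is unbounded on the set $\{\beta^{\pm k}\}_{k\geq 1}$, which by Weyl equidistribution is dense in $S^{1}$. Any analytic continuation past an arc would be analytic, hence bounded, near some point of this dense set---contradiction. This establishes the natural boundary.

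For the radial limit, set $z=sp$ with $|p|=1$, $s\to 1^{-}$. If $p$ lies in the multiplicative span, some pair $(\beta_{i},k_{0})$ satisfies $\beta_{i}^{\pm k_{0}}p=1$; the corresponding mode gives $(1-s)\cdot s/(1-s)\to 1$, and the radial limit of $(1-s)E(sp)$ picks up $-m_{\beta_{i}}/(2k_{0})$ from that root. Because all Fourier coefficients $-1/(2k)$ share the same sign, distinct resonances (from different roots or signs of $k_{0}$) add without cancellation, so the total is nonzero. For $p$ multiplicatively independent of every diophantine root, no mode resonates; each denominator $|1-\beta_{i}^{k}sp|$ stays bounded away from $0$ as $s\to 1$ for fixed $k$; every individual term tends to $0$; one must now pass this vanishing through the infinite $k$-sum.

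The main obstacle is this last step: controlling $(1-s)\sum_{k\geq 1}k^{-1}|1-\beta_{i}^{k}sp|^{-1}$ uniformly as $s\to 1^{-}$. I plan to split the sum according to whether $|1-\beta_{i}^{k}p|$ exceeds $1-s$, bound the large-denominator range directly (using a finite irrationality measure for $\alpha$), and estimate the small-denominator range by an effective equidistribution argument (Erd\H{o}s--Tur\'an discrepancy or the three-gap theorem) that counts near-resonances $\#\{k\leq N:|1-\beta_{i}^{k}p|<\varepsilon\}$. In the single-root case this reduces to classical metric-number-theory sums of the form $\sum k^{-1}\|k\alpha-\gamma\|^{-1}$; in the several-roots case a simultaneous statement of Baker linear-forms-in-logarithms flavour is needed to exclude hidden resonances, which is exactly the content of the ``multiplicative independence'' hypothesis.
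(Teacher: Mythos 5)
Your decomposition of $E$ into the off-circle/root-of-unity part $M$ plus the diophantine summands $E_\beta=R_\beta$ is exactly the paper's starting point, and your sign observation (all resonant contributions are negative, so they cannot cancel) is also the paper's mechanism for the natural boundary. But the core of your plan — expand $R_\beta$ into the modes $-\tfrac1{2k}\bigl(\tfrac{\beta^k z}{1-\beta^k z}+\tfrac{\beta^{-k}z}{1-\beta^{-k}z}\bigr)$, isolate the resonant $k_0$, and control the rest — has a genuine gap precisely at the step you flag. The quantity you propose to bound, $(1-s)\sum_{k\ge1}k^{-1}\lvert 1-\beta^{k}sp\rvert^{-1}$, is infinite for every fixed $s<1$: since $\lvert 1-\beta^k sp\rvert<2$, each term exceeds $1/(2k)$ and the series diverges. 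The mode series converges only conditionally (via cancellation of phases, which already needs Gelfond-type bounds to make the Dirichlet test work), so no absolute-value estimate — Erd\H{o}s--Tur\'an discrepancy, the three-gap theorem, or an irrationality measure — can give the required tail bound $o\bigl((1-s)^{-1}\bigr)$; even for badly approximable $\theta$ the sums $\sum_k k^{-1}\Vert k\theta-\gamma\Vert^{-1}$ diverge. For the same reason the claim that the non-resonant part ``stays bounded near $\beta^{\mp k_0}$ by the known convergence of the full series'' does not follow: convergence at each interior point says nothing uniform as $s\to1^-$. Any repair must exploit the oscillation of the unbounded function $\log\lvert 1-e^{2\pi it}\rvert$ itself, uniformly along the radius — and that is essentially the content of the paper's route, which avoids the mode decomposition entirely: Frobenius' lemma (Lemma \ref{Lemma_FrobeniusPoleComputation}) converts $\operatorname*{limrad}(1-\lvert z\rvert)R_\beta(z)$ at $p=\beta^m$ into the Ces\`aro mean $\lim_N\frac1N\sum_{n\le N}\log\lvert 1-\beta^n\rvert\beta^{mn}$, which is then evaluated as the integral $W_m=-\tfrac1{2\lvert m\rvert}\delta_{m\ne0}$ by the Baxa--Schoi\ss engeier-type equidistribution theorem (applicable to this unbounded integrand thanks to Gelfond's estimate, and extended to two variables for the multiplicatively independent case).

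A second, scope-level gap: your resonance picture only detects points $p$ that are integer powers of a single diophantine root. The multiplicative span in the theorem allows rational exponents and combinations of several roots (Definition \ref{def_MultiplicativeIndependence}), where no single mode resonates exactly; this is handled in the paper by the fractional-exponent case of the orthogonality theorem (the values $C_m\in\mathbf{Q}(\mu_\infty,\pi,\{L(1,\chi)\})$) together with the ``sufficiently divisible power'' trick to force strictly negative rational values and rule out cancellation among different diophantine roots. Your Baker-type ``simultaneous'' statement is not formulated, and it is not what the multiplicative independence hypothesis amounts to in the paper; the independent case is settled there by the two-variable equidistribution statement, not by linear forms in logarithms.
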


Here \textquotedblleft$\operatorname*{limrad}_{z\rightarrow p}$%
\textquotedblright\ refers to the limit under all sequences of constant
complex argument. This will be Theorem \ref{thm_A_for_knots_NatBdryCase}.
%\[%
%{\includegraphics[
%natheight=3.655300in,
%natwidth=4.137100in,
%height=1.9536in,
%width=2.209in
%]%
%{rx_for_NOT_unimodularHIRES.eps}%
%}
%%%
%\qquad\qquad%
%{\includegraphics[
%natheight=3.655300in,
%natwidth=4.137100in,
%height=1.9405in,
%width=2.1942in
%]%
%{rx_for_unimodular2aHIRES.eps}%
%}
%%%
%\qquad
%\]
%The above plots show the evaluation of the power series $E$ near $z=1$ for a
%knot without diophantine roots (on the left), and with diophantine roots (on
%the right).

In fact, we get a relation between the singular values on the unit
circle and special $L$-values, see \S \ref{subsect_Intro_LVals}.

\subsection{Application to Reidemeister torsion}

As mentioned before, our results do not just apply to knots in the $3$-sphere,
but also to other rank one growth phenomena governed by the Mahler measure. As
is explained in many places, e.g. \cite{MR832411}, it is natural to view the
torsion homology in $H_{1}$ in the\ Silver--Williams theorem as a special
instance of the growth of Reidemeister--Franz torsion.

For example, using a variation of the arguments of our Theorems, we can also
show the following:

\begin{theoremAnn}
Let $K^{n}\subset M^{n+2}$ be an $n$-knot, where $M^{n+2}$ is a\ $(n+2)$%
-dimensional homology sphere (in the PL\ category). If $\Delta_{K^{n},i}$
denotes the $i$-th Alexander polynomial, and none of the $\Delta_{K^{n},i}$
has a root in $\mu_{\infty}$, then the generating function of the Reidemeister
torsion%
\[
J_{K^{n}}(z):=\sum_{r\geq1}\log(\tau_{r})\cdot z^{r}%
\]
with%
\[
\tau_{r}:=\prod_{i=1}^{n}\left\vert H_{i}(\widehat{X}_{r},\mathbf{Z}%
)\right\vert ^{(-1)^{i+1}}\text{,}%
\]
where $\widehat{X_{r}}$ is the $r$-th cyclic branched covering, has the
following property:

\begin{enumerate}
\item If no root of any of the Alexander polynomials $\Delta_{K^{n},i}$ has
absolute value $1$, the function admits a meromorphic continuation to the
entire complex plane. Its poles are located at most at all integer powers of
all roots of all $\Delta_{K^{n},i}$ which lie outside the open unit disc.

\item If some $\Delta_{K^{n},i}$ has a root of absolute value $1$ and no other
$\Delta_{K^{n},j}$ (with $j\neq i$) has a root at the same value, then
$J_{K^{n}}$ has the unit circle as its natural boundary. An analytic
continuation beyond the unit circle is impossible.
\end{enumerate}
\end{theoremAnn}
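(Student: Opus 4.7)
The plan is to bootstrap the results for classical knots (Theorems~\ref{thm_A_for_knots} and~\ref{thm_A_for_knots_NatBdryCase}) by decomposing $J_{K^n}$ along the individual Alexander polynomials. From the standard Fox-calculus/Milnor formula for cyclic branched covers of a homology sphere, one has (up to finitely many exceptional $r$)
\[
\bigl|H_i(\widehat{X}_r, \mathbf{Z})\bigr| = \prod_{\zeta^r = 1} \bigl|\Delta_{K^n,i}(\zeta)\bigr|,
\]
so that
\[
\log \tau_r = \sum_{i=1}^{n} (-1)^{i+1} \sum_{\zeta^r=1} \log \bigl|\Delta_{K^n,i}(\zeta)\bigr|.
\]
Interchanging the sums gives $J_{K^n}(z) = \sum_{i=1}^n (-1)^{i+1} E_i(z)$, where
\[
E_i(z) := \sum_{r \geq 1} \Bigl(\sum_{\zeta^r = 1} \log \bigl|\Delta_{K^n,i}(\zeta)\bigr|\Bigr) z^r
\]
has exactly the formal shape of the function $E$ of the opening, but built from $\Delta_{K^n,i}$ in place of $\Delta_K$. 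From this point on the two parts of the theorem are two different specializations of the previously established theorems applied to each $E_i$ separately.

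For part~(1), the assumption that no root of any $\Delta_{K^n,i}$ lies on the unit circle implies in particular that none is diophantine. Thus Theorem~\ref{thm_A_for_knots} applies to each $E_i$ individually and yields a meromorphic continuation to $\mathbf{C}$ with pole locus contained in the integer powers of the roots of $\Delta_{K^n,i}$ outside the open unit disc. Taking the alternating sum preserves meromorphy and keeps the pole locus inside the union of the individual pole loci, giving the claimed description.

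For part~(2), suppose $\beta$ is a unit-modulus root of $\Delta_{K^n,i_0}$ shared by no other $\Delta_{K^n,j}$. Applying Theorem~\ref{thm_A_for_knots_NatBdryCase} to $E_{i_0}$, the unit circle is a natural boundary of $E_{i_0}$ and $\operatorname*{limrad}_{z \to p}(1-|z|)E_{i_0}(z) \neq 0$ precisely on a dense subset of the unit circle, namely the multiplicative span of its diophantine roots. One then compares with the radial limits of the remaining $E_j$: at points $p$ where only $E_{i_0}$ contributes a nonzero limrad, cancellation in the alternating sum is impossible and $J_{K^n}$ inherits a non-removable radial singularity. A density argument then spreads these singularities to a dense subset of $S^1$, forcing the unit circle to be a natural boundary of $J_{K^n}$.

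\textbf{Main obstacle.} The crucial nontrivial step is the non-cancellation argument in part~(2). Each $E_j$ that itself possesses diophantine roots already contributes a dense set of unit-circle points with nonzero limrad, and a priori the alternating sum could let these singular parts annihilate on dense subsets and so admit a meromorphic extension across $S^1$. What saves us is the precise limrad formula of Theorem~\ref{thm_A_for_knots_NatBdryCase}, which identifies the non-vanishing radial singularities of each $E_j$ with the multiplicative group generated by the diophantine roots of $\Delta_{K^n,j}$; the hypothesis that $\beta$ is a root of \emph{only} $\Delta_{K^n,i_0}$ supplies the asymmetry at $p = \beta$ and its integer powers. Upgrading this single-point asymmetry to a genuinely dense family of non-cancelling singular points on $S^1$ — using multiplicative independence of unit-circle algebraic numbers — is the one piece of real work beyond the formal reduction.
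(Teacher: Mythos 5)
Your overall strategy is the same as the paper's (decompose $\log\tau_r$ into an alternating sum of knot-type generating functions and feed each piece into the $R_x$ machinery), but your very first step contains a genuine gap. You claim as ``standard'' the per-degree formula $\left\vert H_{i}(\widehat{X}_{r},\mathbf{Z})\right\vert=\prod_{\zeta^{r}=1}\left\vert \Delta_{K^{n},i}(\zeta)\right\vert$. For higher-dimensional knots no such individual-degree Fox formula is available: the homology of the infinite cyclic cover can have $\mathbf{Z}$-torsion and the Milnor sequence mixes adjacent degrees, which is precisely why one passes to Reidemeister torsion. The input the paper actually uses is Porti's Mayberry--Murasugi-type theorem \cite[Theorem 6.1]{MR2073320}, which identifies only the \emph{alternating product} $\prod_{i=1}^{n}\left\vert H_{i}(\widehat{X}_{r},\mathbf{Z})\right\vert^{(-1)^{i+1}}$ with $\prod_{i=1}^{n+1}\prod_{\zeta\in\mu_{r}}\left\vert \Delta_{K^{n},i}(\zeta)\right\vert^{(-1)^{i+1}}$ --- note the Alexander side runs up to $i=n+1$, so your decomposition $J_{K^{n}}=\sum_{i=1}^{n}(-1)^{i+1}E_{i}$ also omits a term. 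With Porti's formula in place, your reduction of part (1) to the continuation results for the functions $R_{x}$ (equivalently, to the argument of Theorem \ref{thm_A_for_knots} applied to each polynomial separately) is fine, including the ``at most'' in the pole locus.

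The second gap is in part (2), and you have in effect flagged it yourself without closing it. The danger is not only a literal joint root: another polynomial $\Delta_{K^{n},j}$ of opposite parity may have a unit-circle root $\beta'$ that is multiplicatively dependent on, but distinct from, $\beta$ (this is allowed by the hypothesis). Then by Theorem \ref{thm_RxAtUnimodularHasNaturalBoundary} the corresponding $R_{\beta'}$ contributes nonzero radial limits on the very same dense set of points $\beta^{m}$ (for sufficiently divisible $m$), and in the alternating sum these enter with the opposite sign. The sign-definiteness argument that rescues the knot case in Theorem \ref{thm_A_for_knots_NatBdryCase} --- all contributing limits are of the form $-\tfrac{1}{2\left\vert M'\right\vert}$, hence cannot sum to zero --- is therefore unavailable, and exact cancellation of the rational values at all sufficiently divisible powers is arithmetically possible (e.g.\ a double root $\beta$ against a simple root $\beta^{2}$ of opposite parity); one would then have to fall back on the fractional-exponent values $C_{m}$ of Theorem \ref{thm_ortho}, whose nonvanishing is not established anywhere. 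So the statement ``at points where only $E_{i_0}$ contributes a nonzero limrad, cancellation is impossible'' does not by itself produce a dense set of singular points, and the ``density argument'' you defer is exactly the missing content. (The paper itself only sketches this step and acknowledges the cancellation issue for joint roots in a remark, but your proposal does not supply the missing argument either.)
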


See Theorem \ref{thm_PortiFoxFormula}. This result arises immediately from
combining Porti's Mayberry--Murasugi type formula of \cite{MR2073320} with the
tools which we develop in this paper.

Many similar variations around Reidemeister torsion will be possible.\medskip

There are also applications which are less connected to geometry. For example,
Hillar \cite{MR2167674} studied polynomials which have the same cyclic resultants:

\begin{theoremAnn}
[Hillar]Let $f,g\in\mathbf{R}[t]$ be polynomials such that their cyclic
resultants are all non-zero. Then the absolute values of the cyclic resultants
agree if and only if there exist $u,v\in\mathbf{C}[t]$ with $u(0)\neq0$ and
integers $\ell_{1},\ell_{2}\geq0$ such that%
\begin{align*}
f(t)  & =\pm t^{\ell_{1}}v(t)u(t^{-1})t^{\deg u}\\
g(t)  & =t^{\ell_{2}}v(t)u(t)\text{.}%
\end{align*}

\end{theoremAnn}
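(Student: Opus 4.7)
The plan is to derive this from the meromorphic continuation machinery of the preceding theorems, applied to the generating function of cyclic resultants rather than of torsion orders. The ``if'' direction is a direct verification: multiplicativity of the resultant in its first argument, combined with the identities $|\operatorname*{Res}(t^{\ell},t^{r}-1)|=1$ and $|\operatorname*{Res}(\widetilde{u},t^{r}-1)|=|\operatorname*{Res}(u,t^{r}-1)|$ (where $\widetilde{u}(t):=t^{\deg u}u(t^{-1})$, the second identity following by expressing both sides as $|c|^{r}\prod_{i}|\alpha_{i}^{r}-1|$ in terms of the leading coefficient $c$ and roots $\alpha_i$ of $u$), immediately yields equality of the absolute cyclic resultants.

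For the ``only if'' direction, after absorbing $t$-power factors (which do not affect absolute cyclic resultants) one may assume $f(0),g(0)\neq0$. Introduce
\[
S_{f}(z):=\sum_{r\geq1}\log|\operatorname*{Res}(f,t^{r}-1)|\,z^{r},
\]
and similarly $S_{g}$, so that the hypothesis reads $S_{f}=S_{g}$ as formal power series. The main theorems of the paper are proved by applying the continuation machinery directly to $\sum_{r}\log|\operatorname*{Res}(\Delta_{K},t^{r}-1)|\,z^{r}$ via the Mayberry--Murasugi identification with $|H_{1}(X_{r},\mathbf{Z})_{\operatorname*{tor}}|$; the same proof works verbatim when $\Delta_{K}$ is replaced by any real polynomial with non-vanishing cyclic resultants. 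Thus Theorem~A and its natural-boundary counterpart apply to $S_{f}$ and $S_{g}$.

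It remains to extract the roots from this analytic data. When no root of $f$ lies on the unit circle, the pole data of $S_{f}$ (locations and residues at $\gamma^{k},\bar\gamma^{k}$ for $k\geq1$, plus the double pole at $z=1$) encodes the Mahler measure $M(f)$ and the multiset $\Omega_{f}$ consisting of roots $\alpha$ of $f$ with $|\alpha|>1$ together with the reciprocals $\alpha^{-1}$ of roots with $|\alpha|<1$. Crucially, $\Omega_{f}$ does \emph{not} distinguish a root $\alpha$ from its replacement by $\alpha^{-1}$: both yield identical pole contributions. From $S_{f}=S_{g}$ one concludes $\Omega_{f}=\Omega_{g}$ and $M(f)=M(g)$, so the root multisets of $f$ and $g$ agree up to inverting individual roots. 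Collecting the roots where $f$ and $g$ coincide into a factor $v$, and the remaining roots as they appear in $g$ into a factor $u$, produces $g=c_{g}\cdot vu$ and $f=c_{f}\cdot v\widetilde{u}$. Matching leading-coefficient moduli (forced by $M(f)=M(g)$) together with reality of $f,g$ reduces the remaining freedom to an overall sign $\pm$. Diophantine roots are handled identically through the limrad data furnished by the natural-boundary theorem; such unit-circle roots in a real polynomial equal their own inverses (being conjugates), so they contribute only to $v$.

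The main technical hurdle lies in this extraction step: one must show that the pole data of $S_{f}$ across all powers $k$ determines $\Omega_{f}$ with correct multiplicities even when resonances $\gamma_{1}^{k_{1}}=\gamma_{2}^{k_{2}}$ cause distinct generators to share pole locations. Generically there is nothing to disentangle, but the $1/k$-weighting of the residue contributions arising from the Taylor expansion of $\log|1-w|$ must be exploited to separate the multiplicity bookkeeping at coincident poles.
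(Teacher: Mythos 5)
For the regime where no root of $f$ or $g$ lies on the unit circle, your argument is essentially the paper's: it forms the same generating function of log absolute cyclic resultants, proves its meromorphic continuation (Theorem \ref{thm_TFuncOfCyclicResultants}, a rerun of the knot argument with Fox's formula replaced by the factorization $|\operatorname{Res}(f,t^{m}-1)|=|a|^{m}\prod_{i}|1-\beta_{i}^{m}|$), reads off from $T_{f}=T_{g}$ the pole locations and residues, hence the non-zero roots up to inversion with multiplicities, and assembles $u,v$ exactly as you do; the ``if'' direction is likewise dismissed as immediate. The resonance problem you flag (distinct generators sharing pole locations) is glossed over in the paper too; it can be settled by peeling off poles in order of increasing modulus: a point of minimal modulus $>1$ in the pole set can only be a first power $\beta_{i}^{-1}$, so the residue there is unpolluted and gives the multiplicity, after which one subtracts that contribution and inducts. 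Your appeal to the ``$1/k$-weighting'' gestures in the right direction but is not by itself an argument.

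The genuine gap is the sentence claiming that ``diophantine roots are handled identically through the limrad data.'' This is precisely the case the paper does not cover: it proves Hillar's theorem only under the strictly stronger hypothesis that no root lies on the unit circle, and remarks explicitly that Hillar's algebraic method has the advantage of working in the diophantine case, ``where our $T_{f}$ fails to admit a meromorphic continuation.'' When a diophantine root is present there are no poles or residues at all; the only analytic data available from Theorems \ref{thm_RxAtUnimodularHasNaturalBoundary} and \ref{thm_A_for_knots_NatBdryCase} are radial limit values of $(1-|z|)T_{f}(z)$, which at sufficiently divisible powers of points in the multiplicative span of the diophantine roots are finite sums of terms $-\tfrac{1}{2|M'|}$ contributed jointly by all multiplicatively dependent unit-circle roots (and lie in $\mathbf{Q}(\mu_{\infty},\pi,\{L(1,\chi)\}_{\chi})$ at fractional powers). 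Nothing in the paper shows these aggregated limits determine the individual diophantine roots and their multiplicities -- e.g.\ when two such roots are multiplicatively dependent -- so your reduction ``they contribute only to $v$'' is unsupported. Moreover the parenthetical justification is wrong as stated: a unit-circle root of a real polynomial is not equal to its own inverse; rather its inverse is its complex conjugate, which is again a root, so the unit-circle root multiset is inversion-stable. That is the fact you would need, but only after the missing step of recovering the unit-circle multisets of $f$ and $g$ from $T_{f}=T_{g}$, which the paper's results do not provide.
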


For polynomials which have no roots on the unit circle (this is the generic
case), our methods give a new proof of this theorem. See
\S \ref{sect_CyclicResultants}.

\subsection{Open questions}

We do not know how the generation functions behave in rank $d\geq2$
situations,%
\[
\pi_{1}(X,\ast)\twoheadrightarrow\mathbf{Z}^{d}\text{,}%
\]
as in Le \cite{MR3260847} and\ Raimbault \cite{MR2966689}, where one also has
an asymptotic governed by the Mahler measure. More broadly, one could dream
about studying the generating functions of torsion coming from lattice
quotients in Lie groups, inspired by the asymptotics \`{a} la \cite{MR3028790}%
. Unfortunately, at present, this seems completely our of reach.

\subsection{Relation to special $L$-values\label{subsect_Intro_LVals}}

As an accidental finding along the way, we find a new relation to special
$L$-values. So far, it is known that there is \textsl{some} relation between
Mahler measures and special $L$-values through the Beilinson conjectures. A
popular example is the two-variable Mahler measure%
\[
\mathcal{M}(1+x+y)=\frac{3\sqrt{3}}{4\pi}L(2,\chi)\text{,}%
\]
where $\chi$ is a certain Dirichlet character. This was discovered by Smyth,
and later theoretically explained by Deninger \cite{MR1415320}. However, to
the best of my knowledge, this was the only suggestion of a possible
connection between the\ Silver--Williams theorem and special $L$-values.

However, when evaluating the singular limit values of $E$ for a knot with
diophantine roots, other special $L$-values at $s=1$ show up:

\begin{theoremAnn}
Let $K\subset S^{3}$ be a knot and $\Delta_{K}$ its Alexander polynomial.
Suppose $\Delta_{K}$ has at least one diophantine root. Let $p$ be a point of
the unit circle which lies in the multiplicative span\footnote{We understand
multiplicative dependency as allowing rational exponents, e.g. it is fine to
take some $l$-th root of one of the diophantine roots. See Definition
\ref{def_MultiplicativeIndependence}.} of the diophantine roots of $\Delta
_{K}$. Then%
\[
\operatorname*{limrad}_{z\rightarrow p}\,(1-\left\vert z\right\vert
)E(z)\in\mathbf{Q}(\mu_{\infty},\pi,\{L(1,\chi)\}_{\chi})\text{,}%
\]
where $\chi$ runs through a finite set (depending on $p$) of non-principal
Dirichlet characters of various moduli.
\end{theoremAnn}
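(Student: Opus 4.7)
The plan is to combine the root-by-root decomposition used in Theorem~\ref{thm_A_for_knots_NatBdryCase} with an Abel--Cesàro/Weyl equidistribution computation, and finally recognise the answer via the Dirichlet class-number formula.

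First I would reduce to diophantine roots. By the Mayberry--Murasugi--Porti formula, as in the proof of Theorem~\ref{thm_A_for_knots},
$$E(z)=\frac{z\log|c|}{(1-z)^{2}}+\sum_{i}E_{\alpha_{i}}(z),\qquad E_{\alpha}(z):=\sum_{r\geq 1}\log|\alpha^{r}-1|\,z^{r},$$
where $c$ is the leading coefficient of $\Delta_{K}$ and the $\alpha_{i}$ are its roots. Non-diophantine $\alpha_{i}$ (those with $|\alpha_{i}|\neq 1$) give $E_{\alpha_{i}}$ that extend meromorphically across the unit circle with no unit-circle singularities, so their contribution to $(1-|z|)E_{\alpha_{i}}(z)$ at $p\neq 1$ vanishes. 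Only the diophantine $\alpha=e^{2\pi i\theta}$ contribute.

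Next, I would apply Abel--Cesàro. Since $\alpha$ is algebraic on the unit circle, Baker's theorem yields $|1-\alpha^{r}|\gg r^{-A}$, so $\log|\alpha^{r}-1|=O(\log r)$; combined with the Weyl equidistribution of $\{r\theta\}$ and the Koksma inequality this gives
$$\operatorname{limrad}_{z\to p}(1-|z|)E_{\alpha}(z)=\lim_{S\to\infty}\frac{1}{S}\sum_{s=1}^{S}\log|\alpha^{s}-1|\cdot p^{s}.$$
The hypothesis that $p$ lies in the multiplicative span of the diophantine roots supplies a relation $p^{N}=\eta\prod_{j}\alpha_{j}^{m_{j}}$ with $N\in\mathbf{Z}_{\geq 1}$, $m_{j}\in\mathbf{Z}$ and $\eta\in\mu_{N}$. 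Splitting the Cesàro sum over arithmetic progressions modulo $\mathrm{ord}(\eta)$ and using the joint Weyl equidistribution of $(s\theta_{1},\ldots,s\theta_{t})$ on the subtorus of $\mathbf{T}^{t}$ determined by the $\mathbf{Z}$-linear relations among the $\theta_{i}$ and $N\phi$ (where $p=e^{2\pi i\phi}$), each contribution reduces to an integral of the shape
$$\int \log\bigl|2\sin\pi(\text{linear form in }y)\bigr|\cdot e^{2\pi i(\text{linear form in }y)}\,dy$$
over this subtorus. By Fourier orthogonality, combined with the expansion $\log|2\sin\pi x|=-\sum_{m\geq 1}\cos(2\pi mx)/m$, each such integral evaluates to a finite $\mathbf{Q}(\mu_{\infty})$-linear combination of values $\log|1-\zeta|$ with $\zeta$ a root of unity of order dividing some $M$ depending on $p$.

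Finally, I would pass to the target field via the Dirichlet class-number formula. Inverting, for each modulus $M$,
$$L(1,\chi)=-\frac{1}{M}\sum_{a=1}^{M-1}\chi(a)\log|1-e^{2\pi ia/M}|\qquad(\chi\bmod M\text{ even, non-principal})$$
by character orthogonality on $(\mathbf{Z}/M)^{\times}$ expresses each $\log|1-\zeta_{M}^{a}|$ as a $\mathbf{Q}(\mu_{M})$-linear combination of the $L(1,\chi)$'s, while the complementary odd-character contribution supplies rational multiples of $\pi$ (via $\arg(1-\zeta_{M}^{a})$). Hence the radial limit lies in $\mathbf{Q}(\mu_{\infty},\pi,\{L(1,\chi)\}_{\chi})$, with $\chi$ ranging over a finite set of non-principal Dirichlet characters of moduli dividing $M$, as claimed. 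The main obstacle is the second step: identifying exactly which Fourier modes survive on the relevant subtorus -- so that the surviving contributions assemble into log-values at \emph{roots of unity} rather than at the diophantine $\alpha_{i}$ themselves -- requires careful bookkeeping of the multiplicative relations encoded by the span hypothesis, and is the conceptual heart of the argument.
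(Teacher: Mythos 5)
Your architecture is essentially the paper's: decompose $E$ by Fox's formula into the per-root series $R_{\beta_i}$ plus the leading-coefficient term, discard the roots off the unit circle, turn the radial limit at $p$ into a Ces\`aro average of $\log|1-\beta^{n}|\cdot p^{n}$ by rotating the series and applying Frobenius' lemma, evaluate that average as an integral by equidistribution, and finally land in $\mathbf{Q}(\mu_{\infty},\pi,\{L(1,\chi)\}_{\chi})$ via digamma/Hurwitz-zeta identities and character orthogonality --- the last conversion is exactly what Lemma \ref{Lemma_LerchTranscendent}, Prop.\ \ref{prop_W_m_FractionalVals} and Prop.\ \ref{Prop_RxAtRootOfUnity_ViaLValuesAtOne} carry out, and your ``bookkeeping of surviving Fourier modes'' is the content of \S\ref{sect_Orthogonality}. (One small slip: ``non-diophantine'' does not mean $|\alpha_i|\neq 1$; Alexander roots in $\mu_{\infty}$ also lie on the unit circle and their $R_{\beta}$ do have unit-circle poles --- one must note these poles sit at roots of unity, away from the relevant $p$, so their radial contribution vanishes.)

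The genuine gap is the equidistribution step. The weight $g(t)=\log\left\vert 1-e^{2\pi it}\right\vert$ is unbounded at $t\in\{0,1\}$, hence neither Riemann-integrable nor of bounded variation, so neither plain Weyl equidistribution nor the Koksma(--Hlawka) inequality applies as you invoke them; the paper even records a No-Go theorem showing that for a non-Riemann-integrable $f$ the Weyl conclusion can fail along some uniformly distributed sequence, so a qualitative appeal cannot suffice. This is precisely why the paper imports the Baxa--Schoi\ss engeier theorem for functions with controlled singularities (Theorem \ref{Thm_BaxaSchoissengeierEquidist}), verifies its hypothesis $f(\{n\theta\})/n\rightarrow 0$ via Gelfond's bound (your $O(\log r)$ estimate plays exactly this role), and --- crucially for points $p$ whose dependency relation involves several multiplicatively independent diophantine roots, where the average runs over $(\{n\theta_{1}\},\{n\theta_{2}\})$ --- proves the ad hoc two-variable extension Theorem \ref{thm_bs_multi}; no off-the-shelf discrepancy argument covers this singular, multi-dimensional situation. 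Note also that Frobenius' lemma presupposes the \emph{existence} of the Ces\`aro limit, which is the same unproved point, not a consequence of it. Your sketch could be repaired along these lines (or by a quantitative discrepancy argument with a dyadic decomposition near the singularity, again powered by the Gelfond/Baker lower bound), but as written the passage from the Ces\`aro sum to the subtorus integral is unjustified, and that is where the real work of the paper lies; the subsequent evaluation of the fractional-frequency integrals through $S_{m}(\pm1)$, the Hurwitz zeta function and Gauss' digamma theorem then does produce values in the asserted field, as in Theorem \ref{thm_ortho}.
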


In \S \ref{sect_Orthogonality} we provide (complicated) formulae which allow
us the explicit evaluation of these limits. I have no philosophical
explanation why special $L$-values show up in this context. It is mysterious.
See Theorem \ref{thm_A_for_knots_NatBdryCase}.

\subsection{Technical results of independent interest}

In order to prove our main theorems, we need to establish various results
which might be interesting in their own right -- and a priori have little to
do with torsion homology growth.

The principal result in this direction is an evaluation of certain time
averages of ergodic nature:

\begin{theoremAnn}
Suppose $\theta$ is a real number such that either

\begin{itemize}
\item $e^{2\pi i\theta}$ is an algebraic number, or

\item $\theta$ is badly approximable.
\end{itemize}

Then the following holds:

\begin{enumerate}
\item If $\dim_{\mathbf{Q}}\left\langle 1,\theta\right\rangle =2$: For all
$m\in\mathbf{Z}$, we have%
\[
\underset{N\rightarrow\infty}{\lim}\frac{1}{N}\sum_{n=1}^{N}\log\left\vert
1-e^{2\pi in\theta}\right\vert \cdot e^{2\pi imn\theta}=-\frac{1}{2\left\vert
m\right\vert }\delta_{m\neq0}\text{.}%
\]
If $m\in\mathbf{Q}\setminus\mathbf{Z}$, we get a value%
\[
C_{m}\in\mathbf{Q}(\mu_{\infty},\pi,\{L(1,\chi)\}_{\chi})\text{,}%
\]
where $\chi$ ranges over a set of non-principal Dirichlet characters modulo
$2v$ for $v\geq1$ the denominator of $m$ in lowest terms. The values $C_{m}$
only depend on $m$, and are independent of $\theta$.

\item If $\alpha$ is a real number and $\dim_{\mathbf{Q}}\left\langle
1,\theta,\alpha\right\rangle =3$, then for all $m\in\mathbf{Z}$,%
\[
\underset{N\rightarrow\infty}{\lim}\frac{1}{N}\sum_{n=1}^{N}\log\left\vert
1-e^{2\pi in\theta}\right\vert \cdot e^{2\pi imn\alpha}=0\text{.}%
\]

\end{enumerate}
\end{theoremAnn}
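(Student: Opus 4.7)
The plan is to start from the classical Fourier identity
\[
\log|1-e^{2\pi i t}| = -\sum_{k\geq 1}\frac{\cos(2\pi k t)}{k},
\]
substitute $t = n\theta$, and formally swap the summations over $k$ and $n$. For Part~(1) this converts the time average into
\[
-\frac{1}{2}\sum_{k\geq 1}\frac{1}{k}\Bigl[W_N\bigl((k+m)\theta\bigr) + W_N\bigl((m-k)\theta\bigr)\Bigr],
\]
where $W_N(\gamma) := \frac{1}{N}\sum_{n=1}^N e^{2\pi i \gamma n}$ satisfies the classical bound $|W_N(\gamma)|\leq\frac{1}{2N\|\gamma\|}$ and tends to $\mathbf{1}_{\gamma\in\mathbf{Z}}$ as $N\to\infty$. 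Justifying the swap requires a uniform lower bound on $\|k\theta\|$: in the badly approximable case this is the defining property $\|k\theta\|\geq c/k$, while when $e^{2\pi i\theta}$ is algebraic one obtains a polynomial bound $\|k\theta\|\geq c_\varepsilon k^{-A}$ from Baker's theorem on linear forms in logarithms. Either bound lets one truncate the outer sum at a slowly growing $K = K(N)$ and control the tail by Abel summation against the harmonic weights $1/k$.

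For $m\in\mathbf{Z}$ the only surviving resonance is $(k\pm m)\theta\in\mathbf{Z}$, and since $\theta$ is irrational this forces $k=\mp m$: for $m\neq 0$ one gets the unique contribution $-\tfrac{1}{2|m|}$, and for $m=0$ nothing survives. For $m = a/v \in \mathbf{Q}\setminus\mathbf{Z}$ with $\gcd(a,v)=1$ and $v\geq 2$, I would use the factorisation $1-X^v = \prod_d (1-\zeta_v^d X)$ applied to $X = e^{2\pi i n\theta/v}$ to decompose
\[
\log|1-e^{2\pi i n\theta}| = \sum_{d=0}^{v-1}\log\bigl|1-\zeta_v^d\,e^{2\pi i n\theta/v}\bigr|,
\]
so that the original average splits into $v$ twisted Birkhoff averages for the rotation by $\theta/v$ (which inherits the Diophantine property from $\theta$), each now carrying an integer-frequency twist $e^{2\pi i a n\theta/v}$. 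The Fourier argument has a surviving resonance at the integer $k=-a$ in each summand, and the combined contributions together with boundary corrections from the Weyl-sum tails assemble into character sums of the shape
\[
L(1,\chi) = -\frac{1}{f}\sum_{b \bmod f}\chi(b)\log|1-e^{2\pi i b/f}|
\]
for non-principal Dirichlet $\chi$ of conductor dividing $2v$. This lands the limit in $\mathbf{Q}(\mu_\infty,\pi,\{L(1,\chi)\}_\chi)$, with the $\theta$-dependence cancelling automatically out of the Fourier bookkeeping, which explains why $C_m$ depends only on $m$.

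For Part~(2), the same swap produces $W_N(k\theta + m\alpha)$. The hypothesis that $\{1,\theta,\alpha\}$ is $\mathbf{Q}$-linearly independent rules out $k\theta + m\alpha\in\mathbf{Z}$ for every $k\geq 1$ and every $m\in\mathbf{Z}$, so no resonance survives and the limit is zero; the Diophantine control on $\theta$ alone (up to an $\alpha$-dependent additive shift uniform in $k$) justifies the interchange. The main obstacle is the $m\in\mathbf{Q}\setminus\mathbf{Z}$ part of Part~(1): naively every Weyl sum tends to zero, so the $\theta$-independent value $C_m$ must be extracted from the cumulative near-resonance contributions of the frequencies $(kv\pm a)\theta/v$, and correctly recognising the resulting sums against logarithms at roots of unity as $L(1,\chi)$-values---while verifying that the Diophantine truncation survives the passage to residues modulo $v$---will be the delicate technical heart of the argument.
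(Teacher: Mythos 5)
Your route is genuinely different from the paper's. The paper proves both parts by applying Baxa--Schoi\ss engeier-type equidistribution (Theorem \ref{Thm_BaxaSchoissengeierEquidist}, plus the ad-hoc two-variable extension Theorem \ref{thm_bs_multi} for part (2)) directly to the unbounded test function, reducing the time average to the integral $W_m$, which is then evaluated by contour integration and, for fractional $m$, by Hurwitz zeta/digamma identities. You instead expand $\log|1-e^{2\pi i t}|$ in its Fourier series and run a resonance analysis with Weyl sums, using lower bounds on $\Vert k\theta\Vert$ (bad approximability, or Gelfond/Baker in the algebraic case) to justify the interchange. For integer $m$ this is a workable and arguably more elementary scheme; in part (2) it even dispenses with multidimensional equidistribution, since the $\alpha$-twist is unimodular and dies by non-resonance (each $\Vert m\alpha\pm k\theta\Vert$ is a fixed positive number, and the near-resonant $k$ are spaced according to the homogeneous quantity, which depends on $\theta$ alone). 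However, the error control as you sketch it only covers the badly approximable case: with only $\Vert k\theta\Vert\gg k^{-B}$ the partial quotients may be unbounded, and taking $N=q_k$ with $q_{k+1}\approx q_k^{B}$ one has $\sum_{n\le N}\Vert n\theta\Vert^{-1}\approx N^{B}$, so the Abel-summation tail term $\frac{1}{NK}\sum_{n\le N}\Vert n\theta\Vert^{-1}$ is of order $N^{B-1}/K$ and blows up for slowly growing $K$. One must in addition cap the Fourier tail by $O\bigl(\log\frac{1}{\Vert n\theta\Vert}+\log K\bigr)$ for the few $n$ with $\Vert n\theta\Vert\le 1/K$, whose number is controlled by the spacing that Gelfond forces; this is exactly where the paper's hypothesis is consumed (via the condition $\lim_n f(\{n\theta\})/n=0$). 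The gap is fixable, but it is real as written.

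The fractional case of part (1) is the piece you leave open, and there your setup and the paper part ways in substance. Your factorisation is the correct normalisation: for $m=a/v$ the $n$-th term is exactly $g(\{n\theta/v\})$ with $g(t)=\log|1-e^{2\pi i v t}|\,e^{2\pi i a t}$ (note $e^{2\pi i m n\theta}$ is a function of $\{n\theta/v\}$, not of $\{n\theta\}$). But carrying your resonance analysis through, the $d$-th factor $\log|1-\zeta_v^{d}e^{2\pi i n\theta/v}|$ contributes its unique resonance $-\frac{1}{2|a|}\zeta_v^{-da}$ at $k=|a|$, and summing over $d=0,\dots,v-1$ these cancel because $v\nmid a$; equivalently, the $(-a)$-th Fourier coefficient of $t\mapsto\log|1-e^{2\pi i v t}|$ vanishes, its spectrum being contained in $v\mathbf{Z}$. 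So within your framework no $L(1,\chi)$-values can be produced by ``cumulative near-resonances'' or boundary corrections: done correctly, your method yields the limit $0$ for fractional $m$, not the paper's value $W_m$ of Prop.\ \ref{prop_W_m_FractionalVals}. The paper reaches $W_m$ by applying the one-variable equidistribution theorem to $f(t)=\log|1-e^{2\pi i t}|e^{2\pi i m t}$ after rewriting $e^{2\pi i m n\theta}$ as $e^{2\pi i m\{n\theta\}}$, a substitution that is legitimate only for integer $m$. As it stands, then, your proposal neither proves the fractional statement nor could it do so along the route you indicate; you should either confront this discrepancy with the paper directly or drop the guessed $L$-value mechanism.
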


This will be Theorem \ref{thm_ortho}. We will use this theorem in order to
understand the behaviour of the function $E$ for knots whose Alexander
polynomial has a diophantine root.

The proof is based on a (very strong)\ version of Weyl Equidistribution due to
Baxa and Schoi\ss engeier \cite{MR1914805}. Their result is only available in
dimension one, but we also need a two-variable version. For our purposes, a
rather minimalistic extension of their proof is sufficient. It is just about
strong enough to treat the computation which we need. This formulation might
be of independent interest:

\begin{theoremAnn}
\emph{(Baxa--Schoi\ss engeier-type Equidistribution)} Suppose $1,\theta
_{1},\ldots,\theta_{d}$ are $\mathbf{Q}$-linearly independent real numbers.
Suppose $F\subseteq\lbrack0,1]\cap\mathbf{Q}$ is finite. Suppose
$f:[0,1]^{d}\rightarrow\mathbf{R}$ is a function in Class $\operatorname*{BSU}%
^{d}(F)$ which admits a singular weight $g$ (see Definition
\ref{def_class_BSU} in the main body of the text) such that
\[
\underset{n\rightarrow\infty}{\lim}\frac{g(\{n\theta_{1}\})}{n}=0\text{.}%
\]
Then%
\[
\underset{N\rightarrow\infty}{\lim}\frac{1}{N}\sum_{n=1}^{N}f(\{n\theta
_{1}\},\ldots,\{n\theta_{d}\})=\int_{[0,1]^{d}}f(\underline{s})\,\mathrm{d}%
\underline{s}\text{.}%
\]

\end{theoremAnn}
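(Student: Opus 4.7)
The plan is to reduce to the classical multidimensional Weyl theorem (for Riemann integrable functions) by a truncation argument, with the singular part controlled through the weight $g$ and the hypothesis $g(\{n\theta_{1}\})/n\to 0$. The structural observation driving everything is that the singularities of a function $f\in\operatorname*{BSU}^{d}(F)$ are confined to the finite union of hyperplanes $\{s_{1}=x_{0}\}$ with $x_{0}\in F$; in particular they depend only on the first coordinate, so all delicate analysis can be pushed into a one-dimensional problem, while the remaining coordinates $\theta_{2},\ldots,\theta_{d}$ contribute only via their joint equidistribution against bounded test functions.

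First, I would split $f=f_{M}+r_{M}$, where $f_{M}$ is a truncation of $f$ at height $M$: for instance $f_{M}(\underline{s}):=f(\underline{s})$ wherever $g(s_{1})\leq M$, extended by clipping to a bounded Riemann integrable function elsewhere. From the axioms of $\operatorname*{BSU}^{d}(F)$ one checks that $f_{M}$ is Riemann integrable and that $\int f_{M}\to\int f$ as $M\to\infty$ (dominated convergence with $g$ as majorant). Since $1,\theta_{1},\ldots,\theta_{d}$ are $\mathbf{Q}$-linearly independent, the standard Weyl theorem in dimension $d$ gives, for each fixed $M$,
\[
\lim_{N\to\infty}\frac{1}{N}\sum_{n=1}^{N}f_{M}(\{n\theta_{1}\},\ldots,\{n\theta_{d}\})=\int_{[0,1]^{d}}f_{M}(\underline{s})\,\mathrm{d}\underline{s}.
\]

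Second, I would control the singular remainder $r_{M}$. By construction it is supported in $\{g(s_{1})>M\}\times[0,1]^{d-1}$ and satisfies a pointwise bound of the form $|r_{M}(\underline{s})|\leq g(s_{1})\,\mathbf{1}_{g(s_{1})>M}$, whence
\[
\Bigl|\frac{1}{N}\sum_{n=1}^{N}r_{M}(\{n\theta_{1}\},\ldots,\{n\theta_{d}\})\Bigr|\leq\frac{1}{N}\sum_{n=1}^{N}g(\{n\theta_{1}\})\,\mathbf{1}_{g(\{n\theta_{1}\})>M}.
\]
This is now a purely one-dimensional expression, and is precisely the situation Baxa and Schoi\ss engeier treat in \cite{MR1914805}: using a three-distance-theorem partition of the orbit $\{n\theta_{1}\}$ together with decay bounds on the continued fraction convergents of $\theta_{1}$, they show that the individual control $g(\{n\theta_{1}\})/n\to 0$ is already strong enough to make the Ces\`{a}ro average of $g(\{n\theta_{1}\})$ converge to $\int_{0}^{1}g$, and therefore
\[
\limsup_{N\to\infty}\frac{1}{N}\sum_{n=1}^{N}g(\{n\theta_{1}\})\,\mathbf{1}_{g(\{n\theta_{1}\})>M}\;\xrightarrow[M\to\infty]{}\;0.
\]
Combining this with the first step and letting $M\to\infty$ after $N\to\infty$ proves the theorem.

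The main obstacle is the second step: upgrading the pointwise rate $g(\{n\theta_{1}\})=o(n)$ to a Ces\`{a}ro tail estimate that is uniformly small on the "bad set" $\{n:g(\{n\theta_{1}\})>M\}$. Classical Weyl is too weak here, because even though the Lebesgue measure of the bad set tends to zero as $M\to\infty$, one must still rule out that the orbit visits it with anomalous frequency or anomalously close to $F$. The one-dimensional Baxa--Schoi\ss engeier machinery is designed for exactly this. The genuine novelty for the statement above is organizational rather than analytic: in dimension $d$ the singular locus is a product $F\times[0,1]^{d-1}$, so the one-dimensional result lifts essentially without modification, with $\theta_{2},\ldots,\theta_{d}$ entering only at the stage where the bounded truncation $f_{M}$ is averaged by standard multidimensional Weyl equidistribution.
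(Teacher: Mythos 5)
Your proposal is correct, but it follows a genuinely different route from the paper's. You truncate in the \emph{range}: write $f=g\cdot h$, clip $g$ at height $M$ to get a bounded, almost-everywhere continuous (hence Riemann-integrable) piece that classical $d$-dimensional Weyl equidistribution handles, and reduce the tail to the one-dimensional average of $|g|\mathbf{1}_{|g|>M}$ along $\{n\theta_{1}\}$, which you dispose of by quoting the one-dimensional Baxa--Schoi\ss engeier theorem for $g$ as a black box. The paper instead truncates in the \emph{domain}: for each singular point $\beta\in F$ it multiplies $f$ by the characteristic function of $[\beta-\varepsilon,1]\times[0,1]^{d-1}$, bounds the singular piece by $h_{\max}\cdot\frac{1}{N}\sum_{n\le N}g_{\varepsilon}(\{n\theta_{1}\})$, and re-runs the Baxa--Schoi\ss engeier Main Lemma (the continued-fraction case distinction) on $g_{\varepsilon}$ directly, finishing with an induction on the cardinality of $F$; so the paper reproves the $d=1$ mechanism rather than citing the $d=1$ theorem. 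Your reduction is shorter and makes the multidimensional statement a formal corollary of the $d=1$ theorem plus ordinary Weyl; the paper's version stays closer to the original argument and needs no extra limit interchange at the truncation stage. One step of yours needs a line of justification rather than ``and therefore'': from $\frac{1}{N}\sum g(\{n\theta_{1}\})\to\int_{0}^{1}g$ alone the tail estimate $\limsup_{N}\frac{1}{N}\sum|g|\mathbf{1}_{|g|>M}(\{n\theta_{1}\})\to 0$ as $M\to\infty$ does not follow; you also need that the clipped averages converge to $\int_{0}^{1}\min(|g|,M)$ (classical Weyl for the clipped function, which is bounded and a.e.\ continuous, so Riemann-integrable) together with $\int_{0}^{1}\min(|g|,M)\to\int_{0}^{1}|g|$ by monotone convergence; for instance the pointwise inequality $|g|\mathbf{1}_{|g|>M}\le 2\left(|g|-\min(|g|,M/2)\right)$ closes the gap without worrying about Jordan measurability of the level sets $\{|g|>M\}$. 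Finally, your bound on the remainder should carry the factor $h_{\max}$ and absolute values on $g$ (and one should note $|g|\in\operatorname*{BS}(F)$, shrinking the monotonicity intervals if necessary); with these touch-ups the argument is complete.
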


See Theorem \ref{thm_bs_multi}. The proof is a mild variation of the method of
\cite{MR1914805}. For $d=1$, we get nothing new.

\begin{acknowledgement}
I heartily thank G.\ W\"{u}stholz for his suggestions on how to use
equidistribution techniques, based on an earlier version of this manuscript in
the spring of 2016. I thank William Stein and the SAGE team.
\end{acknowledgement}

\section{\label{sect_HeuristicMotivation}Heuristics and Motivation}

As we had explained in the introduction, the Silver--Williams asymptotic
(which in the case of knots was developed in earlier work of
Gonz\'{a}lez-Acu\~{n}a and Short \cite{MR1142552} as well as Riley
\cite{MR1041145})%
\[
\underset{r\rightarrow\infty}{\lim}\frac{\log\left\vert H_{1}(X_{r}%
,\mathbf{Z})_{\operatorname*{tor}}\right\vert }{r}=\log\mathcal{M}(\Delta_{K})
\]
suggests that the function%
\[
E(z)=\sum_{r\geq1}\log\left\vert H_{1}(X_{r},\mathbf{Z})_{\operatorname*{tor}%
}\right\vert \cdot z^{r}%
\]
might have a meromorphic continuation to some disc of radius $>1$ with,
likely, a pole of order $2$ at $z=1$. As we shall prove something stronger
later, let us only sketch (under simplifying assumptions) why this speculation
is natural. In rigorous form, we observe the following:

\begin{lemma}
Suppose a power series%
\[
f(z):=\sum_{r\geq1}\log(a_{r})\cdot z^{r}%
\]
converges and admits a meromorphic continuation to some disc of radius $>1$
with a single pole at $z=1$ of order $N+1$ and Laurent expansion%
\[
f(z-1)=\frac{\log C}{(z-1)^{N+1}}+\text{higher order terms}%
\]
at $z=1$. If we additionally know that the limit%
\[
L:=\underset{r\longrightarrow\infty}{\lim}\frac{\log(a_{r})}{r}\qquad\text{(a
\textquotedblleft Silver--Williams asymptotic\textquotedblright)}%
\]
exists, we must have $N=0$ or $1$. If $N=0$, the limit is $L=0$, and if $N=1$,
it is $\log C$.
\end{lemma}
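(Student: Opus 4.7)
The plan is to read off the asymptotic behaviour of $\log(a_{r})$ directly from the principal part of $f$ at $z=1$, and then compare it with the hypothesised limit $L$. First I would split $f = P + g$, where
\[
P(z) = \frac{\log C}{(z-1)^{N+1}} + \sum_{k=1}^{N}\frac{c_{k}}{(z-1)^{k}}
\]
is the principal part of $f$ at $z=1$, and $g$ is the holomorphic remainder, well-defined in a disc of radius $R>1$ on which $f$ is meromorphic with no other poles. Cauchy estimates give that the Taylor coefficients of $g$ at $z=0$ decay geometrically, so their contribution to $\log(a_{r})$ is $o(1)$ as $r\to\infty$.

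Next I expand each summand of $P$ around $z=0$ via
\[
\frac{1}{(z-1)^{k}} = (-1)^{k}\sum_{r\geq 0}\binom{r+k-1}{k-1}\, z^{r}.
\]
The $r$-th Taylor coefficient of this grows like $(-1)^{k} r^{k-1}/(k-1)!$, so the dominant contribution as $r\to\infty$ comes from the pole of maximal order, giving
\[
\log(a_{r}) \;=\; (-1)^{N+1}\log C \cdot \binom{r+N}{N} \;+\; O(r^{N-1}).
\]
Note that $\log C \neq 0$, since by assumption the pole at $z=1$ has exact order $N+1$.

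Dividing by $r$ and using $\binom{r+N}{N} \sim r^{N}/N!$ yields
\[
\frac{\log(a_{r})}{r} \;=\; \frac{(-1)^{N+1}\log C}{N!}\, r^{\,N-1} \;+\; O(r^{N-2}).
\]
For the left-hand side to converge to a finite limit, one must have $N-1 \leq 0$, i.e.\ $N \in \{0,1\}$. If $N=0$, the asymptotic reads $\log(a_{r}) = -\log C + o(1)$, which is bounded, so $L = 0$. If $N=1$, it reads $\log(a_{r})/r \to \log C$, so $L = \log C$. The only conceivable obstacle is sign bookkeeping --- making sure the factor $(-1)^{N+1}$ is correctly tracked through the binomial expansion and that the lower-order principal part terms really do sit in the error bound --- but these are routine once $P$ has been separated from the holomorphic remainder $g$.
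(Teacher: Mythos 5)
Your proposal is correct and is essentially the paper's own argument in slightly different packaging: the paper extracts $\log(a_r)$ via the Residue Theorem on a circle $|\zeta|=R$ with $R>1$ (the pole at $z=1$ contributing a degree-$\leq N$ polynomial in $r$ with leading coefficient a nonzero multiple of $\log C$, the remaining contour integral bounded by $\mathrm{const}\cdot R^{-r}$), whereas you subtract the principal part and use the binomial expansion plus Cauchy estimates for the holomorphic remainder --- the same computation and the same error bound. Your bookkeeping (exactness of the pole order giving $\log C\neq 0$, the sign $(-1)^{N+1}$, and the conclusions $L=0$ for $N=0$ and $L=\log C$ for $N=1$) matches the paper's conclusion.
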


\begin{proof}
Suppose $R>1$ is within the disc of meromorphic continuation. For all $r\geq
0$, the Residue Theorem implies that%
\begin{align*}
\frac{1}{2\pi i}\int_{\left\vert \zeta\right\vert =R}\frac{f(\zeta)}%
{\zeta^{r+1}}\mathrm{d}\zeta & =\log(a_{r})+\frac{1}{2\pi i}\int_{\left\vert
\zeta\right\vert =R}\frac{\log C}{(\zeta-1)^{N+1}\zeta^{r+1}}\mathrm{d}\zeta\\
& =\log(a_{r})+(-1)^{N}\log(C)\cdot r(r-1)\cdots(r-N+1)\\
& \qquad+\text{(degree}<N\text{ polynomial in }r\text{).}%
\end{align*}
Thus, $\log(a_{r})$ is a degree $\leq N$ polynomial in the variable $r$ up to
an error term which can be bounded by%
\[
\left\vert \int_{\left\vert \zeta\right\vert =R}\frac{E(\zeta)}{\zeta^{r+1}%
}\mathrm{d}\zeta\right\vert \leq2\pi R\cdot\int\frac{\left\vert E(\zeta
)\right\vert }{R^{r+1}}\mathrm{d}\zeta=\frac{2\pi}{R^{r}}\cdot\text{const.}%
\]
The constant depends on $R$, but not on $r$. Thus, since $R>1$, so that
$\frac{2\pi}{R^{r}}$ converges to zero as $r\rightarrow+\infty$, we obtain%
\[
\underset{r\longrightarrow\infty}{\lim}\frac{\log(a_{r})}{r}=-(-1)^{N}%
\log(C)\underset{r\longrightarrow\infty}{\lim}\frac{r(r-1)\cdots(r-N+1)}%
{r}\text{.}%
\]
Hence, if the limit on the left-hand side exists at all, we must have $N=0$ or
$1$. If $N=0$, the limit is zero, and if $N=1$, it is $\log C$.
\end{proof}

In summary: The Silver--Williams asymptotic hints at the fact that
\textit{some} analytic continuation might exist. Theorem \ref{thm_A_for_knots}
will then settle this (for a generic knot).

\section{\label{sect_BasicPowerSeries}Preparations}

Let $\mu_{r}\subset\mathbf{C}$ denote the set of all $r$-th roots of unity,
$\mu_{\infty}$ all roots of unity.

\begin{definition}
\label{Def_FunctionR}Let $x\in\mathbf{C}$ be given. Define, at first formally,
a complex power series%
\[
R_{x}(z):=\left.  \sum\limits_{r\geq1}\right.  ^{\prime}\log\left\vert
1-x^{r}\right\vert \cdot z^{r}\text{,}%
\]
where the notation $\left.  \sum\right.  ^{\prime}$ means: We omit the $r$-th
summand if $x^{r}=1$.
\end{definition}

Before we can continue, we need the following deep result of Gelfond:

\begin{proposition}
[Gelfond]\label{Prop_Gelfond}Let $\alpha$ be an algebraic number with
$\left\vert \alpha\right\vert =1$ and which is not a root of unity. Then there
exist real numbers $A,B>0$ such that $\left\vert \alpha^{n}-1\right\vert
>An^{-B}$ holds for all $n\geq1$.
\end{proposition}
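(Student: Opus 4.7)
The plan is to reduce Gelfond's inequality to a lower bound on a linear form in two logarithms of algebraic numbers, and then invoke Gelfond's classical theorem on such forms as a black box.

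First, since $|\alpha|=1$ and $\alpha$ is not a root of unity, write $\alpha=e^{2\pi i\theta}$ for some irrational $\theta\in\mathbf{R}$. Then
\[
|\alpha^{n}-1|=|e^{2\pi in\theta}-1|=2|\sin(\pi n\theta)|\geq 4\|n\theta\|,
\]
where $\|x\|$ denotes the distance from $x$ to the nearest integer (via $|\sin(\pi x)|\geq 2\|x\|$ for $x\in\mathbf{R}$). The task therefore reduces to proving a bound of the shape $\|n\theta\|\geq A'n^{-B}$ for all $n\geq 1$.

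Next, for each $n$ pick the integer $k$ closest to $n\theta$, so that $|n\theta-k|=\|n\theta\|$. Fix the branch of the logarithm with $\log\alpha=2\pi i\theta$, and use $2\pi i=2\log(-1)$ to write
\[
2\pi\|n\theta\|=|n\log\alpha-2k\log(-1)|.
\]
The right-hand side is a $\mathbf{Z}$-linear combination of the logarithms of the algebraic numbers $\alpha$ and $-1$, and it is non-zero precisely because $\theta\notin\mathbf{Q}$. Gelfond's theorem on two-logarithm forms then delivers constants $C,B'>0$, depending only on $\alpha$, for which
\[
|n\log\alpha-2k\log(-1)|\geq C\cdot\max(n,|k|)^{-B'}.
\]
In the only interesting regime $\|n\theta\|\leq 1$, the nearest $k$ satisfies $|k|\leq (|\theta|+1)n$, so $\max(n,|k|)=O(n)$. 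This yields $\|n\theta\|\geq A''n^{-B'}$, and, after combining with the reduction above, the claimed lower bound with $B:=B'$. Enlarging constants if necessary absorbs the trivial regime where $|\alpha^n-1|$ is already bounded away from $0$.

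The main obstacle is of course the invocation of Gelfond's transcendence-theoretic two-logarithm lower bound itself; everything around it is routine. It is worth emphasising why such a deep input is genuinely needed: by the Gelfond--Schneider theorem the number $\theta=(2\pi i)^{-1}\log\alpha$ is typically transcendental, so the classical rational-approximation theorems for algebraic irrationals (Thue--Siegel--Roth) do not apply to $\theta$ directly, and only a transcendence-theoretic bound on linear forms in logarithms produces the required polynomial decay rate in $n$.
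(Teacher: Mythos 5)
The paper offers no proof of this proposition at all: it is imported as a known deep result, with a pointer to Baker--W\"ustholz \cite{MR1234835} for explicit and stronger estimates. Your proposal supplies the standard derivation, and the reduction itself is correct: $|\alpha^{n}-1|=2|\sin(\pi n\theta)|\geq 4\Vert n\theta\Vert$, the nearest-integer step rewriting $2\pi\Vert n\theta\Vert$ as the linear form $|n\log\alpha-2k\log(-1)|$ in logarithms of the algebraic numbers $\alpha$ and $-1$, nonvanishing because $\theta\notin\mathbf{Q}$ (equivalently $\alpha\notin\mu_{\infty}$), the observation $|k|=O(n)$, and absorption of the finitely many initial $n$ into the constants. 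Your closing remark is also apt: Liouville-type height bounds only give exponentially small lower bounds here, so some linear-forms-in-logarithms input is genuinely unavoidable.

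The one caveat concerns the black box you invoke. Gelfond's own classical estimates for linear forms in two logarithms have the shape $|\Lambda|>\exp\bigl(-(\log B)^{2+\varepsilon}\bigr)$ with $B=\max(n,|k|)$, which is \emph{not} polynomial in $B$; fed through your reduction it yields only $|\alpha^{n}-1|\gg\exp\bigl(-(\log n)^{2+\varepsilon}\bigr)$, strictly weaker than the stated bound $An^{-B}$ (though that weaker bound would in fact suffice for every use the paper makes of the proposition, e.g.\ Lemma \ref{Lemma_CriticalUpperBound}(3) with $\log r$ replaced by $(\log r)^{2+\varepsilon}$). The dependence you actually need -- exponent linear in $\log B$ -- comes from the later refinements of Baker's theory (Fel'dman; Baker--W\"ustholz \cite{MR1234835}, precisely the reference the paper gives). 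So the argument is structurally sound, but you should cite a linear-in-$\log B$ lower bound for the two-logarithm form rather than ``Gelfond's classical theorem'', or else weaken the claimed conclusion accordingly; as written, the quoted strength of the black box does not match what Gelfond proved.
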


This version is sufficiently strong for our purposes. Nevertheless, much
stronger results are known. See Baker--W\"{u}stholz \cite{MR1234835} for a
concrete estimate (also involving the Mahler measure), or one of the several
papers that have appeared since and improve on these bounds (e.g.
\cite{MR2457264}).

\begin{lemma}
\label{Lemma_CriticalUpperBound}Suppose $x\in\mathbf{C}$ is a complex
number.{}

\begin{enumerate}
\item If $\left\vert x\right\vert <1$, then $\left\vert \log\left\vert
1-x^{r}\right\vert \right\vert \leq2\left\vert x\right\vert ^{r}$ for all
sufficiently large real numbers $r$.

\item If $\left\vert x\right\vert >1$, then $\left\vert \log\left\vert
1-x^{r}\right\vert \right\vert <1+r\log\left\vert x\right\vert $ for all
sufficiently large real numbers $r$.

\item If $\left\vert x\right\vert =1$ and $x$ is an algebraic integer, but not
a root of unity, then there exists a constant $C_{x}>0$ such that $\left\vert
\log\left\vert 1-x^{r}\right\vert \right\vert \leq C_{x}\cdot\log(r)$ for all
sufficiently large natural numbers $r$.
\end{enumerate}
\end{lemma}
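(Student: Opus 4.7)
The strategy is to treat the three regimes $|x|<1$, $|x|>1$, $|x|=1$ separately. In each case the quantity to estimate is $\log|1-w|$ with $w=x^{r}$, but the relevant asymptotic regime of $w$ is different.

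For part (1), since $|x|<1$, the point $w=x^{r}$ tends to $0$. Starting from
\[
\log|1-w|^{2} \;=\; \log(1-w)+\log(1-\overline{w}) \;=\; -\sum_{k\geq 1}\frac{w^{k}+\overline{w}^{k}}{k},
\]
valid for $|w|<1$, the triangle inequality yields $|\log|1-w||\leq -\log(1-|w|)$. A one-line monotonicity check of $t\mapsto -\log(1-t)/t$ on $(0,1)$ shows that for $|w|\leq\tfrac12$ one has $-\log(1-|w|)\leq (2\log 2)|w|<2|w|$. Picking $r$ large enough that $|x|^{r}\leq\tfrac12$ gives the stated bound.

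For part (2), reduce to part (1) via the factorisation $1-x^{r}=-x^{r}(1-x^{-r})$, which gives
\[
\log|1-x^{r}| \;=\; r\log|x| + \log|1-x^{-r}|.
\]
Since $|x^{-1}|<1$, part (1) applied to $x^{-1}$ bounds the second summand by $2|x|^{-r}$ in absolute value, and this is $<1$ once $r$ is large. As $r\log|x|>0$, the triangle inequality delivers $|\log|1-x^{r}|| \leq r\log|x|+1$.

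For part (3), the trivial estimate $|1-x^{r}|\leq 2$ yields the upper bound $\log|1-x^{r}|\leq \log 2$. The serious issue is a \emph{lower} bound, since the point $x^{r}$ may well approach $1$. This is exactly what Proposition \ref{Prop_Gelfond} supplies: constants $A,B>0$ with $|1-x^{r}|>Ar^{-B}$, giving $\log|1-x^{r}| > \log A - B\log r$. Combining the two sides,
\[
\bigl|\log|1-x^{r}|\bigr| \;\leq\; B\log r + \max\bigl(|\log A|,\,\log 2\bigr),
\]
which is dominated by $C_{x}\log r$ for any $C_{x}>B$ once $r$ is sufficiently large.

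Parts (1) and (2) are elementary real analysis; the only deep ingredient is the appeal to Gelfond's theorem in part (3). Without an algebraicity hypothesis, $x^{r}$ could accumulate at $1$ anomalously fast (Liouville-type $x$), and no polynomial-in-$r$ lower bound of the required shape would be available. Thus the entire conceptual difficulty of the lemma is packaged into the black-box invocation of Proposition \ref{Prop_Gelfond}.
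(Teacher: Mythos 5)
Your proposal is correct, and it follows exactly the route the paper intends: the paper gives no written proof, remarking only that parts (1) and (2) are a harmless exercise and that part (3) rests on Gelfond's estimate (Proposition \ref{Prop_Gelfond}), which is precisely how you argue. Your elementary bounds for (1) and (2) and the combination of the trivial upper bound $|1-x^{r}|\leq 2$ with Gelfond's lower bound in (3) are all sound.
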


The first two claims of the lemma are a harmless exercise, the third part
depends on Gelfond's result.

\begin{lemma}
\label{Lemma_ConvergeOfRx}Let $x\in\mathbf{C}^{\times}$ be a given algebraic
number and suppose $x$ is not a root of unity. Then the series $R_{x}$ has
radius of convergence $\geq1$. If $\left\vert x\right\vert <1$, it even has
radius of convergence $\geq\left\vert x\right\vert ^{-1}>1$.
\end{lemma}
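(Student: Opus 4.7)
The plan is to invoke the Cauchy--Hadamard formula for the coefficients $c_r := \log\left\vert 1-x^r\right\vert$ of $R_x$ (with $c_r := 0$ whenever $x^r=1$, in line with the $\sum'$ convention), and to split into three cases according to whether $\left\vert x\right\vert<1$, $\left\vert x\right\vert>1$, or $\left\vert x\right\vert=1$. In each case the task reduces to bounding $\limsup_{r\to\infty}\left\vert c_r\right\vert^{1/r}$ above by $1$ (or, in the strictly contractive case, by $\left\vert x\right\vert$).

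For the two cases $\left\vert x\right\vert\neq 1$, the bounds from the previous lemma do the job with essentially no further work. If $\left\vert x\right\vert<1$, Lemma \ref{Lemma_CriticalUpperBound}(1) gives $\left\vert c_r\right\vert\leq 2\left\vert x\right\vert^r$ for all large $r$, so $\limsup_r\left\vert c_r\right\vert^{1/r}\leq\left\vert x\right\vert$ and the radius of convergence is at least $\left\vert x\right\vert^{-1}>1$, which is the stronger estimate promised in the second half of the lemma. If $\left\vert x\right\vert>1$, Lemma \ref{Lemma_CriticalUpperBound}(2) yields the linear bound $\left\vert c_r\right\vert\leq 1+r\log\left\vert x\right\vert$, and since $(1+r\log\left\vert x\right\vert)^{1/r}\to 1$, we obtain $\limsup_r\left\vert c_r\right\vert^{1/r}\leq 1$, so the radius of convergence is at least $1$.

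The remaining case $\left\vert x\right\vert=1$ is where the algebraicity and the non-root-of-unity hypotheses really enter, and I expect this to be the only genuine obstacle. Gelfond's Proposition \ref{Prop_Gelfond} supplies constants $A,B>0$ with $\left\vert 1-x^r\right\vert\geq A r^{-B}$ for all $r\geq1$, and combined with the elementary upper bound $\left\vert 1-x^r\right\vert\leq 2$ this squeezes $\log\left\vert 1-x^r\right\vert$ into the interval $[\log A - B\log r,\ \log 2]$. Hence $\left\vert c_r\right\vert = O(\log r)$, which gives $\limsup_r\left\vert c_r\right\vert^{1/r}=1$ and therefore radius of convergence at least $1$.

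Two minor subtleties deserve a remark as I write this up. First, Lemma \ref{Lemma_CriticalUpperBound}(3) is stated for algebraic integers, whereas Proposition \ref{Prop_Gelfond} is valid for arbitrary algebraic numbers on the unit circle; so in the last case I would simply appeal to Gelfond directly, rather than via the lemma, to avoid an unnecessary integrality assumption. Second, the $\sum'$ convention causes no trouble here, since $x^r=1$ cannot occur under the standing assumption that $x$ is not a root of unity, and so the prime may be dropped in all three cases without changing anything.
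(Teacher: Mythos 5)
Your proof is correct and follows essentially the same route as the paper, which deduces the lemma from Lemma \ref{Lemma_CriticalUpperBound} together with $\lim_{r\to\infty}\sqrt[r]{\log r}=1$ via the root test. Your observation that in the unimodular case one should appeal directly to Proposition \ref{Prop_Gelfond} (stated for algebraic numbers) rather than to Lemma \ref{Lemma_CriticalUpperBound}(3) (stated only for algebraic integers) is a sensible refinement, but it does not change the substance of the argument.
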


\begin{proof}
This follows directly from Lemma \ref{Lemma_CriticalUpperBound} and
$\underset{r\rightarrow\infty}{\lim}\sqrt[r]{\log(r)}=1$.
\end{proof}

The case where $x$ is a root of unity is the only case where a meromorphic
continuation of $R_{x}$ to the entire complex plane is easy to achieve, so let
us handle this case right away.

Below, we will write $(\ldots)$ to denote a holomorphic term.

\begin{proposition}
\label{prop_Rx_AtRootOfUnity}Let $x\in\mu_{\infty}$ be some root of unity.
Then $R_{x}$ admits a meromorphic continuation to the entire complex plane,
given by%
\[
R_{x}(z)=\sum_{l=1}^{m-1}\log\left\vert 1-\zeta_{m}^{l}\right\vert \frac
{z^{l}}{1-z^{m}}\text{,}%
\]
where $m$ denotes the (primitive) order of $x$ and $\zeta_{m}:=x$. Near $z=1
$, $R_{x}$ has the Laurent expansion%
\begin{align*}
& =\frac{1}{m}\log\left(  \frac{1}{m}\right)  \frac{1}{z-1}\\
& -\left(  \frac{m-1}{2}\frac{1}{m}\log\left(  \frac{1}{m}\right)  +\frac
{1}{m}\sum_{l=1}^{m-1}l\cdot\log\left\vert 1-\zeta_{m}^{l}\right\vert \right)
+(z-1)\cdot(\ldots)\text{.}%
\end{align*}
In particular, if $x=1$ then $R_{x}$ is the zero function. Otherwise, $R_{x}$
has a meromorphic continuation to the entire complex plane, with poles
precisely at the finite set $\{x^{n}\mid n\in\mathbf{Z}\}$, all having order
$1$, and the pole at $z=1$ has residue $\frac{1}{m}\log\left(  \frac{1}%
{m}\right)  $.
\end{proposition}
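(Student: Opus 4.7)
The plan is to collapse the defining series into a rational function by exploiting the periodicity of $x^r$. Writing $\zeta_m := x$, the value $\log|1-x^r|$ depends only on $l := r \bmod m$, and the terms skipped in the prime-decorated sum are exactly those with $l=0$. Inside the open unit disc, where convergence is guaranteed by Lemma \ref{Lemma_ConvergeOfRx}, I would group summands by residue class and collapse the inner geometric series $\sum_{k\ge 0} z^{km+l} = z^l/(1-z^m)$, landing at
\[
R_x(z) = \sum_{l=1}^{m-1} \log|1-\zeta_m^l|\, \frac{z^l}{1-z^m}.
\]
This closed form is itself the desired meromorphic continuation to all of $\mathbf{C}$. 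The case $x=1$ is degenerate: $m=1$, the sum from $l=1$ to $0$ is empty, $R_x \equiv 0$, consistent with every summand in the original series being omitted.

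From the rational expression, the pole set is contained in the zero set of $1-z^m$, i.e.\ in the $m$-th roots of unity, which coincides with $\{x^n \mid n\in\mathbf{Z}\}$ since $x$ has order $m$; because $1-z^m$ has only simple zeros, each pole has order at most one.

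For the Laurent expansion at $z=1$, I would set $w = z - 1$ and Taylor expand. One has
\[
z^l = 1 + lw + O(w^2), \qquad 1 - z^m = -m w\Bigl(1 + \tfrac{m-1}{2}\, w + O(w^2)\Bigr),
\]
whence $z^l/(1-z^m) = -1/(mw) + (1/m)\bigl(\tfrac{m-1}{2} - l\bigr) + O(w)$. Summing against the coefficients $\log|1-\zeta_m^l|$ and invoking the classical identity $\prod_{l=1}^{m-1}(1-\zeta_m^l) = m$ — obtained by evaluating $(z^m-1)/(z-1)$ at $z=1$ — which yields $\sum_{l=1}^{m-1}\log|1-\zeta_m^l| = \log m$, assembles the stated principal and constant terms. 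In particular the residue at $z=1$ equals $-\log(m)/m = (1/m)\log(1/m)$.

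No step of this plan is really an obstacle; the argument is elementary once the idea of summing by residue class modulo $m$ is in place. The only real care needed is bookkeeping the Laurent coefficients to the correct order, where Gauss's product identity is the nontrivial input that makes the leading residue simplify cleanly.
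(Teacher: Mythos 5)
Your proposal is correct and follows essentially the same route as the paper: grouping the series by residue classes modulo $m$ to obtain the closed rational form $\sum_{l=1}^{m-1}\log|1-\zeta_m^l|\,z^l/(1-z^m)$, then expanding at $z=1$ and using $\prod_{l=1}^{m-1}(1-\zeta_m^l)=m$ to identify the residue $\tfrac{1}{m}\log(\tfrac{1}{m})$. The only difference is cosmetic — you Taylor expand in $w=z-1$ where the paper manipulates binomial sums — and your bookkeeping of the constant term matches the paper's.
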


\begin{remark}
This formulation of the proposition is best for our purposes, but there is
also a different perspective relating this to special $L$-values, see Prop.
\ref{Prop_RxAtRootOfUnity_ViaLValuesAtOne}.
\end{remark}

\begin{proof}
Suppose $x$ is a primitive $m$-th root of unity. We write $x=\zeta_{m}$. If
$m=1$, the function $R_{x}$ is zero by definition, so we may assume $m\geq2$.
Then%
\begin{align*}
R_{x}(z)  & =\left.  \sum_{r\geq1}\right.  ^{\prime}\log\left\vert 1-\zeta
_{m}^{r}\right\vert \cdot z^{r}=\sum_{n\geq0}\sum_{l=1}^{m-1}\log\left\vert
1-\zeta_{m}^{mn+l}\right\vert \cdot z^{mn+l}\\
& =\sum_{l=1}^{m-1}\log\left\vert 1-\zeta_{m}^{l}\right\vert \frac{z^{l}%
}{1-z^{m}}\text{.}%
\end{align*}
Now,%
\begin{align*}
\frac{z^{l}}{1-z^{m}}  & =\frac{\left(  \left(  z-1\right)  +1\right)  ^{l}%
}{1-\left(  \left(  z-1\right)  +1\right)  ^{m}}=\frac{\sum_{t=0}^{l}\binom
{l}{t}(z-1)^{t}}{1-\sum_{k=0}^{m}\binom{m}{k}(z-1)^{k}}\\
& =-\frac{1}{m}\frac{1}{z-1}\left(  \sum_{t=0}^{l}\binom{l}{t}(z-1)^{t}%
\right)  \left(  \sum_{r=0}^{\infty}\left(  -\frac{1}{m}\sum_{k=2}^{m}%
\binom{m}{k}(z-1)^{k-1}\right)  ^{r}\right) \\
& =-\frac{1}{m}\frac{1}{z-1}-\frac{l}{m}+\frac{m-1}{2m}+(z-1)(\ldots)\text{.}%
\end{align*}
Thus, expanding $R_{x}$ at $z=1$ yields%
\begin{align*}
& =\sum_{l=1}^{m-1}\log\left\vert 1-\zeta_{m}^{l}\right\vert \left(  -\frac
{1}{m}\frac{1}{z-1}-\frac{l}{m}+\frac{m-1}{2m}\right)  +(z-1)(\ldots)\\
& =-\frac{1}{m}\frac{1}{z-1}\sum_{l=1}^{m-1}\log\left\vert 1-\zeta_{m}%
^{l}\right\vert +\frac{m-1}{2m}\sum_{l=1}^{m-1}\log\left\vert 1-\zeta_{m}%
^{l}\right\vert \\
& -\frac{1}{m}\sum_{l=1}^{m-1}l\cdot\log\left\vert 1-\zeta_{m}^{l}\right\vert
+(z-1)\cdot(\ldots)\text{.}%
\end{align*}
Finally, $\sum_{l=1}^{m-1}\log\left\vert 1-\zeta_{m}^{l}\right\vert
=\log\left\vert \prod_{l=1}^{m-1}(1-\zeta_{m}^{l})\right\vert =\log(m)$ for
$m\geq2$, so%
\[
=\frac{1}{m}\log\left(  \frac{1}{m}\right)  \frac{1}{z-1}-\frac{m-1}{2}%
\frac{1}{m}\log\left(  \frac{1}{m}\right)  -\frac{1}{m}\sum_{l=1}^{m-1}%
l\cdot\log\left\vert 1-\zeta_{m}^{l}\right\vert +(z-1)\cdot(\ldots)\text{,}%
\]
finishing the proof.
\end{proof}

\section{Analytic continuation of $R_{x}$ for $\left\vert x\right\vert >1$}

The next `easy' case is $R_{x}$ for $\left\vert x\right\vert >1$, because it
can be reduced to the case $\left\vert x\right\vert <1$:

\begin{lemma}
\label{Lemma_RSwapXToXInverse}If $\left\vert x\right\vert >1$, then the series
$R_{x}$ converges inside the unit disc and inside this disc, we have the
identity%
\[
R_{x}(z)=R_{x^{-1}}(z)+\log\left\vert x\right\vert \cdot\frac{z}{(z-1)^{2}%
}\text{.}%
\]

\end{lemma}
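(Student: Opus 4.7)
The plan is to reduce everything to the identity $|1-x^r| = |x|^r \cdot |x^{-r}-1|$, which is nothing more than factoring out $|x^r|$. Since $|x|>1$, we have $|x^r|=|x|^r>1$ for every $r\geq 1$, so $x^r\neq 1$ and the primed summation $\sum'$ collapses to an ordinary $\sum$ both for $R_x$ and for $R_{x^{-1}}$. Taking logarithms in the identity gives
\[
\log\left\vert 1-x^{r}\right\vert = r\log\left\vert x\right\vert +\log\left\vert 1-x^{-r}\right\vert ,
\]
so formally
\[
R_{x}(z) = \log\left\vert x\right\vert \sum_{r\geq 1} r z^{r} + \sum_{r\geq 1}\log\left\vert 1-x^{-r}\right\vert z^{r}
      = \log\left\vert x\right\vert \cdot\frac{z}{(z-1)^{2}} + R_{x^{-1}}(z),
\]
using the standard geometric sum $\sum_{r\geq 1}rz^{r}=z/(1-z)^{2}=z/(z-1)^{2}$.

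To make this rigorous I first need absolute convergence on the open unit disc of each of the three series involved. For $R_{x^{-1}}$ this is immediate from Lemma \ref{Lemma_ConvergeOfRx}, since $|x^{-1}|<1$ gives radius of convergence at least $|x|>1$, in particular convergence throughout $|z|<1$. For $R_{x}$ I would appeal to part (2) of Lemma \ref{Lemma_CriticalUpperBound}, which yields $|\log|1-x^{r}||\leq 1+r\log|x|$ for $r$ large, so the series $\sum|\log|1-x^{r}|||z|^{r}$ is dominated by $\sum(1+r\log|x|)|z|^{r}$, again convergent for $|z|<1$. Finally $\sum r z^r$ manifestly converges on $|z|<1$. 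With absolute convergence of all three series on $|z|<1$, the termwise rearrangement used above is legitimate, and the claimed identity holds pointwise there.

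There is really no substantial obstacle here; the only thing to guard against is carelessness with the primed sums. It is worth remarking (but not proving in the body) that, precisely because the right-hand side already gives a meromorphic extension, Lemma \ref{Lemma_RSwapXToXInverse} is the mechanism by which the eventual global continuation of $R_x$ for $|x|>1$ will be deduced from the $|x|<1$ case treated later, with the extra simple pole of order two at $z=1$ arising from the $z/(z-1)^{2}$ correction term.
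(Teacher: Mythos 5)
Your proof is correct and follows essentially the same route as the paper: both rest on the factorization $|1-x^{r}|=|x|^{r}\,|1-x^{-r}|$ together with $\sum_{r\geq 1} r z^{r}=z/(z-1)^{2}$, the only cosmetic difference being that you expand $R_{x}$ while the paper expands $R_{x^{-1}}$, and you additionally verify convergence of $R_{x}$ directly via Lemma \ref{Lemma_CriticalUpperBound}(2) rather than deducing it from the identity. (Minor slip in your closing remark: the correction term contributes a pole of order two at $z=1$, not a \emph{simple} pole.)
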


\begin{proof}
If $\left\vert x\right\vert >1$, we have $\left\vert x^{-1}\right\vert <1$ and
so below the left-hand side converges in the unit disc, $R_{x^{-1}}(z)$ equals%
\begin{align*}
& =\sum_{r\geq1}\log\left\vert 1-x^{-r}\right\vert z^{r}=\sum_{r\geq1}%
\log\left(  \left\vert x^{-r}\right\vert \left\vert x^{r}-1\right\vert
\right)  z^{r}\\
& =\sum_{r\geq1}\log\left(  \left\vert x^{-r}\right\vert \right)  z^{r}%
+\sum_{r\geq1}\log\left\vert 1-x^{r}\right\vert z^{r}=-\log\left\vert
x\right\vert \sum_{r\geq1}rz^{r}+\sum_{r\geq1}\log\left\vert 1-x^{r}%
\right\vert z^{r}\text{,}%
\end{align*}
which is exactly the claim that we wished to prove.
\end{proof}

Next, we want to determine the first terms of the expansion of $R_{x}(z)$
around $z=1$. To this end, let $p$ be the partition function, i.e. $p(n)$ is
the number of distinct presentations of $n$ as a sum of integers $\geq1$,
irrespective of the order. Let $F$ be its generating function, i.e.%
\begin{equation}
F(z)=\sum_{n\geq0}p(n)z^{n}=\prod_{n=1}^{\infty}\frac{1}{1-z^{n}}%
\text{.}\label{lPartFunc}%
\end{equation}

\begin{lemma}
\label{Lemma_RxLocalExpansionAtZEquals1}Suppose $x\in\mathbf{C}^{\times}$ with
$\left\vert x\right\vert <1$. Then the expansion of $R_{x}(z)$ around $z=1$ is
given by%
\[
R_{x}(z)=-\log\left\vert F(x)\right\vert +\sum_{\ell=0}^{\infty}(z-1)^{\ell
+1}\left(  \sum_{r\geq1}\log\left\vert x^{r}-1\right\vert \binom{r}{\ell
+1}\right)
\]
and all the sums in the round brackets on the right converge.
\end{lemma}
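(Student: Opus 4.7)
The plan is to derive the expansion directly by Taylor-expanding the power series $R_x(z)$ around $z=1$, exploiting the fact that for $|x|<1$ the point $z=1$ lies strictly inside the disc of convergence. By Lemma \ref{Lemma_ConvergeOfRx}, the radius of convergence of $R_x$ is at least $|x|^{-1}>1$, so $R_x$ is holomorphic in an open neighbourhood of $z=1$ and hence admits a convergent Taylor expansion there. It therefore suffices to compute the Taylor coefficients of $R_x$ at $z=1$ and identify the constant term with $-\log|F(x)|$.

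Concretely, I would substitute the polynomial identity $z^{r}=\sum_{k=0}^{r}\binom{r}{k}(z-1)^{k}$ into the definition
\[
R_{x}(z)=\sum_{r\geq 1}\log|1-x^{r}|\cdot z^{r}
\]
and then swap the order of summation. To justify the swap, I would use the crucial bound $|\log|1-x^{r}|| \leq 2|x|^{r}$ from Lemma \ref{Lemma_CriticalUpperBound}(1), valid for all sufficiently large $r$, which yields
\[
\sum_{r\geq 1}|\log|1-x^{r}||\sum_{k=0}^{r}\binom{r}{k}|z-1|^{k} \leq C\sum_{r\geq 1}|x|^{r}(1+|z-1|)^{r},
\]
and this is finite provided $|z-1|<|x|^{-1}-1$. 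Fubini then permits regrouping as $\sum_{k\geq 0}(z-1)^{k}\sum_{r\geq \max(k,1)}\binom{r}{k}\log|1-x^{r}|$, and each inner sum converges absolutely by the same exponential-decay bound (the polynomial factor $\binom{r}{k}$ is killed by $|x|^{r}$).

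The constant term (\textit{i.e.} the $k=0$ piece) is $\sum_{r\geq 1}\log|1-x^{r}| = \log\Bigl|\prod_{r\geq 1}(1-x^{r})\Bigr|$, and by the Euler product (\ref{lPartFunc}) this equals $\log|1/F(x)|=-\log|F(x)|$. Reindexing the remaining terms by $k=\ell+1$ and observing that $\binom{r}{\ell+1}=0$ for $r<\ell+1$ (so the sum may be written starting at $r\geq 1$) yields exactly the claimed formula.

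The only genuine technical point is the interchange of the double sum, which rests on having a sharp enough pointwise bound on $|\log|1-x^{r}||$; for $|x|<1$ this is the easy first part of Lemma \ref{Lemma_CriticalUpperBound}, which does not require Gelfond's theorem. Everything else is bookkeeping, so I expect no serious obstacle here; the lemma is really just the Taylor expansion of a power series written out, together with the Euler product identification of the constant term.
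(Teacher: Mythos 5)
Your proposal is correct and follows essentially the same route as the paper: expand around $z=1$ using the binomial theorem and identify the constant term via the Euler product $\prod_{r\geq1}(1-x^{r})^{-1}=F(x)$, with convergence secured by the bound $\left\vert \log\left\vert 1-x^{r}\right\vert \right\vert \leq 2\left\vert x\right\vert ^{r}$ from Lemma \ref{Lemma_CriticalUpperBound}(1). The only difference is bookkeeping: the paper first splits off $\sum_{r\geq1}\log\left\vert 1-x^{r}\right\vert$, writes $z^{r}-1=(z-1)(1+z+\cdots+z^{r-1})$, expands each $z^{a}=((z-1)+1)^{a}$ and then uses the identity $\sum_{a=0}^{r-1}\binom{a}{\ell}=\binom{r}{\ell+1}$, whereas you expand $z^{r}=\sum_{k}\binom{r}{k}(z-1)^{k}$ directly and regroup by Fubini; your version even makes the interchange of summation (valid for $\left\vert z-1\right\vert <\left\vert x\right\vert ^{-1}-1$) slightly more explicit than the paper does.
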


\begin{proof}
The convergence of the sums is harmless: By Lemma
\ref{Lemma_CriticalUpperBound}, since $\left\vert x\right\vert <1$, we have%
\[
\left\vert \log\left\vert x^{r}-1\right\vert \binom{r}{\ell+1}\right\vert
\leq2\left\vert x\right\vert ^{r}\binom{r}{\ell+1}%
\]
and then $\sum_{r\geq1}\log\left\vert x^{r}-1\right\vert \binom{r}{\ell+1} $
is dominated by $2\sum_{r\geq1}\left\vert x\right\vert ^{r}\binom{r}{\ell+1}$
and the latter is the standard estimate for convergence of a power series. In
particular, since $\left\vert x\right\vert <1$, this converges. We compute%
\[
R_{x}(z)=\sum_{r\geq1}\log\left\vert 1-x^{r}\right\vert z^{r}=\sum_{r\geq
1}\log\left\vert 1-x^{r}\right\vert (z^{r}-1)+\sum_{r\geq1}\log\left\vert
1-x^{r}\right\vert \text{,}%
\]
but in view of Equation \ref{lPartFunc} the rightmost term is just a special
value of the generating function of the partition function. Thus,%
\begin{align*}
& =-\log\left\vert F(x)\right\vert +\sum_{r\geq1}\log\left\vert x^{r}%
-1\right\vert \cdot(z-1)(1+z+\cdots+z^{r-1})\\
& =-\log\left\vert F(x)\right\vert +\sum_{r\geq1}\log\left\vert x^{r}%
-1\right\vert \cdot(z-1)\sum_{a=0}^{r-1}((z-1)+1)^{a}%
\end{align*}
and expanding this using the binomial formula yields%
\[
=-\log\left\vert F(x)\right\vert +\sum_{\ell=0}^{\infty}(z-1)^{\ell+1}\left(
\sum_{r\geq1}\log\left\vert x^{r}-1\right\vert \cdot\sum_{a=0}^{r-1}\binom
{a}{\ell}\right)  \text{.}%
\]
By a standard identity on binomial coefficients, the innermost sum equals
$\binom{r}{\ell+1}$, confirming our claim.
\end{proof}

\begin{aside}
The modular discriminant $\triangle$ is a weight $12$ modular form for
$\operatorname*{SL}_{2}(\mathbf{Z})$ and given by%
\[
\triangle(\tau)=q\prod_{m\geq1}(1-q^{m})^{24}\qquad\text{for}\qquad q=e^{2\pi
i\tau}\text{, }\tau\in\mathbf{H}\text{.}%
\]
For its logarithm, we obtain $\frac{1}{24}\left(  \log\triangle(\tau)-\log
q\right)  =\sum_{m\geq1}\log(1-q^{m})$ and thus for $F(x)\in\mathbf{C}%
\setminus\mathbf{R}_{\leq0}$, we get%
\[
-\log\left\vert F(x)\right\vert =\sum_{n=1}^{\infty}\log(1-x^{n})+\sum
_{n=1}^{\infty}\log(1-\overline{x}^{n})\text{.}%
\]
Thus, we may spell out $-\log\left\vert F(x)\right\vert $ as an expression in
the function $\log\triangle$. For $\gamma\in\operatorname*{SL}_{2}%
(\mathbf{Z})$, the transformation behaviour of $\log\triangle$ has an explicit
description in terms of Dedekind symbols/sums, \cite[Ch. 4]{MR0357299}. This
gives us a rich structure on the constant coefficient of the Laurent expansion
of $R_{x}$ at $z=1$. I have been wondering whether this structure would give
rise to some visible patterns in the behaviour of $R_{x}$ when changing $x$,
but I\ have not been able to isolate anything meaningful. Maybe somebody else
has an idea.
\end{aside}

\section{Analytic continuation of $R_{x}$ for $\left\vert x\right\vert <1$}

\subsection{Choice for complex exponentiation}

We shall mostly work with the principal branch of the logarithm. For us, this
means that it is defined on $\mathbf{C}^{\times}$ and given by%
\begin{equation}
\log(re^{i\theta})=\log r+i\theta\qquad\text{for }\theta\in(-\pi,+\pi
]\text{.}\label{lbranch1}%
\end{equation}
Based on this choice of a logarithm, $x^{s}:=\exp(s\cdot\log x)$ is our choice
of the meaning of complex exponentiation. We use capital letters
\textquotedblleft$\operatorname*{Log}$\textquotedblright\ whenever we want to
stress that an arbitrary branch of the logarithm can be used.

\begin{remark}
Usually, both $x$ and $s$ will be complex numbers, so we will have to be very
careful with deceptive functional equations, e.g. $e^{st}\neq(e^{s})^{t}$ for
general $s,t\in\mathbf{C}$.
\end{remark}

\begin{lemma}
For $x\in\mathbf{C}^{\times}$ and $s\in\mathbf{C}$, one has $\left\vert
x^{s}\right\vert =\left\vert x\right\vert ^{\operatorname*{Re}s}\cdot
e^{-\arg(x)\operatorname{Im}s}$.
\end{lemma}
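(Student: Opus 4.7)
The plan is to unwind the definition $x^{s}=\exp(s\cdot\log x)$ and simply compute the modulus of the resulting complex exponential. First I would fix the polar form $x=re^{i\theta}$ with $r=\left\vert x\right\vert$ and $\theta=\arg(x)\in(-\pi,+\pi]$, so that by Equation \ref{lbranch1} we have $\log x=\log r+i\theta$. I would then write $s=a+bi$ with $a=\operatorname{Re}s$ and $b=\operatorname{Im}s$.

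The core step is to expand the product $s\cdot\log x$ as
\[
s\cdot\log x=(a+bi)(\log r+i\theta)=(a\log r-b\theta)+i(b\log r+a\theta).
\]
Applying $\exp$ and using the standard identity $\left\vert e^{u+iv}\right\vert=e^{u}$ for real $u,v$ isolates the modulus as
\[
\left\vert x^{s}\right\vert=e^{a\log r-b\theta}=r^{a}\cdot e^{-b\theta},
\]
which is exactly the claimed formula once one translates $r=\left\vert x\right\vert$, $a=\operatorname{Re}s$, $b=\operatorname{Im}s$, $\theta=\arg(x)$.

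There is no real obstacle here; the only thing one has to be careful about is that $x^{s}$ is not defined via the identity $(e^{s})^{t}$ or via $e^{\operatorname{Re}(s\log x)}$ naively, but rather through the fixed principal branch of the logarithm, exactly as warned in the remark preceding the lemma. As long as one uses Equation \ref{lbranch1} consistently, the computation above is the whole proof.
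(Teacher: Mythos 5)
Your proposal is correct and follows essentially the same route as the paper: write $x=\left\vert x\right\vert e^{i\theta}$, expand $s\log x$ into real and imaginary parts with the principal branch, and take the modulus of the exponential. Nothing is missing.
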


\begin{proof}
We have, for $x=\left\vert x\right\vert e^{i\theta}$, $\theta:=\arg x\in
(-\pi,\pi]$ and $s=a+bi$ with $a,b\in\mathbf{R}$,%
\begin{align*}
x^{s}  & =e^{s\log x}=e^{(a+bi)(\log\left\vert x\right\vert +i\theta
)}=e^{(a\log\left\vert x\right\vert -b\theta)+i(a\theta+b\log\left\vert
x\right\vert )}\\
& =e^{a\log\left\vert x\right\vert }\cdot e^{-b\theta}\cdot e^{i(a\theta
+b\log\left\vert x\right\vert )}=\left\vert x\right\vert ^{\operatorname*{Re}%
s}\cdot e^{-\arg(x)\operatorname{Im}s}\cdot e^{i(a\theta+b\log\left\vert
x\right\vert )}%
\end{align*}
and taking absolute values, we get the claim.
\end{proof}

\subsection{Integral forms}

\begin{proposition}
\label{prop_TinyBoxAbelPlana}Let $K>0$ be real. Suppose $h:[1,+\infty)\times
i[-K,K]\rightarrow\mathbf{C}$ is a function which admits a holomorphic
continuation to an open neighbourhood of this box. Then for all integers
$1\leq a<b$, we have
\begin{align*}
\sum_{n=a}^{b}h(n)  & =\frac{1}{2}h(a)+\frac{1}{2}h(b)+\int_{a}^{b}%
h(s)\mathrm{d}s\\
& +i\int_{0}^{K}\frac{h(a+iy)-h(a-iy)}{e^{2\pi y}-1}\mathrm{d}y-i\int_{0}%
^{K}\frac{h(b+iy)-h(b-iy)}{e^{2\pi y}-1}\mathrm{d}y\\
& -\int_{a+iK}^{b+iK}\frac{h(s)}{1-e^{-2\pi is}}\mathrm{d}s+\int_{a-iK}%
^{b-iK}\frac{h(s)}{e^{2\pi is}-1}\mathrm{d}s\text{.}%
\end{align*}

\end{proposition}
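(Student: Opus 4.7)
The plan is to prove this as a finite, boxed version of the Abel--Plana formula, via contour integration with a pair of meromorphic kernels and carefully accounted indentations around integer poles that lie on the boundary.

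I would introduce
\[
K_+(s):=\frac{1}{1-e^{-2\pi is}},\qquad K_-(s):=\frac{1}{e^{2\pi is}-1}.
\]
Both are meromorphic with simple poles precisely at the integers, each of residue $\frac{1}{2\pi i}$, and the substitution $u=e^{2\pi is}$ immediately yields the key identity $K_+-K_-\equiv 1$. I would then split the box $[a,b]\times i[-K,K]$ into the upper half $R_+:=[a,b]\times i[0,K]$ and lower half $R_-:=[a,b]\times i[-K,0]$. For small $\epsilon>0$, I would apply the residue theorem to $h(s)K_+(s)$ along $\partial R_+$ traversed counterclockwise, deformed with quarter-circle indentations of radius $\epsilon$ at the corner integers $a,b$ and upper semicircles of radius $\epsilon$ at each interior integer $a{+}1,\ldots,b{-}1$; the analogous construction is applied to $h(s)K_-(s)$ on $\partial R_-$, with indentations dipping below the real axis. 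After indentation, neither deformed contour encloses any pole of its integrand, so both contour integrals vanish.

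Next I would analyze each kind of piece as $\epsilon\to 0^+$. A direct parametrization $s=n+\epsilon e^{i\phi}$ shows that each interior semicircle contributes $-h(n)/2$ and each corner quarter-circle contributes $-h(a)/4$ or $-h(b)/4$, in both rectangles, for a total indentation contribution of $-\sum_{n=a}^{b}h(n)+\tfrac12(h(a)+h(b))$. The bottom of $R_+$ and the top of $R_-$ are the same real segment traversed in opposite directions, so their combined contribution is $\int_a^b h(x)(K_+(x)-K_-(x))\,dx=\int_a^b h(x)\,dx$, with the principal values at each integer pole cancelling between the two halves precisely because $K_+-K_-\equiv 1$ is nonsingular there. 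The top of $R_+$ and the bottom of $R_-$ reproduce the two horizontal correction integrals in the statement. For the vertical sides, using $a,b\in\mathbf{Z}$ one computes $K_+(a+iy)=-1/(e^{2\pi y}-1)$ and $K_-(a-iy)=1/(e^{2\pi y}-1)$, and analogously at $b$; adding the $x=a$ contributions from both half-rectangles yields $i\int_\epsilon^K (h(a+iy)-h(a-iy))/(e^{2\pi y}-1)\,dy$, with the $x=b$ terms entering with the opposite sign, which is exactly the Abel--Plana vertical correction in the statement.

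The main technical obstacle, and really the only subtle point, is that the individual real-axis and vertical pieces are logarithmically divergent as $\epsilon\to 0$: the integrand $hK_\pm$ has a $\frac{1}{s-n}$-type singularity at each integer on the real boundary, and on the vertical sides the kernel behaves like $\pm 1/(2\pi y)$ near $y=0$. These divergences do not cancel in isolation, but they cancel exactly in the combinations that actually appear: on the real axis because $K_+-K_-\equiv 1$, and on the vertical sides because holomorphy of $h$ forces $h(a+iy)-h(a-iy)=O(y)$ as $y\to 0$, absorbing the $1/y$ blow-up of the kernel. Once these cancellations are verified and the identity $0=(\text{indentations})+(\text{real axis})+(\text{top/bottom})+(\text{verticals})$ is solved for $\sum_{n=a}^{b}h(n)$, the formula in the proposition falls out directly.
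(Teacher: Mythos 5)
Your argument is correct, and it reaches the stated identity by a route that differs from the paper's mainly in its bookkeeping. The paper works with a single contour around the whole box $[a,b]\times i[-K,K]$ and the kernel $\pi\cot(\pi z)$: the Residue Theorem picks up the interior integers $a+1,\dots,b-1$, the term $\int_a^b h$ is produced by averaging the upper- and lower-half representations of the straight segment, indentations are needed only at the two boundary poles $a$ and $b$ (giving the $-\tfrac12 h(a)$, $-\tfrac12 h(b)$ corrections), and only afterwards is $\cot$ split into the two exponential kernels on the upper and lower halves. You instead split the box into the half-rectangles $R_\pm$ from the outset, apply $K_+=1/(1-e^{-2\pi is})$ and $K_-=1/(e^{2\pi is}-1)$ directly (these are exactly $\tfrac{1}{2i}\cot(\pi s)\pm\tfrac12$, so the kernels coincide with the paper's after its cancellation step), and since every integer pole now sits on the shared boundary segment, the whole sum $\sum h(n)$ is harvested from the indentation arcs while $K_+-K_-\equiv 1$ supplies $\int_a^b h$ without any averaging trick; the contour integrals themselves vanish by Cauchy's theorem rather than contributing residues. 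Your residue and arc computations check out (residue $\tfrac{1}{2\pi i}$ at each integer, $-h(n)/2$ per interior semicircle in each half, $-h(a)/4$, $-h(b)/4$ per corner quarter-circle, and the vertical-edge evaluations $K_+(a+iy)=-1/(e^{2\pi y}-1)$, $K_-(a-iy)=1/(e^{2\pi y}-1)$ using $a,b\in\mathbf{Z}$), and you correctly flag the one genuine delicacy, namely that the individually log-divergent pieces near the real axis must be grouped (via $K_+-K_-\equiv1$ on the horizontal slits and $h(a+iy)-h(a-iy)=O(y)$ on the vertical edges) before letting $\epsilon\to0$; since the indented-contour identity holds exactly for each fixed $\epsilon>0$, this grouping is legitimate. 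The trade-off is minor: the paper's cotangent formulation makes the link to the classical Abel--Plana derivation (Olver) transparent and keeps the indentation analysis confined to the two corners, while your version avoids both the residue computation and the averaging step at the cost of tracking indentations at every integer on the seam.
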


This is a modification of the Abel--Plana contour. A modern reference is
Olver's book \cite[Ch. 8, \S 3]{MR1429619}. However, our version is a little
more complicated as we cannot let $K$ go to infinity.

\begin{proof}
We begin with the cotangent series, spelled out below, which is compactly
convergent in $\mathbf{C}\backslash\mathbf{Z}$. We may consider the positively
oriented contour $\mathsf{C}$ around the set $\mathsf{Box}:=[a,b]\times
i[-K,K]$, and slightly modify it at $a$ and $b$ by cutting out little
semi-circles of radius $\delta>0$, chosen sufficiently small, say $<\frac
{1}{4}K$, as in the following figure:%
\[%
\begin{array}
[c]{ccc}%
\raisebox{-0.4583in}{\includegraphics[
height=0.8371in,
width=1.299in
]%
{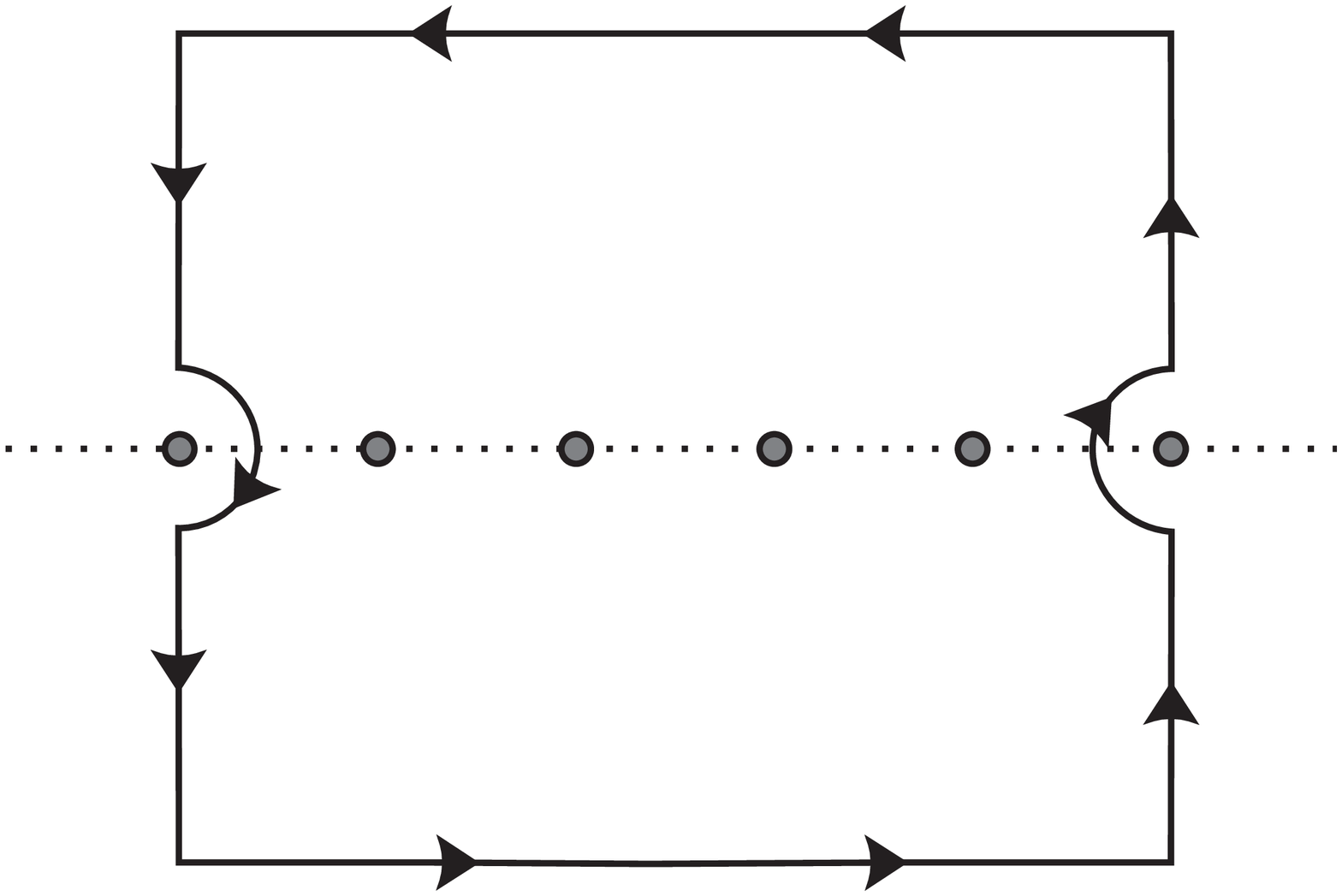}%
}
& \qquad & \pi\cot(\pi z)=\frac{1}{z}+%
{\displaystyle\sum_{n=1}^{\infty}}
\left(  \frac{1}{z+n}+\frac{1}{z-n}\right)  \text{,}%
\end{array}
\]
The grey dots represent the integers $\mathbf{Z}$, that is the poles of the
cotangent and we need these semi-circles to avoid passing right through a
pole. If $\mathbf{H}$ denotes the upper half-plane, the Residue\ Theorem gives
us%
\begin{align}
\sum_{n=a+1}^{b-1}h(n)-\int_{a+\delta}^{b-\delta}h(z)\mathrm{d}z  & =\frac
{1}{2i}\int_{\mathsf{C}}h(z)\cot(\pi z)\mathrm{d}z\label{lvins17}\\
& +\frac{1}{2}\int_{\mathbf{H}\cap\mathsf{C}}h(z)\mathrm{d}z-\frac{1}{2}%
\int_{-\mathbf{H}\cap\mathsf{C}}h(z)\mathrm{d}z\text{.}\nonumber
\end{align}
The second line arises from writing the integral along $[a,b]$ either by going
through the upper half of the contour, or the lower half, and since both work
equally well, we can just as well take the average of these two (equal)
values. As is well-known, we have $\frac{1}{i}\cot(z)=\frac{1}{i}\frac
{\cos(z)}{\sin(z)}=\frac{e^{iz}+e^{-iz}}{e^{iz}-e^{-iz}}=\frac{e^{2iz}%
+1}{e^{2iz}-1}$, which may be simplified to either of the following two
expressions $1+2\frac{1}{e^{2iz}-1}$ or $2\frac{1}{1-e^{-2iz}}-1$, whichever
is more convenient. We subdivide the contour $\mathsf{C}$ of the first
integral into its upper and lower part as well. This yields
\begin{align*}
& =\frac{1}{2i}\int_{\mathbf{H}\cap\mathsf{C}}h(z)\cot(\pi z)\mathrm{d}%
z+\frac{1}{2i}\int_{-\mathbf{H}\cap\mathsf{C}}h(z)\cot(\pi z)\mathrm{d}z\\
& +\frac{1}{2}\int_{\mathbf{H}\cap\mathsf{C}}h(z)\mathrm{d}z-\frac{1}{2}%
\int_{-\mathbf{H}\cap\mathsf{C}}h(z)\mathrm{d}z
\end{align*}
and then, using the aforementioned two presentations, this can be rewritten as%
\begin{align*}
& =\int_{\mathbf{H}\cap\mathsf{C}}h(z)\left(  \frac{1}{1-e^{-2\pi iz}}%
-\frac{1}{2}\right)  \mathrm{d}z+\int_{-\mathbf{H}\cap\mathsf{C}}h(z)\left(
\frac{1}{2}+\frac{1}{e^{2\pi iz}-1}\right)  \mathrm{d}z\\
& +\frac{1}{2}\int_{\mathbf{H}\cap\mathsf{C}}h(z)\mathrm{d}z-\frac{1}{2}%
\int_{-\mathbf{H}\cap\mathsf{C}}h(z)\mathrm{d}z=\int_{\mathbf{H}\cap
\mathsf{C}}\frac{h(z)}{1-e^{-2\pi iz}}\mathrm{d}z+\int_{-\mathbf{H}%
\cap\mathsf{C}}\frac{h(z)}{e^{2\pi iz}-1}\mathrm{d}z\text{,}%
\end{align*}
involving a convenient cancellation of terms. Firstly, one checks using the
continuity of $h$ that for $\delta\rightarrow0$ one gets additional terms of
$-\frac{1}{2}h(a)$ and $-\frac{1}{2}h(b)$, so adding the summands for $n=a,b$
to the left-hand side of Equation \ref{lvins17} (so that the sum now reads
$\sum_{a}^{b}$), we need to add $\frac{1}{2}h(a)+\frac{1}{2}h(b)$ on the
right-hand side to keep it balanced. Replace $\mathsf{C}$ by a straight left
and right edge with a tiny omission of radius $\delta>0$ instead of the
semi-circles, and this will be justified later as we shall see that the
remaining limit will exist for $\delta\rightarrow0$. For the left edge, pick
the curve $z(y):=a+iy$ and use that $e^{\pm2\pi i(a+yi)}=e^{\mp2\pi y} $
because $a\in\mathbf{Z}$. Thus,%
\begin{align}
\int_{\mathbf{H}\cap\mathsf{C}_{left}}\frac{h(z)}{1-e^{-2\pi iz}}\mathrm{d}z
& =-i\int_{\delta}^{K}\frac{h(a+iy)}{1-e^{2\pi y}}\mathrm{d}y\label{lvins21}\\
\int_{-\mathbf{H}\cap\mathsf{C}_{left}}\frac{h(z)}{e^{2\pi iz}-1}\mathrm{d}z
& =-i\int_{\delta}^{K}\frac{h(a-iy)}{e^{2\pi y}-1}\mathrm{d}y\text{.}\nonumber
\end{align}
The same works for the right edge with $b$ in place of $a$. Finally, take the
limit $\delta\rightarrow0$. This is harmless: Both numerator and denominator
are zero for $y=0$, but by L'H\^{o}pital's rule, the limit of the integrand
for $y\rightarrow0$ agrees with $\underset{y\rightarrow0}{\lim}\,\frac{i}%
{2\pi}(h^{\prime}(b+iy)+h^{\prime}(b-iy))e^{-2\pi y}=\frac{i}{\pi}h^{\prime
}(b)$, and in particular this limit exists.
\end{proof}

Below, we shall repeatedly need a case distinction \textquotedblleft$\pm
$\textquotedblright. Either case comes with an assumption, which we shall
repeatedly need, so we give it a name:

\begin{assumption}
[$\mathfrak{C}^{\pm}$]Suppose $x\in\mathbf{C}^{\times}$ and $\left\vert
x\right\vert <1$ is chosen. If $\mp\arg(x)>0$, choose a real number $K$ such
that%
\[
0<K<-\frac{\log\left\vert x\right\vert }{\mp\arg x}\text{.}%
\]
If $\mp\arg(x)\leq0$, we only assume $K>0$ and do not impose an upper bound.
\end{assumption}

\begin{lemma}
\label{lemma_KBound}Fix a case \textquotedblleft$\pm$\textquotedblright. If
Assumption $\mathfrak{C}^{\pm}$ is satisfied, then for all $s=u\pm iv$ with
$u\geq1$ and $0\leq v<K$, we have $\left\vert x^{s}\right\vert <1$. If $u>1$
and $0\leq v\leq K$, we also have $\left\vert x^{s}\right\vert <1$.
\end{lemma}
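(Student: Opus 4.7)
The statement is a direct unpacking of the formula $|x^{s}| = |x|^{\operatorname{Re}s}\cdot e^{-\arg(x)\operatorname{Im}s}$ from the preceding lemma. Taking logarithms, the target inequality $|x^{s}|<1$ for $s = u\pm iv$ becomes the real-linear condition
\[
u\log|x| \;\mp\; v\arg(x) \;<\; 0,
\]
where $\log|x|<0$ by the hypothesis $|x|<1$. So the whole lemma reduces to verifying this linear inequality in $(u,v)$ under the constraints coming from Assumption $\mathfrak{C}^{\pm}$.

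I would then split into two cases according to the sign of $\mp\arg(x)$, matching the case split in the formulation of $\mathfrak{C}^{\pm}$. If $\mp\arg(x)\le 0$, then $\mp v\arg(x)\le 0$ for $v\ge 0$, while $u\log|x|\le\log|x|<0$ for $u\ge 1$, so the inequality holds strictly irrespective of $v$; this matches the branch of $\mathfrak{C}^{\pm}$ that imposes no upper bound on $K$. In the non-trivial case $\mp\arg(x)>0$, rearrange the inequality to
\[
v \;<\; u\cdot\frac{-\log|x|}{\mp\arg(x)},
\]
and invoke the strict bound $K < -\log|x|/(\mp\arg x)$ built into $\mathfrak{C}^{\pm}$. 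For $u\ge 1$ and $v<K$, the chain $v<K<-\log|x|/(\mp\arg x)\le u\cdot(-\log|x|/(\mp\arg x))$ gives strict inequality; for $u>1$ and $v\le K$, the extra factor $u>1$ absorbs the possible equality $v=K$ and still produces strict inequality. Either way the claim follows.

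The only real obstacle is notational bookkeeping: one must track the $\mp$-sign consistently through the statement of $\mathfrak{C}^{\pm}$, the formula for $|x^{s}|$, and the rearrangement above, and verify that the strict inequality defining $K$ (together with $u\ge 1$, resp.\ $u>1$) is exactly what is needed to absorb the two boundary configurations $(u\ge 1, v<K)$ and $(u>1, v\le K)$. No deeper idea is required.
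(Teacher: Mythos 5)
Your proof is correct and follows essentially the same route as the paper: compute $\left\vert x^{s}\right\vert=\left\vert x\right\vert^{u}e^{\mp(\arg x)v}$, treat the sign cases of $\mp\arg(x)$ exactly as in Assumption $\mathfrak{C}^{\pm}$, and use the strict bound on $K$ in the nontrivial case (the paper argues multiplicatively where you take logarithms, which is the same computation). One cosmetic remark: since $K<-\log\left\vert x\right\vert/(\mp\arg x)$ is strict, the boundary configuration $v=K$ is already handled by that strict inequality rather than by the factor $u>1$, but this does not affect the validity of your argument.
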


\begin{proof}
We have $\left\vert x^{s}\right\vert =\left\vert e^{s\log x}\right\vert
=\left\vert e^{(u\pm iv)(\log\left\vert x\right\vert +i\arg x)}\right\vert
=\left\vert e^{u\log\left\vert x\right\vert \mp v\arg x}\right\vert
=\left\vert x\right\vert ^{u}\cdot e^{\mp(\arg x)v}$. If $\mp\arg(x)\leq0 $,
we surely have $\left\vert e^{\mp(\arg x)v}\right\vert \leq1$ and $\left\vert
x\right\vert ^{u}<1$ by $u\geq1$ and $\left\vert x\right\vert <1$, proving the
claim in this case. Otherwise, if $\mp\arg(x)>0$, we have $v<K<-\frac
{\log\left\vert x\right\vert }{\mp\arg x}$ (resp. equality) and thus
$\left\vert x\right\vert ^{u}\cdot e^{\mp(\arg x)v}<\left\vert x\right\vert
^{u-1}$ and for $u\geq1$ (resp. $>1$) the claim follows from $\left\vert
x\right\vert <1$.
\end{proof}

\begin{proposition}
\label{W_prop_realaxis_continue}Suppose $x\in\mathbf{C}^{\times}$ and
$\left\vert x\right\vert <1$. Then for all $\operatorname{Re}w>0$, the
integral%
\[
A_{w,x}(w):=\int_{1}^{\infty}\log(1-x^{s})e^{-ws}\mathrm{d}s
\]
converges and defines a holomorphic function on the right half-plane. The
series%
\[
\tilde{A}_{w,x}(w):=\sum_{n\geq1}\frac{1}{n}\frac{e^{n\log x-w}}{n\log x-w}%
\]
is compactly convergent everywhere outside $\mathbf{Z}_{\geq1}\cdot\log x$ in
the complex plane and defines a meromorphic continuation with poles at these points.
\end{proposition}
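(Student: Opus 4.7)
My strategy is to establish (i) convergence and holomorphy of $A_{w,x}(w)$ directly from elementary estimates, then (ii) derive the series formula for $\tilde{A}_{w,x}(w)$ by expanding the logarithm and performing a Fubini swap, and finally (iii) verify compact convergence of that series on $\mathbf{C}\setminus\mathbf{Z}_{\geq 1}\cdot\log x$, which then exhibits the meromorphic continuation.

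For step (i), note that on the integration range $s$ is real, so $|x^{s}|=|x|^{s}\leq|x|<1$, and the principal branch of $\log(1-x^{s})$ is well defined. Applying Lemma~\ref{Lemma_CriticalUpperBound}(1) with $s$ playing the role of $r$, we get $|\log(1-x^{s})|\leq 2|x|^{s}$ for $s$ sufficiently large. Combined with $|e^{-ws}|=e^{-(\operatorname{Re}w)s}$, the integrand is dominated by $2|x|^{s}e^{-(\operatorname{Re}w)s}=2\,e^{s(\log|x|-\operatorname{Re}w)}$. Since $\log|x|<0$, this is integrable whenever $\operatorname{Re}w>\log|x|$, and in particular throughout the right half-plane. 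The bound is locally uniform in $w$ on compacta of the right half-plane, so differentiation under the integral (or Morera plus Fubini) yields holomorphy.

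For step (ii), I use the expansion $\log(1-x^{s})=-\sum_{n\geq 1}\tfrac{x^{ns}}{n}$, valid because $|x^{s}|<1$ on $[1,\infty)$. To justify swapping the sum with the integral, observe that
\[
\sum_{n\geq 1}\frac{1}{n}\int_{1}^{\infty}|x|^{ns}e^{-(\operatorname{Re}w)s}\,\mathrm{d}s=\sum_{n\geq 1}\frac{1}{n}\cdot\frac{|x|^{n}e^{-\operatorname{Re}w}}{\operatorname{Re}w-n\log|x|}
\]
converges (the $n$-th term is $O(|x|^{n}/n^{2})$), so Fubini applies. Each inner integral evaluates, using $\operatorname{Re}(n\log x-w)=n\log|x|-\operatorname{Re}w<0$, to
\[
\int_{1}^{\infty}e^{s(n\log x-w)}\,\mathrm{d}s=-\frac{e^{n\log x-w}}{n\log x-w},
\]
and summing with the $-\tfrac{1}{n}$ weights produces exactly $\tilde{A}_{w,x}(w)$.

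For step (iii), on any compact $K\subset\mathbf{C}\setminus\mathbf{Z}_{\geq 1}\cdot\log x$, the distance from $w$ to the discrete set of poles is bounded below; since $|n\log x|\to\infty$ linearly and $w$ stays bounded on $K$, there exist constants $c,N>0$ with $|n\log x-w|\geq c\,n$ for all $n\geq N$ and $w\in K$. Therefore the $n$-th summand is bounded in absolute value by $\tfrac{\sup_{K}|e^{-w}|}{c}\cdot\tfrac{|x|^{n}}{n^{2}}$ for $n\geq N$, giving normal convergence of the tail on $K$ and hence a holomorphic limit. Near any singular point $w_{0}=n_{0}\log x$, the tail $\sum_{n\neq n_{0}}$ is holomorphic by the same argument, while the single term $n=n_{0}$ contributes a simple pole; this shows $\tilde{A}_{w,x}$ is meromorphic on $\mathbf{C}$ with poles exactly at $\mathbf{Z}_{\geq 1}\cdot\log x$. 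Since all these poles satisfy $\operatorname{Re}(n\log x)=n\log|x|<0$, they lie outside the right half-plane, and the identity $A_{w,x}(w)=\tilde{A}_{w,x}(w)$ established in step (ii) shows that $\tilde{A}_{w,x}$ is the desired meromorphic continuation of $A_{w,x}$.

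No step presents a serious obstacle; the only subtle point is the Fubini swap, which is handled by the explicit dominating series above.
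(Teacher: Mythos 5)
Your proposal is correct and follows essentially the same route as the paper: expand $\log(1-x^{s})$ into its power series, integrate termwise (you justify a single Fubini swap on $[1,\infty)$ with an explicit dominating series, whereas the paper integrates over $[a,b]$ and lets $b\to+\infty$ using $\operatorname{Re}w>0$), and then verify compact convergence of the resulting series off $\mathbf{Z}_{\geq1}\cdot\log x$ via $\left\vert e^{n\log x-w}\right\vert=\left\vert x\right\vert^{n}e^{-\operatorname{Re}w}$ together with a lower bound on the denominators. One tiny nitpick: Lemma \ref{Lemma_CriticalUpperBound}(1) as stated bounds $\log\left\vert 1-x^{r}\right\vert$ rather than the complex $\log(1-x^{s})$, but the same power-series estimate gives $\left\vert \log(1-x^{s})\right\vert\leq2\left\vert x\right\vert^{s}$ for large $s$, so nothing is lost.
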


Note that all these poles lie in the open left half-plane.

\begin{proof}
\textit{(Step 1)} For real numbers $s\geq1$, we have $\left\vert
x^{s}\right\vert =\left\vert x\right\vert ^{s}<1$, so inside the range of
integration, $\log(1-x^{s})$ can be expanded as a uniformly convergent power
series.\ Concretely, for $1\leq a<b$ we get
\[
\int_{a}^{b}\log(1-x^{s})e^{-ws}\mathrm{d}s=-\int_{a}^{b}\sum_{n\geq1}%
\frac{x^{sn}}{n}e^{-ws}\mathrm{d}s
\]
and swapping integral and sum, this can be integrated in an explicit fashion:%
\begin{align*}
& =-\sum_{n\geq1}\frac{1}{n}\int_{a}^{b}x^{sn}e^{-ws}\mathrm{d}s=-\sum
_{n\geq1}\frac{1}{n}\int_{a}^{b}e^{(n\log x-w)s}\mathrm{d}s\\
& =-\sum_{n\geq1}\frac{1}{n}\left\{
\begin{array}
[c]{ll}%
\left.  \frac{e^{(n\log x-w)s}}{n\log x-w}\right\vert _{s=a}^{s=b} & \text{for
}n\log x-w\neq0\\
b-a & \text{otherwise.}%
\end{array}
\right.
\end{align*}
\textit{(Step 2)} Now, assume $\operatorname{Re}w>0$ and consider the case
$b\rightarrow+\infty$. Then thanks to $\operatorname{Re}w>0$, we have
$e^{-bw}\rightarrow0$, and so $\left\vert e^{n\log x-w}\right\vert
^{b}=\left\vert x^{n}\right\vert ^{b}e^{-bw}\longrightarrow0$. Thus,
\textit{Step 1} implies that $A_{w,x}=\tilde{A}_{w,x}$ in the right
half-plane. However, $\tilde{A}_{w,x}$ is uniformly convergent in any
compactum outside the poles, even without assuming $\operatorname{Re}w>0$. To
see this, note that $\left\vert e^{n\log x-w}\right\vert =\left\vert
x\right\vert ^{n}\cdot e^{-\operatorname{Re}w}$ and since the denominator can
be bounded in any compactum outside the poles, the convergence is dominated by
a convergent geometric series because of $\left\vert x\right\vert <1$. Thus,
$\tilde{A}_{w,x}$ is meromorphic in the entire complex plane.
\end{proof}

\begin{proposition}
\label{prop_analytic_M}Suppose we are in the situation of Assumption
$\mathfrak{C}^{\pm}$. Let $a\geq1$ be an integer and $0<\delta<K$. Then for
all $\operatorname{Re}w>0$, the integral%
\[
M_{a,K}^{\pm}(w):=\int_{\delta}^{K}\frac{\log(1-x^{a\pm iy})e^{-w(a\pm iy)}%
}{e^{2\pi y}-1}\mathrm{d}y
\]
converges and defines a holomorphic function. The series%
\[
\tilde{M}_{a,K}^{\pm}(w):=-e^{-wa}\sum_{m,n\geq1}\frac{x^{ma}}{m}\left\{
\begin{array}
[c]{ll}%
\left.  \frac{e^{(\pm im\log x\mp iw-2\pi n)y}}{\pm im\log x\mp iw-2\pi
n}\right\vert _{y=\delta}^{y=K} & \text{for }\pm im\log x\mp iw-2\pi n\neq0\\
K-\delta & \text{otherwise.}%
\end{array}
\right.
\]
is compactly convergent in the entire complex plane and defines a holomorphic
continuation of $M_{a}^{\pm}$.
\end{proposition}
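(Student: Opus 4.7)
The plan is to follow the template of Proposition \ref{W_prop_realaxis_continue}: expand the two non-exponential factors as uniformly convergent series on the compact interval $[\delta,K]$, multiply them together, integrate termwise, and then verify that the resulting double sum converges compactly on all of $\mathbf{C}$. Well-definedness of $M^{\pm}_{a,K}(w)$ for $\operatorname{Re}w>0$ is automatic: $\delta>0$ keeps $(e^{2\pi y}-1)^{-1}$ continuous and bounded on $[\delta,K]$, and Lemma \ref{lemma_KBound} (applied with $s=a\pm iy$, $a\geq 1$, $y\in[\delta,K)$), plus a direct computation of $|x|\cdot e^{\mp K\arg x}$ using the \emph{strict} inequality in Assumption $\mathfrak{C}^{\pm}$ to cover the boundary case $y=K$ with $a=1$, gives $|x^{a\pm iy}|<1$, so $\log(1-x^{a\pm iy})$ is continuous and bounded. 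Holomorphy in $w$ on $\operatorname{Re}w>0$ is standard (Morera, or differentiation under the integral).

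Next, the uniform bound $|x^{a\pm iy}|\leq\rho_{0}<1$ on $[\delta,K]$ gives the uniformly convergent expansion
\[
\log(1-x^{a\pm iy}) \;=\; -\sum_{m\geq 1}\frac{x^{ma}}{m}\,e^{\pm imy\log x},
\]
while $\delta>0$ yields $(e^{2\pi y}-1)^{-1}=\sum_{n\geq 1}e^{-2\pi ny}$ uniformly on $[\delta,K]$. Multiplying these with $e^{-w(a\pm iy)}=e^{-wa}e^{\mp iwy}$ produces a double series whose general term is
\[
-e^{-wa}\,\frac{x^{ma}}{m}\,e^{c_{m,n}(w)y},\qquad c_{m,n}(w):=\pm im\log x\mp iw-2\pi n,
\]
still uniformly convergent in $y\in[\delta,K]$. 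Termwise integration then produces exactly the stated $\tilde{M}^{\pm}_{a,K}(w)$, where the ``$K-\delta$'' branch corresponds to the removable singularity at $c_{m,n}=0$ of the entire function $c\mapsto\int_{\delta}^{K}e^{cy}\,\mathrm{d}y=(e^{cK}-e^{c\delta})/c$.

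It remains to show that $\tilde{M}^{\pm}_{a,K}$ converges compactly on all of $\mathbf{C}$, which will simultaneously give entireness and agreement with $M^{\pm}_{a,K}$ on $\operatorname{Re}w>0$. Each summand is entire in $w$, so it suffices to produce a summable majorant on an arbitrary compact $L\subset\mathbf{C}$. With $B:=\sup_{w\in L}|\operatorname{Im}w|$ and $\sigma:=\mp\arg x$, the identity $\operatorname{Re}c_{m,n}(w)=\sigma m\pm\operatorname{Im}w-2\pi n$ yields, for $y\in\{\delta,K\}$,
\[
|x|^{ma}\,e^{\operatorname{Re}(c_{m,n}(w))\,y} \;\leq\; \rho^{m}\cdot e^{BK}\cdot e^{-2\pi ny},\qquad \rho:=|x|^{a}e^{\sigma K}.
\]
Outside a finite exceptional set of $(m,n)$ where $c_{m,n}$ could vanish somewhere on $L$, $|c_{m,n}(w)|$ is uniformly bounded below on $L$ by a positive constant; the finitely many exceptional summands are each continuous, hence bounded, on $L$ since $c\mapsto\int_{\delta}^{K}e^{cy}\,\mathrm{d}y$ is entire. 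A Weierstrass M-test with majorant a constant multiple of $\sum_{m,n\geq 1}\rho^{m}e^{-2\pi n\delta}/m$ then closes the argument.

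The one genuinely delicate point, and the main obstacle, is the estimate $\rho<1$. For $\sigma\leq 0$ it is immediate from $|x|<1$. For $\sigma>0$, Assumption $\mathfrak{C}^{\pm}$ gives $e^{\sigma K}<|x|^{-1}$ \emph{strictly}, so $\rho<|x|^{a-1}\leq 1$, and strictness of $\rho<1$ is preserved through the edge case $a=1$ precisely because the inequality in $\mathfrak{C}^{\pm}$ is strict. This is the only place the hypothesis on $K$ enters crucially; everything else is a formal expansion-and-swap essentially verbatim from Proposition \ref{W_prop_realaxis_continue}.
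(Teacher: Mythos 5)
Your proposal is correct and follows essentially the same route as the paper: expand $\log(1-x^{a\pm iy})$ and $(e^{2\pi y}-1)^{-1}$ as uniformly convergent series on $[\delta,K]$, integrate termwise, and then dominate the resulting double series by geometric series in $m$ and $n$ on an arbitrary compact set, using the strict inequality in Assumption $\mathfrak{C}^{\pm}$ when $\mp\arg x>0$ and treating the vanishing-denominator terms via the entire function $c\mapsto\int_{\delta}^{K}e^{cy}\,\mathrm{d}y$. The only blemish is notational: for $\sigma=\mp\arg x\le 0$ the maximizing endpoint is $y=\delta$, so the majorant should use $|x|^{a}$ (or $|x|^{a}\max(1,e^{\sigma K})$) rather than $\rho=|x|^{a}e^{\sigma K}$; this is immediate to fix and does not affect the argument.
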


\begin{proof}
\textit{(Step 1)} We assume $\operatorname{Re}w>0$. Then the integral%
\[
M_{a,K}^{\pm}(w):=\int_{\delta}^{K}\frac{\log(1-x^{a\pm iy})e^{-w(a\pm iy)}%
}{e^{2\pi y}-1}\mathrm{d}y=\int_{\delta}^{K}\frac{\log(1-x^{a\pm
iy})e^{-w(a\pm iy)}}{e^{2\pi y}(1-e^{-2\pi y})}\mathrm{d}y\text{.}%
\]
is convergent. Since $y\geq\delta>0$ within the range of integration, we have
$\left\vert e^{-2\pi y}\right\vert <1$ and we may expand $1/(1-e^{-2\pi y})$
as a uniformly convergent geometric series. We obtain%
\[
=\int_{\delta}^{K}\log(1-x^{a\pm iy})e^{-w(a\pm iy)}\sum_{n\geq1}e^{-2\pi
ny}\mathrm{d}y\text{.}%
\]
By Assumption $\mathfrak{C}^{\pm}$ and Lemma \ref{lemma_KBound}, we have
$\left\vert x^{a\pm iy}\right\vert <1$ since $a\geq1$ and $0\leq y<K$. Thus,
again we may expand the logarithm and we obtain%
\begin{align*}
& =-\int_{\delta}^{K}\sum_{m\geq1}\frac{x^{(a\pm iy)m}}{m}e^{-w(a\pm iy)}%
\sum_{n\geq1}e^{-2\pi ny}\mathrm{d}y\\
& =-\sum_{m\geq1}\sum_{n\geq1}\frac{1}{m}\int_{\delta}^{K}e^{am\log x}e^{\pm
iym\log x}e^{-w(a\pm iy)}e^{-2\pi ny}\mathrm{d}y
\end{align*}
by uniform convergence. Then,%
\begin{align}
& =-e^{-wa}\sum_{m,n\geq1}\frac{x^{ma}}{m}\int_{\delta}^{K}e^{(\pm im\log x\mp
iw-2\pi n)y}\mathrm{d}y\nonumber\\
& =-e^{-wa}\sum_{m,n\geq1}\frac{x^{ma}}{m}\left\{
\begin{array}
[c]{ll}%
\left.  \frac{e^{(\pm im\log x\mp iw-2\pi n)y}}{\pm im\log x\mp iw-2\pi
n}\right\vert _{y=\delta}^{y=K} & \text{for }\pm im\log x\mp iw-2\pi n\neq0\\
K-\delta & \text{otherwise.}%
\end{array}
\right. \label{l_gW1}%
\end{align}
\textit{(Step 2)} Next, we claim that this last expression converges for all
$w\in\mathbf{C}$, dropping the assumption $\operatorname{Re}w>0$. To this end,
we compute%
\[
X_{n,m}:=\left\vert x^{ma}\right\vert \cdot\left\vert e^{(\pm im\log x\mp
iw-2\pi n)y}\right\vert =\left\vert e^{m(a\log x\pm iy\log x)}\right\vert
\cdot\left\vert e^{\mp iwy}\right\vert \cdot\left\vert e^{-2\pi y}\right\vert
^{n}%
\]
Now, since $y>0$, we have $\left\vert e^{-2\pi y}\right\vert ^{n}<1$ for all
$n$. The term $\left\vert e^{\mp iwy}\right\vert $ does not depend on $n$ nor
$m$. Finally,%
\[
\left\vert e^{m(a\log x\pm iy\log x)}\right\vert =\left\vert e^{m(a(\log
\left\vert x\right\vert +i\arg x)\pm iy(\log\left\vert x\right\vert +i\arg
x))}\right\vert =\left\vert e^{(a\log\left\vert x\right\vert \mp y\arg
x)}\right\vert ^{m}%
\]
If $\mp\arg(x)\leq0$, then $\mp y\arg x\leq0$ and by $a\geq1$ and $\left\vert
x\right\vert <1$, it follow that $e$ has a negative exponent. Thus, we get a
term of the shape $\theta^{m}$ for some $0<\theta<1$. Now: Since $\left\vert
e^{-2\pi y}\right\vert <1$ and $\theta<1$, the terms $X_{n,m}$ are dominated
by a geometric series both in the variables $n$ and $m $. It is easy to check
that the apparent poles in the top case of Equation \ref{l_gW1} are all
removable, and in fact the holomorphic continuation is given by switching to
the second case. In particular, we get an everywhere compactly convergent
series of holomorphic functions.
\end{proof}

\begin{proposition}
Suppose we are in the situation of Assumption $\mathfrak{C}^{\pm}$. Let
$a\geq1$ be an integer. Then for all $\operatorname{Re}w>0$, the integral%
\[
T_{K}^{+}(w):=-\int_{1+iK}^{\infty+iK}\frac{\log(1-x^{s})e^{-ws}}{1-e^{-2\pi
is}}\mathrm{d}s
\]
resp.%
\[
T_{K}^{-}(w):=+\int_{1-iK}^{\infty-iK}\frac{\log(1-x^{s})e^{-ws}}{e^{2\pi
is}-1}\mathrm{d}s
\]
converges and defines a holomorphic function in the open right half-plane. The
series%
\begin{equation}
\tilde{T}_{K}^{\pm}(w)=\pm\sum_{m\geq1}\sum_{n\geq1}\frac{1}{m}\frac{e^{(m\log
x-w\pm2\pi in)(1\pm iK)}}{m\log x-w\pm2\pi in}\label{ltg1}%
\end{equation}
is compactly convergent everywhere outside $\mathbf{Z}_{\geq1}\cdot\log
x\pm\mathbf{Z}_{\geq1}\cdot2\pi i$ in the entire complex plane. It defines a
meromorphic continuation with poles at the said points.
\end{proposition}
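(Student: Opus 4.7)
The proof follows the same template as Proposition \ref{prop_analytic_M}, the only difference being that the contour of integration is shifted horizontally by $\pm iK$ rather than vertically by $\pm\delta$. My plan is: on the contour $s = u \pm iK$ with $u \in [1,\infty)$, expand both $\log(1-x^s) = -\sum_{m \geq 1} x^{sm}/m$ and a geometric series unfolding the kernel $1/(1 - e^{\mp 2\pi is})$; swap summation and integration by dominated convergence; evaluate the remaining elementary integrals $\int_1^\infty e^{\alpha(u \pm iK)}\,\mathrm{d}u$ in closed form; and finally investigate the resulting double series as a function of $w$.

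Assume first $\operatorname{Re} w > 0$. Assumption $\mathfrak{C}^{\pm}$ combined with Lemma \ref{lemma_KBound} gives $|x^s| < 1$ on the contour, and along the upper contour one has $|e^{2\pi is}| = e^{-2\pi K} < 1$ (dually on the lower one), so
\[
\frac{1}{1-e^{-2\pi is}} = -\sum_{n \geq 1} e^{2\pi ins} \qquad\text{resp.}\qquad \frac{1}{e^{2\pi is}-1} = \sum_{n \geq 1} e^{-2\pi ins}.
\]
Plugging both expansions into $T_K^{\pm}(w)$, the $(m,n)$-th contribution to the integrand is dominated in modulus by $\tfrac{1}{m}\,|x|^{um}\, e^{-2\pi nK}\, e^{-u\operatorname{Re} w}$ up to an $m,n$-independent bounded factor coming from $e^{-ws}$; this is summable-integrable, so Fubini applies. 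Each remaining integral $\int_1^\infty e^{\alpha(u \pm iK)}\,\mathrm{d}u$ with $\alpha := m\log x - w \pm 2\pi in$ converges because $\operatorname{Re}\alpha = m\log|x| - \operatorname{Re}w < 0$, and evaluates to $-e^{\alpha(1 \pm iK)}/\alpha$, producing the closed-form series in Equation \ref{ltg1} (with the sign convention of $\tilde T_K^{\pm}$).

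The last step is to extend the series beyond $\operatorname{Re}w > 0$. A direct computation of the absolute value of the generic numerator yields
\[
|e^{\alpha(1\pm iK)}| = e^{m(\log|x| \mp K\arg x)} \cdot e^{-2\pi nK} \cdot e^{-\operatorname{Re} w \pm K\operatorname{Im}w}.
\]
The crucial point is that $\log|x| \mp K\arg x < 0$: this is exactly Assumption $\mathfrak{C}^{\pm}$ read off at $u=1$, $v=K$, and is the same inequality that underlies Lemma \ref{lemma_KBound}. Together with the independent decay factor $e^{-2\pi nK}$, this produces geometric decay in both $m$ and $n$, uniformly for $w$ in any compactum of $\mathbf{C}$. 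Away from the set $\mathbf{Z}_{\geq 1}\cdot\log x \pm \mathbf{Z}_{\geq 1}\cdot 2\pi i$, the denominators $m\log x - w \pm 2\pi in$ are bounded away from zero on compacta (they grow linearly in $\max(m,n)$ at infinity), so the double series is compactly convergent and defines a holomorphic function there. Since each individual summand is meromorphic on $\mathbf{C}$ with a simple pole at a point of the stated set, the limit is meromorphic with poles contained in $\mathbf{Z}_{\geq 1}\cdot\log x \pm \mathbf{Z}_{\geq 1}\cdot 2\pi i$. The main (and essentially only) subtlety is the strict negativity of $\log|x| \mp K\arg x$, which is precisely what Assumption $\mathfrak{C}^{\pm}$ was designed to guarantee; once that is in hand, the argument is a routine Fubini/geometric-series exercise parallel to Proposition \ref{prop_analytic_M}.
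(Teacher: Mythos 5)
Your proposal is correct and follows essentially the same route as the paper: expand the logarithm and the kernel as geometric series on the shifted contour (using Assumption $\mathfrak{C}^{\pm}$ together with $\left\vert e^{\pm 2\pi i s}\right\vert =e^{-2\pi K}<1$), interchange sum and integral for $\operatorname{Re}w>0$, evaluate the elementary integrals, and then get compact convergence of the double series from geometric decay in both $m$ and $n$, exactly as in the paper's two-step argument. One small imprecision: on the contour one has $\left\vert x^{sm}\right\vert =e^{m(u\log\left\vert x\right\vert \mp K\arg x)}$, which is \emph{not} $\left\vert x\right\vert ^{um}$ times an $(m,n)$-independent factor when $\mp\arg x>0$; this is harmless, though, since the strict inequality $\log\left\vert x\right\vert \mp K\arg x<0$ that you invoke later (equivalently, writing $u\log\left\vert x\right\vert \mp K\arg x=(u-1)\log\left\vert x\right\vert +(\log\left\vert x\right\vert \mp K\arg x)$, as the paper does in its Step 2) immediately gives the summable--integrable dominating bound needed for your Fubini step.
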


Here the integration from \textquotedblleft$1\pm iK$\textquotedblright\ to
\textquotedblleft$\infty\pm iK$\textquotedblright\ is meant to denote
integration along the curve $\gamma(t):=t\pm iK$ for $t\in\lbrack1,+\infty)$.

\begin{proof}
\textit{(Step 1)} For $T^{+}$ we get%
\[
-\int_{a+iK}^{b+iK}\frac{\log(1-x^{s})e^{-ws}}{1-e^{-2\pi is}}\mathrm{d}%
s\text{.}%
\]
We compute%
\[
-\int_{a+iK}^{b+iK}\frac{\log(1-x^{s})e^{-ws}}{1-e^{-2\pi is}}\mathrm{d}%
s=-\int_{a+iK}^{b+iK}\frac{\log(1-x^{s})e^{-ws}}{e^{-2\pi is}(e^{2\pi is}%
-1)}\mathrm{d}s
\]
and since we are in the upper half-plane $\left\vert e^{2\pi is}\right\vert
=e^{-2\pi K}<1$ by $K>0$. Thus, we may expand this as a geometric series which
is uniformly convergent in the range of integration,%
\[
=\int_{a+iK}^{b+iK}\log(1-x^{s})e^{-ws}\sum_{n\geq1}e^{2\pi ins}%
\mathrm{d}s\text{.}%
\]
The case of $T^{-}$ is very much analogous: We get%
\[
-\int_{a-iK}^{b-iK}\log(1-x^{s})e^{-ws}\sum_{n\geq1}e^{-2\pi ins}\mathrm{d}s
\]
instead. Thus, up to signs, we may handle both cases simultaneously. By
Assumption $\mathfrak{C}^{\pm}$, $1\leq a<b$ and Lemma \ref{lemma_KBound}, we
can expand the logarithm,%
\begin{align*}
& =\mp\int_{a\pm iK}^{b\pm iK}\sum_{m\geq1}\frac{x^{sm}}{m}e^{-ws}\sum
_{n\geq1}e^{\pm2\pi ins}\mathrm{d}s=\mp\sum_{m\geq1}\sum_{n\geq1}\frac{1}%
{m}\int_{a\pm iK}^{b\pm iK}e^{(m\log x-w\pm2\pi in)s}\mathrm{d}s\\
& =\mp\sum_{m\geq1}\sum_{n\geq1}\frac{1}{m}\left\{
\begin{array}
[c]{ll}%
\left.  \frac{e^{(m\log x-w\pm2\pi in)s}}{m\log x-w\pm2\pi in}\right\vert
_{s=a\pm iK}^{s=b\pm iK} & \text{for }m\log x-w\pm2\pi in\neq0\\
b-a & \text{otherwise.}%
\end{array}
\right.
\end{align*}
\textit{(Step 2)} We will estimate the magnitute of the integrand/summands.
Suppose $s=u\pm iv$ with $u,v\in\mathbf{R}$, $v\geq0$ (it makes sense to
assume $v\geq0$ because for handling $T^{+}$ it suffices to have estimates
which are valid in the upper half-plane, and correspondingly for $T^{-}$ in
the lower half-plane). Now,%
\[
\left\vert e^{(m\log x-w\pm2\pi in)s}\right\vert =\left\vert e^{mu\log
\left\vert x\right\vert \mp mv\arg(x)}\right\vert \cdot\left\vert
e^{-ws}\right\vert \cdot\left\vert e^{-2\pi v}\right\vert ^{n}\text{.}%
\]
For all $s$ in the range of integration, the imaginary part is $\pm K$, so
$\left\vert e^{-2\pi K}\right\vert <1$ by $K>0$. It follows that our terms are
dominated by a geometric series in $n$. Moreover, $\left\vert e^{mu\log
\left\vert x\right\vert \mp mv\arg(x)}\right\vert =\left\vert e^{u\log
\left\vert x\right\vert \mp v\arg(x)}\right\vert ^{m}$ and by Assumption
$\mathfrak{C}^{\pm}$ we have: In the range of integration, $u\geq1$ and
$\log\left\vert x\right\vert $ is negative, so if $\mp\arg(x)\leq0$, the
exponent is negative and thus we also have domination by a geometric series in
$m$. If $\mp\arg(x)>0$ on the other hand, we have
\[
v<K<-\frac{\log\left\vert x\right\vert }{\mp\arg x}%
\]
by Assumption $\mathfrak{C}^{\pm}$. Thus, $\log\left\vert x\right\vert \mp
v\arg x<0$, and since $\log\left\vert x\right\vert <0$ and $u\geq1$, adding
$(u-1)\log\left\vert x\right\vert \leq0$ yields%
\[
u\log\left\vert x\right\vert \mp v\arg x<(u-1)\log\left\vert x\right\vert
\leq0\text{.}%
\]
Thus, again the exponent is negative, giving domination by a geometric series
also in $m$. Finally, note that for $w=c+di$, the term $\left\vert
e^{-ws}\right\vert $ can be evaluated to be%
\[
\left\vert e^{-ws}\right\vert =\left\vert e^{-(c+di)u\mp iv(c+di)}\right\vert
=\left\vert e^{-cu}\right\vert \cdot\left\vert e^{\pm vd}\right\vert \text{.}%
\]
So, if in \textit{Step 1} we let $b\rightarrow+\infty$, and for $c>0$,
$\left\vert e^{-cu}\right\vert $ goes to zero. As this universally bounds all
coefficients, we see that the right-hand side boundary term of the integration
vanishes \textit{if} we assume $\operatorname{Re}w>0$ (i.e. $c>0$). Moreover,
our upper bound of exponential decay in both $n$ and $m$ shows that outsides
the poles, we have uniform convergence of the series in line \ref{ltg1} in any compactum.
\end{proof}

\begin{proposition}
\label{prop_AnalyticContinuationForQx}Suppose $x\in\mathbf{C}^{\times}$ and
$\left\vert x\right\vert <1$. Then for all $\operatorname{Re}w>0$, the series%
\[
Q_{x}(w):=\sum_{n=1}^{\infty}\log(1-x^{n})e^{-wn}%
\]
is compactly convergent and defines a holomorphic function in the right
half-plane. It admits a meromorphic continuation to the entire complex plane
with poles at $\mathbf{Z}_{\geq1}\cdot\log x+\mathbf{Z}\cdot2\pi i$.
\end{proposition}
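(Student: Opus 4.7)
The plan is to apply the modified Abel--Plana formula of Proposition \ref{prop_TinyBoxAbelPlana} to $h(s):=\log(1-x^s)\,e^{-ws}$ on the rectangle $[1,b]\times i[-K,K]$, with $K$ chosen so that Assumption $\mathfrak{C}^{\pm}$ holds for some fixed choice of sign (either works; choose according to $\arg x$), and then to let $b\to\infty$ while recognising the surviving pieces as the objects $A_{w,x}$, $M^{\pm}_{1,K}$, $T^{\pm}_{K}$ already set up in the previous propositions.

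First I would dispose of the convergence assertion. For $\operatorname{Re}w>0$ the bound $|\log(1-x^n)|\leq 2|x|^n$ from Lemma \ref{Lemma_CriticalUpperBound} together with $|e^{-wn}|\leq 1$ dominates $Q_x(w)$ by the geometric series $2\sum|x|^n$, uniformly on compacta of the closed half-plane. This yields holomorphy of $Q_x$ on $\{\operatorname{Re}w>0\}$. For the Abel--Plana step, Lemma \ref{lemma_KBound} shows $|x^s|<1$ on the box, so $1-x^s$ stays within the open unit disc about $1$, missing the branch cut of $\log$; hence $h$ is holomorphic on a neighbourhood of the box, and Proposition \ref{prop_TinyBoxAbelPlana} applies with $a=1$.

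Next I would take the limit $b\to\infty$ on the right half-plane. The factor $e^{-ws}$ with $\operatorname{Re}w>0$ kills every contribution involving $s=b$: the boundary value $\tfrac{1}{2}h(b)$ tends to zero, and the ``right-edge'' integrals $-i\int_{0}^{K}\frac{h(b+iy)-h(b-iy)}{e^{2\pi y}-1}\mathrm{d}y$ vanish, using the estimate $|h(s)|\leq 2|x|^{\operatorname{Re}s}\cdot e^{-(\operatorname{Re}w)\operatorname{Re}s+K|\operatorname{Im}w|}$. The horizontal integral limits to $A_{w,x}(w)$ by definition, and the top/bottom contour integrals limit to $T^{+}_{K}(w)$ and $T^{-}_{K}(w)$. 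This gives the identity
\[
Q_x(w)=\tfrac{1}{2}h(1)+A_{w,x}(w)+i\!\int_{0}^{K}\!\frac{h(1+iy)-h(1-iy)}{e^{2\pi y}-1}\mathrm{d}y+T^{+}_{K}(w)+T^{-}_{K}(w),
\]
valid on $\operatorname{Re}w>0$. The final step is to observe that every term on the right continues meromorphically to $\mathbf{C}$: the entire function $\tfrac{1}{2}\log(1-x)e^{-w}$, the function $\tilde{A}_{w,x}$ from Proposition \ref{W_prop_realaxis_continue} with poles on $\mathbf{Z}_{\geq 1}\cdot\log x$, and the functions $\tilde{T}^{\pm}_{K}$ from the immediately preceding proposition with poles on $\mathbf{Z}_{\geq 1}\cdot\log x\pm\mathbf{Z}_{\geq 1}\cdot 2\pi i$. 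The ``left-edge'' integral is not quite $M^{+}_{1,K}-M^{-}_{1,K}$ because the latter carries a lower limit $\delta>0$; this mismatch is harmless, handled by splitting $\int_{0}^{K}=\int_{0}^{\epsilon}+\int_{\epsilon}^{K}$. The second piece continues via Proposition \ref{prop_analytic_M} (with $\delta=\epsilon$), while the first piece is entire in $w$ because the integrand is continuous at $y=0$ by L'H\^opital (as in the proof of Proposition \ref{prop_TinyBoxAbelPlana}) and depends holomorphically on $w$, integrated over a compact interval. Unioning the three pole loci gives exactly $\mathbf{Z}_{\geq 1}\cdot\log x+\mathbf{Z}\cdot 2\pi i$, proving the claim.

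The main obstacle is the justification of the vanishing of the boundary contributions at $s=b$ as $b\to\infty$: this is where the hypothesis $\operatorname{Re}w>0$ enters decisively, both to kill $\tfrac{1}{2}h(b)$ and to override the $K|\operatorname{Im}w|$ growth inside the right-edge integrals. The $\delta\to 0$ issue for the $M$-piece is the only other minor subtlety, and is resolved by the splitting trick above. No cancellations between the pole sets of $\tilde{A}_{w,x}$ and $\tilde{T}^{\pm}_{K}$ need be verified for the stated inclusion, though a more precise analysis (which one could add) would show that the residues do not vanish and hence the pole set equals $\mathbf{Z}_{\geq 1}\cdot\log x+\mathbf{Z}\cdot 2\pi i$.
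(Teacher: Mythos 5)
Your proposal is correct and follows essentially the same route as the paper: the modified Abel--Plana formula with $a=1$ and $b\to\infty$, using $\operatorname{Re}w>0$ to kill the right-hand boundary contributions, identification of the surviving pieces with $A_{w,x}$, the left-edge ($M^{\pm}_{1,K}$-type) integrals and $T^{\pm}_{K}$, and then termwise meromorphic continuation via $\tilde{A}_{w,x}$, $\tilde{M}^{\pm}_{1,K}$, $\tilde{T}^{\pm}_{K}$, with the union of pole loci giving $\mathbf{Z}_{\geq1}\cdot\log x+\mathbf{Z}\cdot2\pi i$. The only phrasing to tighten is that $K$ must satisfy $\mathfrak{C}^{+}$ and $\mathfrak{C}^{-}$ simultaneously, since both $T^{+}_{K},T^{-}_{K}$ and both left-edge pieces occur (your ``choose according to $\arg x$'' achieves this, as only one of the two bounds is non-vacuous), and your splitting trick $\int_{0}^{K}=\int_{0}^{\epsilon}+\int_{\epsilon}^{K}$ for the $\delta>0$ lower limit in the $M$-terms is a sound way to handle a detail the paper's proof leaves implicit.
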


\begin{proof}
Firstly, we choose some $K$ such that the Assumptions $\mathfrak{C}^{+}$ and
$\mathfrak{C}^{-}$ are both met. This is always possible: If $\mp\arg(x)\leq
0$, we only need $K>0$, and if $\mp\arg(x)>0$, note that the right-hand side
in%
\[
0<K<-\frac{\log\left\vert x\right\vert }{\mp\arg x}%
\]
is strictly positive since $\left\vert x\right\vert <1$. Thus, some $K$ in
between these bounds exists and we fix a choice. Assume $\operatorname{Re}%
w>0$. Now we apply Prop. \ref{prop_TinyBoxAbelPlana} for $b\rightarrow+\infty
$. Easy estimates show that $h(b)\rightarrow0$ and the right edge term
vanishes as well, and by combining the previous propositions, we get%
\begin{align*}
& \sum_{n=1}^{\infty}\log(1-x^{n})e^{-wn}=\frac{1}{2}\log(1-x)e^{-w}%
+\sum_{n\geq1}\frac{1}{n}\frac{e^{n\log x-w}}{n\log x-w}+i(\tilde{M}_{1,K}%
^{+}(w)-\tilde{M}_{1,K}^{-}(w))\\
& \qquad-\sum_{m\geq1}\sum_{n\geq1}\frac{1}{m}\frac{e^{(m\log x-w+2\pi
in)(1+iK)}}{m\log x-w+2\pi in}-\sum_{m\geq1}\sum_{n\geq1}\frac{1}{m}%
\frac{e^{(m\log x-w-2\pi in)(1-iK)}}{m\log x-w-2\pi in}\text{.}%
\end{align*}
This identity holds only for $\operatorname{Re}w>0$, but the right-hand side
is a meromorphic function in the entire complex plane thanks to the quoted
propositions. Its poles are located at%
\[
\{\mathbf{Z}_{\geq1}\cdot\log x\}\cup\{\mathbf{Z}_{\geq1}\cdot\log
x+\mathbf{Z}_{\neq0}\cdot2\pi i\}=\mathbf{Z}_{\geq1}\cdot\log x+\mathbf{Z}%
\cdot2\pi i\text{,}%
\]
since $\tilde{M}_{1}^{\pm}$ is holomorphic in the entire complex plane.
Finally, connectedness of $\mathbf{C}$ minus these poles and the identity
principle imply that our choice of $K$ does not affect the continuation.
\end{proof}

\begin{remark}
It is not surprising that the pole locus is $2\pi i$-periodic, since $Q_{w}$
is clearly periodic under $w\mapsto w+2\pi i$. Note that most of the summands
that we had individually analytically continued do not enjoy such a
periodicity by themselves. Only their sum is periodic.
\end{remark}

Now we are ready to prove our first key ingredient for the analytic continuation.

\begin{theorem}
\label{thm_AnalyticContinuationForRx}Suppose $x\in\mathbf{C}^{\times}$ and
$\left\vert x\right\vert <1$.

\begin{enumerate}
\item Then the series $R_{x}(z)$ admits a meromorphic continuation to the
entire complex plane with poles at $\{x^{\mathbf{Z}_{\leq-1}},\overline
{x}^{\mathbf{Z}_{\leq-1}}\}$. These poles have order $1$.

\item In explicit terms, pick any sufficiently small choice of some $K>0$.
Then for all $z\in\mathbf{C}$ outside this set of poles, this continuation is
given by%
\[
R_{x}(z)=\frac{1}{2}\left(  \tilde{q}_{x}(z)+\tilde{q}_{\overline{x}%
}(z)\right)  \text{,}%
\]
where%
\begin{align*}
& \tilde{q}_{x}(z):=\frac{1}{2}\log(1-x)z+z\sum_{n\geq1}\frac{1}{n}\frac
{x^{n}}{n\log x+\operatorname*{Log}z}+i(\tilde{M}_{1,K}^{+}%
(-\operatorname*{Log}z)-\tilde{M}_{1,K}^{-}(-\operatorname*{Log}z))\\
& \qquad-\sum_{m,n\geq1}\frac{1}{m}\frac{e^{(m\log x+\operatorname*{Log}z+2\pi
in)(1+iK)}}{m\log x+\operatorname*{Log}z+2\pi in}-\sum_{m,n\geq1}\frac{1}%
{m}\frac{e^{(m\log x+\operatorname*{Log}z-2\pi in)(1-iK)}}{m\log
x+\operatorname*{Log}z-2\pi in}\text{,}%
\end{align*}
with $\tilde{M}_{1,K}^{\pm}$ as in Prop. \ref{prop_analytic_M}, and
$\operatorname*{Log}z$ is any choice of a logarithm defined in a neighbourhood
of $z $. In particular, the value of $\tilde{q}_{x}$ is independent of this
choice and the choice of $K$.
\end{enumerate}
\end{theorem}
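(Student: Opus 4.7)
The strategy is to reduce the meromorphic continuation of $R_x$ to that of $Q_x$ already established in Proposition \ref{prop_AnalyticContinuationForQx}, via the simple identity $\log|w| = \operatorname{Re}\log w$ applied to $w = 1 - x^r$. The first step is to observe that since $|x|<1$, every $1-x^r$ lies in the open disc of radius $1$ around $1$, so in particular in $\mathbf{C}\setminus\mathbf{R}_{\leq 0}$ where the principal branch is holomorphic and satisfies $\overline{\log w} = \log\overline{w}$. This gives
\[
\log|1-x^r| = \tfrac{1}{2}\bigl(\log(1-x^r)+\log(1-\overline{x}^r)\bigr),
\]
and consequently, for $|z|<1$,
\[
R_x(z) = \tfrac{1}{2}\sum_{r\geq 1}\log(1-x^r)\,z^r + \tfrac{1}{2}\sum_{r\geq 1}\log(1-\overline{x}^r)\,z^r = \tfrac{1}{2}\bigl(Q_x(-\log z) + Q_{\overline{x}}(-\log z)\bigr),
\]
where $Q_x$ is as in Proposition \ref{prop_AnalyticContinuationForQx} and we write $w = -\log z$ so that $e^{-w}=z$; note that $\operatorname{Re}(-\log z) = -\log|z| > 0$, so we are indeed inside the half-plane where $Q_x$ is defined by its series.

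The second step is to pass to the meromorphic continuation. Proposition \ref{prop_AnalyticContinuationForQx} gives a meromorphic continuation of $Q_x$ to all of $\mathbf{C}$ with poles in $\mathbf{Z}_{\geq 1}\cdot\log x + \mathbf{Z}\cdot 2\pi i$, and the remark following it notes the $2\pi i$-periodicity $Q_x(w+2\pi i)=Q_x(w)$. This periodicity is exactly what is needed so that, for any choice of branch $\operatorname{Log} z$, the value $Q_x(-\operatorname{Log} z)$ depends only on $z$ and not on the branch; we define $\tilde q_x(z):=Q_x(-\operatorname{Log} z)$, and symmetrically $\tilde q_{\overline{x}}(z)$. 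To apply the explicit formula from Proposition \ref{prop_AnalyticContinuationForQx} in both instances simultaneously, one chooses $K>0$ small enough that Assumption $\mathfrak{C}^{\pm}$ holds for both $x$ and $\overline{x}$; since the upper bound $-\log|x|/(\mp\arg x)$ is strictly positive (as $|x|<1$), and $|\overline{x}|=|x|$, $\arg\overline{x}=-\arg x$, any sufficiently small $K$ works. Substituting $w=-\operatorname{Log} z$ term-by-term into the formula for $Q_x$ from the proof of Proposition \ref{prop_AnalyticContinuationForQx} and factoring $e^{-w}=z$ from the two leading terms yields exactly the claimed expression for $\tilde q_x(z)$. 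The identity $R_x(z)=\tfrac{1}{2}(\tilde q_x(z)+\tilde q_{\overline{x}}(z))$ holds on the unit disc by construction and then extends to all of $\mathbf{C}$ minus poles by the identity principle on the connected complement.

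Finally, to locate and count the poles, one solves $-\operatorname{Log} z = n\log x + 2\pi i k$ for $n\geq 1$, $k\in\mathbf{Z}$; exponentiating gives $z = e^{-n\log x - 2\pi i k} = x^{-n}$, independent of $k$, so $\tilde q_x$ has potential poles precisely at $x^{\mathbf{Z}_{\leq -1}}$, and similarly $\tilde q_{\overline{x}}$ at $\overline{x}^{\mathbf{Z}_{\leq -1}}$. Each such pole is simple: among the countably many poles of $Q_x$ contributing to a given $z=x^{-n_0}$, only one (with $n=n_0$ and the appropriate $k$ determined by the branch) is encountered in a fundamental domain of the $2\pi i$-periodicity, and the poles of $Q_x$ are themselves simple by the explicit formula (each summand contributes at most a simple pole). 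The main obstacle, as I see it, is not a computational one but a bookkeeping issue: keeping clean control of the interplay between the branch ambiguity of $\operatorname{Log}$ and the $2\pi i$-periodicity of $Q_x$, so that $\tilde q_x$ is manifestly single-valued, and checking that the choice of $K$ can be made uniformly for $x$ and $\overline{x}$. Once these are in place, the explicit formula is obtained by plain substitution and every remaining verification is routine.
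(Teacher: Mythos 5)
Your proposal is correct and follows essentially the same route as the paper: pair $\log|1-x^n|=\tfrac12\bigl(\log(1-x^n)+\log(1-\overline{x}^n)\bigr)$ via the principal branch, identify $R_x(e^{-w})$ with $\tfrac12(Q_x(w)+Q_{\overline{x}}(w))$, invoke the meromorphic continuation of Proposition \ref{prop_AnalyticContinuationForQx}, use its $2\pi i$-periodicity to descend along $w=-\operatorname{Log}z$ and glue over branches, and read off poles and the explicit formula by substitution, with a common small $K$ serving both $x$ and $\overline{x}$. The only (cosmetic) difference is that you descend $Q_x$ and $Q_{\overline{x}}$ separately to $\tilde q_x$ and $\tilde q_{\overline{x}}$ before summing, whereas the paper descends their sum $\widetilde{RW}_x$ at once; the level of detail on pole simplicity and possible coincidences between the $x$- and $\overline{x}$-poles matches the paper's own.
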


\begin{proof}
We use a trick:\textit{\ (Trick)} Firstly, define $\tilde{h}_{w,x}%
(s):=\log(1-x^{n})e^{-ws}$. Then for all \textsl{integers} $n$, we have%
\begin{align*}
& \tilde{h}_{w,x}(n)+\tilde{h}_{w,\overline{x}}(n)=\log(1-e^{n\log x}%
)e^{-wn}+\log(1-e^{n\log\overline{x}})e^{-wn}\\
& \qquad=\left(  \log(1-e^{n\log x})+\log(\overline{1-e^{n\log x}})\right)
e^{-wn}=2\log\left\vert 1-e^{n\log x}\right\vert \cdot e^{-wn}\text{.}%
\end{align*}
The key point that we have used is that we only need this formula for
$n\in\mathbf{N}$, in particular $n$ is \textit{real}. Thus, $\overline{n}=n$,
which would be \textsl{false} for a general $s$. Moreover, we have used the
identities $\log z+\log\overline{z}=2\log\left\vert z\right\vert $ and
$\log\overline{z}=\overline{\log z}$, which follow from our choice of the
logarithm, Equation \ref{lbranch1}, and which need \textsl{not} hold for other
branches. For all $w\in\mathbf{C}$ with $\operatorname{Re}w>0$, we have
$\left\vert e^{-w}\right\vert <1$, so by\ Lemma \ref{Lemma_ConvergeOfRx} the
series%
\begin{align*}
R_{x}(e^{-w})  & =\sum_{n=1}^{\infty}\log\left\vert 1-x^{n}\right\vert \cdot
e^{-wn}=\frac{1}{2}\sum_{n=1}^{\infty}\log(1-x^{n})\cdot e^{-wn}\\
& \qquad+\frac{1}{2}\sum_{n=1}^{\infty}\log(1-\overline{x}^{n})\cdot
e^{-wn}=\frac{1}{2}\left(  Q_{x}(w)+Q_{\overline{x}}(w)\right)
\end{align*}
is uniformly convergent and defines a holomorphic function in the right
half-plane. Thanks to Prop. \ref{prop_AnalyticContinuationForQx}, $w\mapsto
R_{x}(e^{-w})$ has a meromorphic continuation to the entire complex plane,
call it $\widetilde{RW}_{x}(w)$.\newline\textit{(Conclusion)} This analytic
continuation must be periodic under $w\rightarrow w+2\pi i$. To see this, note
that it is true for $R_{x}(e^{-w})$ in the right half-plane, simply since it
is true for $e^{-w}$. Thus, $\widetilde{RW}_{x}$ must also be periodic. Let
$\operatorname*{Log}:U\rightarrow\mathbf{C}$ be some branch of the logarithm,
defined on some domain $U\subset\mathbf{C}$. Define a function%
\[
\tilde{R}_{x}:U\longrightarrow\mathbf{C}\text{,}\qquad\tilde{R}_{x}%
(z):=\widetilde{RW}_{x}(-\operatorname*{Log}z)\text{.}%
\]
This makes $\tilde{R}_{x}$ a meromorphic function on $U$. Since all branches
of the logarithm differ by multiples of $2\pi i$, and $\widetilde{RW}_{x}$ is
$2\pi i$-periodic, it follows that choosing different $U$ and different
branches, the definitions of $\tilde{R}_{x}$ on the various opens $U$ glue.
This defines a meromorphic function on the entire complex plane. Finally, we
observe that $\tilde{R}_{x}(e^{-w})=R_{x}(e^{-w})$ for $\operatorname{Re}w>0$,
so this is an analytic continuation. The poles of $Q_{x}$ become poles at
$\{x^{\mathbf{Z}_{\leq-1}}\}$, and analogously for $Q_{\overline{x}}$.\newline
For the explicit formula, unravel our construction. For $x$ and $\overline{x}
$ we may have picked different constants $K$, but in view of Assumption
$\mathfrak{C}^{\pm}$, taking the minimum of these choices, will be fine for both.
\end{proof}

\section{Equidistribution arguments}

Let us recall Weyl Equidistribution for $n$ variables.

\begin{theorem}
[Weyl Equidistribution]\label{Thm_WeylEquidist_multidim}Suppose $(t_{n}%
)_{n\geq1}$ is a uniformly distributed sequence in $[0,1]^{d}$. Then for every
Riemann-integrable $f:[0,1]^{d}\rightarrow\mathbf{R}$, one has equality
\begin{equation}
\underset{N\rightarrow\infty}{\lim}\frac{1}{N}\sum_{n=1}^{N}f(t_{n}%
)=\int_{[0,1]^{d}}f(\underline{s})\,\mathrm{d}\underline{s}\text{.}%
\label{lssa}%
\end{equation}

\end{theorem}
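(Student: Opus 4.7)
The plan is to reduce the general Riemann-integrable case to the special case where $f$ is the indicator function of an axis-aligned box $B \subseteq [0,1]^d$, which is essentially the \emph{definition} of uniform distribution of the sequence $(t_n)_{n \geq 1}$. In that baseline case the left-hand side of Equation (\ref{lssa}) equals $\lim_{N \to \infty} \frac{1}{N}\#\{\,n \leq N : t_n \in B\,\}$ and the right-hand side equals $\operatorname{vol}(B)$, so equality is built in. By linearity of both $\frac{1}{N}\sum_{n \leq N}(-)$ and $\int_{[0,1]^d}(-)\,\mathrm{d}\underline{s}$, the identity then extends to any finite $\mathbf{R}$-linear combination of such indicator functions, i.e.\ to any \emph{step function} that is constant on the cells of a grid partition of the unit cube.

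Next I would execute a sandwich argument. Fix a Riemann-integrable $f : [0,1]^d \to \mathbf{R}$ and an $\varepsilon > 0$. By the multidimensional Darboux criterion, Riemann-integrability produces two step functions $s^{-}_{\varepsilon} \leq f \leq s^{+}_{\varepsilon}$ on a sufficiently fine common grid partition with
\[
\int_{[0,1]^d}\bigl(s^{+}_{\varepsilon}(\underline{s}) - s^{-}_{\varepsilon}(\underline{s})\bigr)\,\mathrm{d}\underline{s} \;<\; \varepsilon.
\]
Applying the step-function case already established to each of $s^{\pm}_{\varepsilon}$, and using the order-preserving nature of both operations, gives
\[
\int_{[0,1]^d}\!\! f\,\mathrm{d}\underline{s} - \varepsilon \;\leq\; \liminf_{N \to \infty}\,\frac{1}{N}\sum_{n=1}^{N} f(t_n) \;\leq\; \limsup_{N \to \infty}\,\frac{1}{N}\sum_{n=1}^{N} f(t_n) \;\leq\; \int_{[0,1]^d}\!\! f\,\mathrm{d}\underline{s} + \varepsilon.
\]
Letting $\varepsilon \to 0$ collapses the $\liminf$ and $\limsup$ to the common value $\int_{[0,1]^d} f\,\mathrm{d}\underline{s}$, which is exactly the assertion.

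The only delicate point, and the one I would flag as the main obstacle, is a boundary-effect bookkeeping: whether one counts $t_n$ lying on the closed boundary of a box $B$, and whether the step functions $s^{\pm}_{\varepsilon}$ are built from half-open, open or closed boxes. Because the boundary of a box has Lebesgue measure zero and the $s^{\pm}_{\varepsilon}$-values on such sets contribute nothing to the integrals, the ambiguity is absorbed into the $\varepsilon$-error and does not affect the limit. This is the reason the theorem requires Riemann-integrability rather than mere boundedness: Riemann-integrability is equivalent to the set of discontinuities of $f$ being Lebesgue null, which is precisely the condition that guarantees the Darboux sandwich $s^{\pm}_{\varepsilon}$ with small gap exists.
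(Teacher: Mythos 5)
Your proposal is correct and is the standard textbook proof of Weyl's criterion in its Riemann-integrable form (indicator functions of boxes by the very definition of uniform distribution, extension to step functions by linearity, then a Darboux sandwich); the paper itself does not prove this theorem at all but quotes it as classical background, so there is no internal argument to compare against, and your argument is exactly the one found in the cited literature (e.g.\ Kuipers--Niederreiter). One small caution on your final paragraph: the boundary bookkeeping is not absorbed merely because $\partial B$ has \emph{Lebesgue} measure zero --- indeed the paper's own remark on the No-Go theorem stresses that a uniformly distributed sequence can visit a Lebesgue-null set with positive frequency. What actually saves you is that the boundary of a box has \emph{Jordan content} zero: it can be covered by finitely many boxes of arbitrarily small total volume, and applying the definition of uniform distribution to those covering boxes shows the visit frequency of $\partial B$ is zero, so the open/half-open/closed ambiguity is harmless. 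With that justification substituted, the proof is complete.
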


This type of equidistribution statement will be an important tool later.
However, we will need to apply it in situations where the function $f$ is not Riemann-integrable.

\begin{remark}
This is a considerable problem even for $d=1$: The left-hand side only depends
on countably many values of $f$, so any notion of integrability which is
preserved under changing $f$ at countably many points is inevitably too weak
to keep the conclusion of the theorem intact. There is a clarifying
\textsl{No-Go Theorem:} Given any Lebesgue-integrable function
$f:[0,1]\rightarrow\mathbf{R}$ which does \emph{not} admit a
Riemann-integrable representative, there must exist a uniformly distributed
sequence $(t_{n})$ such that Equation \ref{lssa} fails \cite{MR0225946}.
\end{remark}

There are more refined and flexible versions of equidistribution theorems
which allow relaxing the assumption of Riemann-integrability when working with
$d=1$ and the sequence is%
\[
t_{n}:=\{n\theta\}\text{,}%
\]
where we write%
\[
\{x\}:=x-\left\lfloor x\right\rfloor
\]
for the fractional part of a real number $x$.

\begin{remark}
\label{rmk_ergodic_approach}If $\theta$ is irrational, this sequence stems
from an ergodic discrete dynamical system on the unit circle, so one can get a
result similar to Equation \ref{lssa} by Birkhoff's Ergodic\ Theorem, however
it is only valid almost everywhere, and that is not good enough for our purposes.
\end{remark}

Inspired by work of Hardy and\ Littlewood, Oskolkov introduced his notion of
functions of Class H and identified conditions for Equation \ref{lssa} to
hold, \cite{MR1044053}, \cite{MR1275906}. We shall work with a stronger
version due to Baxa and Schoi\ss engeier. We will also have to do a little
extra work since we need the result to hold for the sequence $(\{n\theta
_{1}\},\{n\theta_{2}\})$, i.e. dimension $d=2$, as well. Either way, the
specific arithmetic properties of the number $\theta$ become relevant, so we
need to recall some material:

Suppose $\theta\in(0,1)$ is a real number. The (simple) continued fraction
presentation of $\theta$ is%
\begin{equation}
\theta=\frac{1}{a_{1}+\frac{1}{a_{2}+\frac{1}{\ddots}}}\text{,}\qquad
\text{(with }a_{n}\in\mathbf{Z}_{\geq1}\text{)}\label{luia5}%
\end{equation}
and is customarily abbreviated by $\theta=[a_{1},a_{2},\ldots]$. The $a_{n} $
are the \emph{partial quotients}. Moreover, one defines $p_{n},q_{n}$
recursively by%
\begin{align*}
& p_{0}:=0\text{,}\qquad q_{0}:=1\text{,}\qquad p_{-1}:=1\text{,}\qquad
q_{-1}:=0\text{,}\\
& p_{n+1}:=a_{n+1}p_{n}+p_{n-1}\text{,}\qquad q_{n+1}:=a_{n+1}q_{n}+q_{n-1}%
\end{align*}
for all $n\in\mathbf{Z}_{\geq0}$. The fractions $\frac{p_{n}}{q_{n}}$ are the
\emph{convergents} and they correspond to truncating the continued fraction in
line \ref{luia5} after evaluating the $n$-th interwoven fraction. They are
always in lowest terms, i.e. $(p_{n},q_{n})=1$.

The right-hand side in line \ref{luia5} thus has the tacit meaning to be the
limit of the sequence of convergents (one can show that this always
converges). If $\theta\in(0,1)$ is irrational, its simple continued fraction
always exists and the $a_{1},a_{2},\ldots$ are uniquely determined
\cite[Chapter B]{MR1451873}, and conversely all sequences $(a_{n})$ in
$\mathbf{Z}_{\geq1}$ define an irrational $\theta\in(0,1)$.

\begin{definition}
\label{def_BadlyApproximable}For every real number $\theta$, let $\left\Vert
\theta\right\Vert \in\lbrack0,\frac{1}{2}]$ denote the distance to the closest
integer. An irrational $\theta\in(0,1)$ is called \emph{badly approximable} if
one (then all) of the following equivalent conditions are met:

\begin{enumerate}
\item The infimum $\underset{n\geq1}{\inf}\{n\left\Vert n\theta\right\Vert
\}>0$ is strictly positive.

\item There exists a $C_{\theta}>0$ such that $\left\vert \theta-\frac{p}%
{q}\right\vert >\frac{C_{\theta}}{q^{2}}$ holds for all $p/q\in\mathbf{Q}$.

\item There exists a $K_{\theta}>0$ such that $a_{n}<K_{\theta}$ holds for all
partial quotients $a_{n}$ in the continued fraction $[a_{1},a_{2},\ldots]$.
\end{enumerate}
\end{definition}

The equivalence of these characterizations is shown in \cite[Theorem
23]{MR1451873} or \cite[Theorem 5F]{MR568710}. We also need:

\begin{lemma}
\label{lemma_QLinearIndependentMeansTorusUniformlyDist}Suppose $1,\theta
_{1},\ldots,\theta_{d}$ are $\mathbf{Q}$-linearly independent elements inside
the real line. Then the sequence of vectors%
\[
(\{n\theta_{1}\},\ldots,\{n\theta_{d}\})_{n\geq1}\in\lbrack0,1]^{d}%
\]
is uniformly distributed.
\end{lemma}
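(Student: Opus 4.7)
The plan is to prove this via the multi-dimensional Weyl criterion: a sequence $(x_n)_{n\geq 1}$ in $[0,1]^d$ is uniformly distributed if and only if
\[
\lim_{N\to\infty}\frac{1}{N}\sum_{n=1}^{N} e^{2\pi i\langle \underline{h}, x_n\rangle}=0
\]
for every nonzero $\underline{h}=(h_1,\ldots,h_d)\in\mathbf{Z}^d$. This reduces the problem to a single exponential sum calculation, which fits our hypothesis perfectly.

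First, I would fix an arbitrary nonzero $\underline{h}\in\mathbf{Z}^d$ and compute
\[
e^{2\pi i\langle \underline{h},\,(\{n\theta_1\},\ldots,\{n\theta_d\})\rangle}=e^{2\pi i(h_1\{n\theta_1\}+\cdots+h_d\{n\theta_d\})}=e^{2\pi i n\alpha},
\]
where $\alpha:=h_1\theta_1+\cdots+h_d\theta_d$. The key observation is that the integer shifts coming from $\{n\theta_j\}=n\theta_j-\lfloor n\theta_j\rfloor$ disappear inside $e^{2\pi i(\cdot)}$. Now $\mathbf{Q}$-linear independence of $1,\theta_1,\ldots,\theta_d$ together with $\underline{h}\neq 0$ forces $\alpha\notin\mathbf{Q}$; in particular $\alpha\neq 0$ and hence $e^{2\pi i\alpha}\neq 1$.

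Next I would sum the geometric series:
\[
\sum_{n=1}^{N} e^{2\pi i n\alpha}=\frac{e^{2\pi i\alpha}\bigl(1-e^{2\pi iN\alpha}\bigr)}{1-e^{2\pi i\alpha}},
\]
whose modulus is bounded by $2/|1-e^{2\pi i\alpha}|$ independently of $N$. Dividing by $N$ and letting $N\to\infty$ yields zero, which verifies Weyl's criterion and concludes uniform distribution.

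The only nontrivial ingredient is Weyl's criterion itself, whose proof (Stone--Weierstrass approximation of the indicator of a box by trigonometric polynomials) is standard and available in any textbook on equidistribution; since the statement allows us to invoke it as a known fact, there is no real obstacle here. The entire substance of the lemma is the passage from the linear-independence hypothesis to $\alpha\notin\mathbf{Q}$, which is immediate once one writes down the correct single-variable sum.
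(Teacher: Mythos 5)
Your proof is correct: the reduction via the multi-dimensional Weyl criterion to the single exponential sum $\frac{1}{N}\sum_{n=1}^{N}e^{2\pi i n\alpha}$ with $\alpha=h_{1}\theta_{1}+\cdots+h_{d}\theta_{d}\notin\mathbf{Q}$, bounded by a geometric series, is exactly the standard argument the paper invokes (it gives no proof itself, merely citing Kuipers--Niederreiter, where this is Example 6.1 via the Weyl criterion). So you have simply written out, correctly, the proof the paper treats as standard.
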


\begin{proof}
Standard. A detailed proof is given for example in \cite[Ch. I, Theorem 6.3,
Example 6.1]{MR0419394}.
\end{proof}

We fix $\theta\in(0,1)$ irrational. We shall need to work with the Main Lemma
of Baxa and Schoi\ss engeier, so let us recall its statement:

\begin{definition}
[\cite{MR1914805}]\label{Def_SeqSigmaPermutations}For the fixed irrational
$\theta\in(0,1)$, we use the following notation: Given any $N\in
\mathbf{Z}_{\geq1}$, let $\sigma_{N}\in\mathfrak{S}_{N}$ denote the
permutation such that%
\[
\{\sigma_{N}(1)\theta\}<\{\sigma_{N}(2)\theta\}<\{\sigma_{N}(3)\theta
\}<\cdots<\{\sigma_{N}(N)\theta\}\text{.}%
\]

\end{definition}

With this notation:

\begin{lemma}
[Baxa--Schoi\ss engeier Main Lemma]Fix $\theta\in(0,1)$ irrational and use the
notation as introduced above. Let $\beta=\frac{p}{q}\in\mathbf{Q}\cap(0,1]$ be
a rational number given in lowest terms (with $p,q>0$). Write $n_{N}$ to
denote the largest integer such that%
\[
1\leq n_{N}\leq N\qquad\text{and}\qquad\{\sigma_{N}(n_{N})\theta
\}<\beta\text{.}%
\]
Then there exists $m_{0}\in\mathbf{Z}_{\geq1}$ such that for all $m\geq m_{0}$
and all integers $b$ with $1\leq b\leq\max\{1,\frac{a_{m+1}}{2q}\}$ the
following holds: For every $f:[0,1]\rightarrow\lbrack0,+\infty)$ a
Lebesgue-integrable function, which is monotonously increasing in $[0,\beta) $
and is zero in $(\beta,1]$%
\[
\frac{1}{N}\sum_{n=1}^{N}f(\{n\theta\})\leq7q\int_{0}^{1}f(t)\,\mathrm{d}%
t+\frac{1}{\sigma_{N}(n_{N})}f(\{\sigma_{N}(n_{N})\theta\})
\]
holds for $N:=bq_{m}$.
\end{lemma}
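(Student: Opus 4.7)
This is the Main Lemma of Baxa and Schoi\ss engeier \cite{MR1914805}; the plan is to reproduce their argument. The essential ingredients are the Three-Gap Theorem of Steinhaus--S\'os together with a monotone integral comparison.

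\textbf{Geometric setup.} For $N = bq_m$ in the hypothesised range we have $q_m \leq N < q_{m+1}$, so the Three-Gap Theorem describes the consecutive differences $g_k := \{\sigma_N(k+1)\theta\} - \{\sigma_N(k)\theta\}$ explicitly: they take at most three distinct values, each a non-negative integer combination of $\eta_{m-1} := \|q_{m-1}\theta\|$ and $\eta_m := \|q_m\theta\|$. In particular, only a controlled number of the gaps have the ``short'' length $\eta_m$ (of order $1/q_{m+1}$); the remainder are ``long'' (of order $1/q_m$).

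\textbf{Main comparison.} Since $f \geq 0$ vanishes on $(\beta,1]$ and is non-decreasing on $[0,\beta)$,
\begin{equation*}
\sum_{n=1}^{N} f(\{n\theta\}) \;=\; \sum_{k=1}^{n_N} f(\{\sigma_N(k)\theta\}),
\end{equation*}
and monotonicity gives $g_k \cdot f(\{\sigma_N(k)\theta\}) \leq \int_{\{\sigma_N(k)\theta\}}^{\{\sigma_N(k+1)\theta\}} f(t)\,\mathrm{d}t$ for $1 \leq k < n_N$. Telescoping then yields
\begin{equation*}
\sum_{k=1}^{n_N-1} g_k \cdot f(\{\sigma_N(k)\theta\}) \;\leq\; \int_0^1 f(t)\,\mathrm{d}t.
\end{equation*}

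\textbf{Amortised reciprocal-gap bound.} The crux is to convert this into a bound on $\sum_{k < n_N} f(\{\sigma_N(k)\theta\})$. Splitting the sum into short-gap and long-gap contributions using the three-gap description, one balances the counts --- few short gaps but of small length, many long gaps of length of order $1/q_m$ --- to obtain the amortised estimate $\sum_{k < n_N} g_k^{-1} \leq 7qN$ under the hypothesis $b \leq \max\{1, a_{m+1}/(2q)\}$. A naive worst-case bound $g_k^{-1} \leq q_m + q_{m+1}$ is too weak; the hypothesis on $b$ is precisely what prevents too many short gaps from accumulating inside $[0,\beta)$. Combining with the previous step gives $\sum_{k < n_N} f(\{\sigma_N(k)\theta\}) \leq 7qN \int_0^1 f$; isolating the $k = n_N$ term as a boundary contribution, dividing by $N$, and using $1/N \leq 1/\sigma_N(n_N)$ recovers the stated inequality.

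The threshold $m_0$ is purely technical: it ensures $q_m$ is large enough relative to $q$ that the three-gap partition resolves $\beta = p/q$ cleanly, so the isolation of the $k = n_N$ summand as a single boundary term is valid. The main obstacle is the amortised reciprocal-gap bound itself, which is the combinatorial heart of the Baxa--Schoi\ss engeier argument and the reason for the peculiar factor $1/(2q)$ in the hypothesis on $b$.
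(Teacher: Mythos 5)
The paper itself does not prove this lemma at all: it is quoted verbatim as the ``Main Lemma'' of \cite{MR1914805} and used as a black box, so there is no internal argument to compare against. Your proposal likewise defers to Baxa--Schoi\ss engeier, but as a standalone proof it has a genuine gap precisely at the point you yourself flag as ``the combinatorial heart.'' The parts you do carry out are fine: since $\theta$ is irrational and $\beta$ rational, no $\{n\theta\}$ equals $\beta$, so the sum collapses to the points below $\beta$; the monotone comparison $f(\{\sigma_N(k)\theta\})\le g_k^{-1}\int_{\{\sigma_N(k)\theta\}}^{\{\sigma_N(k+1)\theta\}}f$ and the telescoping are valid; and isolating $k=n_N$ and using $\sigma_N(n_N)\le N$ at the end is correct. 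But the asserted ``amortised reciprocal-gap bound'' $\sum_{k<n_N}g_k^{-1}\le 7qN$ is neither proved nor, in the form stated, sufficient: from $f(\{\sigma_N(k)\theta\})\le g_k^{-1}\int_{\mathrm{gap}_k}f$ a bound on the \emph{sum} of reciprocals does not yield $\sum_{k<n_N}f(\{\sigma_N(k)\theta\})\le 7qN\int_0^1 f$, because the integrals over distinct gaps are unrelated; what would suffice is a \emph{uniform} bound $g_k^{-1}\le 7qN$, and that is false in the relevant regime. Indeed, for $N=bq_m$ the three-distance configuration contains short gaps of length $\Vert q_m\theta\Vert\asymp 1/q_{m+1}\asymp 1/(a_{m+1}q_m)$, and since $b=1$ is always admissible while $a_{m+1}$ may be arbitrarily large compared to $q$, one can have $g_k^{-1}\gg 7qN$. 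So the points whose right-hand gap is short cannot be handled gap by gap; they are exactly why the hypothesis $b\le\max\{1,a_{m+1}/(2q)\}$, the rationality of $\beta=p/q$ (which is where the factor $7q$ and the threshold $m_0$ enter, by controlling how many such points can crowd against $\beta$ from below), and the special additive term $\sigma_N(n_N)^{-1}f(\{\sigma_N(n_N)\theta\})$ appear in the statement. None of this mechanism is reconstructed in your sketch, and it cannot be waved through with a one-line amortisation claim.

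If the intention is simply to invoke the result, do what the paper does and cite \cite[Main Lemma]{MR1914805} without a proof sketch; if the intention is to reprove it, the missing ingredient is the precise three-distance bookkeeping for $N=bq_m$ (which gap lengths occur, how often, and where relative to $\beta$), together with the argument showing that under $b\le\max\{1,a_{m+1}/(2q)\}$ every contribution except the single topmost point can be absorbed into $7q\int_0^1 f$.
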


This is \cite[Main Lemma]{MR1914805}. Next, we discuss the Equidistribution
theorem of \textit{loc. cit}. Following their paper, we isolate a well-behaved
class of real functions that go considerable beyond the Riemann-integrable ones:

\begin{definition}
[{\cite[Theorem 1]{MR1914805}}]Let $F\subseteq\lbrack0,1]\cap\mathbf{Q}$ be a
finite subset of the rational numbers. We say that a function
$f:[0,1]\rightarrow\mathbf{R}$ belongs to \emph{Class }$\operatorname*{BS}(F)$
if the following properties hold:

\begin{enumerate}
\item $f$ is Lebesgue-integrable,

\item $f$ is almost everywhere continuous,

\item $f$ locally bounded at every point in $[0,1]\setminus F$,

\item for every $\beta\in F$, there exists some $\varepsilon>0$ such that
$f\mid_{(\beta-\varepsilon,\beta)}$ is bounded or monotone,

\item for every $\beta\in F$, there exists some $\varepsilon>0$ such that
$f\mid_{(\beta,\beta+\varepsilon)}$ is bounded or monotone.
\end{enumerate}

We call $F$ the set of (possible) \emph{singularities}. If $f:[0,1]\rightarrow
\mathbf{C}$ is complex-valued, we say $f\in\operatorname*{BS}(F)$ if both real
and imaginary part belong to $\operatorname*{BS}(F)$.
\end{definition}

\begin{theorem}
[Baxa--Schoi\ss engeier Equidistribution]%
\label{Thm_BaxaSchoissengeierEquidist}Suppose $\theta\in(0,1)$ is irrational
and $f\in\operatorname*{BS}(F)$ for a suitably chosen $F$. Then we have
equality%
\[
\underset{N\rightarrow\infty}{\lim}\frac{1}{N}\sum_{n=1}^{N}f(\{n\theta
\})=\int_{0}^{1}f(t)\,\mathrm{d}t
\]
if and only if%
\begin{equation}
\underset{n\rightarrow\infty}{\lim}\frac{f(\{n\theta\})}{n}=0\text{.}%
\label{lssa2}%
\end{equation}
The condition of Equation \ref{lssa2} is automatically satisfied if $\theta$
is badly approximable.
\end{theorem}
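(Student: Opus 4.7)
The plan has three pieces: the forward implication is immediate from a telescoping identity, the reverse implication is reduced to the Baxa--Schoi\ss engeier Main Lemma after decomposing $f$, and the automatic validity of Equation \ref{lssa2} for badly approximable $\theta$ follows from a Diophantine bound combined with an $L^{1}$-monotonicity estimate. For the forward direction, if $S_{N}:=\sum_{n=1}^{N}f(\{n\theta\})$ satisfies $S_{N}/N\to\int_{0}^{1}f$, then
\[
\frac{f(\{N\theta\})}{N}=\frac{S_{N}}{N}-\frac{N-1}{N}\cdot\frac{S_{N-1}}{N-1}\longrightarrow 0,
\]
which is Equation \ref{lssa2}.

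For the reverse direction, decompose $f=g+\sum_{\beta\in F}h_{\beta}$, where $g$ is bounded and almost everywhere continuous (hence Riemann-integrable), and each $h_{\beta}$ is supported in a small neighbourhood of $\beta$ on which it is monotone on each side; such a decomposition exists by axioms (3)--(5) of Class $\operatorname{BS}(F)$. Classical Weyl equidistribution (Theorem \ref{Thm_WeylEquidist_multidim} in dimension one, justified by Lemma \ref{lemma_QLinearIndependentMeansTorusUniformlyDist}) handles $g$ directly. Each $h_{\beta}$ splits further into left- and right-monotone pieces $h_{\beta}^{-}+h_{\beta}^{+}$; the right piece is reduced to the left-monotone hypothesis of the Main Lemma by the reflection $x\mapsto 1-x$, which transports $\{n\theta\}$ to $\{-n\theta\}$ (equidistributed and still satisfying Equation \ref{lssa2}).

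For each resulting one-sided, nonnegative, monotone piece $h$, introduce the truncation $T_{M}:=\min(h,M)$ (bounded and Riemann-integrable) and the remainder $R_{M}:=h-T_{M}$ (still monotone, supported near $\beta$, with $\|R_{M}\|_{1}\to 0$ as $M\to\infty$). The Main Lemma applied to $R_{M}$ along $N=bq_{m}$ yields
\[
\frac{1}{N}\sum_{n=1}^{N}R_{M}(\{n\theta\})\leq 7q\|R_{M}\|_{1}+\frac{R_{M}(\{\sigma_{N}(n_{N})\theta\})}{\sigma_{N}(n_{N})};
\]
the error term is of the form $h(\{k\theta\})/k$ and vanishes by Equation \ref{lssa2}, since $\sigma_{N}(n_{N})\to\infty$ as $m\to\infty$. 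Combined with Weyl for $T_{M}$ and letting $M\to\infty$, this yields Cesaro convergence along the subsequence $N=bq_{m}$. Extension to arbitrary $N$ inside each block $[bq_{m},q_{m+1})$ uses nonnegativity of the integrand and the bounded growth ratio $q_{m+1}/q_{m}\leq a_{m+1}+1$ to sandwich partial sums between the endpoint values.

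For badly approximable $\theta$: for each $\beta=p/q\in F$, characterisation (2) of Definition \ref{def_BadlyApproximable} applied to a rational of denominator $nq$ yields $|\{n\theta\}-\beta|\geq C'/n$ with $C'=C_{\theta}/q^{2}$. Meanwhile, any Lebesgue-integrable function which is monotone on one side of $\beta$ satisfies $f(x)\cdot|x-\beta|\to 0$ as $x\to\beta$: monotonicity gives $f(x)\cdot|x-\beta|/2\leq\int_{\beta}^{x}|f|$, and the right-hand side vanishes by absolute continuity of the Lebesgue integral. Combining forces $f(\{n\theta\})=o(n)$. The main technical obstacle is the interpolation from $N=bq_{m}$ to general $N$: the Main Lemma only bounds at these distinguished lengths, and patching requires uniform control across each block together with a careful Ostrowski-type decomposition using the continued fraction arithmetic of $\theta$.
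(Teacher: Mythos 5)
First, a point of comparison: the paper does not prove this statement at all — it is quoted from Baxa--Schoi\ss engeier, with the remark ``Most of this is \cite[Theorem 1]{MR1914805}; the last claim is \cite[Theorem 2, (2)]{MR1914805}.'' The closest internal analogue is the proof of the $d$-dimensional variant, Theorem \ref{thm_bs_multi}, which follows exactly the route you propose (decompose off the singularity, apply the Main Lemma to the singular part, Weyl equidistribution to the Riemann-integrable part). Within your attempt, the forward implication (telescoping $S_N/N$) is correct, and your argument for the badly approximable case is sound: $|\{n\theta\}-p/q|\geq C_\theta/(q^2 n)$ from characterization (2) of Definition \ref{def_BadlyApproximable}, combined with $|f(x)|\cdot|x-\beta|\to 0$ as $x\to\beta$ from one-sided monotonicity and absolute continuity of the Lebesgue integral, does give $f(\{n\theta\})=o(n)$ (after noting that away from the finite set $F$ the function is locally bounded).

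The genuine gap is the step you yourself flag and then do not carry out: passing from the distinguished lengths $N=bq_m$, which are the only lengths the Main Lemma covers (and only for $1\leq b\leq\max\{1,a_{m+1}/(2q)\}$), to arbitrary $N$. The mechanism you invoke, a ``bounded growth ratio $q_{m+1}/q_m\leq a_{m+1}+1$,'' is not available: the theorem is asserted for every irrational $\theta$, so the partial quotients $a_{m+1}$ are unbounded in general and the ratio $q_{m+1}/q_m$ can be arbitrarily large; sandwiching a general $N$ between the block endpoints $q_m$ and $q_{m+1}$ therefore loses all control, and your limsup bound is only established along a sparse subsequence. The correct patch — used by Baxa--Schoi\ss engeier and reproduced in the paper's proof of Theorem \ref{thm_bs_multi} — is a case distinction on $b:=\lfloor N/q_m\rfloor$: if $b\leq a_{m+1}/(2q)-1$, compare $\sum_{n\leq N}$ with $\sum_{n\leq (b+1)q_m}$, which is still an admissible length and satisfies $(b+1)q_m\leq 2N$; if $b>a_{m+1}/(2q)-1$, compare with $\sum_{n\leq q_{m+1}}$ and use this lower bound on $b$ to get $q_{m+1}/N=O(q)$. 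This is precisely where the constants $14q$ and $28q^2$ (and the prefactors $2$ and $4q$ on the error terms) in the paper's displayed inequalities come from, and without it the reverse implication is not proved. Two smaller points to make explicit: the error term produced by the Main Lemma is $R_M(\{\sigma_N(n_N)\theta\})/\sigma_N(n_N)$, so you need $0\leq R_M\leq|f|$ near $\beta$ to deduce its vanishing from Equation \ref{lssa2}; and your decomposition $f=g+\sum_\beta h_\beta$ must separately treat the bounded-monotone and the negatively divergent cases (apply the truncation argument to $-h_\beta$ there), which is routine but should be stated.
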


Most of this is \cite[Theorem 1]{MR1914805}. The last claim is \cite[Theorem
2, (2)]{MR1914805}. An even stronger version was established in
\cite{MR2189068}, under broader, but very technical assumptions.

\begin{remark}
If one can ensure stronger conditions on $f$, a precursor of this result is
given in \cite{MR1044053}. If $f$ is a function of Oskolkov's Class H and
$\theta$ is badly approximable, its partial quotients are bounded, so
$\{n\theta\}$ is regularly distributed by \cite[Theorem 3]{MR1275906}. Then
Theorem 1 \textit{loc. cit.} also gives the same conclusion.
\end{remark}

\subsection{An ad-hoc multi-dimensional equidistribution theorem}

As mentioned before, we shall need a slightly stronger form of the
Baxa--Schoi\ss engeier result, imitating Weyl Equidistribution not just for
$d=1$, but also for $d=2$.

\begin{definition}
\label{def_class_BSU}Let $F\subseteq\lbrack0,1]\cap\mathbf{Q}$ be a finite
subset of the rational numbers. We say that a function $f:[0,1]^{d}%
\rightarrow\mathbf{R}$ belongs to \emph{Class }$\operatorname*{BSU}^{d}(F)$ if
the following holds: There exists a function $g\in\operatorname*{BS}(F)$ and a
function $h:[0,1]^{d}\rightarrow\mathbf{R}$ such that%
\[
f(y_{1},\ldots,y_{d})=g(y_{1})\cdot h(y_{1},\ldots,y_{d})\text{,}%
\]
where the function $h$ is Riemann-integrable. We call a choice of such a
function $g$ a \emph{singular weight}. If $f:[0,1]^{d}\rightarrow\mathbf{C}$
is complex-valued, we say $f\in\operatorname*{BSU}^{d}(F)$ if both real and
imaginary part belong to $\operatorname*{BSU}^{d}(F)$.
\end{definition}

We are ready to state our minimalistic extension of Baxa--Schoi\ss engeier
Equidistribution, just about strong enough for what we need:

\begin{theorem}
[Ad-hoc Unidirectional Equidistribution]\label{thm_bs_multi}Suppose
$1,\theta_{1},\ldots,\theta_{d}$ are $\mathbf{Q}$-linearly independent real
numbers. Suppose $F\subseteq\lbrack0,1]\cap\mathbf{Q}$ is finite. Suppose
$f:[0,1]^{d}\rightarrow\mathbf{R}$ is a function in Class $\operatorname*{BSU}%
^{d}(F)$ which admits a singular weight $g$ such that
\begin{equation}
\underset{n\rightarrow\infty}{\lim}\frac{g(\{n\theta_{1}\})}{n}=0\text{.}%
\label{luiav3}%
\end{equation}
Then%
\[
\underset{N\rightarrow\infty}{\lim}\frac{1}{N}\sum_{n=1}^{N}f(\{n\theta
_{1}\},\ldots,\{n\theta_{d}\})=\int_{[0,1]^{d}}f(\underline{s})\,\mathrm{d}%
\underline{s}\text{.}%
\]

\end{theorem}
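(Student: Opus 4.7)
The plan is to decompose the singular weight $g$ into a bounded, Riemann-integrable part and a small $L^{1}$-residual. The product structure $f=g\cdot h$ built into Class $\operatorname{BSU}^{d}(F)$ allows the bounded part to be handled by classical Weyl equidistribution on $[0,1]^{d}$, while the residual — being concentrated in the single coordinate $y_{1}$ — is absorbed by the one-dimensional Baxa--Schoißengeier theorem.

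\textbf{Step 1 (Truncation of the weight).} For $\varepsilon>0$ pick $M=M(\varepsilon)>0$ large and set
\[
g_{\varepsilon}(y):=\max\bigl(\min(g(y),M),-M\bigr),\qquad r_{\varepsilon}:=g-g_{\varepsilon}.
\]
Because $g$ is locally bounded outside the finite set $F$ and almost everywhere continuous, $g_{\varepsilon}$ is bounded and almost everywhere continuous, hence Riemann-integrable on $[0,1]$. Truncation preserves one-sided monotonicity near each $\beta\in F$, so $r_{\varepsilon}\in\operatorname{BS}(F)$; moreover $|r_{\varepsilon}|\leq|g|$ pointwise. Since $g$ is Lebesgue-integrable, dominated convergence lets us choose $M$ so that $\int_{0}^{1}|r_{\varepsilon}|\,\mathrm{d}t<\varepsilon$.

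\textbf{Step 2 (Regular part via Weyl).} The function $(y_{1},\ldots,y_{d})\mapsto g_{\varepsilon}(y_{1})\,h(\underline{y})$ is bounded and almost everywhere continuous on $[0,1]^{d}$, so it is Riemann-integrable. The $\mathbf{Q}$-linear independence of $1,\theta_{1},\ldots,\theta_{d}$ combined with Lemma~\ref{lemma_QLinearIndependentMeansTorusUniformlyDist} and Theorem~\ref{Thm_WeylEquidist_multidim} gives
\[
\lim_{N\to\infty}\frac{1}{N}\sum_{n=1}^{N}g_{\varepsilon}(\{n\theta_{1}\})\,h(\{n\theta_{1}\},\ldots,\{n\theta_{d}\})=\int_{[0,1]^{d}}g_{\varepsilon}(y_{1})h(\underline{y})\,\mathrm{d}\underline{y}.
\]

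\textbf{Step 3 (Singular part via Baxa--Schoißengeier and conclusion).} Let $H\geq 1$ be a uniform bound for $|h|$ (available since $h$ is Riemann-integrable). Then
\[
\left|\frac{1}{N}\sum_{n=1}^{N}r_{\varepsilon}(\{n\theta_{1}\})\,h(\{n\theta_{1}\},\ldots,\{n\theta_{d}\})\right|\leq H\cdot\frac{1}{N}\sum_{n=1}^{N}|r_{\varepsilon}|(\{n\theta_{1}\}).
\]
From $|r_{\varepsilon}|\leq|g|$ and the hypothesis \eqref{luiav3} we get $|r_{\varepsilon}|(\{n\theta_{1}\})/n\to 0$, and $|r_{\varepsilon}|\in\operatorname{BS}(F)$; thus Theorem~\ref{Thm_BaxaSchoissengeierEquidist} applies and the right-hand side tends to $H\int_{0}^{1}|r_{\varepsilon}|\,\mathrm{d}t<H\varepsilon$. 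Similarly $|\int gh-\int g_{\varepsilon}h|\leq H\varepsilon$. Combining with Step~2,
\[
\limsup_{N\to\infty}\left|\frac{1}{N}\sum_{n=1}^{N}f(\{n\theta_{1}\},\ldots,\{n\theta_{d}\})-\int_{[0,1]^{d}}f\,\mathrm{d}\underline{y}\right|\leq 2H\varepsilon,
\]
and letting $\varepsilon\to 0$ finishes the proof.

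The only real technicality is Step~1: one must verify that the truncation of a class $\operatorname{BS}(F)$ function remains in class $\operatorname{BS}$ (preserving the monotone/bounded dichotomy on one-sided neighbourhoods of $F$) and that its tail is $L^{1}$-small. The rest is orchestration of the two input theorems. A more ambitious formulation, permitting a genuinely multi-dimensional singular weight, would require an analogue of the Baxa--Schoißengeier Main Lemma involving box indicators in several coordinates; this is precisely what the product hypothesis in $\operatorname{BSU}^{d}(F)$ is designed to avoid.
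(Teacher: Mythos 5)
Your argument is correct, but it takes a genuinely different route from the paper. The paper does not use the one-dimensional Baxa--Schoi\ss engeier theorem as a black box; instead it re-runs their proof, cutting $f$ off near each singularity by an indicator $c_{[\beta-\varepsilon,1]\times[0,1]^{d-1}}$, invoking the Baxa--Schoi\ss engeier \emph{Main Lemma} (with its renewal-time case distinction $b\lessgtr\frac{a_{m+1}}{2q}-1$ and the explicit constants $28q^{2}$, $14q$) to bound the singular Ces\`{a}ro averages, and finally inducting on the cardinality of $F$. You instead truncate the weight at height $M$, so the bounded part $g_{\varepsilon}h$ is handled by multi-dimensional Weyl equidistribution, and the unbounded residual is a purely one-coordinate average $\frac{1}{N}\sum|r_{\varepsilon}|(\{n\theta_{1}\})$ to which Theorem \ref{Thm_BaxaSchoissengeierEquidist} applies directly (the growth hypothesis transfers since $|r_{\varepsilon}|\leq|g|$), giving the limit $\int_{0}^{1}|r_{\varepsilon}|<\varepsilon$. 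This is shorter, handles all singularities of $F$ simultaneously (no induction, no reduction to $g$ monotone and blowing up on one side), and cleanly isolates why only a one-dimensional refinement is needed --- exactly the point of the product hypothesis in $\operatorname{BSU}^{d}(F)$. What the paper's longer route buys is independence from the statement of the one-dimensional theorem: it only uses the Main Lemma, so it yields quantitative intermediate bounds of the form $\limsup\leq\int f+28\,h_{\max}q^{2}\int g_{\varepsilon}$, in the spirit of \cite{MR1914805}.

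One small point to patch: in Step 1 you verify the $\operatorname{BS}(F)$ conditions for $r_{\varepsilon}$, but in Step 3 you apply Theorem \ref{Thm_BaxaSchoissengeierEquidist} to $|r_{\varepsilon}|$. If $g$ is monotone but changes through the band $[-M,M]$ on a one-sided neighbourhood of some $\beta\in F$, then $|r_{\varepsilon}|$ need not be monotone on that whole neighbourhood; either shrink the neighbourhood (if $g$ is unbounded there, $|r_{\varepsilon}|$ coincides with $\pm(g\mp M)$ close enough to $\beta$, hence is monotone; if $g$ is bounded there, so is $|r_{\varepsilon}|$), or apply the theorem separately to the positive and negative parts $r_{\varepsilon}^{\pm}$, each of which lies in $\operatorname{BS}(F)$ and has integral $<\varepsilon$.
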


We follow the proof of the one-dimensional case in \cite{MR1914805} very
closely, but need to perform some minor modifications. We have tailored this
formulation to be sufficient for our purposes. It would be desirable to have
more general theorems of this sort.

\begin{proof}
We write $f(y_{1},\ldots,y_{d})=g(y_{1})\cdot h(y_{1},\ldots,y_{d})$ with $g$
the singular weight such that Equation \ref{luiav3} holds. We may write
$h=h^{+}-h^{-}$ for $h^{\pm}$ non-negative functions. Since left- and
right-hand side of our claim are linear, it suffices to prove our claim under
the additional assumption $h\geq0$. As $h$ is Riemann-integrable by
assumption, thus bounded, we get%
\[
0\leq h(y_{1},\ldots,y_{d})\leq h_{\max}\text{.}%
\]
Without loss of generality, we may assume $\theta_{1}\in(0,1)$. By assumption,
$\theta_{1}$ is linearly independent from $1$ over the rationals, so
$\theta_{1}$ is irrational. Henceforth, we use the notation $p_{i},q_{i},$
$a_{i}$ etc. for the convergents, partial quotients, etc. for $\theta
:=\theta_{1}$.\newline\textit{(Step 1)} As in \cite{MR1914805}, we first deal
with the case $F=\varnothing$. By Lebesgue's integrability criterion, a
bounded function $f:[0,1]^{d}\rightarrow\mathbf{R}$ is Riemann-integrable if
and only if it continuous almost everywhere. Thus, as $F=\varnothing$, $f$ is
locally bounded on the compactum $[0,1]^{d}$ and thus bounded. By Lebesgue's
criterion, it follows that $f$ is Riemann-integrable and we can use
multi-dimensional Weyl Equidistribution, Theorem
\ref{Thm_WeylEquidist_multidim}, since by Lemma
\ref{lemma_QLinearIndependentMeansTorusUniformlyDist} $(\{n\theta_{1}%
\},\ldots,\{n\theta_{d}\})$ is uniformly distributed in $[0,1]^{d}$%
.\newline\textit{(Step 2)} Next, as in \cite{MR1914805}, consider the case
$F=\{\beta\}$. Assume that $\beta=\frac{p}{q}\in(0,1]$ is a rational number in
lowest terms, $p,q>0$. Suppose $\lim_{t\rightarrow\beta,t<\beta}g(t)=+\infty$
and $g\mid_{(\beta,1)}=0$. Define an \emph{admissible} $\varepsilon>0$ to be
any element in%
\[
\{\varepsilon\mid\varepsilon>0\text{, the function }g\mid_{\lbrack
\beta-\varepsilon,\beta)}\text{ is monotone and non-negative}\}\text{.}%
\]
By the assumptions of $\operatorname*{BS}(F)$, admissible $\varepsilon$ exist.
For any sufficiently large integer $N$, we may choose some $m\in
\mathbf{Z}_{\geq1}$ with $q_{m}\leq N<q_{m+1}$, and then pick $b:=\left\lfloor
N/q_{m}\right\rfloor \geq1$ (cf. renewal time). For every admissible
$\varepsilon$, define%
\[
f_{\varepsilon}(y_{1},\ldots,y_{n}):=f(y_{1},\ldots,y_{n})\cdot c_{[\beta
-\varepsilon,1]\times\lbrack0,1]^{d-1}}\qquad\qquad g_{\varepsilon}%
(y_{1}):=g(y_{1})\cdot c_{[\beta-\varepsilon,1]}\text{,}%
\]
where $c_{I}$ denotes the characteristic function of a set $I$. Then%
\[
f_{\varepsilon}(y_{1},\ldots,y_{n})=g_{\varepsilon}(y_{1})h(y_{1},\ldots
,y_{n})\text{.}%
\]
Moreover, since $g_{\varepsilon}$ and $h$ are non-negative (by the choice of
$\varepsilon$), $f_{\varepsilon}$ is non-negative. Thus,%
\begin{align*}
\frac{1}{N}\sum_{n=1}^{N}f_{\varepsilon}(\{n\theta_{1}\},\ldots,\{n\theta
_{d}\})  & =\frac{1}{N}\sum_{n=1}^{N}g_{\varepsilon}(\{n\theta_{1}\})\cdot
h(\{n\theta_{1}\},\ldots,\{n\theta_{d}\})\\
& \leq\frac{h_{\max}}{N}\sum_{n=1}^{N}g_{\varepsilon}(\{n\theta_{1}\})\text{,}%
\end{align*}
Now Baxa and\ Schoi\ss engeier perform a case distinction, using their Main
Lemma. We copy this: If $\frac{a_{m+1}}{2q}-1<b$, the Main Lemma yields:
\begin{align*}
& \frac{1}{N}\sum_{n=1}^{N}g_{\varepsilon}(\{n\theta_{1}\})\leq\quad
(\quad\ldots\quad)\\
& \qquad\leq28q^{2}\int_{0}^{1}g_{\varepsilon}(s)\,\mathrm{d}s+\frac
{4q}{\sigma_{q_{m+1}}(n_{q_{m+1}})}g_{\varepsilon}(\{\sigma_{q_{m+1}%
}(n_{q_{m+1}})\theta_{1}\})\text{,}%
\end{align*}
with $\sigma_{(-)},n_{(-)}$ as in the sense of the Main Lemma. Or, in the
other case $b\leq\frac{a_{m+1}}{2q}-1$, it yields%
\begin{align*}
& \frac{1}{N}\sum_{n=1}^{N}g_{\varepsilon}(\{n\theta_{1}\})\leq\quad\left(
\quad\ldots\quad\right) \\
& \qquad\leq14q\int_{0}^{1}g_{\varepsilon}(s)\,\mathrm{d}s+\frac{2}%
{\sigma_{(b+1)q_{m}}(n_{(b+1)q_{m}})}g_{\varepsilon}(\{\sigma_{(b+1)q_{m}%
}(n_{(b+1)q_{m}})\theta_{1}\})\text{.}%
\end{align*}
We refer to their paper for any details. As we can do this for a sequence of
choices $N$ with $N\rightarrow+\infty$, and we have $\lim_{N\rightarrow
+\infty}\sigma_{N}(n_{N})=+\infty$ (cf. Definition
\ref{Def_SeqSigmaPermutations}), we may use Equation \ref{luiav3} and as in
\cite{MR1914805} we obtain: For every admissible $\varepsilon$, we have the
upper bound%
\begin{align*}
\underset{N\rightarrow+\infty}{\lim\sup}\,\frac{1}{N}\sum_{n=1}^{N}%
f_{\varepsilon}(\{n\theta_{1}\},\ldots,\{n\theta_{d}\})  & \leq\underset
{N\rightarrow+\infty}{\lim\sup}\,\frac{h_{\max}}{N}\sum_{n=1}^{N}%
g_{\varepsilon}(\{n\theta_{1}\})\\
& \leq28\cdot h_{\max}\cdot q^{2}\int_{0}^{1}g_{\varepsilon}(s)\,\mathrm{d}%
s\text{.}%
\end{align*}
Now, since we had assumed that $F=\{\beta\}$, the function $g-g_{\varepsilon}$
is Riemann-integrable, thus%
\[
f-f_{\varepsilon}=\left(  g-g_{\varepsilon}\right)  \cdot h
\]
is itself Riemann-integrable. Multi-dimensional Weyl Equidistribution applies
so that%
\[
\lim_{N\rightarrow+\infty}\frac{1}{N}\sum_{n=1}^{N}(f-f_{\varepsilon
})(\{n\theta_{1}\},\ldots,\{n\theta_{d}\})=\int_{[0,1]^{d}}(f-f_{\varepsilon
})(\underline{s})\mathrm{d}\underline{s}\leq\int_{\lbrack0,1]^{d}}%
f(\underline{s})\mathrm{d}\underline{s}%
\]
since $f_{\varepsilon}$ is a non-negative function, and $f$ is
Lebesgue-integrable. Combining both upper bounds, we obtain%
\[
\underset{N\rightarrow+\infty}{\lim\sup}\,\frac{1}{N}\sum_{n=1}^{N}%
f(\{n\theta_{1}\},\ldots,\{n\theta_{d}\})\leq\int_{\lbrack0,1]^{d}%
}f(\underline{s})\mathrm{d}\underline{s}+28\cdot h_{\max}\cdot q^{2}\cdot
\int_{0}^{1}g_{\varepsilon}(s)\,\mathrm{d}s
\]
for all admissible $\varepsilon$. Thus,%
\[
\underset{N\rightarrow+\infty}{\lim\sup}\,\frac{1}{N}\sum_{n=1}^{N}%
f(\{n\theta_{1}\},\ldots,\{n\theta_{d}\})\leq\int_{\lbrack0,1]^{d}%
}f(\underline{s})\mathrm{d}\underline{s}\text{.}%
\]
Conversely, since $f_{\varepsilon}$ is non-negative,%
\[
\underset{N\rightarrow+\infty}{\lim\inf}\,\frac{1}{N}\sum_{n=1}^{N}%
f(\{n\theta_{1}\},\ldots,\{n\theta_{d}\})\geq\underset{N\rightarrow+\infty
}{\lim\inf}\,\frac{1}{N}\sum_{n=1}^{N}(f-f_{\varepsilon})(\{n\theta
_{1}\},\ldots,\{n\theta_{d}\})=\int_{[0,1]^{d}}f(\underline{s})\mathrm{d}%
\underline{s}%
\]
by Weyl Equidistribution and since $f-f_{\varepsilon}$ is Riemann-integrable.
As both limes superior and inferior exist and coincide, we obtain that the
limit exists and is of said value.\newline\textit{(Step 3)} Now one can do an
induction over the cardinality of $F$. This argument can be carried out
precisely as in \cite{MR1914805} and we leave it to the reader.
\end{proof}

\section{\label{sect_Orthogonality}The orthogonality theorem}

This section is devoted to the proof of the following statement.

\begin{theorem}
[Orthogonality]\label{thm_ortho}Suppose $\theta$ is a real number such that either

\begin{itemize}
\item $e^{2\pi i\theta}$ is an algebraic number, or

\item $\theta$ is badly approximable, i.e. it has a bounded sequence of
partial quotients.
\end{itemize}

Then the following holds:

\begin{enumerate}
\item If $\dim_{\mathbf{Q}}\left\langle 1,\theta\right\rangle =2$: For all
$m\in\mathbf{Z}$, we have%
\[
\underset{N\rightarrow\infty}{\lim}\frac{1}{N}\sum_{n=1}^{N}\log\left\vert
1-e^{2\pi in\theta}\right\vert \cdot e^{2\pi imn\theta}=-\frac{1}{2\left\vert
m\right\vert }\delta_{m\neq0}\text{.}%
\]
If $m\in\mathbf{Q}\setminus\mathbf{Z}$, we get a value%
\[
C_{m}\in\mathbf{Q}(\mu_{\infty},\pi,\{L(1,\chi)\}_{\chi})\text{,}%
\]
where $\chi$ ranges over a set of non-principal Dirichlet characters modulo
$2v$ for $v\geq1$ the denominator of $m$ in lowest terms. The values $C_{m}$
only depend on $m$, and are independent of $\theta$.

\item If $\alpha$ is a real number and $\dim_{\mathbf{Q}}\left\langle
1,\theta,\alpha\right\rangle =3$, then for all $m\in\mathbf{Z}$,%
\[
\underset{N\rightarrow\infty}{\lim}\frac{1}{N}\sum_{n=1}^{N}\log\left\vert
1-e^{2\pi in\theta}\right\vert \cdot e^{2\pi imn\alpha}=0\text{.}%
\]

\end{enumerate}
\end{theorem}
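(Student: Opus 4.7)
The plan is to realize each time average as an ergodic sum $\tfrac{1}{N}\sum_{n=1}^{N}f(\{n\phi_{1}\},\ldots,\{n\phi_{d}\})$ of a Class $\operatorname{BSU}^{d}$ function along a rotation orbit on the torus, and then invoke one of the Baxa--Schoi\ss{}engeier equidistribution theorems (Theorem \ref{Thm_BaxaSchoissengeierEquidist} or, in dimension two, Theorem \ref{thm_bs_multi}). The arithmetic hypotheses on $\theta$ are used precisely to verify the singular-weight condition; the evaluation of the limit then reduces to computing an explicit Lebesgue integral.

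For Part (1) with $m\in\mathbf{Z}$, I take $d=1$, $\phi_{1}=\theta$, and $f_{m}(y):=\log\lvert 1-e^{2\pi iy}\rvert\cdot e^{2\pi imy}$. This factors as the singular weight $g(y)=\log\lvert 1-e^{2\pi iy}\rvert\in\operatorname{BS}(\{0\})$ times the continuous factor $e^{2\pi imy}$. The singular-weight condition $g(\{n\theta\})/n\to 0$ is exactly $\log\lvert 1-e^{2\pi in\theta}\rvert=o(n)$: under the algebraic hypothesis, Proposition \ref{Prop_Gelfond} gives $\lvert 1-e^{2\pi in\theta}\rvert>An^{-B}$ (note that $\dim_{\mathbf{Q}}\langle 1,\theta\rangle=2$ forces $e^{2\pi i\theta}$ not to be a root of unity); under bad approximability, $\lVert n\theta\rVert\geq C_{\theta}/n$ together with $\lvert 1-e^{2\pi iy}\rvert=2\sin(\pi\lVert y\rVert)\geq 4\lVert y\rVert$ forces $\lvert 1-e^{2\pi in\theta}\rvert\geq 4C_{\theta}/n$. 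Either way the logarithm is $O(\log n)$. Theorem \ref{Thm_BaxaSchoissengeierEquidist} then identifies the limit with $\int_{0}^{1}f_{m}(y)\,dy$, and term-by-term integration against the Fourier expansion $\log\lvert 1-e^{2\pi iy}\rvert=-\sum_{k\geq 1}\cos(2\pi ky)/k$ (justified by $L^{2}$-convergence) together with orthogonality of characters yields $-\tfrac{1}{2\lvert m\rvert}\delta_{m\neq 0}$.

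For Part (1) with $m=p/v\in\mathbf{Q}\setminus\mathbf{Z}$ in lowest terms, $v\geq 2$, I rescale: set $\phi:=\theta/v$ and $t_{n}:=\{n\phi\}$. From $n\theta=vn\phi$ and $vt_{n}\equiv n\theta\pmod{\mathbf{Z}}$ one gets $e^{2\pi in\theta}=e^{2\pi ivt_{n}}$ and $e^{2\pi imn\theta}=e^{2\pi ipt_{n}}$, so the sum becomes $\tfrac{1}{N}\sum_{n}f(t_{n})$ with $f(y):=\log\lvert 1-e^{2\pi ivy}\rvert\cdot e^{2\pi ipy}\in\operatorname{BSU}^{1}(\{j/v:0\leq j<v\})$. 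Its singular weight $g(y)=\log\lvert 1-e^{2\pi ivy}\rvert$ satisfies $g(t_{n})=\log\lvert 1-e^{2\pi in\theta}\rvert$ by the above, so the singular-weight condition for $\phi$ unravels to the same bound on $\theta$ (no additional Diophantine hypothesis on $\phi$ is needed, even though $\phi$ itself may fail to be badly approximable). Theorem \ref{Thm_BaxaSchoissengeierEquidist} applied with $\theta_{1}=\phi$ then produces $C_{m}=\int_{0}^{1}f(y)\,dy$. To exhibit this integral as an element of $\mathbf{Q}(\mu_{\infty},\pi,\{L(1,\chi)\}_{\chi})$, write $\log\lvert 1-e^{2\pi ivy}\rvert=\log 2+\log\lvert\sin(\pi vy)\rvert$, substitute $u=vy$, split $[0,v]$ into unit pieces $j+[0,1]$ for $0\leq j<v$, and expand the inner integrals $\int_{0}^{1}\log\lvert 2\sin\pi s\rvert\cdot e^{2\pi ips/v}\,ds$ via character orthogonality on $(\mathbf{Z}/2v\mathbf{Z})^{\times}$ coupled with the standard identification of such finite Fourier coefficients of $\log\lvert\sin\rvert$ with $L(1,\chi)$ for non-principal Dirichlet characters modulo $2v$ (the factor of $2$ in the modulus tracks the sign of $\sin$). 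This representation is intrinsic to $m$ and makes $\theta$-independence of $C_{m}$ manifest; in fact, direct Fourier-orthogonality in $y$ shows the integral equals $0$, consistent with $0\in\mathbf{Q}(\mu_{\infty},\pi,\{L(1,\chi)\}_{\chi})$.

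For Part (2), $1,\theta,\alpha$ are $\mathbf{Q}$-linearly independent, so take $d=2$, $\phi_{1}=\theta$, $\phi_{2}=\alpha$, and $F(y_{1},y_{2}):=\log\lvert 1-e^{2\pi iy_{1}}\rvert\cdot e^{2\pi imy_{2}}\in\operatorname{BSU}^{2}(\{0\})$ with singular weight $g(y_{1})=\log\lvert 1-e^{2\pi iy_{1}}\rvert$ and Riemann-integrable factor $h(y_{1},y_{2})=e^{2\pi imy_{2}}$. The singular-weight condition is identical to Part (1) and holds. Theorem \ref{thm_bs_multi} identifies the limit with the product $\bigl(\int_{0}^{1}\log\lvert 1-e^{2\pi iy_{1}}\rvert\,dy_{1}\bigr)\bigl(\int_{0}^{1}e^{2\pi imy_{2}}\,dy_{2}\bigr)$; the $y_{2}$-integral is $\delta_{m,0}$, and for $m=0$ the $y_{1}$-integral vanishes by Jensen's formula, so the product is $0$ in all cases. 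The principal obstacle throughout is establishing the singular-weight decay $\log\lvert 1-e^{2\pi in\theta}\rvert/n\to 0$, which is exactly the place where the deep arithmetic input (Gelfond's transcendence estimate, or the defining inequality for bad approximability) is indispensable; everything downstream is either multi-dimensional Weyl equidistribution (via Theorem \ref{thm_bs_multi}) or bookkeeping with the Fourier expansion of $\log\lvert\sin\rvert$ and discrete Fourier analysis on $\mathbf{Z}/2v\mathbf{Z}$.
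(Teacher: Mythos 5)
For the integer-$m$ half of (1) and for (2), your argument is essentially the paper's own proof: the same reduction to an equidistribution average, the same verification of the decay condition (Gelfond via Proposition \ref{Prop_Gelfond} in the algebraic case, the defining inequality in the badly approximable case), and the same appeal to Theorem \ref{Thm_BaxaSchoissengeierEquidist} resp.\ Theorem \ref{thm_bs_multi}. The only deviations are cosmetic: you evaluate $W_{m}$ by pairing with the $L^{2}$-Fourier expansion of $\log\lvert 2\sin\pi y\rvert$ instead of the contour computation of Lemmas \ref{lemma_aux_integral_Pm} and \ref{Lemma_WmCompute} (fine), and your singularity sets should be $\{0,1\}$ rather than $\{0\}$, since the weight also blows up at the right endpoint.

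The fractional-$m$ case is where you genuinely diverge, and the divergence is substantive. Your rescaling is correct: with $\phi=\theta/v$ and $t_{n}=\{n\phi\}$ one has $e^{2\pi in\theta}=e^{2\pi ivt_{n}}$ and $e^{2\pi imn\theta}=e^{2\pi ipt_{n}}$, so the average is exactly $\frac{1}{N}\sum_{n}f(t_{n})$ with $f(y)=\log\lvert1-e^{2\pi ivy}\rvert\,e^{2\pi ipy}$; granting the routine class verification (either check $\operatorname{Re}f,\operatorname{Im}f\in\operatorname{BS}(\{j/v\})$ so that Theorem \ref{Thm_BaxaSchoissengeierEquidist} applies, or invoke the $d=1$ case of Theorem \ref{thm_bs_multi} with singular weight $g(y)=\log\lvert1-e^{2\pi ivy}\rvert$, since you only exhibit a BSU-type factorization), the limit is $\int_{0}^{1}f=0$, because the Fourier spectrum of $\log\lvert1-e^{2\pi ivy}\rvert$ lies in $v\mathbf{Z}$ and $v\nmid p$. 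The paper instead identifies this limit with $W_{m}=\int_{0}^{1}\log\lvert1-e^{2\pi it}\rvert e^{2\pi imt}\,\mathrm{d}t$, which is generically nonzero and whose $L$-value structure is the content of Proposition \ref{prop_W_m_FractionalVals}; that identification rests on replacing $e^{2\pi imn\theta}$ by $e^{2\pi im\{n\theta\}}$, which is legitimate only for integer $m$ --- for $m=p/v$ the two differ by the $v$-th root of unity $e^{2\pi im\lfloor n\theta\rfloor}$, and your substitution is precisely the bookkeeping of that extra factor, whose oscillation (jointly with $\{n\theta\}$, both being functions of $\{n\theta/v\}$) kills the average. So for the literal exponent $e^{2\pi imn\theta}$ --- which is the reading needed downstream, where Frobenius' lemma is applied after rotating by the fixed point $e^{2\pi im\theta}$ in Theorem \ref{Thm_CriticalThmOnSingBvrOnRadiusOfConvergence} --- your value $C_{m}=0$ appears to be the correct one, and the stated membership in $\mathbf{Q}(\mu_{\infty},\pi,\{L(1,\chi)\}_{\chi})$ holds only vacuously; the nonzero $L$-value content the paper attaches to $C_{m}$ pertains to the different average $\frac{1}{N}\sum_{n}\log\lvert1-e^{2\pi i\{n\theta\}}\rvert e^{2\pi im\{n\theta\}}$, for which the $W_{m}$-computation is the right one. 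In short: aside from spelling out the class-membership check for the rescaled function, your proof is sound, but be aware that it trivializes the fractional part of the statement and is at odds with the value the paper's own proof computes.
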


\begin{example}
Let us look at the following polynomial%
\begin{align*}
& f:=x^{8}-120x^{7}+4332x^{6}-86664x^{5}\\
& \qquad\qquad+1311590x^{4}-10994952x^{3}+75494124x^{2}-19704x+1\text{.}%
\end{align*}
It was constructed by A. Dubickas \cite{MR3193953}. It has a special property:
Four complex roots of this polynomial have the same absolute value (this is
not at all obvious) and are (obviously) algebraic units. Thus, the quotients
$\frac{v_{1}}{v_{2}}$ of any two such are concrete complex algebraic numbers
lying on the unit circle. Moreover, $\frac{v_{1}}{v_{2}}$ will not be a root
of unity, so Claim (1) of the theorem applies. We get\footnote{as an aside:
note that by Gelfond--Schneider, both $\theta_{1},\theta_{2}$ are necessarily
transcendental.}%
\begin{align*}
\left.  u_{1}:=e^{2\pi i\theta_{1}}\right.  \qquad & \text{with}\qquad\left.
\theta_{1}\approx0.400842\right. \\
\left.  u_{2}:=e^{2\pi i\theta_{2}}\right.  \qquad & \text{with}\qquad\left.
\theta_{2}\approx0.410383\right.  \text{.}%
\end{align*}
The sequence $\sum_{n=1}^{N}\log\left\vert 1-e^{2\pi in\theta_{i}}\right\vert
$ (for $i=1,2$) oscillates rather wildly, so for example for $m=0$, the
theorem claims that the average values of the sequence still balance out
around zero. Even looking at concrete values, this is not obvious:%
\[%
{\includegraphics[
height=0.8146in,
width=1.6481in
]%
{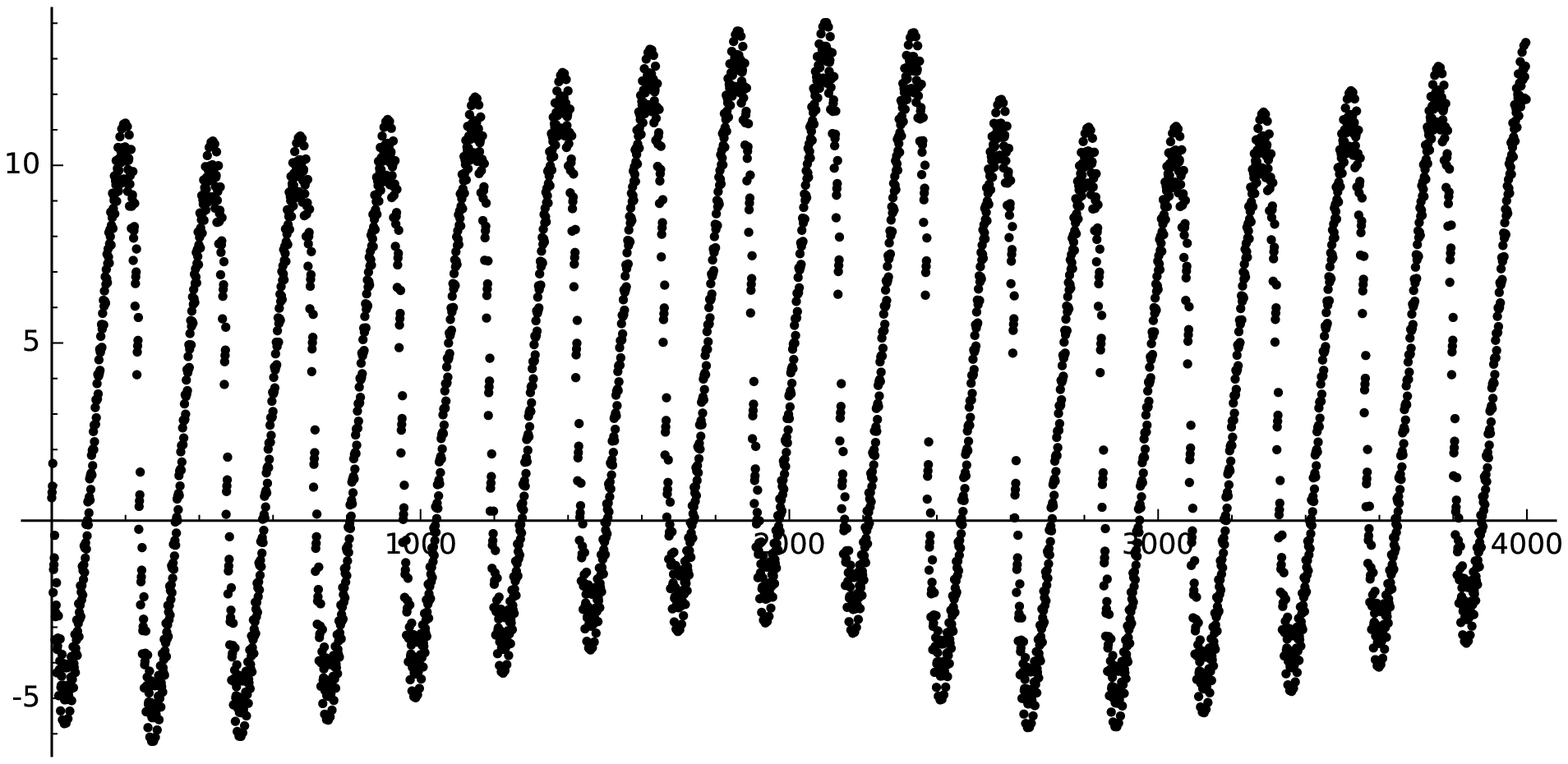}%
}
\qquad\qquad%
{\includegraphics[
height=0.7923in,
width=1.6003in
]%
{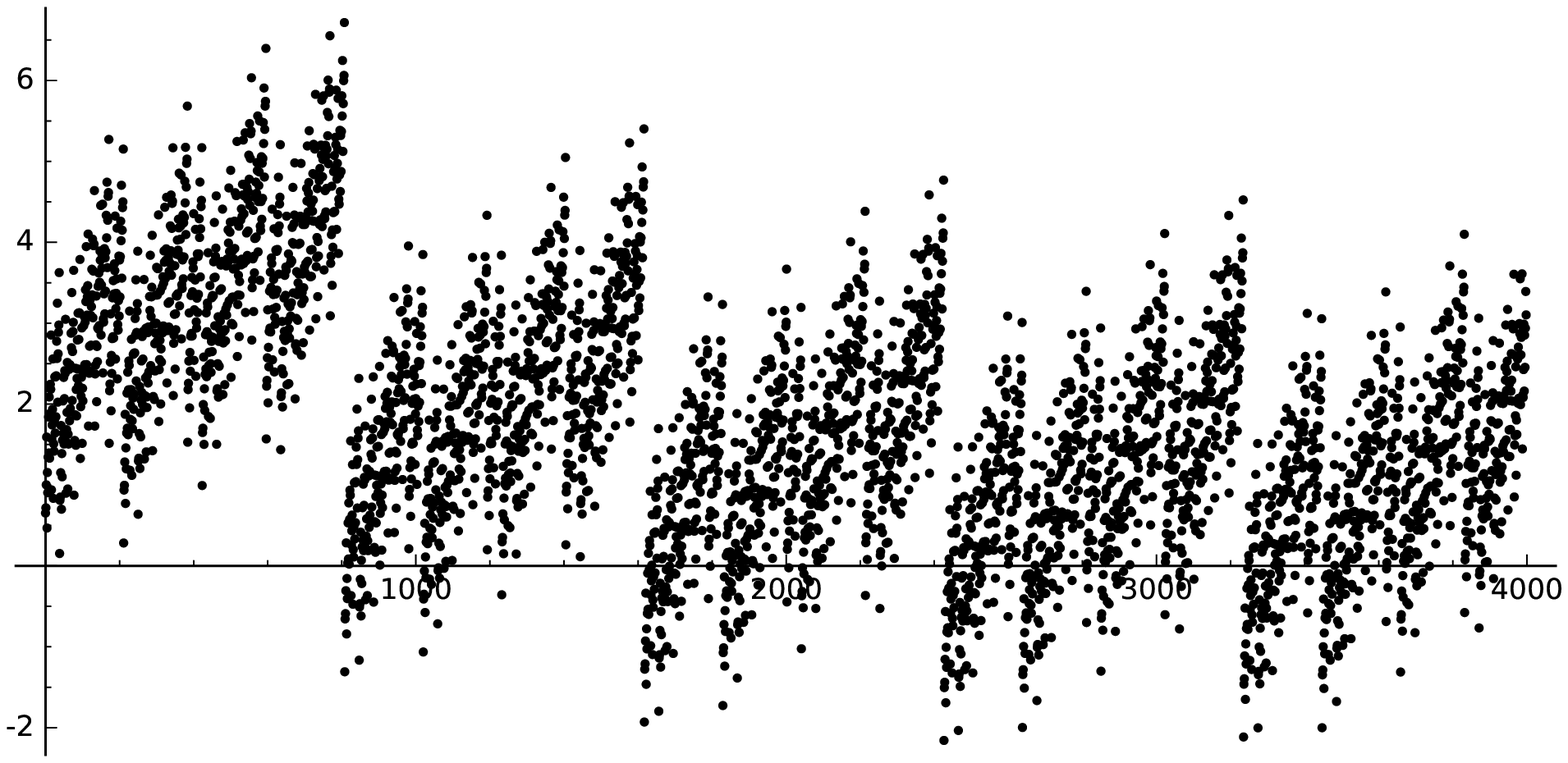}%
}
\]

\end{example}

The main idea will be to use equidistribution results to translate the
statement of the Orthogonality Theorem into statements about integrals. So
first of all, let us compute the relevant integrals.

\subsection{Integral values}

Define%
\[
P_{m}:=\int_{0}^{\pi}\log(\sin t)e^{2imt}\,\mathrm{d}t\qquad\text{for}\qquad
m\in\mathbf{Z}\text{.}%
\]

\begin{lemma}
\label{lemma_aux_integral_Pm}We have $P_{0}=-\pi\log2$ and $P_{m}=-\frac{1}%
{2}\frac{\pi}{\left\vert m\right\vert }$ for all $m\in\mathbf{Z}%
\setminus\{0\}$.
\end{lemma}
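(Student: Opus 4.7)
My plan is to reduce this to a standard Fourier-series computation. The classical identity
\[
\log\lvert 2\sin t\rvert = -\sum_{k\geq 1}\frac{\cos(2kt)}{k},
\]
holds pointwise for $t\notin \pi\mathbf{Z}$ and as an $L^{2}([0,\pi])$ identity (the endpoint singularities of $\log\sin t$ are only logarithmic, so $\log\sin t\in L^{2}$). Rearranging,
\[
\log\sin t = -\log 2 \;-\; \sum_{k\geq 1}\frac{\cos(2kt)}{k}.
\]

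First I would substitute this expansion into the definition of $P_{m}$ and interchange sum and integral. The interchange is legitimate because the series converges to $\log\sin t$ in $L^{2}([0,\pi])$ and $e^{2imt}$ is bounded, so pairing against $e^{2imt}$ is a continuous linear functional on $L^{2}$.

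After the swap, everything reduces to the orthogonality identity $\int_{0}^{\pi}e^{2i\ell t}\,\mathrm{d}t = \pi\delta_{\ell,0}$ for $\ell\in\mathbf{Z}$. Writing $\cos(2kt)=\tfrac{1}{2}(e^{2ikt}+e^{-2ikt})$, the constant term $-\log 2$ contributes $-\pi\log 2$ if $m=0$ and vanishes otherwise, giving $P_{0}=-\pi\log 2$. For $m\neq 0$, every term of the series dies except the unique surviving $k=\lvert m\rvert$, whose coefficient $1/k = 1/\lvert m\rvert$ produces $P_{m} = -\frac{\pi}{2\lvert m\rvert}$.

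This is entirely routine; the only step deserving more than a line of justification is the $L^{2}$-convergence of the log-sine Fourier series, which I would quote (it is, e.g., the real part of $-\log(1-e^{2it})=\sum_{k\geq 1}e^{2ikt}/k$, valid on the unit circle minus $1$), so no substantive obstacle arises.
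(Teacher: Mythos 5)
Your proof is correct, and it takes a genuinely different route from the paper. You expand $\log(2\sin t)=-\sum_{k\geq1}\cos(2kt)/k$ (the boundary values of $-\log(1-e^{2it})=\sum_{k\geq1}e^{2ikt}/k$), integrate term by term against $e^{2imt}$ using $L^{2}$-continuity of the pairing, and finish by orthogonality; the only point worth one extra line is the identification of the $L^{2}$-limit of the partial sums with $\log(2\sin t)$ (the partial sums are Cauchy in $L^{2}$ since the coefficients are square-summable, and a subsequence converges a.e., so the pointwise identity pins down the limit), which is standard. The paper instead integrates $\log(1-e^{2iz})e^{2imz}$ over a box contour and invokes Cauchy's theorem, which only works for $m\geq0$, and then recovers negative $m$ from the symmetry $P_{-m}=\overline{P_{m}}$; your Fourier argument treats all integer $m$ uniformly and is shorter and more elementary. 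What the contour method buys is that it generalizes: the very next result in the paper (Prop. \ref{prop_log_val_for_rationalarg_helper}) needs the analogous integral for \emph{fractional} exponents $m$, where orthogonality of $e^{2imt}$ against integer frequencies is no longer available and the segment-contour technique (leading to the values $S_{m}(\pm1)$) is what carries the computation; your approach does not extend to that case, but for the integer-$m$ lemma as stated it is complete.
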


\begin{proof}
For $m=0$, this is a not completely trivial, but still standard exercise in
contour integration, presented for example in \cite[Ch. 4, \S 5.3.5]%
{MR510197}. \textit{(Step 1)} For non-negative $m\geq0$, the same technique
works with the appropriate changes made. We give the details for the sake of
completeness: Fix some $m\in\mathbf{Z}_{\geq0}$. Consider the function%
\[
g(z):=\log(1-e^{2iz})e^{2imz}\text{.}%
\]
As in \cite[Ch. 4, \S 5.3.5]{MR510197}, it is easy to see that $1-e^{2iz}%
\in\mathbf{R}_{\leq0}$ holds if and only if $z\in\pi\mathbf{Z}+i\mathbf{R}%
_{\leq0}$. Pick some $K>0$ and consider the box $\mathsf{Box}:=[0,\pi]\times
i[0,K]$. Starting from the contour $\partial\mathsf{Box}$, in order to obtain
that $g$ is holomorphic everywhere inside and on a neighbourhood of the
contour, we need to switch to a modified contour $\mathsf{C}$: Introduce small
quarter-circle indentations at $z=0$ and $z=\pi$. Then by Cauchy's Integral
Theorem,%
\[
0=\int_{\mathsf{C}}g(z)\,\mathrm{d}z=\int_{\mathsf{C}_{top}}g(z)\,\mathrm{d}%
z+\int_{0}^{\pi}\log(1-e^{2it})e^{2imt}\,\mathrm{d}t\text{,}%
\]
where $\mathsf{C}_{top}$ is the top edge. The contributions from the left and
right edge cancel out by the periodicity $g(z+\pi)=g(z)$. Under $K\rightarrow
+\infty$, the top edge integral converges to zero (this needs $m\geq0$). The
treatment of the quarter-circle indentations requires limit considerations, we
refer to loc. cit. for the details. Finally, Euler's formula immediately gives
$1-e^{2it}=-2ie^{iz}\sin z$ and thus%
\begin{align*}
0  & =\int_{0}^{\pi}\log(1-e^{2it})e^{2imt}\,\mathrm{d}t=\int_{0}^{\pi}%
\log(-2ie^{it}\sin t)e^{2imt}\,\mathrm{d}t\\
& \qquad=\log(-2i)\int_{0}^{\pi}e^{2imt}\,\mathrm{d}t+\int_{0}^{\pi}%
\log(e^{it})e^{2imt}\,\mathrm{d}t+\int_{0}^{\pi}\log(\sin t)e^{2imt}%
\,\mathrm{d}t\text{.}%
\end{align*}
The second line follows from the first by the functional equation for the
complex logarithm (it requires a little side thought to be sure that no branch
switch discrepancy of $2\pi i$ gets introduced this way). We have
$\log(-2i)=\log2-\frac{\pi i}{2}$ and $\int_{0}^{\pi}e^{2imt}\,\mathrm{d}%
t=\delta_{m=0}\pi$. Moreover, $\log(e^{it})=it$ as $t\in(0,\pi)$. Hence,%
\[
\int_{0}^{\pi}\log(e^{it})e^{2imt}\,\mathrm{d}t=i\int_{0}^{\pi}te^{2imt}%
\,\mathrm{d}t=\left\{
\begin{array}
[c]{ll}%
\frac{1}{2}\frac{\pi}{m} & \text{for }m\neq0\\
\frac{1}{2}i\pi^{2} & \text{for }m=0
\end{array}
\right.
\]
by straight-forward partial integration. Combining these computations
immediately gives our claim for all $m\geq0$. \textit{(Step 2)} Next, we deal
with $m\in\mathbf{Z}_{\geq-1}$. The contour integration in \textit{Step 1}
does not work anymore (Reason: As one moves the top contour off to infinity,
this contribution does no longer converge to zero. To see this, note that
$\left\vert g(iy)\right\vert =\left\vert \log(1-e^{-2y})e^{-2my}\right\vert $,
and for negative $m$ the term $\left\vert e^{-2my}\right\vert $ grows
exponentially as $y\rightarrow+\infty$). We resolve this as follows: We have%
\[
P_{-m}=\int_{0}^{\pi}\log(\sin t)\overline{e^{2imt}}\,\mathrm{d}%
t=\overline{P_{m}}%
\]
for \textit{all} $m$. Since \textit{Step 1} shows that for $m\geq1$ the value
of $P_{m}$ is real, it is not affected by complex conjugation.
\end{proof}

Next, define auxiliary values%
\begin{equation}
W_{m}:=\int_{0}^{1}\log\left\vert 1-e^{2\pi i\theta}\right\vert e^{2\pi
im\theta}\,\mathrm{d}\theta\qquad\text{for}\qquad m\in\mathbf{Z}%
\text{.}\label{Def_Wm}%
\end{equation}
Recall the standard sine squaring formula, $\sin(t)^{2}=\frac{1}{2}\left(
1-\cos(2t)\right)  $. It implies $2\sin\left(  \frac{t}{2}\right)
^{2}=\left(  1-\cos t\right)  $ and thus%
\[
\left\vert 1-e^{it}\right\vert ^{2}=(1-e^{it})\overline{(1-e^{it})}=2-2\cos
t=4\sin\left(  \frac{t}{2}\right)  ^{2}\text{.}%
\]
Then,%
\begin{equation}
\log\left\vert 1-e^{it}\right\vert =\frac{1}{2}\log\left(  4\sin\left(
\frac{t}{2}\right)  ^{2}\right)  =\log\left(  2\sin\left(  \frac{t}{2}\right)
\right)  \text{.}\label{eq_logabs_to_sine}%
\end{equation}

\begin{lemma}
\label{Lemma_WmCompute}We have $W_{m}=-\frac{1}{2\left\vert m\right\vert
}\delta_{m\neq0}$ for all $m\in\mathbf{Z}$.
\end{lemma}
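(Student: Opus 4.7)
The plan is to reduce $W_m$ to the integral $P_m$ from Lemma \ref{lemma_aux_integral_Pm} by a direct change of variables. The key observation is that equation \ref{eq_logabs_to_sine} has already rewritten the logarithm of $|1-e^{it}|$ in terms of a sine, so all the remaining work is a substitution and bookkeeping.

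First I would set $t := 2\pi\theta$ in equation \ref{eq_logabs_to_sine} to obtain $\log|1-e^{2\pi i\theta}| = \log(2\sin(\pi\theta))$ on $\theta \in (0,1)$. Plugging this into the definition \ref{Def_Wm} of $W_m$ and changing variables via $u := \pi\theta$ (so $\mathrm{d}\theta = \mathrm{d}u/\pi$) yields
\[
W_m \;=\; \frac{1}{\pi}\int_0^\pi \log\bigl(2\sin u\bigr)\, e^{2imu}\,\mathrm{d}u.
\]
Next I would split $\log(2\sin u) = \log 2 + \log(\sin u)$, so that the integral decomposes as $\frac{\log 2}{\pi}\int_0^\pi e^{2imu}\mathrm{d}u + \frac{1}{\pi}P_m$. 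The first piece equals $\log 2 \cdot \delta_{m=0}$ because for $m\in\mathbf{Z}\setminus\{0\}$ the integrand $e^{2imu}$ has antiderivative $e^{2imu}/(2im)$ whose boundary contributions at $0$ and $\pi$ cancel.

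Now I would invoke Lemma \ref{lemma_aux_integral_Pm}. For $m\neq 0$, the first piece vanishes and $\frac{1}{\pi}P_m = \frac{1}{\pi}\cdot\bigl(-\tfrac{1}{2}\tfrac{\pi}{|m|}\bigr) = -\tfrac{1}{2|m|}$, matching the claim. For $m=0$, the two contributions combine to $\log 2 + \frac{1}{\pi}P_0 = \log 2 + \frac{1}{\pi}(-\pi\log 2) = 0$, again as claimed.

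There is essentially no obstacle, since all of the analytic content (the contour integration producing $P_m$) lives in Lemma \ref{lemma_aux_integral_Pm}; the only subtlety is a harmless one, namely that the integrand $\log|1-e^{2\pi i\theta}|$ has an integrable logarithmic singularity at $\theta=0$ (and at $\theta=1$ after the substitution $u=\pi\theta$ at $u=\pi$), but integrability is clear and allows the elementary splitting of the integral.
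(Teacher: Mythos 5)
Your proof is correct and follows essentially the same route as the paper: rewrite $\log|1-e^{2\pi i\theta}|$ as $\log(2\sin(\pi\theta))$ via Equation \ref{eq_logabs_to_sine}, substitute to get $\frac{1}{\pi}\int_0^\pi\log(2\sin u)e^{2imu}\,\mathrm{d}u$, split off the $\log 2$ term, and invoke Lemma \ref{lemma_aux_integral_Pm}. The only difference is cosmetic (you perform the change of variables in one step rather than two), so there is nothing to add.
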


\begin{proof}
By substitution, we switch from the variable $\theta$ to $\theta/2\pi$. Then
use Formula \ref{eq_logabs_to_sine} in order to obtain
\begin{align*}
W_{m}  & =\frac{1}{2\pi}\int_{0}^{2\pi}\log\left\vert 1-e^{i\theta}\right\vert
e^{im\theta}\,\mathrm{d}\theta=\frac{1}{2\pi}\int_{0}^{2\pi}\log\left(
2\sin\left(  \frac{\theta}{2}\right)  \right)  e^{im\theta}\,\mathrm{d}%
\theta\\
& =\frac{1}{\pi}\int_{0}^{\pi}\log\left(  2\sin\theta\right)  e^{2im\theta
}\,\mathrm{d}\theta\\
& =\frac{1}{\pi}\int_{0}^{\pi}\log(\sin\theta)\cdot e^{2im\theta}%
\,\mathrm{d}\theta+\frac{1}{\pi}\log2\int_{0}^{\pi}e^{2i\theta m}%
\,\mathrm{d}\theta\\
& =\frac{1}{\pi}P_{m}+\frac{1}{\pi}\log2\cdot\pi\delta_{m=0}=-\frac
{1}{2\left\vert m\right\vert }\delta_{m\neq0}\text{.}%
\end{align*}
Here we have used the functional equation of the real logarithm, which yields
a term of the shape $P_{m}$ and then we may evaluate the entire expression
using Lemma \ref{lemma_aux_integral_Pm}.
\end{proof}

\subsection{Fractional values}

So far, we have determined $W_{m}$ for all integer values. For non-integral
values, the structure is more complicated. We will analyze this case now. Let
$m\in\mathbf{R}\setminus\mathbf{Z}_{\leq-1}$ be given. Recall that $w^{m}%
=\exp(m\cdot\log w)$ as a function in $w$ is holomorphic on $\mathbf{C}%
\setminus(-\infty,0]$ and $\log(1-w)$ is holomorphic in $\mathbf{C}%
\setminus\lbrack1,\infty)$. The intersection $X:=\mathbf{C}\setminus
((-\infty,0]\cup\lbrack1,\infty))$ is simply connected, so%
\[
S_{m}(z):=-\int_{0}^{z}\log(1-w)w^{m}\frac{\mathrm{d}w}{w}%
\]
determines a well-defined holomorphic function on $X$, independent of the
choice of a path of integration from $0$ to $z$. For $m=0$, we have
$S_{0}(z)=\operatorname*{Li}_{2}(z)$, the classical dilogarithm. For
$\left\vert z\right\vert <1$, termwise integration of the logarithm series
yields the uniformly convergent series%
\[
S_{m}(z)=\sum_{r=1}^{\infty}\frac{1}{r}\frac{z^{r+m}}{r+m}\qquad
\text{for}\qquad z\in X\text{, }\left\vert z\right\vert <1\text{.}%
\]
Note that this will usually \textsl{not} be a power series since $m$ need not
be a natural number. We may use this series to attach a value to the two
points $\{-1,1\}\notin X$, namely%
\begin{equation}
S_{m}(1):=\sum_{r=1}^{\infty}\frac{1}{r(r+m)}\qquad\text{and}\qquad
S_{m}(-1):=\sum_{r=1}^{\infty}\frac{1}{r}\frac{e^{i\pi(r+m)}}{r+m}%
\text{.}\label{l_def_Sq_AtOneAndMinusOne}%
\end{equation}
Note that these values really hinge on our choice of preferred branches, e.g.%
\begin{equation}
S_{m}(-1)=\lim\limits_{\substack{z\longrightarrow-1, \\z\in
X,\operatorname{Im}z>0 }}S_{m}(z)\text{.}\label{l_rmk_SqMinusOne}%
\end{equation}

\begin{proposition}
\label{prop_log_val_for_rationalarg_helper}Suppose $m\in\mathbf{R}%
\setminus\mathbf{Z}_{\leq0}$.

\begin{enumerate}
\item Then%
\[
i\int_{0}^{\pi}\log\left(  2\sin\left(  \frac{\theta}{2}\right)  \right)
\cdot e^{im\theta}\mathrm{d}\theta=S_{m}(1)-S_{m}(-1)-\frac{1}{2m^{2}}\left(
1-e^{i\pi m}+m\pi i\right)  \text{,}%
\]
where $S_{m}(1)$ and $S_{m}(-1)$ are defined as in Equation
\ref{l_def_Sq_AtOneAndMinusOne}.

\item Moreover,%
\[
i\int_{\pi}^{2\pi}\log\left(  2\sin\left(  \frac{\theta}{2}\right)  \right)
\cdot e^{im\theta}\mathrm{d}\theta=-e^{2\pi im}\cdot\overline{i\int_{0}^{\pi
}\log\left(  2\sin\left(  \frac{\theta}{2}\right)  \right)  \cdot e^{im\theta
}\mathrm{d}\theta}\text{,}%
\]
reducing this integral to (1).
\end{enumerate}
\end{proposition}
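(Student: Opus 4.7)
The plan for part~(1) is to split $\log(2\sin(\theta/2))$ into $\log(1-e^{i\theta})$ minus a linear-in-$\theta$ correction, then turn the first piece into a contour integral that can be matched to the definition of $S_{m}$. Concretely, the factorization $1-e^{i\theta} = -2ie^{i\theta/2}\sin(\theta/2) = 2\sin(\theta/2)\cdot e^{i(\theta/2 - \pi/2)}$ is valid for $\theta\in(0,2\pi)$; for $\theta\in(0,\pi)$ the argument $\theta/2 - \pi/2$ lies in $(-\pi/2, 0)$, so with the principal branch of Equation~\ref{lbranch1}
\begin{equation*}
\log(1-e^{i\theta}) = \log(2\sin(\theta/2)) + i(\theta/2 - \pi/2).
\end{equation*}
Substituting this into the integrand decomposes the original integral as $i\int_0^\pi \log(1-e^{i\theta})e^{im\theta}\,d\theta + \int_0^\pi (\theta/2 - \pi/2)e^{im\theta}\,d\theta$.

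For the first piece, I would substitute $w=e^{i\theta}$ (so that $dw/w = i\,d\theta$), transforming the integral into $\int_{C_+} \log(1-w)\,w^{m}\,\tfrac{dw}{w}$, where $C_+$ is the upper unit semicircle traversed from $w=1$ to $w=-1$. The curve $C_+$, minus its endpoints, lies inside the simply connected domain $X = \mathbf{C}\setminus((-\infty,0]\cup[1,\infty))$ on which $S_{m}$ is defined as a single-valued holomorphic function. Splitting $C_+$ as the concatenation of the reversed real segment $1\to 0$ with the upper-half-plane path $0\to -1$, and appealing to Abel-type continuity to pass to the limits at $\pm 1$ (the integrable logarithmic singularity at $\theta=0$ causes no trouble), this contour integral evaluates to $S_m(1) - S_m(-1)$ in the sense of Equations~\ref{l_def_Sq_AtOneAndMinusOne} and~\ref{l_rmk_SqMinusOne}. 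The second piece is elementary: a single integration by parts on $\int_0^\pi \theta\,e^{im\theta}\,d\theta$ together with $\int_0^\pi e^{im\theta}\,d\theta = (e^{i\pi m}-1)/(im)$ produces $-\tfrac{1}{2m^{2}}(1-e^{i\pi m}+m\pi i)$ after routine simplification. Combining the two contributions yields the identity stated in~(1).

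Part~(2) I would derive from part~(1) by the substitution $\phi := 2\pi - \theta$. This reverses and rescales the interval $[\pi,2\pi]\to[0,\pi]$, produces an overall factor $e^{2\pi im}$ from the exponential, and, crucially, replaces $\sin(\theta/2)$ by $\sin(\pi-\phi/2)=\sin(\phi/2)$ so that the log factor is unchanged. The integral becomes $ie^{2\pi im}\int_0^\pi\log(2\sin(\phi/2))e^{-im\phi}\,d\phi$. Because $m$ is real and $\log(2\sin(\phi/2))$ is real, complex conjugation of the left-hand side of~(1) flips both $i\mapsto -i$ and $e^{im\theta}\mapsto e^{-im\theta}$, which precisely identifies $ie^{2\pi im}\int_0^\pi\log(2\sin(\phi/2))e^{-im\phi}\,d\phi$ with $-e^{2\pi im}\,\overline{i\int_0^\pi \log(2\sin(\theta/2))e^{im\theta}\,d\theta}$, as claimed.

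The main obstacle will be the careful handling of boundary behaviour of the contour integral at $w=\pm 1$: the equality of $\int_{C_+}\log(1-w)w^{m}\tfrac{dw}{w}$ with $S_m(1)-S_m(-1)$ must be reconciled with the fact that $\pm 1\notin X$, and one has to verify that the endpoint limits taken from inside $C_+$ match the series-based definitions in Equation~\ref{l_def_Sq_AtOneAndMinusOne} and the directional limit in Equation~\ref{l_rmk_SqMinusOne}. The hypothesis $m\notin\mathbf{Z}_{\leq 0}$ is precisely what ensures both that the series defining $S_m(\pm 1)$ converge and that the relevant Abel radial/angular limits exist; once this is in place, the remaining steps are routine calculus.
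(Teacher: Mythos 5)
Your proposal is correct and follows essentially the same route as the paper: the branch identity $\log(1-e^{i\theta})=\log(2\sin(\theta/2))+i(\theta/2-\pi/2)$, evaluation of the $\log(1-e^{i\theta})$ piece as $S_{m}(1)-S_{m}(-1)$ by holomorphy of $-\log(1-w)w^{m-1}$ on $X$ with endpoint limits matching Equations \ref{l_def_Sq_AtOneAndMinusOne} and \ref{l_rmk_SqMinusOne}, and the same elementary integral for the linear correction. Your part (2) via the single substitution $\phi=2\pi-\theta$ is just a compressed form of the paper's shift-reflect-shift symmetry argument, so nothing essentially new there either.
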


\begin{proof}
\textit{(1)} Suppose $0<\delta<\xi<\pi$. We integrate the holomorphic function%
\[
f_{m}(w):=-\log(1-w)\cdot w^{m-1}\qquad\text{(for }w\in X\text{)}%
\]
over a circle segment: we go straight from $0$ to $e^{i\delta}$, follow the
arc from $e^{i\delta}$ to $e^{i\xi}$, and then go back straight from $e^{i\xi
}$ to $0$. This yields%
\begin{equation}%
\begin{tabular}
[c]{ll}%
\raisebox{-0.4341in}{\includegraphics[
height=0.9176in,
width=1.3906in
]%
{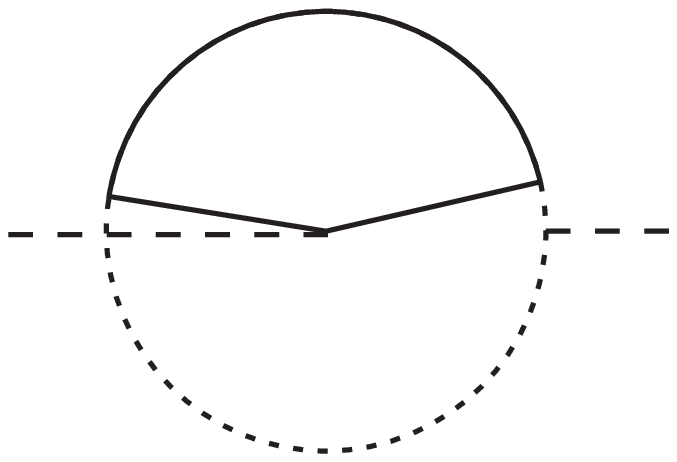}%
}
& $%
{\displaystyle\int_{0}^{e^{i\delta}}}
f_{m}(z)\mathrm{d}z+%
{\displaystyle\int_{e^{i\delta}}^{e^{i\xi}}}
f_{m}(z)\mathrm{d}z-%
{\displaystyle\int_{0}^{e^{i\xi}}}
f_{m}(z)\mathrm{d}z=0$%
\end{tabular}
\label{cDiag1}%
\end{equation}
(some indentation and care is required at $z=0$ since this does not lie in $X
$. We leave this to the reader). Thus, parametrizing the arc through
$\gamma(\theta):=e^{i\theta}$, we get%
\[
S_{m}(e^{i\delta})-i\int_{\delta}^{\xi}\log(1-e^{i\theta})\cdot e^{im\theta
}\mathrm{d}\theta-S_{m}(e^{i\xi})=0\text{.}%
\]
In the range $0<\theta<\pi$, we have%
\[
\log(1-e^{i\theta})=\log\left(  2\sin\left(  \frac{\theta}{2}\right)  \right)
+i\left(  \frac{\theta}{2}-\frac{\pi}{2}\right)  \text{.}%
\]
Thus,%
\[
S_{m}(e^{i\delta})-S_{m}(e^{i\xi})=i\int_{\delta}^{\xi}\log\left(
2\sin\left(  \frac{\theta}{2}\right)  \right)  \cdot e^{im\theta}%
\mathrm{d}\theta-\int_{\delta}^{\xi}\left(  \frac{\theta}{2}-\frac{\pi}%
{2}\right)  \cdot e^{im\theta}\mathrm{d}\theta\text{.}%
\]
The limit $\delta\rightarrow+0$ is harmless, and in fact our Definition of
$S_{m}(1)$ in line \ref{l_def_Sq_AtOneAndMinusOne} is made such that
$S_{m}(e^{i\delta})$ converges to $S_{m}(1)$. Moreover, since $m\neq0$, we
have%
\[
\int_{0}^{\pi}\left(  \frac{\theta}{2}-\frac{\pi}{2}\right)  \cdot
e^{im\theta}\mathrm{d}\theta=-\frac{1}{2m^{2}}\left(  1-e^{i\pi m}+m\pi
i\right)  \text{.}%
\]
by a straightforward computation. Analogously, consider the limit
$\xi\rightarrow\pi$. Again, $S_{m}(-1)$ is defined exactly in such a way to
agree with this limit, cf. Equation \ref{l_rmk_SqMinusOne}. Our first claim
follows. \textit{(2)}\ If we wanted to generalize the treatment of
\textit{Case 1}, we would have to handle the branch switch along the negative
real half-axis (Figure \ref{cDiag1}). We avoid this by exploiting symmetry:
Observe that%
\begin{align*}
& \int_{\pi}^{2\pi}\log\left(  2\sin\left(  \frac{\theta}{2}\right)  \right)
\cdot e^{im\theta}\mathrm{d}\theta=e^{i\pi m}\int_{0}^{\pi}\log\left(
2\sin\left(  \frac{\theta+\pi}{2}\right)  \right)  \cdot e^{im\theta
}\mathrm{d}\theta\\
& =e^{i\pi m}\int_{-\pi}^{0}\log\left(  2\sin\left(  \frac{-\theta+\pi}%
{2}\right)  \right)  \cdot e^{-im\theta}\mathrm{d}\theta=e^{i\pi m}\int_{-\pi
}^{0}\log\left(  2\sin\left(  \frac{\theta+\pi}{2}\right)  \right)  \cdot
e^{-im\theta}\mathrm{d}\theta\\
& =e^{2\pi im}\int_{0}^{\pi}\log\left(  2\sin\left(  \frac{\theta}{2}\right)
\right)  \cdot e^{-im\theta}\mathrm{d}\theta=e^{2\pi im}\overline{\int
_{0}^{\pi}\log\left(  2\sin\left(  \frac{\theta}{2}\right)  \right)  \cdot
e^{im\theta}\mathrm{d}\theta}\text{.}%
\end{align*}
Concretely: First, we shift integration to $[0,\pi]$, then we substitute
$-\theta$ for the variable $\theta$, then we use that $\sin\left(
\frac{\theta+\pi}{2}\right)  =\cos\left(  \frac{\theta}{2}\right)  $, so the
term inside the logarithm is invariant under changing the sign of $\theta$,
and then we shift back to $[0,\pi]$. Finally, we use that the logarithm term
is real-valued.
\end{proof}

\begin{lemma}
\label{Lemma_LerchTranscendent}Suppose $m\in\mathbf{Q}\setminus\mathbf{Z}%
_{\leq0}$ with $m=\frac{u}{v}$ with $u,v>0$ (not necessarily in lowest terms).
The value of $S_{m}(-1)$ is a finite $\mathbf{Q}(\mu_{\infty})$-linear
combination of values $L(1,\chi)$ for non-principal Dirichlet characters
$\chi$ modulo $2v$.
\end{lemma}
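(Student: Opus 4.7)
The plan is to express $S_m(-1)$ as an explicit finite sum involving logarithms of cyclotomic units, and then invoke Dirichlet's classical identity linking those logarithms to the $L$-values $L(1,\chi)$. First, apply the partial fraction decomposition $\tfrac{1}{r(r+m)}=\tfrac{v}{u}\bigl(\tfrac{1}{r}-\tfrac{1}{r+m}\bigr)$ to the defining series and pull out $e^{i\pi m}=\zeta_{2v}^u$ to obtain
\[
S_m(-1)\;=\;\frac{v\,\zeta_{2v}^u}{u}\!\left[-\log 2\;-\;v\sum_{r\geq 1}\frac{(-1)^r}{rv+u}\right].
\]
The prefactor lies in $\mathbf{Q}(\mu_\infty)$, so only the remaining sum matters. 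Both $(-1)^r$ and $rv+u\bmod 2v$ depend only on the parity of $r$: for even $r$ one has $rv+u\equiv u\pmod{2v}$ and for odd $r$ one has $rv+u\equiv u+v\pmod{2v}$. Consequently the sum equals $\sum_{n\geq 1}f(n)/n$ plus an explicit rational remainder, where $f\colon\mathbf{Z}/2v\to\{-1,0,+1\}$ is the periodic function with $f(u)=+1$, $f(u+v)=-1$, and zero elsewhere. The balance $\sum_a f(a)=0$ is exactly the Dirichlet-test convergence criterion.

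Second, I apply the finite Fourier transform on $\mathbf{Z}/2v$. A direct calculation yields $\hat f(b)=\zeta_{2v}^{-ub}\bigl(1-(-1)^b\bigr)/(2v)$, which vanishes unless $b$ is odd. Swapping sums (justified by Abel resummation) and applying the principal-branch identity $\sum_{n\geq 1}\zeta^n/n=-\log(1-\zeta)$, valid for every root of unity $\zeta\neq 1$, gives
\[
\sum_{n\geq 1}\frac{f(n)}{n}\;=\;-\frac{1}{v}\sum_{\substack{1\leq b<2v\\ b\text{ odd}}}\zeta_{2v}^{-ub}\log\bigl(1-\zeta_{2v}^b\bigr).
\]

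Third, I relate these cyclotomic logarithms back to Dirichlet $L$-values. Decompose $\log(1-\zeta_{2v}^b)=\log|1-\zeta_{2v}^b|+i\arg(1-\zeta_{2v}^b)$: the imaginary parts lie in $\pi\mathbf{Q}$ and enter via odd non-principal characters $\chi$ modulo $2v$ (whose $L(1,\chi)$ are rational multiples of $\pi$ up to Gauss-sum factors in $\mathbf{Q}(\mu_{2v})$). The real parts are organised by Dirichlet's formula
\[
L(1,\chi)\;=\;-\tau(\bar\chi)^{-1}\sum_{a=1}^{f-1}\bar\chi(a)\log(1-\zeta_f^a),
\]
valid for every non-principal primitive $\chi$ of conductor $f\mid 2v$. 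Character orthogonality on $(\mathbf{Z}/2v)^\times$ inverts this linear system, expressing every $\log(1-\zeta_{2v}^b)$ with $\gcd(b,2v)=1$ as a $\mathbf{Q}(\mu_{2v})$-linear combination of $L(1,\chi)$'s for non-principal $\chi$ modulo $2v$; non-coprime indices $b$ descend to primitive roots of smaller conductors $f\mid 2v$ via the distribution relations and still give characters modulo $2v$ upon lifting. Reassembling with the earlier steps puts $S_m(-1)$ into the required form.

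The delicate part is this final bookkeeping: one must verify that the $\log 2$ contributions arising both from the explicit prefactor in the first step and from the decomposition $\log|1-\zeta_{2v}^b|=\log 2+\log|\sin(\pi b/(2v))|$ in the third step combine to pure $L(1,\chi)$-data via the cyclotomic norm identities $\prod_a(1-\zeta_f^a)=f$ (for prime-power $f$), the general distribution relations, and the vanishing $\sum_a\chi(a)=0$ for non-principal $\chi$. This is ultimately the content of the class number formula for the subfields of $\mathbf{Q}(\mu_{2v})$.
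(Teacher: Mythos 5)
Your first two steps are correct, and in fact they give a cleaner, fully explicit version of the reduction: the identity
\[
S_m(-1)=\frac{v\,\zeta_{2v}^{u}}{u}\Bigl[-\log 2-v\sum_{r\geq1}\frac{(-1)^r}{rv+u}\Bigr],\qquad
\sum_{n\geq1}\frac{f(n)}{n}=-\frac1v\sum_{\substack{1\leq b<2v\\ b\ \mathrm{odd}}}\zeta_{2v}^{-ub}\log\bigl(1-\zeta_{2v}^{b}\bigr)
\]
checks out. The gap is your third step, and it is not a bookkeeping nuisance but the crux: the claim that the explicit $-\log 2$ (and the rational remainder from re-indexing) gets absorbed into non-principal $L(1,\chi)$-data modulo $2v$ is false. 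Writing $\log\lvert 1-\zeta_{2v}^{b}\rvert=\log 2+\log\sin(\pi b/(2v))$, the total $\log 2$-coefficient coming from the cyclotomic logarithms is proportional to $\sum_{b\ \mathrm{odd}}\zeta_{2v}^{-ub}$, a full twisted character sum which vanishes for every genuinely fractional $m=u/v$; so there is nothing to cancel the prefactor $-\log 2$ against. Concretely, for $m=\tfrac12$ the Leibniz and alternating harmonic series give $S_{1/2}(-1)=4i-2i\log 2-i\pi$, while the only non-principal character modulo $4$ is $\chi_{-4}$ with $L(1,\chi_{-4})=\pi/4$. By Baker's theorem ($\beta_0+\sum\beta_j\log\alpha_j\neq0$ whenever $\beta_0\neq0$ and all data are algebraic), neither the constant $4i$ nor the $\log 2$ term can lie in the $\mathbf{Q}(\mu_\infty)$-span of $\pi$; and enlarging the modulus does not help, since odd characters only contribute algebraic multiples of $\pi$ and even non-principal ones only algebraic combinations of logarithms of cyclotomic units, while $2$ is multiplicatively independent of roots of unity and units. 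So no amount of norm identities, distribution relations or class-number-formula input can complete the step as you describe it; the conclusion would have to be weakened to allow $1$ and $\log 2$ (equivalently the principal-character datum $\lim_{s\to1}(1-2^{1-s})\zeta(s)$) alongside the $L(1,\chi)$.

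For comparison, the paper's own proof takes the Hurwitz-zeta route: it groups $\sum_r(-1)^r/(r+C)^s$ into values $\boldsymbol{\zeta}(s,a/(2v))$, expands these in Dirichlet $L$-functions, and excludes the principal character by arguing that the limit at $s=1$ exists. Your finite-Fourier/cyclotomic-logarithm computation does the same job more transparently, but the troublesome term is exactly the $C=0$ series $\sum_r(-1)^r/r=-\log 2$, which the paper likewise treats as "non-principal $L$-data modulo $2$" although no non-principal character modulo $2$ exists. So the difficulty you flag as "the delicate part" is genuine and is not resolved in the paper either; in your write-up, however, it is the load-bearing step, and it is asserted rather than proved — and, as the example above shows, it cannot be proved in the stated form.
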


\begin{proof}
Define $z:=e^{i\pi\frac{j}{n}}$ (we shall only need the case $n=j=1$, but
dealing with the general case makes the computation clearer). Let
$C\in\{0,m\}$ be arbitrary. For all $s\in\mathbf{C}$ with $\operatorname{Re}%
s>1$, we compute%
\begin{align*}
\sum_{r=1}^{\infty}\frac{z^{r}}{(r+C)^{s}}  & =\sum_{r=0}^{\infty}\sum
_{l=1}^{2n}\frac{z^{2rn+l}}{(2rn+l+C)^{s}}=\sum_{l=1}^{2n}(e^{i\pi\frac{j}{n}%
})^{l}\sum_{r=0}^{\infty}\frac{1}{(2rn+l+C)^{s}}\\
& =\frac{1}{2n}\sum_{l=1}^{2n}(e^{i\pi\frac{j}{n}})^{l}\sum_{r=0}^{\infty
}\frac{1}{\left(  r+\frac{l+C}{2n}\right)  ^{s}}=\frac{1}{2n}\sum_{l=1}%
^{2n}(e^{i\pi\frac{j}{n}})^{l}\boldsymbol{\zeta}\left(  s,\frac{l+C}%
{2n}\right)  \text{,}%
\end{align*}
where $\boldsymbol{\zeta}(s,A):=\sum_{r=0}^{\infty}(r+A)^{-s}$ denotes the
Hurwitz zeta function. Note that we have used that $m\notin\mathbf{Z}_{\leq
-1}$. It is well-known that the Hurwitz zeta function at \textit{rational}
parameters, can be expressed through Dirichlet $L$-values. Concretely,%
\[
\boldsymbol{\zeta}\left(  s,\frac{a}{b}\right)  =\frac{b^{s}}{\varphi(b)}%
\sum_{\chi}\overline{\chi(a)}\cdot L(s,\chi)\text{,}%
\]
where $\chi$ runs through all Dirichlet characters modulo $b$, and $\varphi$
is Euler's totient function. Thus, we may expand%
\[
\sum_{r=1}^{\infty}\frac{z^{r}}{(r+C)^{s}}=\sum_{i\in I}x_{i}\cdot
L(s,\chi^{(i)})\text{,}%
\]
for $I$ some finite index set, $x_{i}\in\overline{\mathbf{Q}}$, $\chi^{(i)} $
Dirichlet characters modulo $2n$ (for $C=0$) resp. $2nv$ (for $C=\frac{u}{v}%
$). Now, restrict to the case $n=j=1$. Since $\left\vert z\right\vert =1$, but
$z\neq1$, the limit of the left-hand side for $s\rightarrow1$ exists. For all
non-principal characters $\tilde{\chi}$, $L(s,\tilde{\chi})$ exists for $s=1$
on the right-hand side. Thus, the principal character $\chi_{0}$ does not
occur among those $i\in I$ with $x_{i}\neq0$ (Reason: Suppose it does. Since
all other summands have a finite limit for $s\rightarrow1$, this would force
$L(s,\chi_{0})$ to have a finite limit for $s\rightarrow1$ as well, but there
is a pole instead). Thus, we can actually carry out the limit $s\rightarrow1$
and obtain%
\begin{equation}
\sum_{r=1}^{\infty}\frac{(e^{i\pi})^{r}}{r+C}=\sum_{i\in I}x_{i}\cdot
L(1,\chi^{(i)})\label{lcca1a}%
\end{equation}
for a collection of non-principal Dirichlet characters $\chi^{(i)}$ modulo $2$
(for $C=0$) resp. $2v$ (for $C=\frac{u}{v}$). (Of course, there is only one
such Dirichlet character for $C=0$, but let us ignore this simplification).
Finally, note that we have%
\[
\frac{1}{r(r+m)}=\frac{1}{m}\left(  \frac{1}{r}-\frac{1}{r+m}\right)
\]
as $m\neq0$. Thus, for $\left\vert w\right\vert <1$ we have%
\[
S_{m}(w)=\frac{w^{m}}{m}\left(  \sum_{r=1}^{\infty}\frac{w^{r}}{r}-\sum
_{r=1}^{\infty}\frac{w^{r}}{r+m}\right)  \text{.}%
\]
We get $S_{m}(-1)$ if we plug in $w:=e^{i\pi}$. Although this does not satisfy
$\left\vert w\right\vert <1$, it is consistent with our definition of
$S_{m}(-1)$ by line \ref{l_rmk_SqMinusOne} and the series are conditionally
convergent. Since $w^{m}\in\overline{\mathbf{Q}}$, Equation \ref{lcca1a}
implies our claim.
\end{proof}

If $\Gamma$ denotes the Gamma function, the \emph{digamma function} is defined
as its logarithmic derivative, i.e.
\[
\psi(z):=\frac{\Gamma^{\prime}(z)}{\Gamma(z)}\text{.}%
\]
The standard functional equation of the Gamma function implies that%
\begin{equation}
\psi(z+1)=\psi(z)+\frac{1}{z}\text{.}\label{lmf3}%
\end{equation}
Write $\gamma$ for the Euler--Mascheroni constant. The Weierstrass product
formula for the Gamma function yields%
\[
\frac{1}{\Gamma(z)}=ze^{\gamma z}\prod_{r=1}^{\infty}\left(  1+\frac{z}%
{r}\right)  e^{-\frac{z}{r}}\text{.}%
\]
Thus, taking its logarithmic derivative,%
\[
-\psi(z)=\gamma+\frac{1}{z}+\sum_{r=1}^{\infty}\left(  \frac{1}{r+z}-\frac
{1}{r}\right)  =\gamma+\frac{1}{z}-z\sum_{r=1}^{\infty}\frac{1}{(r+z)r}%
\text{.}%
\]
Hence, by the functional equation of Equation \ref{lmf3},%
\begin{equation}
\frac{\left(  \psi(z)+\gamma\right)  +\frac{1}{z}}{z}=\sum_{r=1}^{\infty}%
\frac{1}{(r+z)r}\text{.}\label{lmf4}%
\end{equation}

\begin{proposition}
\label{prop_W_m_FractionalVals}For a fraction $m=\frac{u}{v}\in\mathbf{Q}%
\setminus\mathbf{Z}$ (with $u,v\in\mathbf{Z}$ and $v\geq1$), the value of
$W_{m}$ lies in the field%
\[
\mathbf{Q}(\mu_{\infty},\pi,\{L(1,\chi)\}_{\chi})\text{,}%
\]
where $\chi$ runs through a finite set of non-principal Dirichlet characters
modulo $2v$.
\end{proposition}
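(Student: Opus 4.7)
The strategy is to combine Proposition \ref{prop_log_val_for_rationalarg_helper} with Lemma \ref{Lemma_LerchTranscendent}, reducing to a single unknown series that can be decomposed into Dirichlet $L$-values at $s=1$ by a variant of the argument in Lemma \ref{Lemma_LerchTranscendent}. First, the substitution $\theta \mapsto \theta/(2\pi)$ in Equation \ref{Def_Wm}, followed by the identity \ref{eq_logabs_to_sine}, rewrites
\[
W_m = \frac{1}{2\pi}\int_0^{2\pi}\log\!\Bigl(2\sin\!\tfrac{\theta}{2}\Bigr)\,e^{im\theta}\,\mathrm d\theta.
\]
Split the interval at $\pi$. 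Part~(1) of Proposition \ref{prop_log_val_for_rationalarg_helper} expresses the $[0,\pi]$-piece as
\[
-i\bigl[S_m(1) - S_m(-1) - \tfrac{1}{2m^2}(1 - e^{i\pi m} + m\pi i)\bigr],
\]
and part~(2) expresses the $[\pi,2\pi]$-piece as $-e^{2\pi im}$ times the complex conjugate. Combined, $W_m$ becomes an explicit $\mathbf Q(\mu_\infty,\pi)$-linear combination of $S_m(1), \overline{S_m(1)}, S_m(-1), \overline{S_m(-1)}$ plus elementary terms living in $\mathbf Q(\mu_\infty,\pi)$ (since $m\in\mathbf Q$ makes $e^{i\pi m}$, $1/m^2$, and $m\pi i$ all lie there).

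Second, Lemma \ref{Lemma_LerchTranscendent} places $S_m(-1)$ in the target field. Moreover, writing $S_m(-1) = e^{i\pi m}\sum_{r\ge1}(-1)^r/(r(r+m))$ exhibits it as $e^{i\pi m}$ times a real series, so $\overline{S_m(-1)} = e^{-2\pi im}S_m(-1)$ is also in the target field. The series $S_m(1) = \sum_{r\ge1} 1/(r(r+m))$ is real, hence self-conjugate; the task reduces to showing that $S_m(1)$ lies in $\mathbf Q(\mu_\infty,\pi,\{L(1,\chi)\}_\chi)$. For this, I would mimic the Hurwitz-zeta decomposition in the proof of Lemma \ref{Lemma_LerchTranscendent}, splitting the index $r$ modulo $2v$ and applying $\boldsymbol\zeta(s,a/(2v)) = \frac{(2v)^s}{\phi(2v)}\sum_\chi \overline{\chi(a)}L(s,\chi)$, all at $s=1$.

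The hard part will be that, unlike in Lemma \ref{Lemma_LerchTranscendent} (where $z \neq 1$ forced the coefficient of the pole-carrying $L(s,\chi_0)$ to vanish), here the principal character contributes genuinely and must be handled by hand. Concretely, one uses $S_m(1) = (\psi(m+1)+\gamma)/m$ via Equation \ref{lmf4}, together with the identity
\[
\sum_{a\in(\mathbf Z/2v)^\times}\chi(a)\bigl(\psi(a/(2v))+\gamma\bigr) = -2v\,L(1,\chi)
\]
for non-principal $\chi$ mod $2v$ (provable by writing $\psi(a/q)+\gamma = \sum_{n\ge0}(\tfrac{1}{n+1}-\tfrac{1}{n+a/q})$ and interchanging sums). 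Inverting by character orthogonality expresses $\psi(a/(2v))+\gamma$ as a $\mathbf Q(\mu_\infty)$-combination of $L(1,\chi)$ for non-principal $\chi$, plus the principal-character part $\frac{1}{\phi(2v)}\sum_{a\in(\mathbf Z/2v)^\times}(\psi(a/(2v))+\gamma)$, which by Möbius inversion applied to Gauss's multiplication formula is a $\mathbf Q$-linear combination of $\log d$ for $d\mid 2v$. The main obstacle is verifying that these residual $\log d$-contributions cancel against the $\log 2$ implicit in $\log(2\sin(\theta/2)) = \log 2 + \log\sin(\theta/2)$ together with the elementary $\mathbf Q(\mu_\infty,\pi)$-terms from the first step, possibly after re-expressing $\log\Phi_d(1)$ through sums $\sum_a\log|1-\zeta_d^a|$ and invoking character orthogonality once more. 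The integer case $m\in\mathbf Z$ of Lemma \ref{Lemma_WmCompute}, where $W_m=-1/(2|m|)$ carries no logarithms whatsoever, serves as a consistency check guiding this delicate cancellation.
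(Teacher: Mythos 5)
Your plan coincides with the paper's own proof up to the very last step: the paper likewise splits the integral at $\pi$, applies Proposition \ref{prop_log_val_for_rationalarg_helper}, disposes of $S_{m}(-1)$ by Lemma \ref{Lemma_LerchTranscendent} (your observation $\overline{S_{m}(-1)}=e^{-2\pi im}S_{m}(-1)$ is correct, if not needed there), and converts $S_{m}(1)$ into a digamma value via Equation \ref{lmf4}; your identity $\sum_{a\in(\mathbf{Z}/2v)^{\times}}\chi(a)\left(\psi(a/(2v))+\gamma\right)=-2v\,L(1,\chi)$ for non-principal $\chi$ is also correct. The divergence is in the endgame: the paper finishes by citing \cite[Lemma 21]{MR2332591} (writing $\psi(m)+\gamma$ as a $\mathbf{Q}(\mu_{\infty})$-combination of the numbers $\log(1-\zeta_{v}^{i})$) together with the one-line assertion that these are expressible through $L(1,\chi)$, whereas you try to prove this step by character orthogonality and correctly isolate a principal-character residue consisting of rational multiples of $\log d$ for $d\mid 2v$.

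The genuine gap is your final claim that these residual logarithms cancel: they do not, and no bookkeeping can make them. Take $m=\tfrac{1}{2}$ (so $u=1$, $v=2$, $2v=4$). Then $\psi(\tfrac12)+\gamma=-2\log 2$, $S_{1/2}(1)=4-4\log 2$, $S_{1/2}(-1)=i\,(4-\pi-2\log 2)$, and assembling via Proposition \ref{prop_log_val_for_rationalarg_helper} (or computing $\int_{0}^{1}\log(2\sin\pi\theta)\,e^{\pi i\theta}\,\mathrm{d}\theta$ directly from $\int_{0}^{\pi/2}\sin t\,\log\sin t\,\mathrm{d}t=\log 2-1$) gives
\[
W_{1/2}=\frac{2i\,(2\log 2-1)}{\pi}\text{,}
\]
so the $\log 2$ survives in the final value; the $\log 2$ hidden in $\log(2\sin(\theta/2))=\log 2+\log\sin(\theta/2)$ does not absorb it. Since the only non-principal Dirichlet character modulo $4$ is odd, with $L(1,\chi)=\pi/4$, completing your cancellation argument would amount to proving $\log 2\in\mathbf{Q}(\mu_{\infty},\pi)$, which is out of reach (and expected to be false); the same obstruction appears for general $v$ through the prime divisors of $v$. (A secondary wrinkle: $\{m\}$ has denominator $v$, and $2a/(2v)$ is not coprime to $2v$, so orthogonality modulo $2v$ does not apply verbatim to the digamma argument you need.) In other words, the obstacle you flagged is real, but its resolution is not a delicate cancellation: the norm-direction logarithms genuinely persist in $W_{m}$, and they are exactly what is glossed over in the paper's closing assertion, since non-principal $L(1,\chi)$-values only account for $\pi$ and logarithms of cyclotomic units. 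Any correct completion must either adjoin these residual logarithms (equivalently, admit the principal-character/Hurwitz contributions) to the target field or weaken the statement accordingly.
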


\begin{proof}
We have%
\[
W_{m}=\frac{1}{2\pi}\left(  \int_{0}^{\pi}+\int_{\pi}^{2\pi}\right)
\log\left(  2\sin\left(  \frac{\theta}{2}\right)  \right)  e^{im\theta
}\,\mathrm{d}\theta\text{,}%
\]
so $W_{m}$ is a sum of two terms whose shape we understand thanks to Prop.
\ref{prop_log_val_for_rationalarg_helper}. In this presentation, we just have
algebraic numbers in a cyclotomic field, $\pi$, $S_{m}(-1)$ whose structure is
settled by\ Lemma \ref{Lemma_LerchTranscendent}, and $S_{m}(1)$. Finally,
by\ Equation \ref{lmf4},%
\[
S_{m}(1)=\sum_{r=1}^{\infty}\frac{1}{r(r+m)}=\frac{\left(  \psi(m)+\gamma
\right)  +\frac{1}{m}}{m}\text{.}%
\]
Now, $\psi(m)+\gamma$ for $m=\frac{u}{v}\in\mathbf{Q}\setminus\mathbf{Z}$ can
be written as a $\mathbf{Q}(\mu_{\infty})$-linear combination of values
$\log(1-\zeta_{v}^{i})$ for $\zeta_{v}$ a primitive $v$-th root of unity
\cite[Lemma 21]{MR2332591}, and these in turn as $L(1,\chi)$ for suitable
$\chi$.
\end{proof}

\begin{aside}
Suppose $m\in\mathbf{Q}\setminus\mathbf{Z}$. Then it was proven by Bundschuh
that $\psi\left(  m\right)  +\gamma$ is transcendental. This is \cite[Korollar
1]{MR555344}. More is known, e.g. on the linear independence of such values
over $\mathbf{Q}$, \cite[Theorem 4]{MR2332591} (however, over $\mathbf{Q}%
(\mu_{\infty})$ the situation is less clear).
\end{aside}

\subsection{Proof of Theorem \ref{thm_ortho}}

\begin{proof}
[Proof of Theorem \ref{thm_ortho}]\textit{(Claim 1) }We claim that the
function%
\begin{equation}
f(t):=\log\left\vert 1-e^{2\pi it}\right\vert \cdot e^{2\pi imt}%
\qquad\text{for}\qquad t\in(0,1)\label{lftog}%
\end{equation}
satisfies $f\in\operatorname*{BS}(\{0,1\})$: It is clearly continuous on
$(0,1)$. Next, treat the real and imaginary parts separately. They have a very
different boundary behaviour, as witnessed by the following figure of the
graphs for $m=0,1,2$.%
\[%
{\includegraphics[
height=0.6685in,
width=1.3578in
]%
{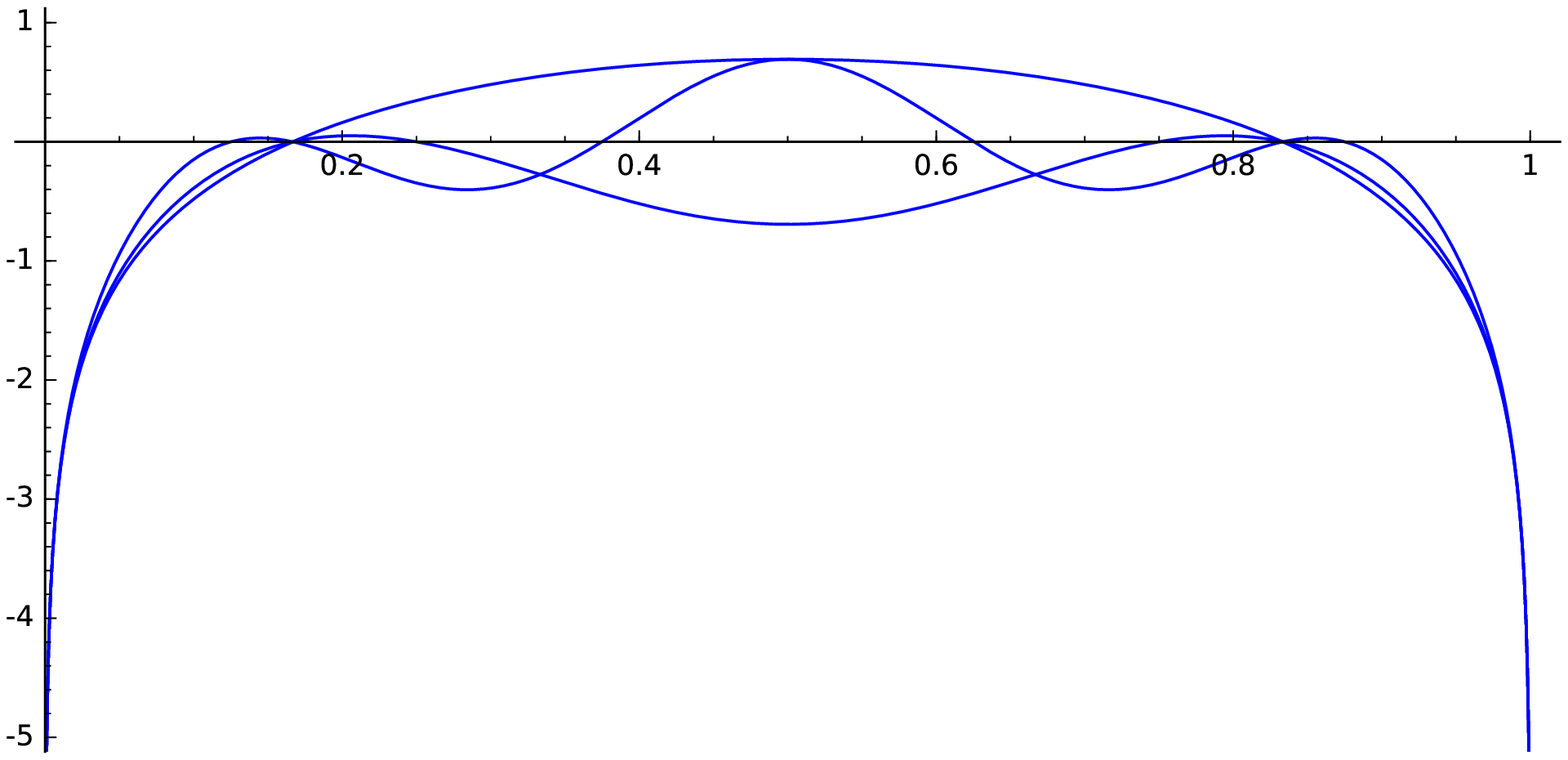}%
}
\qquad\qquad%
{\includegraphics[
height=0.6849in,
width=1.3923in
]%
{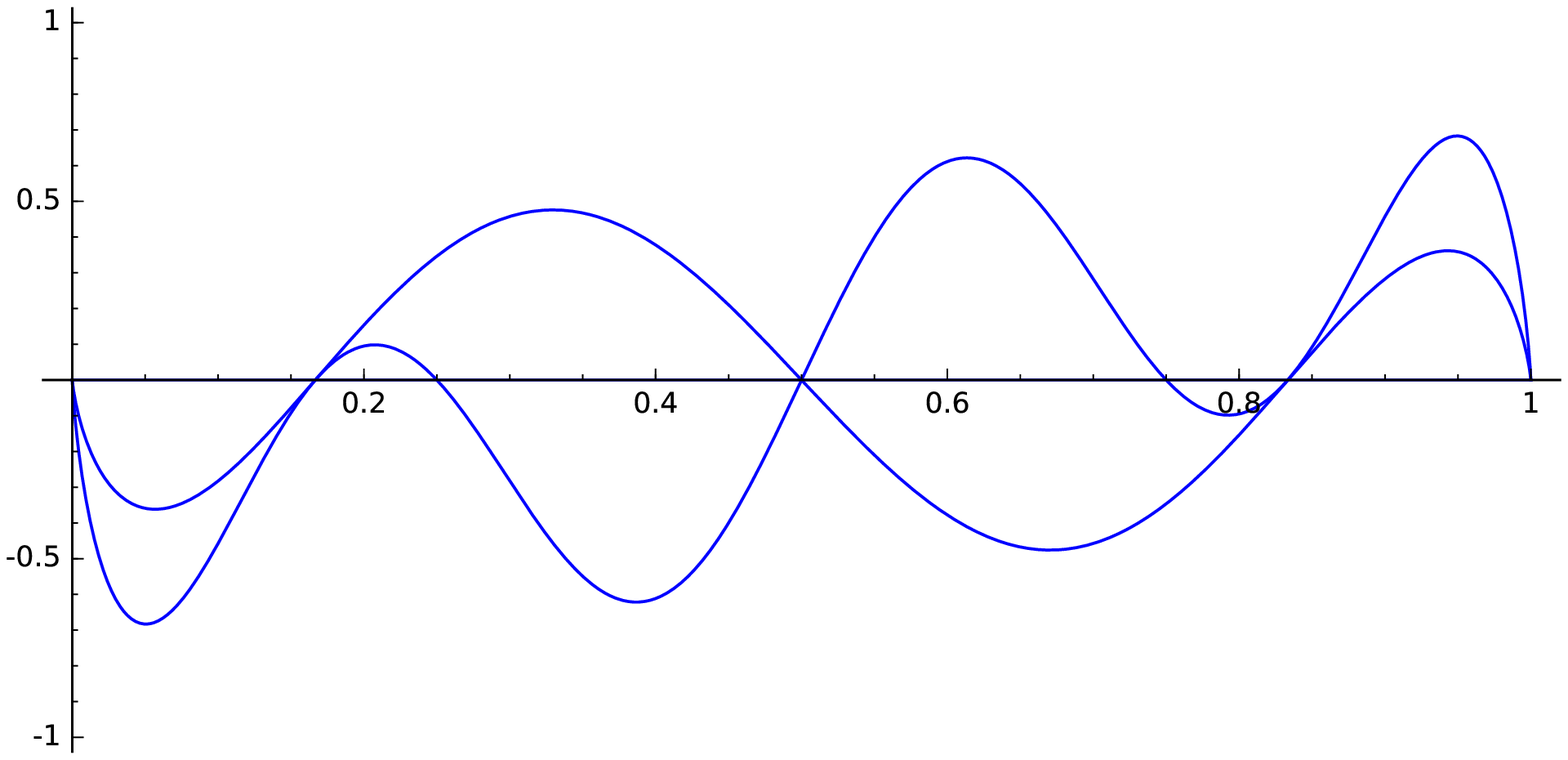}%
}
\]
\newline The real part is $\operatorname{Re}f(t)=\log\left\vert 1-e^{2\pi
it}\right\vert \cdot\cos(2\pi mt)$. For the limit $t\rightarrow0$, we easily
find that we have $\log\left\vert 1-e^{2\pi it}\right\vert \rightarrow-\infty$
and since $\cos(0)=+1$ and $r>0$, we get $\lim_{t\rightarrow0,t\in
(0,1)}\operatorname{Re}f(t)=-\infty$. For the limit $t\rightarrow1$, the same
happens. For all $t\in(0,\frac{1}{2})$, the function $-\log\left\vert
1-e^{2\pi it}\right\vert $ is monotonously decreasing, and for $t>0$ staying
sufficiently small, $\cos(2\pi mt)$ is also monotonously decreasing (or if
$m=0$ constant). Either way, their product is monotonously decreasing. Thus,
the negative is monotonously increasing. For $t\in(\frac{1}{2},1)$, proceed
symmetrically.\ This finishes the real part. The imaginary part is
$\operatorname{Im}f(t)=\log\left\vert 1-e^{2\pi it}\right\vert \sin(2\pi mt)$.
We compute, using Formula \ref{eq_logabs_to_sine},%
\begin{equation}
\underset{t\rightarrow0,t\in(0,1)}{\lim}\operatorname{Im}f(t)=\underset
{t\rightarrow0,t\in(0,1)}{\lim}\log\left(  2\sin\pi t\right)  \sin(2\pi
mt)\text{.}\label{lMb1}%
\end{equation}
We need a case distinction: If $m=0$, this is constantly zero. In particular,
the limit is zero. If $m\neq0$ so that $\sin(2\pi mt)\neq0$ near $t=0$, we may
rewrite this as
\[
\underset{t\rightarrow0,t\in(0,1)}{\lim}\log\left(  2\sin\pi t\right)
\sin(2\pi mt)=\underset{t\rightarrow0,t\in(0,1)}{\lim}\frac{\log\left(
2\sin\pi t\right)  }{\left(  \frac{1}{\sin(2\pi mt)}\right)  }\text{,}%
\]
and for $t\rightarrow0$, $t>0$, both numerator and denominator diverge to
$+\infty$, so by the L'H\^{o}pital Rule,%
\[
=-\underset{t\rightarrow0,t\in(0,1)}{\lim}\frac{\cos(\pi t)}{2m\cos(2\pi
mt)}\frac{\sin(2\pi mt)^{2}}{\sin(\pi t)}\text{.}%
\]
The first factor converges to some non-zero value, and since the sine is
$\sin(z)=z+O(z^{2})$ to first order, the second factor tends to zero as
$t\rightarrow0$. It follows that the limit in line \ref{lMb1} is always zero.
A very similar computation can be made for the limit $t\rightarrow1$. We leave
this to the reader. It follows that $\operatorname{Im}f(t)$ may be continued
to a continuous function on all of $[0,1]$, so it even lies in
$\operatorname*{BS}(\varnothing)$.

This being settled, we observe that%
\[
\underset{N\rightarrow\infty}{\lim}\frac{1}{N}\sum_{n=1}^{N}\log\left\vert
1-e^{2\pi in\theta}\right\vert e^{2\pi imn\theta}=\underset{N\rightarrow
\infty}{\lim}\frac{1}{N}\sum_{n=1}^{N}\log\left\vert 1-e^{2\pi i\{n\theta
\}}\right\vert e^{2\pi im\{n\theta\}}%
\]
and we want to apply Baxa--Schoi\ss engeier Equidistribution, Theorem
\ref{Thm_BaxaSchoissengeierEquidist}. To be able to invoke this result, it
remains to check the condition%
\begin{equation}
\underset{n\rightarrow\infty}{\lim}\frac{f(\{n\theta\})}{n}=0\text{.}%
\label{lBB1}%
\end{equation}
(Case A) If $u:=e^{2\pi i\theta}$ is algebraic, it cannot be a root of unity,
since that would contradict our assumption $\dim_{\mathbf{Q}}\left\langle
1,\theta\right\rangle =2$. We also have $\left\vert e^{2\pi i\theta
}\right\vert =1$, so we can use the typical diophantine estimate: Namely,
\[
\frac{\left\vert f(\{n\theta\})\right\vert }{n}=\frac{\left\vert
\log\left\vert 1-e^{2\pi in\theta}\right\vert e^{2\pi im\{n\theta
\}}\right\vert }{n}\leq\frac{1}{n}\log\left\vert 1-e^{2\pi in\theta
}\right\vert =\frac{1}{n}\log\left\vert 1-u^{n}\right\vert \text{.}%
\]
We have $\left\vert u\right\vert =1$ and since $\theta$ is irrational, $u$
cannot be a root of unity. Thus, Lemma \ref{Lemma_CriticalUpperBound} (which
in turn hinges on the Gelfond estimate) is available and implies%
\[
\frac{\left\vert f(\{n\theta\})\right\vert }{n}\leq\frac{C_{u}\log n}{n}%
\]
for a suitable constant $C_{u}>0$ and this converges to zero for
$n\rightarrow+\infty$. Thus, Equation \ref{lBB1} is fine.\newline(Case B) If
$\theta$ is badly approximable, Equation \ref{lBB1} automatically holds, see
Theorem \ref{Thm_BaxaSchoissengeierEquidist}.

Thus, in either case, Theorem \ref{Thm_BaxaSchoissengeierEquidist} applies. We
get%
\[
\underset{N\rightarrow\infty}{\lim}\frac{1}{N}\sum_{n=1}^{N}\log\left\vert
1-e^{2\pi i\{n\theta\}}\right\vert e^{2\pi im\{n\theta\}}=\int_{0}^{1}%
\log\left\vert 1-e^{2\pi it}\right\vert e^{2\pi imt}\,\mathrm{d}t=W_{m}%
\]
(with $W_{m}$ as in line \ref{Def_Wm}) and thus Lemma \ref{Lemma_WmCompute}
and Prop. \ref{prop_W_m_FractionalVals} yield our claim.\newline\textit{(Claim
2)} The proof is similar, but now we work on $[0,1]^{2}$. We have%
\[
\underset{N\rightarrow\infty}{\lim}\frac{1}{N}\sum_{n=1}^{N}\log\left\vert
1-e^{2\pi in\theta}\right\vert e^{2\pi imn\alpha}=\underset{N\rightarrow
\infty}{\lim}\frac{1}{N}\sum_{n=1}^{N}\log\left\vert 1-e^{2\pi i\{n\theta
\}}\right\vert e^{2\pi im\{n\alpha\}}\text{.}%
\]
As $\dim_{\mathbf{Q}}\left\langle 1,\theta,\xi\right\rangle =3$ by assumption,
the sequence $(\{n\theta\},\{n\alpha\})$ is uniformly distributed in
$[0,1]^{2}\subset\mathbf{R}^{2}$ (Lemma
\ref{lemma_QLinearIndependentMeansTorusUniformlyDist}). We claim that the
function%
\[
f(s,t):=\log\left\vert 1-e^{2\pi is}\right\vert \cdot e^{2\pi imt}%
\]
lies in $\operatorname*{BSU}^{2}(\{0,1\})$ for the singular weight
$g(s):=\log\left\vert 1-e^{2\pi is}\right\vert $ and $h(s,t):=e^{2\pi imt}$.
Since $g$ is the special case $m=0$ of the function in Equation \ref{lftog},
we have already settled that $g\in\operatorname*{BS}(\{0,1\})$ and $h$ is
clearly Riemann-integrable on $[0,1]^{2}$. Moreover, the condition of Equation
\ref{lBB1} only needs to be verified for the singular weight, which we have
also already done in the proof of \textit{Claim 1}. Thus, Theorem
\ref{thm_bs_multi} applies and we get%
\[
=\int_{0}^{1}\int_{0}^{1}\log\left\vert 1-e^{2\pi is}\right\vert \cdot e^{2\pi
imt}\,\mathrm{d}s\,\mathrm{d}t=\left(  \int_{0}^{1}\log\left\vert 1-e^{2\pi
is}\right\vert \,\mathrm{d}s\right)  \cdot\left(  \int_{0}^{1}e^{2\pi
imt}\,\mathrm{d}t\right)  \text{.}%
\]
The first integral, by definiton, agrees with $W_{0}$, but we already know
that $W_{0}=0$ by Lemma \ref{Lemma_WmCompute} (in particular, we do not even
need to use that for $m\neq0$, the second factor also vanishes). This finishes
the proof.
\end{proof}

\section{Limit values near the unit circle}

In this section we shall handle the only remaining case: $R_{x}$ for
$\left\vert x\right\vert =1$ and $x$ is not a root of unity.

\begin{theorem}
\label{thm_RxAtUnimodularHasNaturalBoundary}Suppose $x\in\mathbf{C}$ is an
algebraic integer with $\left\vert x\right\vert =1$ and $x$ is not a root of
unity. Then for every point $p\in\wp$ in $\wp:=\{x^{m}\mid m\in\mathbf{Z}\}$,
we have%
\[
\operatorname*{limrad}_{z\rightarrow p}\,(1-\left\vert z\right\vert
)R_{x}(z)=-\frac{1}{2\left\vert m\right\vert }\delta_{m\neq0}\text{.}%
\]
In particular, the unit circle is the natural boundary for $R_{x}$. For any
fractional exponent $m$ one still has%
\[
\operatorname*{limrad}_{z\rightarrow p}\,(1-\left\vert z\right\vert
)R_{x}(z)\in\mathbf{Q}(\mu_{\infty},\pi,\{L(1,\chi)\}_{\chi})\text{,}%
\]
where $\chi$ runs through a finite set (depending on $p$) of non-principal
Dirichlet characters of various moduli. Write $x=e^{2\pi i\theta}$. Then for
every point $p=e^{2\pi i\alpha}$ such that $1,\theta,\alpha$ are $\mathbf{Q}%
$-linearly independent inside the real numbers, we have%
\[
\operatorname*{limrad}_{z\rightarrow p}\,(1-\left\vert z\right\vert
)R_{x}(z)=0\text{.}%
\]

\end{theorem}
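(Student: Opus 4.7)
The strategy is to bridge between the radial limit $\operatorname*{limrad}_{z\to p}(1-|z|)R_x(z)$ and an ordinary Cesàro average, and then invoke the Orthogonality Theorem (Theorem~\ref{thm_ortho}). Write $x = e^{2\pi i\theta}$, which forces $\theta$ irrational since $x$ is not a root of unity. Parametrize the radial approach as $z = tp$ with $t \in (0,1)$, $t\to 1^-$. Then
\[
(1-|z|)R_x(z) \;=\; (1-t)\sum_{r\geq 1} b_r\, t^r, \qquad b_r := \log|1-x^r|\cdot p^r.
\]
By hypothesis $x$ is an algebraic integer on the unit circle that is not a root of unity, so Lemma~\ref{Lemma_CriticalUpperBound}(3) (which hinges on Gelfond's estimate, Proposition~\ref{Prop_Gelfond}) gives $|b_r| \leq C\log r$ for all sufficiently large $r$.

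Next I would apply the classical Frobenius--Abelian theorem: if the Cesàro averages $\tfrac{1}{N}\sum_{r=1}^N b_r$ converge to a limit $L$, then $(1-t)\sum_{r\geq 1} b_r t^r \to L$ as $t\to 1^-$. The proof is Abel summation: with $s_N := b_1+\cdots+b_N$, one has $(1-t)\sum b_r t^r = (1-t)^2\sum_{N\geq 1} s_N t^N$ (the boundary term $s_N t^N$ vanishes because $|s_N| = O(N\log N)$ by the above bound, while $t<1$). Using $(1-t)^2\sum_{N\geq 1} N\,t^N = t$ and splitting the sum according to whether $s_r/r$ is close to $L$ gives the claim. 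Here the $O(\log r)$ bound on $|b_r|$ is precisely what makes the interchange of sum and limit legitimate.

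The Cesàro average of $b_r = \log|1-e^{2\pi i r\theta}|\cdot e^{2\pi i m r\theta}$ (respectively of $\log|1-e^{2\pi i r\theta}|\cdot e^{2\pi i r\alpha}$) is exactly what Theorem~\ref{thm_ortho} evaluates. For $p = x^m$ with integer $m$, Claim~(1) of that theorem produces $-\tfrac{1}{2|m|}\delta_{m\neq 0}$. For $p = e^{2\pi i m\theta}$ with $m \in \mathbf{Q}\setminus\mathbf{Z}$, the same Claim~(1) produces the value $C_m \in \mathbf{Q}(\mu_\infty,\pi,\{L(1,\chi)\}_\chi)$. For $p = e^{2\pi i\alpha}$ with $1,\theta,\alpha$ linearly independent over $\mathbf{Q}$, Claim~(2) produces $0$. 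Combining with the Abelian step finishes each of the three cases.

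For the natural boundary assertion, observe that $\theta$ irrational forces $\{x^m : m\in\mathbf{Z}\}$ to be dense on the unit circle. Our computation shows that $(1-|z|)R_x(z)$ has non-zero radial limit at $x^m$ for every $m\neq 0$, so $R_x$ blows up like $(1-|z|)^{-1}$ along a dense subset of points on the unit circle. No meromorphic (let alone analytic) continuation past the unit circle is possible. The only step requiring real care is the Frobenius--Abel conversion, and that care reduces to the sub-polynomial growth of $|b_r|$ supplied by Gelfond; once this is in hand, the rest is routine bookkeeping driven entirely by Theorem~\ref{thm_ortho}.
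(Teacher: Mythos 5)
Your proposal is correct and takes essentially the same route as the paper: the paper likewise brings the point of interest to $z=1$ by rotating the series, applies the classical Ces\`aro-to-radial-limit lemma (cited as Lemma \ref{Lemma_FrobeniusPoleComputation} rather than re-derived via Abel summation as you do), evaluates the resulting averages with Theorem \ref{thm_ortho}, and gets the natural boundary from density of $\wp$ in the unit circle. The only cosmetic difference is that the paper phrases the argument through the auxiliary series $Y_{\theta}$ (Theorem \ref{Thm_CriticalThmOnSingBvrOnRadiusOfConvergence}) and quotes Frobenius' lemma from the literature.
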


It will be more natural to handle this type of result by working with the
argument $\theta$ as opposed to $x=e^{2\pi i\theta}$ itself, so we switch to
this viewpoint in this section: For $\theta\in(0,1)$ irrational, we define the
power series%

\[
Y_{\theta}(z):=\sum_{n\geq1}\log\left\vert 1-e^{2\pi in\theta}\right\vert
\cdot z^{n}\text{.}%
\]
We need a special notation: For a point $p$ on the unit circle, a sequence
$(z_{n})$ \emph{converges} \emph{radially} to $p$ if $\arg z_{n}$ is constant
for all sufficiently large $n$. The corresponding concept of limit is%
\begin{equation}
\operatorname*{limrad}_{z\rightarrow p}\,f(z_{n}):=\lim_{\substack{z:=r\cdot p
\\r\rightarrow1,r\in(0,1)}}f(z)\text{.}\label{luiav9}%
\end{equation}
The case of interest for this definition are functions $f$ which vary wildly
with the argument.

Based on Theorem \ref{thm_ortho}, we can now prove the following
characterization of the radial limit values when we approach the unit circle:

\begin{theorem}
\label{Thm_CriticalThmOnSingBvrOnRadiusOfConvergence}Suppose an irrational
$\theta\in(0,1)$ is given. Moreover, assume

\begin{itemize}
\item $u:=e^{2\pi i\theta}$ is an algebraic integer, or

\item $\theta$ is badly approximable.
\end{itemize}

For every point $p\in\wp$ with $\wp:=\{e^{2\pi im\theta}\mid m\in\mathbf{Z}%
\}$, we have%
\[
\operatorname*{limrad}_{z\rightarrow p}\,(1-\left\vert z\right\vert
)Y_{\theta}(z)=-\frac{1}{2\left\vert m\right\vert }\delta_{m\neq0}\text{.}%
\]
For any fractional exponent $m$ one still has%
\[
\operatorname*{limrad}_{z\rightarrow p}\,(1-\left\vert z\right\vert
)Y_{\theta}(z)\in\mathbf{Q}(\mu_{\infty},\pi,\{L(1,\chi)\}_{\chi})\text{,}%
\]
where $\chi$ runs through a finite set (depending on $p$) of non-principal
Dirichlet characters of various moduli. For every point $p=e^{2\pi i\alpha}$
such that $1,\theta,\alpha$ are $\mathbf{Q}$-linearly independent inside the
real numbers, we have%
\[
\operatorname*{limrad}_{z\rightarrow p}\,(1-\left\vert z\right\vert
)Y_{\theta}(z)=0\text{.}%
\]
In particular, the unit circle is the natural boundary for $Y_{\theta}$.
\end{theorem}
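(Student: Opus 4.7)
The plan is to convert the radial limits of $(1-\lvert z\rvert)Y_\theta(z)$ into Cesàro means of the type handled by Theorem~\ref{thm_ortho}, via the classical implication \emph{Cesàro summable $\Rightarrow$ Abel summable}. Write $z = rp$ with $p = e^{2\pi i \alpha}$ on the unit circle and $r \in (0,1)$, so that $1-\lvert z\rvert = 1-r$ and
\[
(1-\lvert z\rvert)\,Y_\theta(z) \;=\; (1-r)\sum_{n\geq 1} a_n r^n, \qquad a_n := \log\lvert 1-e^{2\pi i n\theta}\rvert\cdot e^{2\pi i n\alpha}.
\]
In either regime covered by the hypothesis one has $\lvert a_n\rvert = O(\log n)$: in the algebraic case this is Lemma~\ref{Lemma_CriticalUpperBound}(3) applied to $u := e^{2\pi i\theta}$, and in the badly approximable case it follows from $\lvert 1 - e^{2\pi i n \theta}\rvert \geq 4\lVert n\theta\rVert \geq 4C_\theta/n$ via Definition~\ref{def_BadlyApproximable}(2). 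In particular the series converges absolutely for $\lvert r\rvert < 1$.

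The elementary Abelian lemma — partial summation gives $\sum a_n r^n = (1-r)\sum s_n r^n$ with $s_n := a_1 + \cdots + a_n$, after which $(1-r)^2 \sum n r^n = r$ and a routine $\varepsilon$-splitting of $s_n = Ln + o(n)$ yield the conclusion — shows that if $s_N/N \to L$, then $(1-r)\sum a_n r^n \to L$ as $r\to 1^-$; the $O(\log n)$ bound on $a_n$ is more than enough to discharge the convergence hypotheses. The three cases of the theorem now correspond directly to the three cases of Theorem~\ref{thm_ortho}: for $p = e^{2\pi i m\theta}$ with $m\in\mathbf{Z}$ the limit is $-\tfrac{1}{2\lvert m\rvert}\delta_{m\neq 0}$; for $p = e^{2\pi i m\theta}$ with $m\in\mathbf{Q}\setminus\mathbf{Z}$ the limit is the universal constant $C_m$ lying in $\mathbf{Q}(\mu_\infty,\pi,\{L(1,\chi)\}_\chi)$; and if $1,\theta,\alpha$ are $\mathbf{Q}$-linearly independent the limit is $0$.

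For the natural boundary claim: since $\theta$ is irrational, $\wp := \{e^{2\pi i m\theta}\}_{m\in\mathbf{Z}}$ is dense in the unit circle. If $Y_\theta$ admitted a meromorphic continuation across some open arc $A$, then, poles being isolated, after shrinking $A$ we may assume $Y_\theta$ is holomorphic and locally bounded on a neighbourhood of $A$; radially this forces $(1-\lvert z\rvert)Y_\theta(z)\to 0$ at every point of $A$. But $A$ contains infinitely many $p = e^{2\pi i m\theta}$ at which the above computation gives the nonzero value $-\tfrac{1}{2\lvert m\rvert}$, a contradiction.

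The main obstacle is the Abelian step for a complex-valued sequence that is only Cesàro summable; the key simplifying input is the $O(\log n)$ bound on $a_n$, which grants absolute convergence for $\lvert r\rvert < 1$ and lets the standard Frobenius-type proof go through without any Tauberian hypothesis. Everything else reduces to bookkeeping against Theorem~\ref{thm_ortho} and the density of $\wp$.
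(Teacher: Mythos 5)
Your proof is correct and follows essentially the same route as the paper: writing $z=rp$ converts the radial limit into a Ces\`aro mean via the Frobenius (Ces\`aro-implies-Abel) lemma, which the paper cites as Lemma \ref{Lemma_FrobeniusPoleComputation}, and then Theorem \ref{thm_ortho} evaluates the three cases, with the density of $\wp$ yielding the natural boundary. Your explicit $O(\log n)$ bound in the badly approximable case is a nice touch, since the paper's appeal to Lemma \ref{Lemma_ConvergeOfRx} for the radius of convergence literally covers only the algebraic case.
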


If we only wanted the statement about the natural boundary, we could try to
invoke the following result:

\begin{theorem}
[Carroll, Kemperman]Suppose $g:[0,1]\rightarrow\mathbf{C}$ is a
Lebesgue-integrable function. Then one and only one of the following
statements is true:

\begin{enumerate}
\item For almost all $\alpha\in\mathbf{R}$, the power series%
\[
F_{\alpha}:=\sum_{n=0}^{\infty}g(\{n\alpha\})z^{n}%
\]
has the unit circle as a natural boundary.

\item The function $g$ agrees almost everywhere with a trigonometric
polynomial $\theta\mapsto\sum_{m\in\mathbf{Z}}c_{m}e^{2\pi im\theta}$ of
period $1$.
\end{enumerate}
\end{theorem}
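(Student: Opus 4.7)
The plan is to exploit the formal Fourier identity
\[
F_\alpha(z)\;=\;\sum_{m\in\mathbf{Z}} c_m\sum_{n\ge 0} e^{2\pi imn\alpha}z^n
\;=\;\sum_{m\in\mathbf{Z}}\frac{c_m}{1-e^{2\pi im\alpha}z},
\]
where $c_m:=\int_0^1 g(\theta)e^{-2\pi im\theta}\,\mathrm{d}\theta$ are the Fourier coefficients of $g$. Each summand on the right is a rational function with a simple pole at $\zeta_m:=e^{-2\pi im\alpha}\in S^1$; the dichotomy of the theorem corresponds precisely to whether finitely or infinitely many of these candidate poles really are singularities of $F_\alpha$.

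The easy direction $(2)\Rightarrow\neg(1)$ is almost immediate: if $g$ agrees a.e.\ with a trigonometric polynomial, only finitely many $c_m$ are non-zero, the display above is a \emph{finite} sum (with no convergence subtlety), and $F_\alpha$ is then rational with at most finitely many poles on $S^1$, so the unit circle is not a natural boundary.

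For the hard direction $(\neg 2)\Rightarrow(1)$ I would argue by contradiction. Let $m_1<m_2<\cdots$ enumerate the (infinite) set of $m$ with $c_m\ne 0$, and suppose the set $S$ of $\alpha$ for which $F_\alpha$ extends across \emph{some} open arc of $S^1$ had positive Lebesgue measure. Using the countably many arcs with rational endpoints as a cover, one restricts to a positive-measure $S'\subseteq S$ along which continuation holds across a common fixed arc $J\subset S^1$ of length $\ge\varepsilon>0$. The key analytic input is a \emph{Tauberian/Fatou-type lemma}: if $\sum a_n z^n$ has radius of convergence $1$ and continues analytically past $\zeta_0\in S^1$, then the Ces\`aro mean $N^{-1}\sum_{n=1}^N a_n\zeta_0^{-n}$, if it exists, must vanish. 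Applied with $a_n=g(\{n\alpha\})$ this forces, for every $\alpha\in S'$ and every $m_k$ with $\zeta_{m_k}\in J$,
\[
0\;=\;\lim_{N\to\infty}\frac{1}{N}\sum_{n=1}^N g(\{n\alpha\})\,e^{-2\pi im_kn\alpha}\;=\;c_{m_k},
\]
the second equality furnished by Birkhoff's ergodic theorem applied to the product rotation $(\alpha,\theta)\mapsto(\alpha,\theta+\alpha)$ on $[0,1]^2$ (plus a Fubini manoeuvre to pass from a.e.\ starting point $\theta$ to the specific starting point $\theta=0$). Since $c_{m_k}\ne 0$, the sequence $(\{m_k\alpha\})$ must miss the arc $-J$ for \emph{every} $\alpha\in S'$. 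But Weyl's theorem for integer subsequences says that for any strictly increasing $(m_k)\subset\mathbf{Z}$ the orbit $(\{m_k\alpha\})$ is equidistributed in $[0,1]$ for a.e.\ $\alpha$, and thus meets every arc of positive length. This contradicts $|S'|>0$.

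The principal obstacle is the Tauberian lemma, converting \emph{qualitative} analytic continuability across $\zeta_0$ into the \emph{quantitative} vanishing of the Ces\`aro mean. The natural route is to deform the Cauchy contour representing $a_n$ slightly outside $S^1$ on a neighbourhood of $\zeta_0$ and compare, but the bookkeeping is delicate and probably demands some boundedness or mild $L^p$-control on the coefficients. A secondary technical nuisance is that Birkhoff guarantees the ergodic limit only for a.e.\ starting point $\theta$, whereas we need it at the specific value $\theta=0$; this is where the Fubini trick on the product rotation (or a separate density argument exploiting $T_\alpha$-equivariance of the continuability property) has to carry the load.
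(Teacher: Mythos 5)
The paper does not prove this statement at all; it is quoted verbatim from Carroll--Kemperman \cite[Theorem 1.1]{MR0173754}, so there is no internal proof to compare your argument with, and your proposal must stand on its own. Your easy direction is essentially fine (after observing that for a.e.\ $\alpha$ the whole orbit $\{n\alpha\}$, $n\geq1$, avoids the null set where $g$ differs from the trigonometric polynomial, so $F_\alpha$ is rational up to finitely many coefficients). But your assessment of where the difficulty sits is inverted. The ``Tauberian lemma'' you flag as the principal obstacle is actually immediate and requires no boundedness or $L^p$ control on the coefficients: Ces\`aro summability implies Abel summability to the same value --- this is exactly the Frobenius lemma the paper quotes as Lemma \ref{Lemma_FrobeniusPoleComputation} --- and if $F_\alpha$ continues analytically across the relevant boundary point $p$ then $F_\alpha(rp)$ stays bounded as $r\to1^-$, so $(1-r)F_\alpha(rp)\to0$ and the Ces\`aro mean, if it exists, vanishes. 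No contour deformation is needed.

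The genuine gap is the step you dismiss as a secondary nuisance: the claim that for a.e.\ $\alpha$ (or even just for your positive-measure set $S'$) one has $\lim_N N^{-1}\sum_{n\leq N}g(\{n\alpha\})e^{-2\pi i m n\alpha}=c_m$ for $g$ merely Lebesgue-integrable. Birkhoff applied fiberwise to $(\alpha,\theta)\mapsto(\alpha,\theta+\alpha)$ gives convergence only for a.e.\ starting point $\theta$ in each fiber, and the section $\theta=0$ is a null set of the product system, so no Fubini manoeuvre can reach it; the analytic-continuation hypothesis is attached precisely to the orbit of $0$. Worse, the statement you need is simply false in this generality: already for $m=0$, Marstrand's refutation of Khinchin's conjecture on strong uniform distribution produces a bounded measurable $g$ (an indicator function, hence not a.e.\ a trigonometric polynomial) for which $N^{-1}\sum_{n\leq N}g(\{n\alpha\})$ fails to converge to $\int_0^1 g$ for almost every $\alpha$. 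This is exactly the obstruction the paper records in Remark \ref{rmk_ergodic_approach} and in the No-Go theorem of \cite{MR0225946}, and it is the reason the paper develops the Baxa--Schoi\ss engeier machinery for special $\theta$ instead of relying on almost-everywhere ergodic statements. For such a $g$ your chain $0=\lim_N N^{-1}\sum_{n\leq N}g(\{n\alpha\})e^{-2\pi i m_k n\alpha}=c_{m_k}$ breaks at the second equality, while the Carroll--Kemperman conclusion still holds; so the hard direction $(\neg 2)\Rightarrow(1)$ cannot be completed along the Birkhoff/Fubini route and needs a genuinely different mechanism.
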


This is \cite[Theorem 1.1]{MR0173754}. We can apply this to $g(t):=\log
\left\vert 1-e^{2\pi it}\right\vert $. As will be implicit in our proof of
Theorem \ref{Thm_CriticalThmOnSingBvrOnRadiusOfConvergence} below, we can rule
out possibility (2), so that (1)\ must be true. However, this is far too weak
for our purposes. We are interested in the case of $e^{2\pi i\theta}$ being
algebraic, which forms a countable set, or $\theta$ being badly approximable,
which is a set of measure zero\footnote{measure zero, but uncountable (the
badly approximable numbers can be identified with the set of bounded sequences
by using the partial quotients).}. Thus, our Theorem
\ref{Thm_CriticalThmOnSingBvrOnRadiusOfConvergence} makes a statement about a
set of measure zero. Since the Carroll--Kemperman result works only almost
everywhere, it is of no help. This is similar to the issue explained in Remark
\ref{rmk_ergodic_approach} that prevents us from exploiting ergodicity
directly.\medskip

Our proof is based on the following very classical result:

\begin{lemma}
[{Frobenius, \cite[Lemma 1]{MR0183707}}]\label{Lemma_FrobeniusPoleComputation}%
Suppose $C\in\mathbf{C}$ and $(a_{n})$ is a sequence of complex numbers with%
\[
\underset{N\rightarrow\infty}{\lim}\frac{1}{N}(a_{1}+\cdots+a_{N})=C\text{.}%
\]
Define a power series%
\[
F(z):=\sum_{n\geq1}a_{n}z^{n}\text{.}%
\]
If $F$ has radius of convergence $\geq1$, then $\lim_{r\rightarrow1,r\in
(0,1)}\,(1-r)F(r)=C$.
\end{lemma}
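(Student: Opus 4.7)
The plan is to use Abel summation by parts to convert the Cesàro-type hypothesis $\frac{1}{N}\sum_{n=1}^N a_n \to C$ into a statement about the generating function. Set $S_N := a_1+\cdots+a_N$ and $S_0:=0$. The hypothesis forces $|S_n| \le (|C|+1)n$ for all sufficiently large $n$, so $S_n = O(n)$. For $|z|<1$, Abel summation gives
\[
\sum_{n=1}^{N} a_n z^n \;=\; \sum_{n=1}^N (S_n - S_{n-1}) z^n \;=\; S_N z^N + (1-z)\sum_{n=1}^{N-1} S_n z^n.
\]
The boundary term $S_N z^N$ tends to $0$ as $N\to\infty$ by $S_N=O(N)$ and $|z|<1$, and the same bound makes $\sum_{n\ge 1} S_n z^n$ absolutely convergent (dominated by $\sum n|z|^n$). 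Thus
\[
F(z) \;=\; (1-z)\sum_{n\ge 1} S_n z^n, \qquad (1-z)F(z) \;=\; (1-z)^2 \sum_{n\ge 1} S_n z^n.
\]

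The next step is to decompose $S_n = Cn + \rho_n$ with $\rho_n/n \to 0$. For the main piece I would invoke the elementary identity $\sum_{n\ge 1} n z^n = z/(1-z)^2$, valid in the open unit disc, which yields
\[
C(1-z)^2 \sum_{n\ge 1} n z^n \;=\; Cz,
\]
and this tends to $C$ as $z = r \to 1^-$. It then remains to show that the remainder $(1-r)^2 \sum_{n\ge 1} \rho_n r^n$ tends to $0$. Given $\varepsilon>0$, choose $N_0$ so that $|\rho_n| \le \varepsilon n$ for $n \ge N_0$. The tail satisfies
\[
\Bigl|(1-r)^2 \sum_{n\ge N_0} \rho_n r^n\Bigr| \;\le\; \varepsilon (1-r)^2 \sum_{n\ge 1} n r^n \;=\; \varepsilon r \;\le\; \varepsilon,
\]
while the head $(1-r)^2 \sum_{n<N_0} \rho_n r^n$ is a fixed finite polynomial in $r$ multiplied by $(1-r)^2$, so it tends to $0$ as $r\to 1^-$. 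Taking $\limsup$ as $r\to 1^-$ and then letting $\varepsilon\to 0$ finishes the proof.

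There is really no serious obstacle here; the entire argument is classical and elementary. The only point demanding a moment of care is checking that the Cesàro hypothesis legitimately implies $S_n=O(n)$ (so that the Abel summation rearrangement converges and the boundary term vanishes), but this is immediate. Everything else is the standard $\varepsilon/\delta$ splitting into the diagonal generating function $\sum n r^n = r/(1-r)^2$ and a controllable remainder.
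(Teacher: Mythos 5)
Your proof is correct. Note that the paper does not prove this lemma at all: it is quoted verbatim from the literature (Frobenius, cited as Lemma 1 of the reference), so there is no in-paper argument to compare against. What you wrote is the classical summation-by-parts proof that Ces\`aro summability implies Abel summability: the identity $F(z)=(1-z)\sum_{n\ge1}S_nz^n$ is justified by $S_n=O(n)$, the split $S_n=Cn+\rho_n$ with $\rho_n=o(n)$ handles the main term via $\sum_{n\ge1}nz^n=z/(1-z)^2$, and the $\varepsilon$-splitting of the remainder is carried out correctly. A small aside: the hypothesis that the radius of convergence is $\ge1$ is in fact automatic from $S_n=O(n)$ (hence $a_n=O(n)$), so your argument even shows the convergence of $F$ in the open unit disc rather than merely using it.
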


While this describes the behaviour of a radial limit point at $z=1$, the idea
is that by `rotating' a given function, we can bring any point on the unit
circle to lie at $z=1$ and apply Frobenius' Lemma there.

\begin{proof}
\textit{(1) }For any point in $\wp=\{e^{2\pi im\theta}\mid m\in\mathbf{Z}\}$,
let $m$ be chosen accordingly. We have%
\begin{equation}
Y_{\theta}(ze^{2\pi im\theta})=\sum_{n\geq1}\log\left\vert 1-e^{2\pi in\theta
}\right\vert (e^{2\pi im\theta})^{n}z^{n}\label{luia1}%
\end{equation}
and this is itself a power series in $z$, which we may temporarily denote by
$V(z)$. By Lemma \ref{Lemma_ConvergeOfRx} the series $V$ has radius of
convergence $\geq1$. Then it follows that%
\begin{equation}
\lim_{\substack{z:=re^{2\pi im\theta} \\r\rightarrow1,r<1}}(1-\left\vert
z\right\vert )Y_{\theta}(z)=\underset{r\rightarrow1,r\in(0,1)}{\lim
}(1-r)Y_{\theta}(re^{2\pi im\theta})=\underset{r\rightarrow1,r\in(0,1)}{\lim
}(1-r)V(r)\text{.}\label{luia3}%
\end{equation}
By Lemma \ref{Lemma_FrobeniusPoleComputation}, this limit equals%
\[
=\underset{N\rightarrow\infty}{\lim}\frac{1}{N}\sum_{n=1}^{N}(n\text{-th
coefficient of }V)=\underset{N\rightarrow\infty}{\lim}\frac{1}{N}\sum
_{n=1}^{N}\log\left\vert 1-e^{2\pi in\theta}\right\vert e^{2\pi imn\theta
}=-\frac{1}{2\left\vert m\right\vert }\delta_{m\neq0}%
\]
by the Orthogonality Theorem, Theorem \ref{thm_ortho}. This proves the first
part of the claim. Moreover, $-\frac{1}{2\left\vert m\right\vert }%
\delta_{m\neq0}$ is non-zero for all $m\neq0$. In particular, $\wp
\setminus\{1\}$ lies in the set of singular points on the radius of
convergence. As $\wp$ is already dense in the unit circle, the same must be
true for the set of singular points. Thus, the unit circle is the natural
boundary of the power series. For fractional $m$, use the corresponding
statement of Theorem \ref{thm_ortho}.\newline\textit{(2)} Now suppose that a
point $p=e^{2\pi i\alpha}$ is given such that $1,\theta,\alpha$ are
$\mathbf{Q}$-linearly independent inside the real numbers. The idea of the
following proof is taken from the proof of \cite[Ch. I, Theorem 6.6]%
{MR0419394}. As in line \ref{luia1}, we `rotate' the function $Y_{\theta}(z)$:
This time, consider%
\[
Y_{\theta}(ze^{2\pi i\alpha})=\sum_{n\geq1}\log\left\vert 1-e^{2\pi in\theta
}\right\vert (e^{2\pi i\alpha})^{n}z^{n}%
\]
and write $V$ for this function, viewed as a power series in $z$. Proceed as
in line \ref{luia3} and again by Lemma \ref{Lemma_FrobeniusPoleComputation},
the limit turns out to be%
\[
=\underset{N\rightarrow\infty}{\lim}\frac{1}{N}\sum_{n=1}^{N}\log\left\vert
1-e^{2\pi in\theta}\right\vert e^{2\pi imn\alpha}\text{.}%
\]
However, this vanishes by Orthogonality, Theorem \ref{thm_ortho}.
\end{proof}

Clearly this theorem immediately implies Theorem
\ref{thm_RxAtUnimodularHasNaturalBoundary}.

\section{Proof of the main theorems}

As we had explained in the introduction, the tools of the previous sections
can be used in quite varied applications. This has to do with the fact that
the underlying counting problem has shown up in a variety of contexts, which
often have no immediate philosophical connection, yet on a technical level
lead to formally entirely equivalent problems.

\subsection{Classical knots}

Let $K\subset S^{3}$ be a knot\footnote{for us, a knot is always a tame knot
embedded into the $3$-sphere.} and $X_{K}$ the knot exterior, i.e. $X_{K}$ is
a compact real $3$-manifold with boundary. One always has%
\[
H_{1}(X_{K},\mathbf{Z})\simeq\mathbf{Z}\text{.}%
\]
This isomorphism is not canonical, there are two possible choices, but a
posteriori it turns out that the choice does not matter. The Hurewicz Theorem
gives us a canonical surjection%
\[
\pi_{1}(X_{K},\ast)\twoheadrightarrow H_{1}(X_{K},\mathbf{Z})\cong
\mathbf{Z}\text{,}%
\]
which is just the abelianization of the fundamental group. As quotients of the
fundamental group correspond to Galois covering spaces with the corresponding
deck transformation action, this surjection defines an infinite covering
$X_{\infty}\longrightarrow X$ with a canonical $\mathbf{Z} $-action, as well
as finite coverings $X_{r}\longrightarrow X$ with $\mathbf{Z}/r\mathbf{Z}%
$-actions. The $\mathbf{Z}$-action on $X_{\infty}$ induces an action to
homology, so the finitely generated group $H_{1}(X_{\infty},\mathbf{Z})$
carries an action by the group ring of $\mathbf{Z}$. Hence, it is canonically
a $\mathbf{Z}[t,t^{-1}]$-module. By classical work of Alexander, this module
structure has a rather simple structure, namely%
\[
H_{1}(X_{\infty},\mathbf{Z})\cong\mathbf{Z}[t,t^{-1}]/(\Delta_{K}%
)\qquad\text{with}\qquad\Delta_{K}\in\mathbf{Z}[t,t^{-1}]\text{.}%
\]
The element $\Delta_{K}$ is the \emph{Alexander polynomial}. It is only
well-defined up to a unit $\mathbf{Z}[t,t^{-1}]^{\times}\simeq\left\langle
\pm1\right\rangle \times t^{\mathbf{Z}}$. Various normalizations are possible,
but for us any choice of a representative in $\mathbf{Z}[t]$ will be
fine\footnote{This is unnatural from the viewpoint of skein relations, but
more convenient ring-theoretically.}.

\begin{definition}
We call the roots of $\Delta_{K}$ the \emph{Alexander roots}. Such a root
$\beta$ is called \emph{diophantine} if $\left\vert \beta\right\vert =1$ and
$\beta\notin\mu_{\infty}$, i.e. if it lies on the unit circle, but is not a
root of unity.
\end{definition}

We prove a refinement of the Silver--Williams theorem \cite[Theorem
2.1]{MR1923995} in the case of knots. There is a fundamental dichotomy,
depending on whether there is a diophantine Alexander root or not. Let us
begin with the (typical) case in which there is no diophantine root.

\begin{theorem}
\label{thm_A_for_knots}Let $K\subset S^{3}$ be a knot and $\Delta_{K}$ its
Alexander polynomial. If each root $\beta_{i}$ of $\Delta_{K}$ either has
absolute value $\left\vert \beta_{i}\right\vert \neq1$ or is a root of unity,
then the generating function of torsion homology growth%
\[
E_{K}(z):=\sum_{r=1}^{\infty}\log\left\vert H_{1}(X_{r},\mathbf{Z}%
)_{\operatorname*{tor}}\right\vert \cdot z^{r}%
\]
has radius of convergence $1$. However,

\begin{enumerate}
\item $E_{K}$ admits a meromorphic continuation to the entire complex plane.

\item Its poles are located at all integer powers of roots of $\Delta_{K}$
which lie outside the open unit disc, i.e.%
\[
\{\beta^{n}\mid\Delta_{K}(\beta)=0\text{ and }n\in\mathbf{Z}\}\setminus
(\text{open unit disc})\text{.}%
\]
At $z=1$ the pole has order $1$ or $2$. All other poles are of order $1$.

\item The Laurent expansion at $z=1$ begins with%
\begin{align*}
E_{K}(z)  & =\frac{1}{(z-1)^{2}}\log\mathcal{M}(\Delta_{K})\\
& +\frac{1}{z-1}\left(  \log\mathcal{M}(\Delta_{K})+\sum_{\substack{\beta
_{i}\in\mu_{\infty} \\m:=\operatorname*{ord}(\beta_{i})}}\frac{1}{m}%
\log\left(  \frac{1}{m}\right)  \right) \\
& -\sum_{\beta_{i},\left\vert \beta_{i}\right\vert \neq1}\log\left\vert
F(\beta_{i}^{\pm})\right\vert -\sum_{\substack{\beta_{i}\in\mu_{\infty}
\\m:=\operatorname*{ord}(\beta_{i})}}\left(  \frac{m-1}{2}\frac{1}{m}%
\log\left(  \frac{1}{m}\right)  +\frac{1}{m}\sum_{l=1}^{m-1}l\cdot
\log\left\vert 1-\beta_{i}^{l}\right\vert \right) \\
& +(z-1)\cdot\operatorname*{holomorphic}\text{.}%
\end{align*}
Here the sums are taken over the roots $\beta_{1},\ldots,\beta_{n}$ of the
Alexander polynomial.
\end{enumerate}
\end{theorem}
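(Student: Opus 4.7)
The plan is to reduce the analysis of $E_K$ to the auxiliary power series $R_x$ of Definition \ref{Def_FunctionR}, one copy for each Alexander root. The crucial bridge is the Mayberry--Murasugi--Fox formula for $r$-fold cyclic covers of a knot exterior,
\[
|H_1(X_r,\mathbf{Z})_{\operatorname{tor}}| = {\prod_i}^{\prime}\,|1-\beta_i^r|,
\]
where the $\beta_i$ range over the roots of $\Delta_K$ counted with multiplicity and the primed product omits factors with $\beta_i^r=1$. Taking logarithms termwise and summing against $z^r$ gives, within the open unit disc, the identity $E_K(z) = \sum_i R_{\beta_i}(z)$. This reduces everything to a sum of functions whose continuation properties are already in hand.

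Next, I would partition the Alexander roots using the trichotomy $|\beta_i|<1$, $|\beta_i|>1$, or $\beta_i\in\mu_\infty$. The hypothesis that $\Delta_K$ has no diophantine root rules out the only remaining case, namely $|\beta_i|=1$ with $\beta_i\notin\mu_\infty$. For each surviving type I would apply the corresponding continuation result: Theorem \ref{thm_AnalyticContinuationForRx} for $|\beta_i|<1$; Lemma \ref{Lemma_RSwapXToXInverse} followed by Theorem \ref{thm_AnalyticContinuationForRx} for $|\beta_i|>1$, picking up the extra summand $\log|\beta_i|\cdot z/(z-1)^2$; and Proposition \ref{prop_Rx_AtRootOfUnity} for $\beta_i\in\mu_\infty$, which is already rational. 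Summing these contributions delivers a meromorphic continuation of $E_K$ to the entire complex plane.

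For the pole locus, I would collect the pole sets contributed by each summand: the $|\beta_i|<1$ terms give simple poles at $\beta_i^{\mathbf{Z}_{\leq-1}}\cup\overline{\beta_i}^{\mathbf{Z}_{\leq-1}}$; the $|\beta_i|>1$ terms give the analogous positive-index powers via inversion, together with a double pole at $z=1$; and roots of unity contribute finitely many simple poles on the unit circle at the integer powers of $\beta_i$. Since $\Delta_K$ has integer coefficients, the root set is closed under complex conjugation, which absorbs the $\overline{\beta_i}$ contributions, and the union taken outside the open unit disc is exactly the set claimed. The pole at $z=1$ has order $2$ precisely when at least one $|\beta_i|>1$ term is present (so that the double pole from Lemma \ref{Lemma_RSwapXToXInverse} survives) and order $1$ otherwise, coming from the roots-of-unity contributions; all other poles are simple by the cited continuation results.

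Finally, to read off the Laurent expansion at $z=1$, I would combine the local expansions of each type of summand. Expanding $z/(z-1)^2 = 1/(z-1)^2 + 1/(z-1)$ and applying Lemma \ref{Lemma_RxLocalExpansionAtZEquals1} to $R_{\beta_i^{-1}}$, each $|\beta_i|>1$ root contributes $\log|\beta_i|$ to both the $1/(z-1)^2$ and $1/(z-1)$ coefficients together with a constant term $-\log|F(\beta_i^{-1})|$; each $|\beta_i|<1$ root contributes $-\log|F(\beta_i)|$; each root of unity $\beta_i$ of order $m$ contributes the explicit polar and constant terms of Proposition \ref{prop_Rx_AtRootOfUnity}. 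Summing, the coefficient of $1/(z-1)^2$ equals $\sum_{|\beta_i|>1}\log|\beta_i|$, which coincides with $\log\mathcal{M}(\Delta_K)$ under the standard reciprocal normalization of the Alexander polynomial. I expect the main obstacle to be careful bookkeeping: checking that the pole loci from distinct roots line up with no unexpected cancellations, and that the chosen normalization of $\Delta_K$ makes the Mahler measure identification an honest equality. The substantive analytic work has all been done in the preparatory sections.
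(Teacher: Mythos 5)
Your route is the same as the paper's: convert $\log\left\vert H_{1}(X_{r},\mathbf{Z})_{\operatorname*{tor}}\right\vert$ into Alexander-root data via Fox's formula, decompose $E_{K}$ into the auxiliary series $R_{\beta_{i}}$, treat the three classes of roots with Lemma \ref{Lemma_RSwapXToXInverse}, Theorem \ref{thm_AnalyticContinuationForRx} and Proposition \ref{prop_Rx_AtRootOfUnity}, and then assemble the pole locus and the expansion at $z=1$ via Lemma \ref{Lemma_RxLocalExpansionAtZEquals1}. But there is a genuine gap at the very first step. Fox's formula gives $\left\vert H_{1}(\widehat{X}_{r},\mathbf{Z})\right\vert =\prod_{\zeta\in\mu_{r}}\left\vert \Delta_{K}(\zeta)\right\vert =\left\vert a\right\vert ^{r}\prod_{i}\left\vert 1-\beta_{i}^{r}\right\vert$, where $a$ is the leading coefficient of $\Delta_{K}$; you have dropped the factor $\left\vert a\right\vert ^{r}$. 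Hence your bridge identity $E_{K}=\sum_{i}R_{\beta_{i}}$ is missing the summand $\log\left\vert a\right\vert \cdot z/(z-1)^{2}$, and your closing identification of the $1/(z-1)^{2}$-coefficient with $\log\mathcal{M}(\Delta_{K})$ fails whenever $\left\vert a\right\vert >1$. Reciprocity does not rescue this: Alexander polynomials are reciprocal but need not be monic. A concrete counterexample satisfying all hypotheses of the theorem is the knot $6_{1}$ with $\Delta(t)=2t^{2}-5t+2$, roots $2$ and $1/2$: your bookkeeping gives leading Laurent coefficient $\sum_{\left\vert \beta_{i}\right\vert >1}\log\left\vert \beta_{i}\right\vert =\log 2$, whereas $\log\mathcal{M}(\Delta_{K})=\log 4$; already at $r=2$ one has $\left\vert H_{1}(\widehat{X}_{2},\mathbf{Z})\right\vert =9$ while $\prod_{i}\left\vert 1-\beta_{i}^{2}\right\vert =9/4$, so the omitted factor is visible at the level of the counting formula itself.

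The fix is exactly what the paper does: keep the term $\log\left\vert a\right\vert \cdot z/(z-1)^{2}$ from the start and merge it with the terms $\log\left\vert \beta_{i}\right\vert \cdot z/(z-1)^{2}$ produced by Lemma \ref{Lemma_RSwapXToXInverse} for the roots outside the unit circle, using $\mathcal{M}(\Delta_{K})=\left\vert a\right\vert \prod_{\left\vert \beta_{i}\right\vert >1}\left\vert \beta_{i}\right\vert$. With that summand reinstated, the remainder of your outline (pole locus, conjugation symmetry of the roots, orders of the poles, constant term at $z=1$) goes through as in the paper; note only that the correct criterion for an order-two pole at $z=1$ is $\mathcal{M}(\Delta_{K})>1$, which under the theorem's hypotheses and $\Delta_{K}(1)=\pm1$ turns out to be equivalent to the existence of a root off the unit circle, so your stated criterion survives, but for the corrected reason.
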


In principle, by assembling our results, we can give the entire Laurent
expansion at $z=1$ as a closed formula. We leave this to the interested reader.

\begin{remark}
It might be worth to sketch the information on the pole loci of the theorem in
graphical format. We find%
\[%
{\includegraphics[
height=2.1162in,
width=3.2586in
]%
{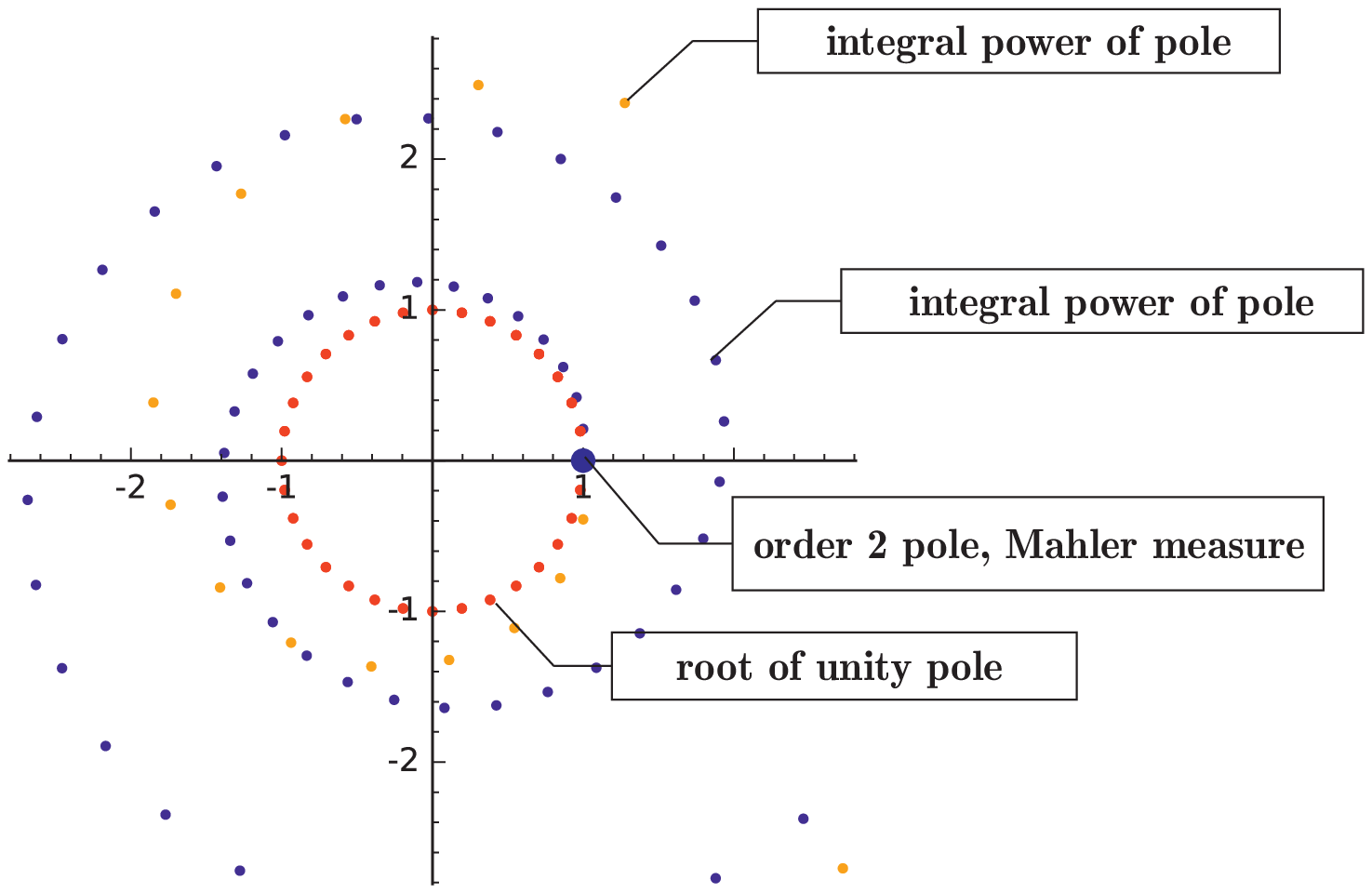}%
}
\]
We write \textquotedblleft Mahler measure\textquotedblright\ for the pole at
$z=1$ as the Theorem shows that we can read off the Mahler measure from the
principal part of the Laurent expansion at this point. As we had explained in
\S \ref{sect_HeuristicMotivation}, the presence of the Mahler measure in this
expansion is quite literally equivalent to the Silver--Williams asymptotic.
\end{remark}

Under the assumptions of the theorem, we immediately recover the following result:

\begin{theorem}
[Fried \cite{MR980956}]\label{thm_Fried}Let $K\subset S^{3}$ be a knot such
that its Alexander polynomial $\Delta_{K}$ has no roots in $\mu_{\infty}$.
Then knowing the values $H_{1}(X_{r},\mathbf{Z})_{\operatorname*{tor}}$ for
all $r\geq1$ uniquely determines $\Delta_{K}$.
\end{theorem}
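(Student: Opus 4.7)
The plan is to read $\Delta_K$ directly off the meromorphic continuation of $E_K$ supplied by Theorem \ref{thm_A_for_knots}. Knowledge of all torsion orders $|H_1(X_r,\mathbf{Z})_{\operatorname{tor}}|$ determines the power series $E_K(z)$ coefficientwise. Since we work under the hypothesis of Theorem \ref{thm_A_for_knots} (no diophantine roots) strengthened by the additional assumption that no Alexander root lies in $\mu_\infty$, every root $\beta$ of $\Delta_K$ satisfies $|\beta|\neq 1$. Theorem \ref{thm_A_for_knots} then provides a meromorphic continuation of $E_K$ to all of $\mathbf{C}$ whose pole locus is $\mathcal{S} = \{\beta^n : \Delta_K(\beta)=0,\ n\in\mathbf{Z}\}\setminus(\text{open unit disc})$, and the residue at each pole encodes the multiplicity of the underlying root.

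I would then recover the multiset of Alexander roots from the pair $(\mathcal{S},\text{residues})$ by peeling off poles from the innermost circle outward. At each stage, let $\rho>1$ be the smallest modulus among the poles not yet accounted for; then every pole of modulus $\rho$ is either a root $\beta$ of $\Delta_K$ with $|\beta|=\rho$ or the inverse $\alpha^{-1}$ of a root $\alpha$ with $|\alpha|=\rho^{-1}$. By the palindromic symmetry $\Delta_K(t) = \pm t^k \Delta_K(t^{-1})$, both possibilities occur simultaneously and with the same multiplicity, so the identification is unambiguous. The residues on this circle, which by Theorem \ref{thm_A_for_knots}(1) carry the corresponding multiplicities, are then subtracted together with the contributions from the higher integer powers $\beta^n$ already entailed in $\mathcal{S}$, and the procedure iterates on the next radius. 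Once all radii are exhausted, the full multiset of roots of $\Delta_K$ with multiplicities is known, and since $\Delta_K\in\mathbf{Z}[t]$ is determined by its roots up to the unit $\pm t^k$ (the only ambiguity inherent in the definition of the Alexander polynomial), this reconstructs $\Delta_K$.

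The main care is in the bookkeeping of collisions: several roots $\beta_i$ may share integer powers, causing their $R_{\beta_i}$-contributions to merge at some pole and their residues to superimpose. The inductive scheme circumvents this cleanly, because at the innermost radius no collision is possible (a colliding pole would have to equal a proper power $\gamma^n$ with $|\gamma|<\rho$, contradicting the choice of $\rho$), and at each later stage the contribution of previously identified roots is subtracted before reading off the new ones. This is the only nontrivial step; everything else is an immediate corollary of Theorem \ref{thm_A_for_knots}, which is precisely why the paper writes ``we immediately recover''.
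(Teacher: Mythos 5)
Your argument is correct and is essentially the paper's own proof: both read the roots of $\Delta_K$ off the pole locus (and residues) of the meromorphic continuation furnished by Theorem \ref{thm_A_for_knots}, and both resolve the only ambiguity, $\beta$ versus $\beta^{-1}$, by the reciprocity of the Alexander polynomial. The only difference is that you spell out the innermost-radius-first induction and the residue/multiplicity bookkeeping that the paper compresses into two sentences ("the poles tell us all integer powers, and $\mathbf{Z}$ has only two generators"), so this is an elaboration rather than a different route.
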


We point out that Fried's result does not come with an easy description how
$\Delta_{K}$ is to be recovered from the homology torsion cardinalities.

\begin{proof}
[New proof of special case]Suppose $\Delta_{K}$ has no roots on the unit
circle. By our Theorem, the poles of $E_{K}$ tell us all integer pole powers,
so since $\mathbf{Z}$ has only two possible generators, $+1$ or $-1$, we can
reconstruct $z$ or $z^{-1} $ for each root of $\Delta_{K}$. However, the
Alexander polynomial is reciprocal, so if $z$ is a root, $z^{-1}$ is also a root.
\end{proof}

Fried's result has recently found the following application:

\begin{theorem}
[{Boileau--Friedl \cite[Prop. 4.10]{bfprofinite}}]Let $K_{1},K_{2}\subset
S^{3}$ be knots such that their knot groups have isomorphic profinite
completion. If neither Alexander polynomial has a root in $\mu_{\infty}$, both
knots have the same Alexander polynomial (up to as unique Alexander
polynomials are).
\end{theorem}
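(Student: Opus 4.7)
The plan is to reduce the statement to Fried's theorem (Theorem \ref{thm_Fried}), whose hypothesis (no Alexander roots in $\mu_{\infty}$) is exactly what is assumed here. Concretely, I need to extract from an isomorphism $\Phi\colon \widehat{\pi_1(X_{K_1})}\xrightarrow{\sim}\widehat{\pi_1(X_{K_2})}$ of profinite completions the equality $|H_1(X_{r,K_1},\mathbf{Z})_{\operatorname*{tor}}|=|H_1(X_{r,K_2},\mathbf{Z})_{\operatorname*{tor}}|$ for every $r\geq1$. Once this is established, Fried's theorem, as reproved above as a corollary of Theorem \ref{thm_A_for_knots}, delivers $\Delta_{K_1}=\Delta_{K_2}$ up to the usual unit ambiguity in $\mathbf{Z}[t,t^{-1}]^{\times}$.

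The first step is to observe that abelianization and profinite completion commute, so $\Phi$ induces an automorphism of $\widehat{\mathbf{Z}}$ on the level of $H_1$. For each $r\geq1$, the open subgroup $U_i\leq\widehat{\pi_1(X_{K_i})}$ defined as the preimage of $r\widehat{\mathbf{Z}}\subseteq\widehat{\mathbf{Z}}$ satisfies $\Phi(U_1)=U_2$, because the kernel of $\widehat{\mathbf{Z}}\twoheadrightarrow\mathbf{Z}/r\mathbf{Z}$ is independent of the choice of generator of $\widehat{\mathbf{Z}}$. For finitely generated groups with a finite index subgroup, profinite completion matches the closure in the ambient profinite completion, so $U_i\cong\widehat{\pi_1(X_{r,K_i})}$ canonically. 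Thus $\Phi$ restricts to an isomorphism $\widehat{\pi_1(X_{r,K_1})}\xrightarrow{\sim}\widehat{\pi_1(X_{r,K_2})}$ of the profinite completions of the cyclic-cover fundamental groups, and, abelianizing a second time, an isomorphism $\widehat{H_1(X_{r,K_1},\mathbf{Z})}\cong\widehat{H_1(X_{r,K_2},\mathbf{Z})}$.

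To finish, I invoke the elementary fact that a finitely generated abelian group $A$ is determined up to isomorphism by its profinite completion: the torsion subgroup of $A$ equals the torsion subgroup of $\widehat{A}$, and the free rank of $A$ equals the $\widehat{\mathbf{Z}}$-rank of $\widehat{A}$. Applied with $A=H_1(X_{r,K_i},\mathbf{Z})$, this yields the desired equality of torsion orders, and Fried's theorem then concludes. The main obstacle I anticipate is the usual sign ambiguity in the choice of identification $H_1(X_K,\mathbf{Z})\cong\mathbf{Z}$, but this is neutralized by the observation above: the subgroup $r\widehat{\mathbf{Z}}\subseteq\widehat{\mathbf{Z}}$ depends only on $r$, not on the chosen generator, so cyclic covers of matching index correspond under $\Phi$ regardless.
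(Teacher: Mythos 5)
Your proposal is correct, and it is essentially the argument of Boileau--Friedl that the paper simply cites: the paper offers no proof of its own here, only the remark that the proof of \emph{loc.\ cit.} goes through with Theorem \ref{thm_A_for_knots} in place of Fried's theorem when neither Alexander polynomial has a root anywhere on the unit circle. Your reduction is the same one: an isomorphism of profinite completions matches, for every $r$, the open subgroups lying over $r\widehat{\mathbf{Z}}$ (unique of index $r$ in $\widehat{\mathbf{Z}}$, so the sign ambiguity is indeed harmless), these open subgroups are the profinite completions of the cyclic-cover groups since a finite-index subgroup inherits its full profinite topology, and a finitely generated abelian group is recovered from its profinite completion, so the torsion orders $\left\vert H_{1}(X_{r},\mathbf{Z})_{\operatorname*{tor}}\right\vert$ agree for all $r$; Fried's theorem then finishes. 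One caveat: your parenthetical that Fried's theorem was ``reproved above as a corollary of Theorem \ref{thm_A_for_knots}'' overstates what the paper does. The paper's new proof of Fried covers only the case where $\Delta_{K}$ has no roots on the unit circle, whereas the hypothesis here (no roots in $\mu_{\infty}$) still allows diophantine roots, for which $E_{K}$ has the unit circle as natural boundary and the pole-reading argument is unavailable. So either invoke Fried's original theorem (Theorem \ref{thm_Fried}) as a cited result -- which makes your proof complete as written -- or strengthen the hypothesis to ``no roots on the unit circle,'' which is exactly the restriction the paper's own remark imposes when it substitutes Theorem \ref{thm_A_for_knots}.
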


So, if neither Alexander polynomial has a root on the unit circle, the
proof\textit{\ loc. cit.} also works with Theorem \ref{thm_A_for_knots}
instead, but use residual finiteness as in \cite[Lemma 4.2]{bfprofinite2}. A
recent development is due to Ueki \cite{ueki}.\medskip

As we had already explained in the narrative of the introduction, everything
changes drastically if $\Delta_{K}$ \textit{does} have a diophantine root.
Then an analytic continuation is impossible. Nonetheless, one can read of a
lot of the data which was previously packaged in the poles from the singular
values on the radius of convergence:

\begin{definition}
\label{def_MultiplicativeIndependence}A set of elements $x_{1},\ldots,x_{r}%
\in\mathbf{C}$, all on the unit circle, will be called \emph{multiplicatively
(in)dependent} if the real numbers%
\[
\{\arg x_{1},\ldots,\arg x_{r}\}\subset\mathbf{R}%
\]
are linearly (in)dependent over the rationals.
\end{definition}

Moreover, we say that something holds \textquotedblleft for all sufficiently
divisible $m$\textquotedblright\ if there exists some integer $N$ such that
the statement holds for all $n$ which are divisible by $N$.

\begin{theorem}
\label{thm_A_for_knots_NatBdryCase}Let $K\subset S^{3}$ be a knot and
$\Delta_{K}$ its Alexander polynomial. If it has at least one diophantine
root, then $E_{K}$ has the unit circle as its natural boundary. Let $p$ be a
point of the unit circle.

\begin{enumerate}
\item If $p$ is multiplicatively independent from all diophantine roots, then%
\[
\operatorname*{limrad}_{z\rightarrow p}\,(1-\left\vert z\right\vert
)E_{K}(z)=0\text{.}%
\]

\item If $p$ is multiplicatively dependent of the diophantine roots, then for
all sufficiently divisible $m\geq1$,%
\[
\operatorname*{limrad}_{z\rightarrow p^{m}}\,(1-\left\vert z\right\vert
)E_{K}(z)\in\mathbf{Q}_{\lneqq0}%
\]
is a (strictly) negative rational number.

\item If $p$ is multiplicatively dependent of the diophantine roots, then%
\[
\operatorname*{limrad}_{z\rightarrow p^{m}}\,(1-\left\vert z\right\vert
)E_{K}(z)\in\mathbf{Q}(\mu_{\infty},\pi,\{L(1,\chi)\}_{\chi})\text{,}%
\]
where $\chi$ runs through a finite set (depeding on $p$) of non-principal
Dirichlet characters of various moduli\footnote{Depending on the worst
denominator in the multiplicative dependency relation, one can bound the
necessary supply of moduli for the required $\chi$; see Theorem
\ref{thm_ortho}. We leave it to the reader to spell this out.}.
\end{enumerate}

In particular, the rational span%
\[
\mathbf{Q}\left\langle (\arg\beta_{i})_{\beta_{i}\text{ a diophantine root of
}\Delta_{K}}\right\rangle
\]
inside the real numbers can be read off the boundary value behaviour of
$E_{K}$ at the unit circle.
\end{theorem}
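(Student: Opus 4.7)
The plan is to decompose $E_K$ into a finite sum of building-block series $R_{\beta_i}$ and then apply Theorem \ref{thm_RxAtUnimodularHasNaturalBoundary} summand by summand. I first invoke the classical Mayberry--Murasugi product formula for knots,
\[
|H_1(X_r,\mathbf{Z})_{\operatorname{tor}}| = \prod_i{}^{\prime} |1 - \beta_i^r|,
\]
where $\beta_i$ ranges over the roots of $\Delta_K$ counted with multiplicity and the prime omits factors with $\beta_i^r=1$. Because $\Delta_K(1)=\pm 1$ for every knot, none of the $\beta_i$ equals $1$. Taking logarithms and forming generating functions yields the key identity
\[
E_K(z)=\sum_{i} R_{\beta_i}(z)
\]
in the sense of Definition \ref{Def_FunctionR}.

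Next I sort the roots into three classes: (a) those with $|\beta_i|\neq 1$, (b) $\beta_i \in \mu_\infty\setminus\{1\}$, and (c) the diophantine ones. By hypothesis, class (c) is non-empty. For (a), Theorem \ref{thm_AnalyticContinuationForRx} (combined with Lemma \ref{Lemma_RSwapXToXInverse} when $|\beta_i|>1$) gives a meromorphic continuation to $\mathbf{C}$ with poles strictly off the unit circle; hence $(1-|z|)R_{\beta_i}(z)\to 0$ radially at every unit-modulus point. For (b), Proposition \ref{prop_Rx_AtRootOfUnity} exhibits $R_{\beta_i}$ as an explicit rational function whose poles form a finite set of roots of unity, so the same conclusion holds at every unit-modulus point outside this finite set. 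The heart of the argument is class (c), which is precisely the content of Theorem \ref{thm_RxAtUnimodularHasNaturalBoundary}.

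The natural-boundary assertion is then immediate: for every diophantine $\beta_i$, the dense orbit $\{\beta_i^n:n\in\mathbf{Z}\}$ on the unit circle supports nonzero rational radial limits $-\tfrac{1}{2|n|}$ of $(1-|z|)R_{\beta_i}$, and the meromorphic contributions from (a) and (b) cannot cancel these. For the three cases of radial limits, I sum contributions termwise. In case (1), multiplicative independence of $p$ from the diophantine roots as a set implies independence from each $\beta_i$ individually, so by the last clause of Theorem \ref{thm_RxAtUnimodularHasNaturalBoundary} every class (c) summand contributes $0$. In cases (2) and (3), multiplicative dependence means $\arg p \in \sum \mathbf{Q}\arg\beta_i$; choosing $m$ divisible by a common denominator of these coefficients places $\arg p^m$ in $\sum\mathbf{Z}\arg\beta_i$, and each class (c) summand then contributes either $0$, a negative rational $-\tfrac{1}{2|k|}$, or a value in $\mathbf{Q}(\mu_\infty,\pi,\{L(1,\chi)\})$, according to whether $p^m$ is multiplicatively independent from, an integer power of, or a fractional power of the individual $\beta_i$. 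Summing yields the field containment in (3), and once $m$ is further enlarged so that no fractional-power contribution survives, the strict negativity in (2).

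The main obstacle is the combinatorial bookkeeping behind the phrase ``sufficiently divisible $m$'': one must simultaneously (i) convert the $\mathbf{Q}$-linear relation $\arg p=\sum c_i\arg\beta_i$ into a $\mathbf{Z}$-linear one, (ii) push $p^m$ clear of the finitely many poles of the class (b) summands, and (iii) eliminate all fractional-exponent contributions from class (c) in favor of the rational values $-\tfrac{1}{2|k|}$. Each of these constraints is a divisibility condition on $m$, and together they reduce to divisibility by a single explicit LCM (the orders of the class (b) roots, the denominators of the $c_i$, and further denominators arising from multiplicative relations among the diophantine roots themselves). Once this divisibility is secured, the three case statements follow mechanically by summing the values supplied by Theorem \ref{thm_RxAtUnimodularHasNaturalBoundary}.
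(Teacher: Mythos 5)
Your route is the paper's own: expand the torsion order by Fox's formula into factors $\left\vert 1-\beta_i^r\right\vert$, write $E_K$ as a sum of series $R_{\beta_i}$, dispose of the roots with $\left\vert\beta_i\right\vert\neq1$ via Theorem \ref{thm_AnalyticContinuationForRx} and Lemma \ref{Lemma_RSwapXToXInverse}, of the roots of unity via Proposition \ref{prop_Rx_AtRootOfUnity}, and feed the diophantine roots into Theorem \ref{thm_RxAtUnimodularHasNaturalBoundary}, with the ``sufficiently divisible $m$'' bookkeeping preventing cancellation. Two slips should nevertheless be repaired, both localized at $z=1$ and therefore not fatal to the statements actually proved. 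First, your product formula drops the leading coefficient: Fox's formula (Theorem \ref{thm_Fox}) gives $\left\vert H_1(X_r,\mathbf{Z})_{\operatorname{tor}}\right\vert=\left\vert a\right\vert^{r}\prod_i\left\vert 1-\beta_i^r\right\vert$, and $\Delta_K(1)=\pm1$ does not make $\Delta_K$ monic (e.g. $2t^2-3t+2$ for the knot $5_2$, whose roots are diophantine, so this case is central to this very theorem). The correct identity is $E_K(z)=\log\left\vert a\right\vert\cdot\frac{z}{(z-1)^2}+\sum_i R_{\beta_i}(z)$, and the swap lemma likewise produces a term $\log\left\vert\beta_i\right\vert\cdot\frac{z}{(z-1)^2}$ for each root outside the unit circle; hence your assertion that the class (a) contributions have ``poles strictly off the unit circle'' is false -- collectively they carry the order-two pole $\log\mathcal{M}(\Delta_K)\cdot z/(z-1)^2$ at $z=1$. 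Since these extra terms are rational functions whose only pole is $z=1$, they contribute $0$ to the radial limits at the points relevant to (1)--(3) and do not affect the natural-boundary argument, which is why your conclusions still come out as in the paper; but the identity as you state it is wrong and the fix should be recorded.

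Second, in the natural-boundary paragraph you only exclude cancellation by the class (a) and (b) summands; a priori another diophantine summand $R_{\beta_j}$ with $\beta_j$ multiplicatively dependent on $\beta_i$ could contribute a fractional-exponent value (an element of $\mathbf{Q}(\mu_\infty,\pi,\{L(1,\chi)\}_\chi)$) at the point $\beta_i^{n}$ which conceivably cancels $-\tfrac{1}{2\left\vert n\right\vert}$. Your own ``sufficiently divisible'' step already repairs this: restrict to the still-dense subfamily of sufficiently divisible powers of $\beta_i$, where every class (c) contribution is a nonpositive rational and the total is strictly negative -- this is exactly how the paper produces its dense set of singular points. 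With the ordering of the argument adjusted accordingly, your proof coincides with the paper's.
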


In order to prove Theorem \ref{thm_A_for_knots} and Theorem
\ref{thm_A_for_knots_NatBdryCase}, we need some preparations along the lines
of \cite{MR1923995}.

\begin{remark}
[Branched coverings of the $3$-sphere]\label{rmk_BranchedCoverings}%
Historically, this story was being looked at from a slightly different
perspective. In \cite{MR0295327}, \cite{MR1923995} one considers
\textsl{branched} coverings $\widehat{X}_{\infty}$ resp. $\widehat{X}_{r}$
over $S^{3}$, instead of the spaces $X_{\infty}$ resp. $X$ over the knot
complement. This is explained e.g. \cite[Ch. 8, E, \S 8.18]{MR1959408}. They
sit in a square%
\[%
\begin{array}
[c]{ccc}%
X_{r} & \hookrightarrow & \widehat{X}_{r}\\
\downarrow &  & \downarrow\\
X_{K} & \hookrightarrow & S^{3}%
\end{array}
\]
and one has $H_{1}(X_{r},\mathbf{Z})\cong\mathbf{Z}\oplus H_{1}(\widehat
{X}_{r},\mathbf{Z})$. See \cite[8.19 (d), Prop.]{MR1959408}.
\end{remark}

\begin{theorem}
[Fox]\label{thm_Fox}Let $K\subset S^{3}$ be a knot. If $\Delta_{K}$ has no
roots which are roots of unity, the homology groups $H_{1}(\widehat{X}%
_{r},\mathbf{Z})$ are finite. In this case, $\left\vert H_{1}(\widehat{X}%
_{r},\mathbf{Z})\right\vert =\prod_{\zeta\in\mu_{r}}\left\vert \Delta
_{K}(\zeta)\right\vert $
\end{theorem}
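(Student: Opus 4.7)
The strategy is to combine the Alexander module description of $H_1(X_\infty,\mathbf{Z})$ with a Wang-type exact sequence for cyclic covers, and then identify the resulting finite group with a resultant.

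First, I would recall the two geometric inputs: (a) the canonical $\mathbf{Z}[t,t^{-1}]$-module identification $M := H_1(X_\infty,\mathbf{Z})\cong\mathbf{Z}[t,t^{-1}]/(\Delta_K)$, with $t$ acting as the generator of the deck group of $X_\infty\to X_K$; and (b) Remark \ref{rmk_BranchedCoverings}, which gives $H_1(X_r,\mathbf{Z})\cong\mathbf{Z}\oplus H_1(\widehat{X}_r,\mathbf{Z})$. The plan is to compute $H_1(\widehat{X}_r,\mathbf{Z})$ by first computing $H_1(X_r,\mathbf{Z})$ and subtracting off this free $\mathbf{Z}$-factor.

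Next, I would apply the Wang/Milnor exact sequence for the infinite cyclic cover $X_\infty\to X_r$ with deck transformation $t^r$, producing
\[
M \xrightarrow{\,t^r-1\,} M \longrightarrow H_1(X_r,\mathbf{Z}) \longrightarrow H_0(X_\infty,\mathbf{Z}) \xrightarrow{\,t^r-1\,} H_0(X_\infty,\mathbf{Z}).
\]
Connectedness of $X_\infty$ makes $H_0(X_\infty,\mathbf{Z})=\mathbf{Z}$ with trivial $t$-action, so the rightmost map is zero; this yields a short exact sequence
\[
0 \longrightarrow M/(t^r-1)M \longrightarrow H_1(X_r,\mathbf{Z}) \longrightarrow \mathbf{Z} \longrightarrow 0.
\]
Since it splits, and by (b) the free $\mathbf{Z}$-summand must correspond to the canonical $\mathbf{Z}$-factor of $H_1(X_r)$, I would conclude $H_1(\widehat{X}_r,\mathbf{Z})\cong M/(t^r-1)M$. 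Under the assumption that $\Delta_K$ has no root of unity among its roots, $\Delta_K$ and $t^r-1$ are coprime in $\mathbf{Q}[t]$, so $M/(t^r-1)M$ is finite; this proves the finiteness claim.

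Finally, I would identify the order. Since $t$ is a unit modulo $t^r-1$, we have
\[
M/(t^r-1)M \;=\; \mathbf{Z}[t,t^{-1}]/(\Delta_K,\,t^r-1) \;=\; \mathbf{Z}[t]/(t^r-1)\big/(\Delta_K).
\]
Because $t^r-1$ is monic, $\mathbf{Z}[t]/(t^r-1)\cong\mathbf{Z}^r$ as an abelian group, and the quotient above is the cokernel of multiplication by $\Delta_K$ on this free $\mathbf{Z}$-module. Its order is therefore the absolute value of the determinant of multiplication by $\Delta_K$. Diagonalizing over $\mathbf{C}$ via the basis of characters $\zeta\in\mu_r$, the eigenvalues of this multiplication map are precisely the numbers $\Delta_K(\zeta)$, hence
\[
|H_1(\widehat{X}_r,\mathbf{Z})| \;=\; \left|\det\bigl(\Delta_K\cdot\text{-}\mid \mathbf{Z}[t]/(t^r-1)\bigr)\right| \;=\; \prod_{\zeta\in\mu_r}|\Delta_K(\zeta)|,
\]
as claimed. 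The only delicate point I anticipate is verifying that the splitting of the Wang sequence matches the geometric splitting of Remark \ref{rmk_BranchedCoverings} (so that we really are subtracting the same $\mathbf{Z}$-summand on both sides); this can be checked by tracing through the map induced by the inclusion $X_r\hookrightarrow\widehat{X}_r$ and noting that the meridian class generates the canonical $\mathbf{Z}$ on both sides.
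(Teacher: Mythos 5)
Your argument is essentially correct, but it is a genuinely different route from the paper's: the paper does not prove Theorem \ref{thm_Fox} at all, it simply cites Fox's original computation (with Weber's later corrections), whereas you give the standard self-contained derivation via the Milnor--Wang exact sequence for $X_\infty\to X_r$, the identification $H_1(\widehat{X}_r,\mathbf{Z})\cong H_1(X_\infty,\mathbf{Z})/(t^r-1)$, and the cokernel-determinant computation on $\mathbf{Z}[t]/(t^r-1)\cong\mathbf{Z}^r$, which correctly yields $\prod_{\zeta\in\mu_r}|\Delta_K(\zeta)|$ as a resultant. Two remarks. First, your worry about matching the two splittings is unnecessary: once you know $M/(t^r-1)M$ is finite, comparing ranks and then torsion subgroups in $H_1(X_r,\mathbf{Z})\cong M/(t^r-1)M\oplus\mathbf{Z}\cong\mathbf{Z}\oplus H_1(\widehat{X}_r,\mathbf{Z})$ gives both the finiteness of $H_1(\widehat{X}_r,\mathbf{Z})$ and the isomorphism with $M/(t^r-1)M$ directly. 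Second, and more substantively, your proof leans on the paper's assertion that $H_1(X_\infty,\mathbf{Z})\cong\mathbf{Z}[t,t^{-1}]/(\Delta_K)$; in general the Alexander module of a knot need not be cyclic (only its first elementary ideal is principal), so if you want the argument to stand independently of that simplification you should replace the cyclic description by a square presentation matrix, e.g.\ $tV-V^{T}$ for a Seifert matrix $V$, whose determinant is $\Delta_K$ up to units; then $M/(t^r-1)M$ is the cokernel of this matrix over $\mathbf{Z}[t]/(t^r-1)$, and the same eigenvalue computation gives $\prod_{\zeta\in\mu_r}|\det(\zeta V-V^{T})|=\prod_{\zeta\in\mu_r}|\Delta_K(\zeta)|$. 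What your approach buys is an actual proof inside the paper's framework (and it makes transparent why the quantity is the resultant $\operatorname{Res}(\Delta_K,t^r-1)$, which is exactly the form used in Equation \ref{l_C_translate_root}); what the paper's citation buys is the fully general statement without any hypothesis on the module structure.
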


This formula is due to Fox \cite[\S 6, (6.1) and (6.3)]{MR0095876}, modulo a
some corrections in the proof due to Weber \cite{MR570312}.

\begin{proof}
[Proof of Theorem \ref{thm_A_for_knots} and Theorem
\ref{thm_A_for_knots_NatBdryCase}]Let $\Delta_{K}(t)=a\prod_{i=1}^{n}%
(t-\beta_{i})\in\mathbf{Z}[t]$ be the Alexander polynomial, factored over
$\mathbf{C}$. According to Remark \ref{rmk_BranchedCoverings} and Fox'
formula, Theorem \ref{thm_Fox}, we have%
\begin{equation}
\left\vert H_{1}(X_{r},\mathbf{Z})_{\operatorname*{tor}}\right\vert
=\left\vert H_{1}(\widehat{X}_{r},\mathbf{Z})\right\vert =\prod_{\zeta\in
\mu_{r}}\left\vert \Delta_{K}(\zeta)\right\vert =\left\vert a\right\vert
^{r}\prod_{i=1}^{n}\left\vert \prod_{\zeta\in\mu_{r}}(\beta_{i}-\zeta
)\right\vert =\left\vert a\right\vert ^{r}\prod_{i=1}^{n}\left\vert
1-\beta_{i}^{r}\right\vert \label{l_C_translate_root}%
\end{equation}
since $T^{r}-1=\prod_{\zeta\in\mu_{r}}(T-\zeta)$.\ Hence,%
\begin{align*}
E_{K}(z)  & =\log\left\vert a\right\vert \cdot\frac{z}{(z-1)^{2}}+\sum
_{i=1}^{n}\sum_{r=1}^{\infty}\log\left\vert 1-\beta_{i}^{r}\right\vert \cdot
z^{r}\\
& =\log\left\vert a\right\vert \frac{z}{(z-1)^{2}}+\sum_{\left\vert \beta
_{i}\right\vert <1}R_{\beta_{i}}(z)+\sum_{\left\vert \beta_{i}\right\vert
>1}R_{\beta_{i}}(z)+\sum_{\left\vert \beta_{i}\right\vert =1}R_{\beta_{i}%
}(z)\text{.}%
\end{align*}
We use Lemma \ref{Lemma_RSwapXToXInverse} for each summand with $\left\vert
\beta_{i}\right\vert >1$. This yields
\begin{align}
& =\frac{z}{(z-1)^{2}}\log\mathcal{M}(\Delta_{K})+\sum_{\left\vert \beta
_{i}\right\vert <1}R_{\beta_{i}}(z)\label{lWA1}\\
& \qquad\qquad\qquad+\sum_{\left\vert \beta_{i}\right\vert >1}R_{\beta
_{i}^{-1}}(z)+\sum_{\substack{\left\vert \beta_{i}\right\vert =1 \\\beta
_{i}\in\mu_{\infty}}}R_{\beta_{i}}(z)+\sum_{\substack{\left\vert \beta
_{i}\right\vert =1 \\\beta_{i}\notin\mu_{\infty}}}R_{\beta_{i}}(z)\text{,}%
\nonumber
\end{align}
where $\mathcal{M}(\Delta_{K})$ denotes the Mahler measure of the Alexander
polynomial. It is independent of the choice of the representative for
$\Delta_{K}$. Now, we need a case distinction.\newline(Case A)\ Suppose that
there is no Alexander root $\beta_{i}$ with $\left\vert \beta_{i}\right\vert
=1$ and $\beta_{i}\notin\mu_{\infty}$. Hence, Equation \ref{lWA1} simplifies
to%
\begin{equation}
=\frac{z}{(z-1)^{2}}\log\mathcal{M}(\Delta_{K})+\sum_{\left\vert \beta
_{i}\right\vert <1}R_{\beta_{i}}(z)+\sum_{\left\vert \beta_{i}\right\vert
>1}R_{\beta_{i}^{-1}}(z)+\sum_{\substack{\left\vert \beta_{i}\right\vert =1
\\\beta_{i}\in\mu_{\infty}}}R_{\beta_{i}}(z)\text{.}\label{lBBA2}%
\end{equation}
By Prop. \ref{prop_Rx_AtRootOfUnity} the power series $R_{\beta_{i}}$ with
$\beta_{i}\in\mu_{\infty}$ admit a meromorphic continuation to the entire
complex plane with poles precisely at the finite set $\{\beta_{i}^{r}\mid
r\in\mathbf{Z}\}$, and all these are of order $1$. The other summands only
feature the power series $R_{\beta}$ for a parameter $\beta$ such that
$\left\vert \beta\right\vert <1$. By Theorem
\ref{thm_AnalyticContinuationForRx} any such $R_{\beta}$ admits a meromorphic
continuation to the entire complex plane whose sole poles are at
$\{\beta^{\mathbf{Z}_{\leq-1}},\overline{\beta}^{\mathbf{Z}_{\leq-1}}\}$, each
of order one. It follows that the sum of all these analytic continuations is a
meromorphic function in all of $\mathbf{C}$ whose poles are at the following
locations: (1) a pole at $z=1$ from the initial summand (only if the Mahler
measure is $\neq1$), as well as (2) poles coming from the $R_{\beta_{i}}$,
i.e. in total%
\begin{equation}
\bigcup_{i,\left\vert \beta_{i}\right\vert \neq1}\{\beta_{i}^{\pm
\mathbf{Z}_{\leq-1}},\overline{\beta}_{i}^{\pm\mathbf{Z}_{\leq-1}}%
\}\cup\bigcup_{i,\left\vert \beta_{i}\right\vert =1}\{\beta_{i}^{\mathbf{Z}%
}\}\cup\{1\}\text{,}\label{lBBA1}%
\end{equation}
where the first union runs through all roots of the Alexander polynomial
which, and we use \textquotedblleft$+$\textquotedblright\ if $\left\vert
\beta_{i}\right\vert <1$ and \textquotedblleft$-$\textquotedblright\ if
$\left\vert \beta_{i}\right\vert >1$. The pole at $z=1$ is always in this set
because the Alexander polynomial of a knot is always non-trivial, so either
the Mahler measure is $\neq1$ (so that there is an order $2$ pole at $z=1$),
or the Mahler measure is $=1$, but then there must be at least one root at a
root of unity and by Prop. \ref{prop_Rx_AtRootOfUnity} this also causes a pole
at $z=1$. We claim that this set agrees with%
\[
\{\beta^{r}\mid\Delta_{K}(\beta)=0\text{ and }r\in\mathbf{Z}\}\setminus
(\text{open unit disc})\text{.}%
\]
To see this: In Equation \ref{lBBA1} all elements lie outside the open unit
disc. If we replace all exponents $\pm\mathbf{Z}_{\leq-1}$ by $\mathbf{Z}%
_{\neq0}$, then all additional elements we get this way lie inside the open
unit disc. Thus, the set in Equation \ref{lBBA1} agrees with%
\[
\left(  \bigcup_{i,\left\vert \beta_{i}\right\vert \neq1}\{\beta
_{i}^{\mathbf{Z}_{\neq0}},\overline{\beta}_{i}^{\mathbf{Z}_{\neq0}}%
\}\cup\bigcup_{i,\left\vert \beta_{i}\right\vert =1}\{\beta_{i}^{\mathbf{Z}%
}\}\right)  \setminus(\text{open unit disc})\text{.}%
\]
All elements in this set are an integral power of an Alexander root since
$\Delta_{K}$ is a real polynomial, so if $\beta$ is a solution, so is
$\overline{\beta}$. The converse inclusion is clear. All the poles coming from
the functions $R_{\beta_{i}}$ has order $1$, so the only possibility to get a
pole of higher order is the order $2$ pole at $z=1$ potentially coming from
the initial summand in Equation \ref{lBBA2} if it is non-zero. This finishes
the proof of Theorem \ref{thm_A_for_knots}.\newline(Case B) Suppose there
exists at least one Alexander root $\beta_{i}$ with $\left\vert \beta
_{i}\right\vert =1$, but $\beta_{i}\notin\mu_{\infty}$. Then Equation
\ref{lWA1} contains the corresponding summand $R_{\beta_{i}}$. By Theorem
\ref{thm_RxAtUnimodularHasNaturalBoundary} for all $p\in\{\beta_{i}^{m}\mid
m\in\mathbf{Z}\}$, we have%
\begin{equation}
\operatorname*{limrad}_{z\rightarrow p}\,(1-\left\vert z\right\vert
)R_{\beta_{i}}(z)=-\frac{1}{2\left\vert m\right\vert }\delta_{m\neq0}%
\text{.}\label{lc1}%
\end{equation}
This yields a dense set of singular points of the unit circle, making the unit
circle the natural boundary for the summand $R_{\beta_{i}}$. We need to study
whether the summation of functions $R_{\beta_{i}}$ (for varying $i$) in
Equation \ref{lWA1} may lead to a cancellation of singular points. We claim
that this is not possible, because: (1) Each summand $R_{\beta_{j}}$ with
$\left\vert \beta_{j}\right\vert <1$ admits an analytic continuation to the
entire complex plane without any poles on the unit circle, so it satisfies%
\[
\operatorname*{limrad}_{z\rightarrow p}\,(1-\left\vert z\right\vert
)R_{\beta_{j}}(z)=0\text{.}%
\]
(2) A summand $R_{\beta_{j}}$ with $\beta_{j}\in\mu_{\infty}$ only has poles
at finitely many roots of unity. Since $p$ is not a root of unity, we again
get $\operatorname*{limrad}_{z\rightarrow p}\,(1-\left\vert z\right\vert
)R_{\beta_{j}}(z)=0$. (3) Each summand $R_{\beta_{j}}$ with $\beta_{j}$
multiplicatively independent from $\beta_{i}$ (and not a root of unity) also
satisfies%
\[
\operatorname*{limrad}_{z\rightarrow p}\,(1-\left\vert z\right\vert
)R_{\beta_{j}}(z)=0
\]
by the second statement of Theorem \ref{thm_RxAtUnimodularHasNaturalBoundary}.
(4) This only leaves summands $R_{\beta_{j}}$ with $\beta_{j}$
multiplicatively dependent on $\beta_{i}$ as candidates for cancellation.
Indeed, by the first part of Theorem
\ref{thm_RxAtUnimodularHasNaturalBoundary} they may contribute a non-zero
value. However, at least after taking a sufficiently divisible
power\footnote{More precisely:\ We only want integral powers of Alexander
roots, so the exponent must be sufficiently divisible to clear all
denominators in the multiplicative dependency relation.}, these may only add
up values of the shape%
\[
-\frac{1}{2\left\vert M^{\prime}\right\vert }%
\]
for suitable $M^{\prime}\geq1$, i.e. (if $p$ is a sufficiently divisible
power)%
\[
\operatorname*{limrad}_{z\rightarrow p}\,(1-\left\vert z\right\vert
)E_{K}(z)=\sum_{k}\left(  -\frac{1}{2\left\vert M_{k}^{\prime}\right\vert
}\right)  \qquad\text{(finite sum).}%
\]
Along with Equation \ref{lc1}, all these values are $<0$, so no non-empty sum
of them can be zero. In particular, no cancellation is possible. It follows
that $E_{K}$ has a dense set of singular points on its radius of convergence.
Hence, the unit circle is the natural boundary for this power series. Along
the way, we have shown the claimed behaviour at boundary values. This finishes
the proof of Theorem \ref{thm_A_for_knots_NatBdryCase}.
\end{proof}

For the sake of completeness, let us also state a structure result regarding
the torsion homology order along with a (rather innocent) bound on the error:

\begin{theorem}
Let $K\subset S^{3}$ be a knot and suppose the Alexander polynomial has no
diophantine roots. Let $m$ be the least common multiple of all orders of roots
of unity which are roots of $\Delta_{K}$, and $m=1$ if there are none. Then
there exists an $m$-periodic sequence $(a_{r})_{r\geq0}$, i.e.%
\[
a_{r+m}=a_{r}\qquad\text{for all }r\geq0
\]
such that%
\[
\left\vert \log\left\vert H_{1}(X_{r},\mathbf{Z})_{\operatorname*{tor}%
}\right\vert -(a_{r}+\log\mathcal{M}(\Delta_{K})r)\right\vert \leq\sum
_{\beta,\left\vert \beta\right\vert \neq1}\frac{\left\vert \beta^{\pm
1}\right\vert ^{r}}{1-\left\vert \beta^{\pm1}\right\vert ^{r}}\text{,}%
\]
where we take $\beta$ if $\left\vert \beta\right\vert <1$ and $\beta^{-1}$ if
$\left\vert \beta\right\vert >1$.
\end{theorem}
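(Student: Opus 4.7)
The plan is to derive the statement directly from Fox's formula (Theorem~\ref{thm_Fox}, see Equation~\eqref{l_C_translate_root}). Writing $\Delta_K(t) = a\prod_{i=1}^{n}(t-\beta_i)$, one has
\[
\log|H_1(X_r,\mathbf{Z})_{\operatorname*{tor}}| = r\log|a| + \left.\sum_{i=1}^{n}\right.^{\prime}\log|1-\beta_i^r|,
\]
with the primed convention of Definition~\ref{Def_FunctionR} omitting the summands where $\beta_i^r = 1$. By hypothesis the only Alexander roots on the unit circle lie in $\mu_\infty$, so the sum splits naturally into three subsums: over $|\beta_i|<1$, over $|\beta_i|>1$, and over $\beta_i \in \mu_\infty$.

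Next I would separate out the Mahler measure. For a root with $|\beta|>1$ I would use the identity $\log|1-\beta^r| = r\log|\beta| + \log|1-\beta^{-r}|$, and combine it with $\log\mathcal{M}(\Delta_K) = \log|a| + \sum_{|\beta|>1}\log|\beta|$ to obtain
\[
\log|H_1(X_r)_{\operatorname*{tor}}| - r\log\mathcal{M}(\Delta_K) = \sum_{|\beta|<1}\log|1-\beta^r| + \sum_{|\beta|>1}\log|1-\beta^{-r}| + \left.\sum_{\beta\in\mu_\infty}\right.^{\prime}\log|1-\beta^r|.
\]
I would then define $a_r$ to be the last sum on the right. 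For each root-of-unity root $\beta$ of order $d$, both the quantity $\log|1-\beta^r|$ and the skip condition depend only on $r\bmod d$, so $a_r$ is periodic of period dividing $m$, the least common multiple of these orders. If no Alexander root is a root of unity, then $m=1$ and $a_r\equiv 0$.

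The remaining step is an elementary bound. I would use $|\log|1-x|| \leq |x|/(1-|x|)$, valid for $|x|<1$, established by the case distinction $|1-x|\leq 1$ versus $|1-x|\geq 1$, combined with the two-sided inequality $1-|x|\leq|1-x|\leq 1+|x|$ and the expansions $-\log(1-y)=\sum_{k\geq 1}y^k/k \leq y/(1-y)$ and $\log(1+y)\leq y\leq y/(1-y)$ for $y\in(0,1)$. Applying this with $x=\beta^r$ for $|\beta|<1$ and $x=\beta^{-r}$ for $|\beta|>1$, then invoking the triangle inequality, yields exactly the stated upper bound $\sum_{|\beta|\neq 1}|\beta^{\pm 1}|^r/(1-|\beta^{\pm 1}|^r)$.

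I do not foresee a genuine obstacle: the argument is essentially an assembly of already-established ingredients (Fox's formula, the Mahler-measure factorization, and a geometric-series estimate). The only mildly delicate point is the bookkeeping of the primed sum when $\beta_i$ is a root of unity and $r$ is a multiple of its order, where $\log|1-\beta_i^r|$ would otherwise diverge. Collecting precisely these terms into the $m$-periodic sequence $a_r$ (and observing that its period divides $m$) is what makes the decomposition clean and is the single step that is not purely mechanical.
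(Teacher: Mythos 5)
Your proposal is correct and follows essentially the same route as the paper: the paper phrases the decomposition through the generating functions $R_{\beta}$ and Lemma \ref{Lemma_RSwapXToXInverse} rather than coefficient-by-coefficient, but the content is identical — split off $r\log\mathcal{M}(\Delta_{K})$, collect the root-of-unity contributions into an $m$-periodic sequence, and bound the rest via $\left\vert \log\left\vert 1-x\right\vert \right\vert \leq\frac{\left\vert x\right\vert }{1-\left\vert x\right\vert }$, which is exactly the inequality the paper establishes by the same case distinction.
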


\begin{proof}
We have%
\begin{align*}
E_{K}(z)  & =\sum_{\beta,\left\vert \beta\right\vert <1}R_{\beta}%
(z)+\sum_{\beta,\left\vert \beta\right\vert >1}R_{\beta}(z)+\sum
_{\beta,\left\vert \beta\right\vert =1}R_{\beta}(z)\\
& =\frac{z}{(z-1)^{2}}\log\mathcal{M}(\Delta_{K})+\sum_{\beta,\left\vert
\beta\right\vert <1}R_{\beta}(z)+\sum_{\beta,\left\vert \beta\right\vert
>1}R_{\beta^{-1}}(z)+\sum_{\beta,\left\vert \beta\right\vert =1}R_{\beta
}(z)\text{,}%
\end{align*}
where $\beta$ runs through the roots of the Alexander polynomial. By
assumption each root $\beta$ with $\left\vert \beta\right\vert =1$ is a root
of unity, say of $m$-th order, and thus (by the definition of $R_{x}$,
Definition \ref{Def_FunctionR}) the coefficients in the power series expansion
of $R_{\beta}$ at $z=0$ are periodic of period $m$. Thus, taking the least
common multiple of these orders, we can split off the summand $\sum
_{\beta,\left\vert \beta\right\vert =1}$ an encode it as the sequence
$(a_{i})_{i\geq0}$ in our claim. Moreover,%
\[
\frac{z}{(z-1)^{2}}\log\mathcal{M}(\Delta_{K})=\log\mathcal{M}(\Delta_{K}%
)\sum_{r\geq1}rz^{r}\text{,}%
\]
so we can also understand the contribution of this summand to the coefficients
easily. Next, note that%
\begin{equation}
\left\vert \log\left\vert 1-x\right\vert \right\vert \leq\frac{\left\vert
x\right\vert }{1-\left\vert x\right\vert }\text{.}\label{lvda1}%
\end{equation}
(By the two-sided triangle inequality%
\begin{equation}
\left\vert 1-\left\vert x\right\vert \right\vert \leq\left\vert 1-x\right\vert
\leq1+\left\vert x\right\vert \text{.}\label{lvion11}%
\end{equation}
Note that $s\mapsto\left\vert \log s\right\vert $ is monotonously decreasing
for real $s\in(0,1]$ and monotonously increasing for $s\geq1$. The case $x=0$
is trivial, so let us first look at the case $0<\left\vert x\right\vert <1$:
We need a further case distinction: (Case A) $\left\vert 1-x\right\vert
\in(0,1]$. Then Equation \ref{lvion11} implies $\left\vert \log(1-\left\vert
x\right\vert )\right\vert \geq\left\vert \log\left\vert 1-x\right\vert
\right\vert $. For any real number $t>-1,t\neq1$ one has the classical
inequality $\frac{t}{t+1}<\log(1+t)$, so plugging in $-\left\vert x\right\vert
\in(-1,1)\setminus\{0\}$, we get $\left\vert \log\left\vert 1-x\right\vert
\right\vert \leq\left\vert \log(1-\left\vert x\right\vert )\right\vert
\leq\left\vert \frac{\left\vert x\right\vert }{1-\left\vert x\right\vert
}\right\vert $ and line \ref{lvda1} is true. (Case B) Now suppose $\left\vert
1-x\right\vert \geq1$. In this case Equation \ref{lvion11} implies $\left\vert
\log\left\vert 1-x\right\vert \right\vert \leq\left\vert \log(1+\left\vert
x\right\vert )\right\vert $. For any real number $t>0$, one has the classical
inequality $\log(t)\leq t-1$, so $\left\vert \log\left\vert 1-x\right\vert
\right\vert \leq\left\vert \log(1+\left\vert x\right\vert )\right\vert
\leq\left\vert x\right\vert $ and again line \ref{lvda1} is true.) Hence,%
\[
\left\vert \log\left\vert H_{1}(X_{r},\mathbf{Z})_{\operatorname*{tor}%
}\right\vert -(a_{r}+\log\mathcal{M}(\Delta_{K})r)\right\vert \leq\sum
_{\beta,\left\vert \beta\right\vert \neq1}\frac{\left\vert \beta^{\pm
1}\right\vert ^{r}}{1-\left\vert \beta^{\pm1}\right\vert ^{r}}\text{,}%
\]
where we take $\beta$ if $\left\vert \beta\right\vert <1$ and $\beta^{-1}$ if
$\left\vert \beta\right\vert >1$, and we do not sum anymore over the roots
with $\left\vert \beta\right\vert =1$.
\end{proof}

We can now use Theorem \ref{thm_A_for_knots} to obtain new ways to isolate the
family of knots whose torsion homology is periodic:

\begin{theorem}
\label{thm_refined_gordon_thm}Let $K\subset S^{3}$ be a knot. The following
are equivalent:

\begin{enumerate}
\item The values $\left\vert H_{1}(X_{r},\mathbf{Z})_{\operatorname*{tor}%
}\right\vert $ are periodic in $r$.

\item All Alexander roots are roots of unity.

\item The values $\log\left\vert H_{1}(X_{r},\mathbf{Z})_{\operatorname*{tor}%
}\right\vert $ satisfy a linear recurrence equation.

\item The values $\log\left\vert H_{1}(X_{r},\mathbf{Z})_{\operatorname*{tor}%
}\right\vert $ are periodic in $r$.

\item $E_{K}$ is a rational function.

\item $E_{K}$ has an analytic continuation to a domain containing $z=1$ and a
pole of order one there.

\item $E_{K}$ has an analytic continuation to the entire complex plane with
only finitely many poles.
\end{enumerate}
\end{theorem}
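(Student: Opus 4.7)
The plan is to prove the seven conditions equivalent by closing a single cycle of implications, using Gordon's classical $(1) \Leftrightarrow (2)$ as a black-box input. Since $|H_1(X_r,\mathbf{Z})_{\operatorname*{tor}}|$ is a positive integer and $\log$ is injective on $\mathbf{R}_{>0}$, $(1) \Leftrightarrow (4)$ is automatic. The skeleton I would run through is
\[
(2) \Rightarrow (1) \Rightarrow (4) \Rightarrow (3) \Rightarrow (5) \Rightarrow (7) \Rightarrow (2),
\]
and I would attach the equivalence $(2) \Leftrightarrow (6)$ separately. The implications $(4) \Rightarrow (3)$, $(3) \Rightarrow (5)$, and $(5) \Rightarrow (7)$ are elementary facts: a periodic sequence satisfies the recurrence $a_{r+m} - a_r = 0$; a sequence satisfying a linear recurrence has a rational generating function (the denominator being the characteristic polynomial of the recurrence read backwards); and a rational function has finitely many poles in $\mathbf{C}$.

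The one substantive step in the cycle is $(7) \Rightarrow (2)$, and I would lean entirely on the natural-boundary dichotomy of Theorems \ref{thm_A_for_knots} and \ref{thm_A_for_knots_NatBdryCase}. First, if $\Delta_K$ had any diophantine root, Theorem \ref{thm_A_for_knots_NatBdryCase} would force the unit circle to be a natural boundary for $E_K$, contradicting the meromorphic continuation to $\mathbf{C}$ required by $(7)$. So $\Delta_K$ has no diophantine root, and Theorem \ref{thm_A_for_knots}(1) identifies the pole locus of $E_K$ with
\[
\{\beta^n : \Delta_K(\beta) = 0,\ n \in \mathbf{Z}\} \setminus (\text{open unit disc}).
\]
If any root $\beta$ had $|\beta| \neq 1$, then $\beta^n$ for $n \geq 1$ (when $|\beta| > 1$) or $n \leq -1$ (when $|\beta| < 1$) produces infinitely many distinct points outside the closed unit disc, contradicting finitely many poles. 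Hence every root has $|\beta| = 1$, and together with the absence of diophantine roots this means every Alexander root is a root of unity. This is the main hurdle, but it is really already done once one has the dichotomy established earlier in the paper.

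For the remaining piece the explicit Laurent expansion in Theorem \ref{thm_A_for_knots}(3) does the work. For $(2) \Rightarrow (5)$, I would use the decomposition of $E_K$ from Equation \ref{lWA1}: under $(2)$ every summand $R_{\beta_i}$ is rational by Proposition \ref{prop_Rx_AtRootOfUnity} and the remaining term $\tfrac{z}{(z-1)^2}\log\mathcal{M}(\Delta_K)$ is visibly rational, so $E_K$ is a finite sum of rational functions. For $(2) \Rightarrow (6)$, note that $(2) \Rightarrow (1)$ (by Gordon) already forces $|H_1(X_r,\mathbf{Z})_{\operatorname*{tor}}|$ bounded, hence by Silver--Williams $\log\mathcal{M}(\Delta_K) = 0$, killing the $(z-1)^{-2}$ coefficient in Theorem \ref{thm_A_for_knots}(3); the $(z-1)^{-1}$ coefficient $\sum_i \tfrac{1}{m_i}\log(1/m_i)$ is then strictly negative as long as some $m_i \geq 2$, and the degenerate case that every $\beta_i = 1$ is ruled out by $\Delta_K(1) = \pm 1$. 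Conversely for $(6) \Rightarrow (2)$, an analytic continuation past a point of the unit circle forbids diophantine roots by Theorem \ref{thm_A_for_knots_NatBdryCase}, so Theorem \ref{thm_A_for_knots}(3) applies and the order-$1$ pole hypothesis forces $\log\mathcal{M}(\Delta_K) = 0$; since $\Delta_K \in \mathbf{Z}[t]$, Kronecker's theorem concludes that every Alexander root is a root of unity.
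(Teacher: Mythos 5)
Your proposal is correct and rests on exactly the same pillars as the paper's proof -- Theorems \ref{thm_A_for_knots} and \ref{thm_A_for_knots_NatBdryCase} for the substantive implications, Gordon for the topological input, and standard generating-function algebra for $(3)$, $(4)$, $(5)$ -- but it is organized differently, and the difference is worth noting. The paper quotes Gordon for the full equivalence $(1)\Leftrightarrow(2)$ and then proves $(5)\Leftrightarrow(2)$, $(6)\Leftrightarrow(2)$, $(7)\Leftrightarrow(2)$ pairwise; your single cycle $(2)\Rightarrow(1)\Rightarrow(4)\Rightarrow(3)\Rightarrow(5)\Rightarrow(7)\Rightarrow(2)$ only consumes the easy direction of Gordon ($(2)\Rightarrow(1)$), so the analytic machinery silently re-proves the harder direction "periodic $\Rightarrow$ roots of unity", which is a nice structural economy the paper does not exploit. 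Your $(7)\Rightarrow(2)$ is the paper's own argument (natural boundary rules out diophantine roots, infinitely many poles rule out roots off the unit circle). The other genuine divergence is in $(2)\Leftrightarrow(6)$: you obtain $\mathcal{M}(\Delta_K)=1$ from periodicity plus the Silver--Williams asymptotic, and conclude the converse with Kronecker's theorem, whereas the paper stays internal, getting $|a|=1$ directly from $\Delta_K(1)=\pm1$ when all roots are roots of unity, and in the converse using $1=|a|\prod_{|\beta_i|\geq1}|\beta_i|$ together with reciprocity of $\Delta_K$. Your route is shorter but imports Silver--Williams, which the paper is at pains to recover as a corollary of Theorem \ref{thm_A_for_knots}; the paper's route keeps the theorem self-contained, and your Kronecker step is if anything slightly cleaner than the paper's appeal to reciprocity (provided one takes a representative with nonzero constant term). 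One shared caveat, not a gap relative to the paper: for the order-one pole at $z=1$ in $(6)$, both you and the paper need the sum $\sum\frac{1}{m}\log\frac{1}{m}$ over root-of-unity roots to be nonempty, i.e.\ $\Delta_K$ non-constant; your $\Delta_K(1)=\pm1$ remark excludes $m=1$ but not the case of no roots at all, and the paper disposes of it only by asserting the Alexander polynomial is "always non-trivial".
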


This is a strengthening of Gordon's classical result \cite{MR0295327}.

\begin{proof}
($5\Leftrightarrow2$) Given (5), i.e. $E_{K}$ is rational, it admits an
analytic continuation to the entire complex plane, so all roots of $\Delta
_{K}$ on the unit circle are roots of unity. As soon as there is a root
$\beta$ of the Alexander polynomial of absolute value $\left\vert
\beta\right\vert \neq1$, $E_{K}$ has infinitely many poles. As there are only
finitely many poles, all roots satisfy $\left\vert \beta\right\vert =1$, so
the previous remark covers all roots, i.e. we get (2). The converse is clear.
($1\Leftrightarrow2$) \cite{MR0295327}. ($1\Leftrightarrow4$) obvious,
($5\Leftrightarrow3$) Standard algebra. ($6\Leftrightarrow2$) The Mahler
measure is $+1$ since all roots lie on the unit circle and in this case the
leading coefficient is $\pm1$, too (for the standard normalized Alexander
polynomial representative this follows for example from $\Delta_{K}(1)=\pm1$).
Thus, $E_{K}(z)=$%
\[
\frac{1}{(z-1)^{2}}\log\mathcal{M}(\Delta_{K})+\frac{1}{z-1}\left(
\log\mathcal{M}(\Delta_{K})+\sum_{\substack{\beta_{i}\in\mu_{\infty}
\\m=\text{primitive order of }\beta_{i}}}\frac{1}{m}\log\left(  \frac{1}%
{m}\right)  \right)  +(\ldots)
\]
simplifies to a pole of order one at $z=1$, because the sum over the strictly
negative terms $\frac{1}{m}\log\left(  \frac{1}{m}\right)  $ is always
non-zero. Conversely, if the pole at $z=1$ has order one, we must have
$\mathcal{M}(\Delta_{K})=1$, so if $a$ denotes the leading coefficient of
$\Delta_{K}$, we get%
\[
1=\left\vert a\right\vert \prod_{\beta_{i},\left\vert \beta_{i}\right\vert
\geq1}\left\vert \beta_{i}\right\vert \qquad\Rightarrow\qquad\mathbf{Z}%
_{\geq1}\ni\left\vert a\right\vert =\frac{1}{\prod_{\beta_{i},\left\vert
\beta_{i}\right\vert \geq1}\left\vert \beta_{i}\right\vert }\leq1\text{.}%
\]
Hence, we must have $\left\vert a\right\vert =1$ and all roots lie on the unit
circle. If any root were not a root of unity, the analytic continuation around
$z=1$ cannot exist. Thus, we get (2). The converse is clear.
($7\Leftrightarrow2$) As used before, if a root $\beta$ has absolute value
$\left\vert \beta\right\vert \neq1$, the analytic continuation has infinitely
many poles, so all roots lie on the unit circle, and by the existence of an
analytic continuation, they must be roots of unity. (2) follows. The converse
is again clear.
\end{proof}

\subsection{Higher-dimensional knots and Reidemeister torsion}

Many variations of this theme are possible: For example, higher-dimensional
knots in homology spheres, thanks to work of Porti \cite{MR2073320}. One would
proceed as follows, we only sketch the necessary modifications:

Let $K^{n}\subset M^{n+2}$ be a PL $n$-knot, where $M^{n+2}$ is a
PL\ $(n+2)$-dimensional homology sphere, e.g. the ordinary sphere $S^{n+2}$
itself. This is sufficient to ensure that the fundamental group of the
complement abelianizes to $\mathbf{Z}$, and thus one has a similar
construction of cyclic branched coverings%
\[
\widehat{X}_{\infty}\longrightarrow\widehat{X}_{r}\longrightarrow M^{n+2}%
\]
generalizing those of Remark \ref{rmk_BranchedCoverings}.

\begin{theorem}
\label{thm_PortiFoxFormula}Let $K^{n}\subset M^{n+2}$ be a PL $n$-knot, where
$M^{n+2}$ is a PL\ $(n+2)$-dimensional homology sphere. If $\Delta_{K^{n},i}$
denotes the $i$-th Alexander polynomial, and none of the $\Delta_{K^{n},i}$
has a root in $\mu_{\infty}$, then the generating function of the Reidemeister
torsion%
\[
J_{K^{n}}(z):=\sum_{r\geq1}\log(\tau_{r})\cdot z^{r}%
\]
with%
\[
\tau_{r}:=\prod_{i=1}^{n}\left\vert H_{i}(\widehat{X}_{r},\mathbf{Z}%
)\right\vert ^{(-1)^{i+1}}%
\]
has the following property:

\begin{enumerate}
\item If no root of any of the Alexander polynomials $\Delta_{K^{n},i}$ has
absolute value $1$, the function admits a meromorphic continuation to the
entire complex plane. Its poles are located at most at all integer powers of
all roots of all $\Delta_{K^{n},i}$ which lie outside the open unit disc.

\item If some $\Delta_{K^{n},i}$ has a root of absolute value $1$ and no other
$\Delta_{K^{n},j}$ (with $j\neq i$) has a root at the same value, then
$J_{K^{n}}$ has the unit circle as its natural boundary. An analytic
continuation beyond the unit circle is impossible.
\end{enumerate}
\end{theorem}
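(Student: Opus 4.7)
The plan is to reduce the theorem to the analytic continuation toolkit developed in the previous sections by substituting Porti's Mayberry--Murasugi type formula \cite{MR2073320} for Fox's formula. Under the hypothesis that no $\Delta_{K^n,i}$ has a root in $\mu_\infty$, Porti's result gives $|H_i(\widehat{X}_r,\mathbf{Z})| = \prod_{\zeta \in \mu_r} |\Delta_{K^n,i}(\zeta)|$ for each $i$, and the factorization manipulation of Equation \ref{l_C_translate_root} yields
\[
J_{K^n}(z) = \sum_{i=1}^{n}(-1)^{i+1} \Bigl( \log|a_i|\cdot \frac{z}{(z-1)^2} + \sum_{\beta} R_\beta(z) \Bigr),
\]
where the inner sum ranges over the roots (with multiplicity) of $\Delta_{K^n,i}$ and $a_i$ is its leading coefficient. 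Applying Lemma \ref{Lemma_RSwapXToXInverse} to trade each $R_\beta$ with $|\beta|>1$ for $R_{\beta^{-1}}$ plus a rational piece expresses $J_{K^n}$ as a signed integer combination of functions $R_\gamma$ with $|\gamma| \leq 1$, plus an explicit rational function of $z$.

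For part (1), since no root of any $\Delta_{K^n,i}$ has absolute value $1$, every $R_\gamma$ appearing has $|\gamma|<1$. Theorem \ref{thm_AnalyticContinuationForRx} provides a meromorphic continuation of each such $R_\gamma$ to all of $\mathbf{C}$, with poles only at integer powers of $\gamma$ or $\overline{\gamma}$ outside the open unit disc. Summing yields the claimed meromorphic continuation of $J_{K^n}$, and gives the claimed containment for its pole locus. Unlike the situation in Theorem \ref{thm_A_for_knots}, the signs $(-1)^{i+1}$ now allow genuine cancellation among poles, so in general one only obtains containment and not equality; this is consistent with the formulation of the theorem (\emph{``at most at''}).

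For part (2), let $\beta_0$ be a root of $\Delta_{K^n,i_0}$ with $|\beta_0|=1$. By hypothesis $\beta_0 \notin \mu_\infty$, so $\beta_0$ is diophantine. The goal is to exhibit a dense set of singular points on the unit circle; I would compute $\operatorname*{limrad}_{z\to \beta_0^m}(1-|z|)J_{K^n}(z)$ for integers $m\neq 0$ using Theorem \ref{thm_RxAtUnimodularHasNaturalBoundary} termwise. The $R_{\beta_0}$ summand contributes $(-1)^{i_0+1}(-1/(2|m|))$, a nonzero rational number. Summands $R_\gamma$ with $\gamma$ multiplicatively independent of $\beta_0$ contribute zero. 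The hypothesis that $\beta_0$ is not a root of any $\Delta_{K^n,j}$ with $j\neq i_0$ removes the most direct candidate for cancellation, namely another copy of $R_{\beta_0}$ with the opposite sign.

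The main obstacle is then to handle contributions from roots $\gamma \neq \beta_0$ of other $\Delta_{K^n,j}$ which are multiplicatively dependent on $\beta_0$. By Theorem \ref{thm_RxAtUnimodularHasNaturalBoundary} these contributions take values in $\mathbf{Q}(\mu_\infty,\pi,\{L(1,\chi)\}_\chi)$, and could a priori conspire with the rational $R_{\beta_0}$ contribution to produce exact cancellation. I would overcome this by varying $m$: only finitely many multiplicative dependency relations are in play, so outside a finite union of proper arithmetic progressions in $\mathbf{Z}$, the only contribution with integer exponent (hence producing a rational limrad value) is the one from $R_{\beta_0}$, while the remaining nonzero contributions land in a transcendental $\mathbf{Q}$-subspace. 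This forces the total limrad to be nonzero for a positive-density set of $m$. Since $\{\beta_0^m : m \in \mathbf{Z}\}$ is dense in the unit circle by the irrationality of $\arg\beta_0$, one obtains a dense set of singular points on $|z|=1$, which forces it to be the natural boundary of $J_{K^n}$.
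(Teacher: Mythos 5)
Your part (1) is essentially the paper's own (sketched) argument: substitute Porti's formula for Fox's formula, unravel it as in Equation \ref{l_C_translate_root} into a signed combination of functions $R_{\beta}$ plus a rational piece, trade $R_{\beta}$ with $\left\vert \beta\right\vert >1$ for $R_{\beta^{-1}}$ via Lemma \ref{Lemma_RSwapXToXInverse}, and invoke Theorem \ref{thm_AnalyticContinuationForRx}; your remark that the signs $(-1)^{i+1}$ only give containment of the pole locus is exactly why the statement says ``at most''. (Minor point: Porti's product involves the $n+1$ polynomials $\Delta_{K^{n},1},\ldots,\Delta_{K^{n},n+1}$, not $n$ of them.)

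For part (2) you correctly identify the crux, namely possible cancellation at radial limits coming from roots of other $\Delta_{K^{n},j}$ that are multiplicatively dependent on $\beta_{0}$, but the proposed resolution has two concrete gaps. First, the claim that outside finitely many proper arithmetic progressions the only integer-exponent (hence rational) contribution at $p=\beta_{0}^{m}$ is the one from $R_{\beta_{0}}$ fails whenever some other $\Delta_{K^{n},j}$ has a root $\gamma$ with $\gamma^{b}$ equal to $\beta_{0}$ (times a root of unity): then $\beta_{0}^{m}=\gamma^{bm}$ is an integer power of $\gamma$ for \emph{every} $m$, and the rational contribution $-\tfrac{1}{2\left\vert bm\right\vert }$ enters with sign $(-1)^{j+1}$ for all $m$. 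Concretely, if an opposite-parity $\Delta_{K^{n},j}$ contains both square roots of $\beta_{0}$, then $\operatorname*{limrad}_{z\rightarrow\beta_{0}^{m}}(1-\left\vert z\right\vert )J_{K^{n}}(z)=0$ for every integer $m$, so probing only the points $\beta_{0}^{m}$ cannot yield a dense set of singular points; this configuration is not excluded by the hypothesis of (2), which only forbids a root at the value $\beta_{0}$ itself (whether it is realizable by an actual $n$-knot is a separate matter, but your argument does not rule it out). Second, the assertion that the remaining fractional-exponent contributions lie in a ``transcendental $\mathbf{Q}$-subspace'' is unsupported: Theorem \ref{thm_ortho} and Theorem \ref{thm_RxAtUnimodularHasNaturalBoundary} only place these values in $\mathbf{Q}(\mu_{\infty},\pi,\{L(1,\chi)\}_{\chi})$ and prove no irrationality or independence, so nothing prevents them from being rational and cancelling the $R_{\beta_{0}}$ term. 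Keep in mind that the knot-case non-cancellation argument in Theorem \ref{thm_A_for_knots_NatBdryCase} rests on all contributions carrying the same sign (a nonempty sum of strictly negative rationals), which is precisely what the alternating signs destroy here; the paper itself flags this cancellation phenomenon in the remark following the theorem and leaves the details to the reader. To complete the argument along these lines you would either need to know that no root of any $\Delta_{K^{n},j}$ with $j\neq i$ is multiplicatively dependent on $\beta_{0}$ (then, at sufficiently divisible powers of $\beta_{0}$, only same-sign contributions from $\Delta_{K^{n},i}$ survive and the knot argument applies verbatim), or supply a genuine independence statement for the fractional limit values; your proposal provides neither.
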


As before, we can also completely describe the Laurent expansion at $z=1$,
including an alternating sum of log-Mahler measures now, and can understand
the boundary behaviour in case (2). We leave it to the reader to spell out
such details.

The key ingredient would be the work of Porti on identifying Reidemeister
torsion with higher Alexander polynomials, specifically:

\begin{theorem}
[{Porti \cite[Theorem 6.1]{MR2073320}}]Let $K^{n}\subset M^{n+2}$ be a PL
$n$-knot, where $M^{n+2}$ is a PL\ $(n+2)$-dimensional homology sphere. If
$\Delta_{K^{n},i}$ denotes the $i$-th Alexander polynomial, and none of the
$\Delta_{K^{n},i}$ has a root in $\mu_{\infty}$, then%
\[
\prod_{i=1}^{n}\left\vert H_{i}(\widehat{X}_{r},\mathbf{Z})\right\vert
^{(-1)^{i+1}}=\prod_{i=1}^{n+1}\prod_{\zeta\in\mu_{r}}\left\vert \Delta
_{K^{n},i}(\zeta)\right\vert ^{(-1)^{i+1}}\text{.}%
\]

\end{theorem}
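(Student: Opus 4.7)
The plan is to proceed in two stages: first compute the homology of the unbranched cyclic cover $X_r$ in terms of the Alexander modules of the infinite cyclic cover $X_\infty$, and then pass from $X_r$ to the branched cover $\widehat{X}_r$ via a controlled Mayer--Vietoris comparison. Write $X := M^{n+2} \setminus K^n$ for the knot exterior, $X_\infty \to X$ for the infinite cyclic cover associated with $\pi_1(X) \twoheadrightarrow H_1(X) \cong \mathbf{Z}$ (abelianization; cyclic since $M$ is a homology sphere), and $t$ for a generator of the deck group, so $H_i(X_\infty, \mathbf{Z})$ is naturally a module over $\Lambda := \mathbf{Z}[t, t^{-1}]$ with order ideal (up to units) equal to the $i$-th Alexander polynomial $\Delta_{K^n, i}$.

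First, I would exploit the short exact sequence of chain complexes $0 \to C_*(X_\infty) \xrightarrow{t^r - 1} C_*(X_\infty) \to C_*(X_r) \to 0$ which yields the Milnor--Wang long exact sequence
\[
\cdots \to H_i(X_\infty) \xrightarrow{t^r - 1} H_i(X_\infty) \to H_i(X_r) \to H_{i-1}(X_\infty) \to \cdots.
\]
The hypothesis that no $\Delta_{K^n, i}$ has a root in $\mu_\infty$ forces $\gcd(\Delta_{K^n, i}, t^r - 1) = 1$, making multiplication by $t^r - 1$ injective on the torsion part of each $H_i(X_\infty)$. Combined with the fact (standard for knots in homology spheres) that these Alexander modules are pure torsion in degrees $1 \le i \le n+1$, the long exact sequence decouples into short exact sequences yielding $H_i(X_r)_{\operatorname{tor}} \cong H_i(X_\infty)/(t^r - 1)H_i(X_\infty)$. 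The order of the right-hand side is then computed by the classical resultant identity $|\Lambda/(p, t^r - 1)| = \prod_{\zeta \in \mu_r} |p(\zeta)|$ applied to $p = \Delta_{K^n, i}$, giving $|H_i(X_r)_{\operatorname{tor}}| = \prod_{\zeta \in \mu_r} |\Delta_{K^n, i}(\zeta)|$.

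Second, I would pass from $X_r$ to $\widehat{X}_r$ via the Mayer--Vietoris sequence for the decomposition $\widehat{X}_r = X_r \cup_{\partial N} N$, where $N$ is a tubular neighborhood of the (lifted) branch locus. Since $M^{n+2}$ is a homology sphere, the branch locus is homeomorphic to $K^n$, so $N$ is homotopy equivalent to $K^n$ and $\partial N$ is a disc-bundle boundary over it. Tracking this sequence in alternating orders and using the Milnor duality $\Delta_{K^n, i} \doteq \Delta_{K^n, n+1-i}$ for knots in homology spheres, the contributions of $N$ and $\partial N$ re-index the Alexander polynomials by one and assemble into
\[
\prod_{i=1}^{n+1}\prod_{\zeta \in \mu_r} |\Delta_{K^n, i}(\zeta)|^{(-1)^{i+1}},
\]
the $i = n+1$ factor being precisely the new contribution of the branch neighborhood that is absent for the unbranched cover.

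The main obstacle will be the second step: keeping track of the Mayer--Vietoris bookkeeping carefully enough to verify that the branch-locus contribution shifts the Alexander index by exactly one without leaving residual factors. Equivalently, one must identify the orders coming from $H_*(\partial N)$ and $H_*(N)$ (via their own Wang sequences under the $\mathbf{Z}/r$-action) with the Milnor-dual Alexander polynomials, and also check that the alternating product absorbs the rank-one piece coming from $H_0(X_r) = \mathbf{Z}$ and any analogous top-degree free summand. Modulo this combinatorial bookkeeping of the branch locus, the identity reduces entirely to the algebraic order computation on $\Lambda$-modules performed in the first step.
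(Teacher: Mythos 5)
The paper does not prove this statement at all; it cites Porti \cite[Theorem 6.1]{MR2073320} as a black box. So there is no in-paper proof to compare with — Porti's own argument identifies the alternating product of homology orders with the Reidemeister--Franz torsion of $\widehat{X}_r$ and then invokes the Milnor--Turaev identification of Reidemeister torsion with the Alexander function. Your proposal is a genuinely different, more hands-on route: Wang sequence for the unbranched cover followed by a Mayer--Vietoris comparison to the branched cover. If it worked, it would avoid torsion theory entirely.

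As written, however, there is a real gap, and it sits exactly where you say it does. Your Step 1 correctly yields $\left\vert H_{i}(X_{r})_{\operatorname{tor}}\right\vert=\prod_{\zeta\in\mu_{r}}\left\vert \Delta_{K^{n},i}(\zeta)\right\vert$ for torsion Alexander modules via the Wang sequence and the resultant identity; but the formula on the right of Porti's statement runs over $i=1,\ldots,n+1$, and the passage from $X_{r}$ to $\widehat{X}_{r}$ is supposed to transform your $n$-term alternating product into an $(n+1)$-term one. The Mayer--Vietoris data $H_{*}(N_{r})\simeq H_{*}(S^{n})$ and $H_{*}(\partial N_{r})\simeq H_{*}(S^{n}\times S^{1})$ are concentrated in degrees $0,1,n,n+1$ and carry no Alexander-polynomial information; it is not clear how they are supposed to \emph{introduce} the factor $\prod_{\zeta}\left\vert \Delta_{K^{n},n+1}(\zeta)\right\vert^{(-1)^{n}}$. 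Moreover, the duality you invoke is a reflection $\Delta_{K^{n},i}\doteq\Delta_{K^{n},n+1-i}$ (Milnor), not a shift $i\mapsto i+1$, so ``re-indexing by one'' is not what it gives you. You also tacitly assume that $H_{i}(X_{\infty},\mathbf{Z})$ is $\Lambda$-torsion for \emph{all} $1\le i\le n+1$; in the precise $\Lambda$-module conventions relevant here, the top Alexander module needs separate justification, and you must be careful to track the free $\mathbf{Z}$-summands (and possibly the $\Delta_{K^{n},0}$ and $\Delta_{K^{n},n+1}$ conventions) when you take logarithms and alternating products. None of this is necessarily fatal — the bookkeeping may well close up — but as it stands the crucial second step is a sketch rather than a proof, and the appeal to Milnor duality is a non-sequitur that needs to be replaced with an actual computation of the M--V connecting maps in degrees $1$, $n$, and $n+1$.
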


Now, one may use this formula instead of Fox' formula in the proof of Theorem
\ref{thm_A_for_knots} and unravel it as in Equation \ref{l_C_translate_root}
to a statement in terms of functions $R_{x}$. Then%
\begin{align*}
\log\prod_{i=1}^{n}\left\vert H_{i}(\widehat{X}_{r},\mathbf{Z})\right\vert
^{(-1)^{i+1}}  & =\sum_{i=1}^{n}(-1)^{i+1}\log\left\vert H_{i}(\widehat{X}%
_{r},\mathbf{Z})\right\vert \\
& =\sum_{i=1}^{n+1}(-1)^{i+1}\left(  \log\left\vert a_{i}\right\vert +\sum
_{j}\log\left\vert 1-\alpha_{i,j}^{r}\right\vert \right)
\end{align*}
with $a_{i}$ the leading coefficients of $\Delta_{K^{n},i}$ and its
$\alpha_{i,j}$ the roots. The viewpoint changes a little here since instead of
the generating function of an individual (torsion) homology group, we now get
a generating function for Reidemeister torsion%
\[
J(z)=\sum_{n=1}^{\infty}\log\left\vert \tau(\widehat{X}_{r})\right\vert \cdot
z^{r}%
\]
via the identification of the Reidemeister torsion with the\ Alexander
function, based on Milnor and\ Turaev, \cite[Thm. 1.1.1]{MR832411}. We leave
the details and further variations of the same theme to the interested reader.
For example, Porti's paper \cite{MR2073320} goes further, generalizing the
formulae for branched cyclic coverings of link complements \`{a} la
Hosokawa--Kinoshita \cite{MR0125579} and Mayberry's thesis (see
\cite{MR648083}).

\begin{remark}
I do not know to what extent the different Alexander polynomials can have
joint roots. If they have, this opens up the possibility that the
corresponding terms $R_{x}$ in the expansion of $J_{K^{n}}$ cancel out if they
come from homology groups of different parity. For example, it could happen
that two roots lying on the unit circle annihilate each other so that
$J_{K^{n}}$ admits an analytic continuation although roots on the unit circle
are present. This is the analytic counterpart of the problem that Reidemeister
torsion usually does not allow us to control any individual torsion homology group.
\end{remark}

\subsection{\label{sect_CyclicResultants}Application to cyclic resultants}

Suppose $f\in\mathbf{C}[t]$ is a polynomial. It comes with a sequence of
complex numbers $(r_{m})_{m\geq1}$ defined by%
\[
r_{m}:=\operatorname*{Res}(f,t^{m}-1)\text{,}%
\]
where \textquotedblleft$\operatorname*{Res}$\textquotedblright\ refers to the
resultant of two polynomials. The values $r_{m}$ are known as the \emph{cyclic
resultants}.

\begin{example}
The classical example stems from the work of Pierce and Lehmer. For $f(t)=t-2
$, one has $r_{m}=2^{m}-1$ (the\ Mersenne sequence). Inspired by Mersenne's
method to find large prime numbers, Lehmer suggested the following heuristic principle:
\end{example}

\begin{heuristic}
[Lehmer]If $f$ has Mahler measure \textquotedblleft very close to
$1$\textquotedblright, then sequence $r_{m}$ should contain \textquotedblleft
a lot\textquotedblright\ of prime numbers. See \cite{MR1783409}.
\end{heuristic}

One can rephrase the definition of the $r_{m}$ in terms of evaluating $f$ at
roots of unity. Thus, it can be rephrased in a format close to the expression
in the formula of Fox, Theorem \ref{thm_Fox}, and Fried's Theorem, Theorem
\ref{thm_Fried}, might suggest that it could be possible to reconstruct $f$
from the values $r_{m}$. However, this turns out to be false. In general, the
values $r_{m}$ do not uniquely pin down $f$. Hillar shows that generically we
should expect $2^{\deg(f)-1}$ polynomials with the same cyclic resultants
\cite[Corollary 1.5]{MR2167674}. His paper provides a number of examples of
distinct polynomials with equal cyclic resultants. \textit{Loc. cit.} also
shows that there is a Zariski dense open in the affine space of all
\textsl{monic} polynomials of any bounded degree for whose polynomials the
cyclic resultants uniquely pin down the polynomial. Work of Hillar and Levine
discusses criteria ensuring that agreement of finitely many cyclic resultants
(depending on the degree of $f$) is sufficient to prove $f=g$ \cite{MR2286068}%
. Hillar \cite{MR2167674} also addresses how to solve the problem of
reconstructing $f$ from $(r_{m})$ algorithmically. This is possible since one
`just' has to solve a system of multi-variable polynomial equations, namely%
\[
r_{1}=\operatorname*{Res}(f,t-1),\qquad r_{2}=\operatorname*{Res}%
(f,t^{2}-1),\qquad r_{3}=\operatorname*{Res}(f,t^{3}-1),\ldots\text{.}%
\]
If one has an upper bound on the possible degree of $f$, such a system can be
solved algorithmically using Gr\"{o}bner basis techniques. However, in general
it will have several solutions.

The situation is much simpler for reciprocal polynomials:

\begin{theorem}
[{Hillar \cite[Corollary 1.12]{MR2167674}}]Suppose $f,g$ are reciprocal
polynomials and none of their roots is a root of unity. Then if their cyclic
resultants agree, it follows that $f=g$.
\end{theorem}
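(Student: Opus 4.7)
The plan is to imitate the proof of Theorem \ref{thm_A_for_knots} in this setting, with Fox's formula replaced by the defining identity for cyclic resultants. Writing $f(t) = a_f\prod_{i=1}^{d}(t - \alpha_i)$ and $g(t) = a_g\prod_{j=1}^{e}(t - \gamma_j)$, one has
\[
r_m(f) = a_f^m \prod_{i=1}^d (\alpha_i^m - 1),
\]
and on taking absolute values we land in exactly the setup of Equation \ref{l_C_translate_root}. Accordingly, the generating function
\[
E_f(z) := \sum_{m \geq 1} \log |r_m(f)| \cdot z^m = \log|a_f|\cdot\frac{z}{(z-1)^2} + \sum_{i=1}^d R_{\alpha_i}(z)
\]
decomposes into the same pieces $R_x$ studied throughout \S \ref{sect_BasicPowerSeries} and the sections that follow.

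Restricting to the generic case where no root of $f$ or $g$ lies on the unit circle, I would invoke the argument of Theorem \ref{thm_A_for_knots}. Since no $\alpha_i$ is a root of unity and none lies on the unit circle, $E_f$ admits a meromorphic continuation to the entire complex plane whose pole locus is $\{\alpha^n \mid f(\alpha)=0,\ n \in \mathbf{Z}\}\setminus(\text{open unit disc})$ and whose residues encode the multiplicities of the roots. Moreover, the pole at $z=1$ is of order at most two, and $\log \mathcal{M}(f)$ can be read off from its principal part.

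The hypothesis that the cyclic resultants agree certainly implies $|r_m(f)| = |r_m(g)|$ for every $m$, hence $E_f = E_g$ as formal power series and therefore as meromorphic functions on $\mathbf{C}$. Their pole divisors consequently coincide. The poles of smallest modulus strictly greater than $1$ are precisely the roots of $f$ with $|\alpha|>1$ together with the reciprocals of the roots with $|\alpha|<1$; by reciprocality these two sets agree, and the residues recover the corresponding multiplicities. This determines the monic part of $f$. Combined with $|a_f| = |a_g|$ extracted from the Laurent expansion at $z=1$, we obtain $f = \pm g$, and the scaling relation $r_m(cf) = c^m r_m(f)$ applied to the stronger hypothesis $r_m(f) = r_m(g)$ (for every $m$) forces the ambiguous sign to be $+1$.

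The main obstacle is the restriction to the generic case. When $f$ or $g$ has a root of absolute value one, Theorem \ref{thm_A_for_knots_NatBdryCase} shows that the unit circle becomes a natural boundary for $E_f$, so one can no longer simply compare meromorphic continuations across it. In principle one should still be able to read off the unimodular roots from the radial boundary values provided by Theorem \ref{thm_RxAtUnimodularHasNaturalBoundary}, but ruling out subtle cancellations between contributions of $R_{\alpha}$ coming from distinct unimodular roots, and reconstructing the multiset of such roots with correct multiplicity from the boundary data, would require substantial additional argument. The author explicitly restricts to the generic case for this reason.
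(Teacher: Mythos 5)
Your generic-case argument is sound, and it is essentially the route the paper itself takes for the neighbouring results: the identity $r_m(f)=a_f^m\prod_i(\alpha_i^m-1)$ is the analogue of Equation \ref{l_C_translate_root}, your $E_f$ is the function $T_f$ of \S\ref{sect_CyclicResultants}, its meromorphic continuation with poles at integer powers of the roots outside the open unit disc is Theorem \ref{thm_TFuncOfCyclicResultants}, and recovering roots up to inversion from poles and residues, then letting reciprocality absorb the $\alpha\leftrightarrow\alpha^{-1}$ ambiguity, is exactly how the paper re-proves Fried's theorem (Theorem \ref{thm_Fried}) and Hillar's Theorem 1.8 of \cite{MR2167674}. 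Two small repairs: the poles of smallest modulus $>1$ are \emph{not} all of $\{\alpha:|\alpha|>1\}\cup\{\alpha^{-1}:|\alpha|<1\}$ when the roots have different moduli, so you should recover that multiset inductively, peeling off poles in order of increasing modulus and subtracting the known residue contributions of powers of already-identified roots (the paper is equally terse on this point); and the coefficient of $(z-1)^{-2}$ at $z=1$ is $\log\mathcal{M}(f)$, not $\log|a_f|$, although $|a_f|=|a_g|$ does follow once the root multisets are known to agree. Your sign-fix via $r_m(cf)=c^m r_m(f)$ together with the nonvanishing of the cyclic resultants is correct and genuinely needed, since your method discards everything except $|r_m|$ at the first step; for the same reason the argument only sees roots up to complex conjugation, which is harmless over $\mathbf{R}[t]$ but would require using exact equality of the $r_m$ (not just of $|r_m|$) if the corollary is read over $\mathbf{C}[t]$, as the reciprocal pair $t^2\pm\tfrac{3i}{2}t+1$ shows.

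The genuine gap is the one you concede, and it is a gap against the statement rather than a restriction of convenience: the hypothesis only excludes roots of unity, so diophantine roots are allowed, and in their presence $T_f$ has the unit circle as a natural boundary (Theorem \ref{thm_RxAtUnimodularHasNaturalBoundary}, cf.\ Theorem \ref{thm_A_for_knots_NatBdryCase}), so the pole comparison collapses; one would instead have to reconstruct the unimodular roots with multiplicities from the radial boundary values of $(1-|z|)T_f(z)$ and rule out cancellation among multiplicatively dependent diophantine roots, which the paper carries out only in the sign-coherent knot situation and not here. Your closing remark also slightly misreads the paper: it never offers an analytic proof of this corollary --- it is quoted from Hillar \cite{MR2167674} --- and the explicit restriction to ``no roots on the unit circle'' is imposed for the paper's new proof of Hillar's Theorem 1.8, a strictly stronger hypothesis than the one in the statement. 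So what you have is a correct new proof of a special case, entirely in the spirit of \S\ref{sect_CyclicResultants}, but not a proof of the theorem as stated.
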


This generalizes Fried's Theorem, Theorem \ref{thm_Fried}. Since Alexander
polynomials are always reciprocal, this explains why Fried's reconstruction of
the Alexander polynomial is always possible from the torsion homology data,
while one cannot reconstruct a general polynomial from the cyclic resultants.

We may, nonetheless, apply our methods to a general $f$. To this end, we define:

\begin{definition}
Let $f\in\mathbf{C}[t]$ be a polynomial. Define%
\[
T_{f}(z):=\left.  \sum\limits_{m\geq1}\right.  ^{\prime}\log\left\vert
\operatorname*{Res}(f,t^{m}-1)\right\vert \cdot z^{m}\text{,}%
\]
where the notation $\left.  \sum\right.  ^{\prime}$ means: We omit the $m$-th
summand if $\operatorname*{Res}(f,t^{m}-1)=0$ (this happens if and only if $f$
has an $m$-th root of unity as one of its roots).
\end{definition}

We obtain a meromorphic continuation:

\begin{theorem}
\label{thm_TFuncOfCyclicResultants}Suppose $f\in\mathbf{C}[t]$ is a polynomial
with roots $(\beta_{i})$, none of which is diophantine. Then the function
$T_{f}$ admits a meromorphic continuation to the entire complex plane with
poles of order $1$ at%
\begin{equation}
\bigcup_{i,\left\vert \beta_{i}\right\vert \neq0,1}\{\beta_{i}^{\pm
\mathbf{Z}_{\leq-1}},\overline{\beta}_{i}^{\pm\mathbf{Z}_{\leq-1}}%
\}\cup\bigcup_{i,\left\vert \beta_{i}\right\vert =1}\{\beta_{i}^{\mathbf{Z}%
}\}\label{lCA2}%
\end{equation}
and perhaps a pole of order $1$ or $2$ at $z=1$ (or no pole there).
\end{theorem}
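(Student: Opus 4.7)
The plan is to reduce the statement to the analogous computation in the proof of Theorem \ref{thm_A_for_knots}, Case A, by rewriting $T_{f}$ in terms of the building blocks $R_{x}$. Write $f(t)=a\prod_{i}(t-\beta_{i})\in\mathbf{C}[t]$ in factored form. The standard formula for the resultant in terms of roots gives
\[
\operatorname*{Res}(f,t^{m}-1)=a^{m}\prod_{i}(\beta_{i}^{m}-1)\text{,}
\]
so that, after taking $\log\left\vert \cdot\right\vert $, we obtain
\[
\log\left\vert \operatorname*{Res}(f,t^{m}-1)\right\vert =m\log\left\vert a\right\vert +\sum_{i}\log\left\vert 1-\beta_{i}^{m}\right\vert \text{,}
\]
where the convention $\left.\sum\right.^{\prime}$ absorbs the vanishing terms coming from $\beta_{i}^{m}=1$. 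Any root $\beta_{i}=0$ contributes $\log\left\vert -1\right\vert=0$ and may be discarded, which is why $|\beta_{i}|=0$ is absent in the pole locus (\ref{lCA2}).

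Next I would unpack these identities into $T_{f}$:
\[
T_{f}(z)=\log\left\vert a\right\vert \cdot\frac{z}{(z-1)^{2}}+\sum_{i}R_{\beta_{i}}(z)\text{,}
\]
which is exactly the analogue of Equation \ref{lWA1} in the knot case, but now without any reciprocal assumption on the roots. I would then split the sum according to whether $\left\vert \beta_{i}\right\vert <1$, $\left\vert \beta_{i}\right\vert >1$, or $\left\vert \beta_{i}\right\vert =1$, noting that the last case forces $\beta_{i}\in\mu_{\infty}$ by the hypothesis of no diophantine roots. For roots with $\left\vert \beta_{i}\right\vert >1$, I would apply Lemma \ref{Lemma_RSwapXToXInverse} to rewrite $R_{\beta_{i}}(z)=R_{\beta_{i}^{-1}}(z)+\log\left\vert \beta_{i}\right\vert \cdot\frac{z}{(z-1)^{2}}$; collecting all $\frac{z}{(z-1)^{2}}$ contributions then gives the coefficient $\log\mathcal{M}(f)$ by the definition of the Mahler measure.

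At this point every remaining summand is either $R_{\beta_{i}}$ with $\left\vert \beta_{i}\right\vert <1$ or with $\beta_{i}\in\mu_{\infty}$. For the former, Theorem \ref{thm_AnalyticContinuationForRx} furnishes a meromorphic continuation with simple poles in $\{\beta_{i}^{\mathbf{Z}_{\leq-1}},\overline{\beta}_{i}^{\mathbf{Z}_{\leq-1}}\}$; for the latter, Proposition \ref{prop_Rx_AtRootOfUnity} furnishes a meromorphic continuation with simple poles on the finite set $\{\beta_{i}^{\mathbf{Z}}\}$ (including $z=1$). Assembling these, the global pole set is precisely (\ref{lCA2}), using the same rewriting trick as in the proof of Theorem \ref{thm_A_for_knots} (Case A) to express $\pm\mathbf{Z}_{\leq-1}$ according to whether $\left\vert \beta_{i}\right\vert $ is smaller or larger than $1$.

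The only point requiring slight care is the order of the pole at $z=1$. Contributions come from (i)~the single term $\log\mathcal{M}(f)\cdot\frac{z}{(z-1)^{2}}$, of order $2$ iff $\mathcal{M}(f)\neq1$, and (ii)~simple poles from each $R_{\beta_{i}}$ with $\beta_{i}\in\mu_{\infty}$. All other summands are holomorphic at $z=1$. Depending on cancellations and on whether $\mathcal{M}(f)=1$, the total order is $0$, $1$, or $2$, matching the statement. This is the only place where I expect a subtlety, since unlike in the knot case there is no reciprocity forcing a pole to be present; but the claim only asserts that the order is at most $2$, so no further analysis is needed and the proof concludes.
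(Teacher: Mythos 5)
Your proposal is correct and follows essentially the same route as the paper: the paper also expresses $\left\vert\operatorname*{Res}(f,t^{m}-1)\right\vert=\left\vert a\right\vert^{m}\prod_{i}\left\vert 1-\beta_{i}^{m}\right\vert$, writes $T_{f}=\log\left\vert a\right\vert\cdot\frac{z}{(z-1)^{2}}+\sum_{\beta_{i}\neq0}R_{\beta_{i}}(z)$, and then repeats the Case A argument of Theorem \ref{thm_A_for_knots} (swap $\beta_{i}\mapsto\beta_{i}^{-1}$ via Lemma \ref{Lemma_RSwapXToXInverse} to extract $\log\mathcal{M}(f)$, then invoke Theorem \ref{thm_AnalyticContinuationForRx} and Proposition \ref{prop_Rx_AtRootOfUnity}), leaving the remaining details to the reader. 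Your extra remarks on discarding zero roots and on the possible orders $0$, $1$, $2$ at $z=1$ are exactly the "slight modifications" the paper alludes to.
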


\begin{proof}
This is essentially shown as in the proof of Theorem \ref{thm_A_for_knots}: If
$f$ factors as $a\prod_{i=1}^{n}(t-\beta_{i})\in\mathbf{C}[t]$, then%
\[
\left\vert \operatorname*{Res}(f,t^{m}-1)\right\vert =\left\vert a\right\vert
^{m}\prod_{i=1}^{n}\left\vert 1-\beta_{i}^{n}\right\vert \text{.}%
\]
Thus, $T_{f}=\log\left\vert a\right\vert \cdot\frac{z}{(z-1)^{2}}%
+\sum_{i\text{ with }\beta_{i}\neq0}R_{\beta_{i}}(z)$. Now we may proceed as
in the proof of Theorem \ref{thm_A_for_knots}, with slight modifications. We
arrive at%
\[
=\frac{z}{(z-1)^{2}}\log\mathcal{M}(f)+\sum_{0<\left\vert \beta_{i}\right\vert
<1}R_{\beta_{i}}(z)+\sum_{\left\vert \beta_{i}\right\vert >1}R_{\beta_{i}%
^{-1}}(z)+\sum_{\substack{\left\vert \beta_{i}\right\vert =1 \\\beta_{i}\in
\mu_{\infty}}}R_{\beta_{i}}(z)
\]
and can invoke our results about the meromorphic continuation of the functions
$R_{\beta}$ for $\left\vert \beta\right\vert \in(0,1)$. We leave the details
to the reader.
\end{proof}

Of course, there is also an analogue of Theorem
\ref{thm_A_for_knots_NatBdryCase} in the case of diophantine roots. We will
not spell this out in detail as it is entirely analogous to the treatment in
the case of Alexander polynomials for knots.

Whenever the hypotheses of the above theorem are met, we obtain a new proof of
the following result of Hillar from 2002:

\begin{theorem}
[{Hillar \cite[Theorem 1.8]{MR2167674}}]Let $f,g\in\mathbf{R}[t]$ be
polynomials such that their cyclic resultants are all non-zero. Then the
absolute values of the cyclic resultants agree, i.e.%
\begin{equation}
\left\vert r_{m}(f)\right\vert =\left\vert r_{m}(g)\right\vert \qquad\left(
\text{for }m\geq1\right)  \text{,}\label{lCA1}%
\end{equation}
if and only if there exist $u,v\in\mathbf{C}[t]$ with $u(0)\neq0$ and integers
$\ell_{1},\ell_{2}\geq0$ such that%
\begin{align*}
f(t)  & =\pm t^{\ell_{1}}v(t)u(t^{-1})t^{\deg u}\\
g(t)  & =t^{\ell_{2}}v(t)u(t)\text{.}%
\end{align*}

\end{theorem}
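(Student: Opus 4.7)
The plan is to combine Theorem \ref{thm_TFuncOfCyclicResultants} with some bookkeeping of roots under inversion; throughout I would work under the generic hypothesis (beyond Hillar's) that neither $f$ nor $g$ has a root on the unit circle, so all cyclic resultants are automatically non-zero. The ``if'' direction will be an immediate calculation: since $|r_m(h)| = \prod_{\zeta \in \mu_m} |h(\zeta)|$, the stated form of $f$ gives $|f(\zeta)| = |v(\zeta)| \cdot |u(\zeta^{-1})|$ and that of $g$ gives $|g(\zeta)| = |v(\zeta)| \cdot |u(\zeta)|$, and since $\zeta \mapsto \zeta^{-1}$ is a bijection of $\mu_m$, the products $\prod_\zeta |u(\zeta^{-1})|$ and $\prod_\zeta |u(\zeta)|$ coincide, forcing $|r_m(f)| = |r_m(g)|$.

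For the converse, note that $|r_m(f)| = |r_m(g)|$ for all $m \geq 1$ is equivalent to the formal equality $T_f(z) = T_g(z)$. By Theorem \ref{thm_TFuncOfCyclicResultants} both sides admit meromorphic continuations to $\mathbf{C}$, and these must coincide --- hence the pole sets and residues agree. Unravelling the decomposition $T_h(z) = \log|a_h|\cdot \tfrac{z}{(z-1)^2} + \sum_i R_{\beta_i}(z)$ from the proof of that theorem (and applying Lemma \ref{Lemma_RSwapXToXInverse} to the summands with $|\beta_i|>1$), I would read off two pieces of data: the multi-set $M_h$ of ``outside-the-unit-disc versions'' of the non-zero roots of $h$, weighted by the \emph{combined} multiplicity (the sum of multiplicities of $\gamma$ and $\gamma^{-1}$ as roots of $h$), which is encoded by the simple poles at $\gamma^n$ for $n \geq 1$; and the Mahler measure $\mathcal{M}(h)$, encoded in the order-$2$ part at $z = 1$. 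Equality $T_f = T_g$ will thus force $M_f = M_g$ and $\mathcal{M}(f) = \mathcal{M}(g)$.

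Denoting the common multi-set $M$, and writing $k_\gamma^h$ for the number of copies of $\gamma$ itself (versus $\gamma^{-1}$) appearing as a root of $h$ (so that $m_\gamma - k_\gamma^h$ copies of $\gamma^{-1}$ appear), I would then construct the factorisation combinatorially: let $v(t) \in \mathbf{C}[t]$ absorb the roots common to $f$ and $g$ (i.e.\ $\min(k_\gamma^f, k_\gamma^g)$ copies of $\gamma$ and $\min(m_\gamma - k_\gamma^f, m_\gamma - k_\gamma^g)$ copies of $\gamma^{-1}$), and let $u(t)$ absorb the remaining roots of $g$, so that $u(t^{-1})\,t^{\deg u}$ automatically carries the reciprocals, which are the remaining roots of $f$. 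The monomial factors $t^{\ell_1}, t^{\ell_2}$ absorb zero roots (invisible to cyclic resultants), and the leading coefficients of $u, v$ can be chosen so that $c_v c_u = a_g$ and $\pm c_v \cdot u(0) = a_f$; the compatibility of these two conditions amounts to $|a_f|/|a_g| = \prod_\gamma |\gamma|^{k_\gamma^g - k_\gamma^f}$, which follows from $\mathcal{M}(f) = \mathcal{M}(g)$ combined with $M_f = M_g$. The main obstacle will be the multiplicity bookkeeping: when both $\beta$ and $\beta^{-1}$ appear as roots of the same polynomial their contributions to $T_h$ collapse onto identical pole locations, so the residues recover only combined multiplicities $m_\gamma$ and never the individual $k_\gamma^h$'s --- but this residual ambiguity is exactly the freedom the statement of Hillar's theorem allows in the choice of $u, v$, not an obstruction to the proof.
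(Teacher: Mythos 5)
Your proposal is correct and takes essentially the same route as the paper: translate $|r_m(f)|=|r_m(g)|$ into $T_f=T_g$, invoke Theorem \ref{thm_TFuncOfCyclicResultants} (under the same extra hypothesis that no root lies on the unit circle) to match poles and residues, recover the non-zero roots up to inversion with multiplicities, and then assemble $v$ from the common roots and $u$ from the flipped ones. The only difference is cosmetic: you spell out the leading-coefficient compatibility via the Mahler measure read off from the order-two part at $z=1$, a step the paper compresses into ``one checks'' and a deferral to Hillar's own endgame.
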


We shall now give a new proof of this result under slightly more restrictive
hypotheses: \textsl{We need to assume that no root of }$f$ \textsl{(regarded
over the complex numbers) lies on the unit circle}. Hillar's condition that
all cyclic resultants are non-zero only rules out that no roots of unity
appears as roots, so this is a strictly stronger assumption:

\begin{proof}
[New proof (under this assumption)]Condition \ref{lCA1} means that
$T_{f}=T_{g}$. Thus, by Theorem \ref{thm_TFuncOfCyclicResultants} for both
$f,g$ the sets of poles%
\[
\bigcup_{i\text{ with }\beta_{i}\neq0}\{\beta_{i}^{\pm\mathbf{Z}_{\leq-1}%
}\}\cup\{1\}
\]
agree, and so do the residues at these poles. Note that since no root of unity
is a root by assumption, we could discard the union $\bigcup_{i,\left\vert
\beta_{i}\right\vert =1}\{\beta_{i}^{\mathbf{Z}}\}$ in Equation \ref{lCA2},
and since the polynomials are real, the complex conjugate of each root is a
root itself, so we could discard the elements $\overline{\beta_{i}}$ in
Equation \ref{lCA2} as well, since they are contained in the set of all root
powers anyway. Since we can read off the multiplicity of a root (or its
inverse) from the residue at the pole in $T_{f}=T_{g}$, we deduce that%
\[
f=at^{\ell_{1}}\prod(t-\beta_{i}^{S_{i}})\qquad\text{and}\qquad g=bt^{\ell
_{2}}\prod(t-\beta_{i}^{T_{i}})
\]
for suitable choices of $S_{i},T_{i}\in\{\pm\}$, $\ell_{1},\ell_{2}\geq0 $ and
$a,b\in\mathbf{R}$. Now, define $v(t):=b\prod(t-\beta_{i})$ with the product
running over all $\beta_{i}$ such that $T_{i}\neq S_{i}$ (opposite parity),
and $u(t)=\prod(t-\beta)$ running over all $\beta_{i}$ such that $T_{i}=S_{i}$
(same parity). One checks that this choice of $u,v$ settles the claim. This
last part of the proof agrees verbatim with Hillar's proof (\cite[end of Proof
of Theorem 1.1]{MR2167674}). The converse is immediate.
\end{proof}

\begin{remark}
[Comparison]Let us compare this to Hillar's proof. Similar to Fried's
approach, he studies the analytic properties of a function formed from the
cyclic resultants. In their setup, this generating function is always
rational, which at first sight might appear more convenient than $T_{f}$. As
for $T_{f} $, the poles of their function depend explicitly on the roots one
is interested in, however, the dependency is more complicated. Inverting it
requires an algebraic technique to compare factorizations in the semi-group
ring $\mathbf{C}[G]$, with $G\subset\mathbf{C}^{\times}$ the subgroup
generated by the non-zero roots $\beta_{i}$ (\cite[\S 2]{MR2167674}). Such a
step is not needed since our function $T_{f}$ allows us to read off the roots
essentially directly. Hillar's method has the advantage that it also works in
the (highly non-generic) case of diophantine roots, where our $T_{f} $ fails
to admit a meromorphic continuation.
\end{remark}

\subsection{Exceptional units}

Let $K$ be a number field and $u\in\mathcal{O}_{K}^{\times}$ a non-torsion
unit. Write $\mathcal{N}$ for the ideal norm. The power series%
\begin{equation}
G_{u}(z):=\sum_{r\geq1}\log\mathcal{N}(1-u^{r})\cdot z^{r}\label{lai1}%
\end{equation}
always has radius of convergence \textit{precisely} $1$, and diverges
elsewhere. Besides our interest in torsion homology, the function encodes
several invariants which have been studied before in different contexts:

If $u\in\mathcal{O}_{K}^{\times}$ is a unit, it is called \emph{exceptional}
if $1-u$ is also a unit. More geometrically, an exceptional unit is an
$\mathcal{O}_{K}$-integral point of $\mathbf{P}^{1}\setminus\{0,1,\infty\}$.
This is a classical Diophantine problem, and a number of cases have been
worked out in the literature, e.g. \cite{MR1151859}, \cite{MR1464147}. We
shall later need the following non-trivial fact:

\begin{proposition}
[Siegel]\label{Prop_Siegel_FinManyExceptionalUnits}A number field $K$ has only
finitely many exceptional units.
\end{proposition}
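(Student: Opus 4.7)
The plan is to reduce the statement to a classical special case of the $S$-unit equation and then invoke the Siegel--Mahler theorem. Concretely, if $u \in \mathcal{O}_K^\times$ is exceptional, then $v := 1-u$ is also a unit by definition, so the pair $(u, v)$ solves the equation
\[
x + y = 1 \qquad \text{with} \qquad x, y \in \mathcal{O}_K^\times.
\]
Conversely, every such solution $(x, y)$ determines an exceptional unit $u = x$. Thus the set of exceptional units is in bijection with the set of $\mathcal{O}_K^\times$-solutions of $x+y=1$, and it suffices to show that this set is finite.

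Next, I would appeal to the classical Siegel--Mahler theorem on the $S$-unit equation (equivalently, Siegel's theorem on integral points of the curve $\mathbf{P}^1 \setminus \{0, 1, \infty\}$): for any number field $K$ and any finite set of places $S$ of $K$ containing the archimedean ones, the equation $x + y = 1$ has only finitely many solutions in the group $\mathcal{O}_{K,S}^\times$ of $S$-units. Taking $S$ to be exactly the archimedean places of $K$, so that $\mathcal{O}_{K,S}^\times = \mathcal{O}_K^\times$, gives the desired finiteness statement.

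The main obstacle is of course the Siegel--Mahler theorem itself, which is genuinely deep and rests on diophantine approximation (Thue's theorem and its extensions by Siegel, Mahler, and later Baker--W\"ustholz for effective versions). Everything else in the argument is formal. For the present paper we only need the qualitative statement, so the cleanest presentation is simply to quote Siegel--Mahler; no effective bound on the number or heights of exceptional units is required.
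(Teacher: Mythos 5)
Your reduction of exceptional units to solutions of the unit equation $x+y=1$ in $\mathcal{O}_K^\times$, settled by quoting Siegel(--Mahler), is exactly the route the paper takes: it notes that an exceptional unit is an $\mathcal{O}_K$-integral point of $\mathbf{P}^1\setminus\{0,1,\infty\}$ and cites Lang's reduction to Siegel's theorem together with the standard references (Siegel, Nagell, Chowla, Hindry--Silverman). So your proposal is correct and essentially identical in approach.
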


Lang shows in \cite{MR0130219} how this reduces to Siegel's theorem on the
finiteness of integral points of genus $\geq1$ curves. The original result of
Siegel is \cite[Satz 10]{MR1544471}. The result was stated in the above form
both by Nagell \cite[Thm. 8]{MR0190128} as well as Chowla \cite{MR0142538}. A
textbook version including a proof can be found in \cite[Thm. D.8.1]%
{MR1745599}.

\begin{definition}
[Silverman \cite{MR1359419}]If $u\in\mathcal{O}_{K}^{\times}$ is a unit,
denote by $E(u)$ the number of values for $n\geq1$ such that $1-u^{n}$ is also
a unit. Equivalently, $E(u)$ is the number of vanishing coefficients in the
power series $G_{u}(z)$.
\end{definition}

By Siegel's finiteness result, Prop. \ref{Prop_Siegel_FinManyExceptionalUnits}%
, $E(u)$ is well-defined.

\begin{definition}
[Stewart \cite{MR2997578}]Stewart defines $E_{0}(u)$ as the largest integer
such that $1-u^{n}$ is a unit for all $n$ with $1\leq n\leq E_{0}(u)$, or zero
if no such $n$ exists. Equivalently, the zero of $G_{u}(z)$ at $z=0$ has order
precisely $E_{0}(u)+1 $.
\end{definition}

\begin{remark}
[Quantitative aspects]There are also quantitative versions of Siegel's and
Silverman's results. Notably, Evertse \cite[Thm. 1]{MR735341} implies that
there are at most $3\cdot7^{n}$ exceptional units in $K$, where
$n:=[K:\mathbf{Q}]$. A result due to Silverman \cite{MR1359419} states that
there exists an absolute constant $C$ such that%
\[
E(u)\leq C\cdot n^{1+\frac{7}{10\log\log n}}\text{.}%
\]
Moreover, Stewart \cite[Corollary 1]{MR2997578} provides the upper bound%
\[
E_{0}(u)\leq C^{\prime}\cdot\tfrac{n\left(  \log(n+1)\right)  ^{4}}{\left(
\log\log(n+2)\right)  ^{3}}\text{,}%
\]
for some other absolute constant $C^{\prime}$.
\end{remark}

As before, we obtain:

\begin{theorem}
\label{thm_guGlobalAnalyticContinuation}Let $K$ be a number field and
$u\in\mathcal{O}_{K}^{\times}$ a unit. Suppose no embedding $\sigma
:K\hookrightarrow\mathbf{C}$ has $\left\vert \sigma u\right\vert =1$. Then the
function $G_{u}$ admits a meromorphic continuation to the entire complex
plane, with poles at:%
\[
\{\text{\emph{all Galois conjugates of }}u^{n}\text{ \emph{for} }%
n\in\mathbf{Z}\}\setminus\text{(open unit disc)}%
\]
and locally at $z=1$, we have%
\[
G_{u}(z-1)=\log\left(  \mathcal{M}(u)^{[K:\mathbf{Q}(u)]}\right)  \frac
{z}{(z-1)^{2}}-\sum_{\sigma}\log\left\vert F(\sigma u^{\pm1})\right\vert
+\mathsf{O}(z-1)\text{,}%
\]
where $\mathcal{M}(u)$ is the Mahler measure of $u$, $F$ the generating
function of the partition function, $\sigma$ runs through all embeddings
$\sigma:K\hookrightarrow\mathbf{C}$, and \textquotedblleft$\pm$%
\textquotedblright\ stands for $+$ if $\left\vert \sigma u\right\vert >1$ and
$-$ if $\left\vert \sigma u\right\vert <1$.
\end{theorem}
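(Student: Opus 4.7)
The plan is to reduce everything to the single-variable function $R_x$ studied in the earlier sections. Write $\mathcal{N}(1-u^r)=|N_{K/\mathbf{Q}}(1-u^r)|=\prod_\sigma|1-\sigma(u)^r|$, where $\sigma$ ranges over the $[K:\mathbf{Q}]$ complex embeddings. Under the hypothesis $|\sigma u|\neq 1$ for every $\sigma$, no $\sigma u$ is a root of unity (so $u$ itself is non-torsion and $u^r\neq 1$ for all $r\geq 1$), and the primed summation convention in the definition of $R_{\sigma u}$ coincides with the ordinary sum. Therefore
\[
G_u(z)=\sum_\sigma R_{\sigma u}(z).
\]

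The second step is to flip the embeddings with $|\sigma u|>1$ via Lemma \ref{Lemma_RSwapXToXInverse}, converting each such $R_{\sigma u}(z)$ into $R_{(\sigma u)^{-1}}(z)+\log|\sigma u|\cdot\frac{z}{(z-1)^2}$. Summing the pole contributions and using that the map $\sigma\mapsto\sigma(u)$ is an exactly $[K:\mathbf{Q}(u)]$-to-one surjection onto the Galois conjugates of $u$, one finds
\[
\sum_{|\sigma u|>1}\log|\sigma u|=[K:\mathbf{Q}(u)]\cdot\log\mathcal{M}(u)=\log\bigl(\mathcal{M}(u)^{[K:\mathbf{Q}(u)]}\bigr),
\]
which gives the desired coefficient of $\tfrac{z}{(z-1)^2}$.

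After the flip, every remaining $R$-summand has its argument strictly inside the open unit disc, so Theorem \ref{thm_AnalyticContinuationForRx} supplies a meromorphic continuation of each to all of $\mathbf{C}$, with only simple poles located at $\{x^{\mathbf{Z}_{\leq-1}},\overline{x}^{\mathbf{Z}_{\leq-1}}\}$ for the corresponding $x$. Because complex conjugation permutes the complex embeddings of $K$, the union of these pole sets is precisely the set of Galois conjugates of $u^n$ ($n\in\mathbf{Z}$) lying outside the open unit disc; combined with the possible contribution at $z=1$ from the $\tfrac{z}{(z-1)^2}$ term, this is the claimed pole locus. Applying Lemma \ref{Lemma_RxLocalExpansionAtZEquals1} summand-by-summand then extracts the constant term $-\sum_\sigma\log|F(\cdot)|$ of the Laurent expansion at $z=1$, where the argument inside each $F$ is the representative of $\sigma u$ or $(\sigma u)^{-1}$ of absolute value less than $1$ (which is forced since $F$ only converges on the open unit disc).

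There is no deep obstacle; the theorem is essentially a repackaging of Theorem \ref{thm_AnalyticContinuationForRx}, Lemma \ref{Lemma_RSwapXToXInverse} and Lemma \ref{Lemma_RxLocalExpansionAtZEquals1} in the number-field setting. The most delicate bookkeeping step, and the one requiring the most care, is the sign convention for the exponent in $\sigma u^{\pm 1}$: one must keep track of which flipped representative is being fed into $F$ so as to stay inside its radius of convergence, and make sure that no cross-cancellation between poles of $R_{\sigma u}$ coming from different embeddings is overlooked when asserting the exact pole locus.
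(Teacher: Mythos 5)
Your argument is correct and is precisely the route the paper intends (the text explicitly leaves this theorem to the reader as "a variation of what we have done for knots"): factor the ideal norm over the embeddings to get $G_u(z)=\sum_\sigma R_{\sigma u}(z)$, flip the summands with $\left\vert \sigma u\right\vert>1$ via Lemma \ref{Lemma_RSwapXToXInverse} so that the coefficient $[K:\mathbf{Q}(u)]\log\mathcal{M}(u)$ appears, and then apply Theorem \ref{thm_AnalyticContinuationForRx} and Lemma \ref{Lemma_RxLocalExpansionAtZEquals1} to the remaining $R_x$ with $\left\vert x\right\vert<1$. Two minor points: your convention of always feeding $F$ the representative inside the open unit disc is the mathematically correct reading of the "$\pm$" in the statement, and to see that $z=1$ genuinely occurs in the pole locus you should add that Kronecker's theorem forces $\mathcal{M}(u)>1$ under the hypothesis that no conjugate of the unit $u$ lies on the unit circle.
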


We leave the proof to the reader; it is just a variation of what we have done
for knots. Note that in the case at hand the underlying polynomial is the
minimal polynomial. It need not be reciprocal.

\begin{theorem}
Let $u,v\in\mathcal{O}_{K}^{\times}$ be units such that no $\sigma
:K\hookrightarrow\mathbf{C}$ sends either into the unit circle. Then the
following are equivalent:

\begin{enumerate}
\item Equality $G_{u}(z)=G_{v}(z)$,

\item The unit $v$ is Galois conjugate to $u$ or $u^{-1}$.
\end{enumerate}
\end{theorem}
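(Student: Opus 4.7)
The direction $(2)\Rightarrow(1)$ is essentially formal. If $v=\sigma(u)$ for some embedding $\sigma:K\hookrightarrow\mathbf{C}$, then $u$ and $v$ have the same minimal polynomial over $\mathbf{Q}$, so the ideals $(1-u^r)$ and $(1-v^r)$ are Galois associates and in particular share the same absolute norm; consequently $G_u=G_v$ coefficient by coefficient. If instead $v$ is Galois conjugate to $u^{-1}$, we first reduce to the previous case by observing that $(1-u^{-r})=(-u^{-r})(1-u^r)=(1-u^r)$ as ideals of $\mathcal{O}_K$, since $u$ is a unit; hence $G_u=G_{u^{-1}}$.

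For the non-trivial implication $(1)\Rightarrow(2)$, the strategy is to read off the Galois orbit of $\{u,u^{-1}\}$ directly from the global pole locus. By Theorem \ref{thm_guGlobalAnalyticContinuation} both $G_u$ and $G_v$ admit meromorphic continuations to all of $\mathbf{C}$, so the identity theorem promotes $G_u=G_v$ from the unit disc to an equality of meromorphic functions; in particular the pole sets $P_u$ and $P_v$ coincide. Introduce
\[
B_u \;:=\; \{\sigma(u):|\sigma(u)|>1\}\cup\{\sigma(u)^{-1}:|\sigma(u)|<1\},
\]
the finite set of Galois conjugates of either $u$ or $u^{-1}$ lying strictly outside the closed unit disc. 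Theorem \ref{thm_guGlobalAnalyticContinuation} then gives the concrete description
\[
P_u \;=\; \{1\}\cup\{b^n : b\in B_u,\ n\geq 1\},
\]
and $B_u\neq\varnothing$ because $\prod_\sigma|\sigma(u)|=1$ together with the hypothesis $|\sigma(u)|\neq 1$ for all $\sigma$ forces at least one factor to exceed $1$.

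The crucial step is to recover $B_u$ as a set from $P_u$. Let $r$ be the smallest modulus occurring in $P_u\setminus\{1\}$. Any $\alpha\in P_u$ of modulus $r$ cannot be of the form $b^n$ with $n\geq 2$ (else $b\in B_u\subset P_u$ would have modulus $r^{1/n}<r$, contradicting minimality). Therefore
\[
\{\alpha\in P_u:|\alpha|=r\}\;=\;\{b\in B_u:|b|=r\}.
\]
Removing this shell together with all of its strict powers from $P_u$ and iterating peels off $B_u$ shell by shell; the process terminates because $B_u$ is finite. Applied to both $u$ and $v$, this yields $B_u=B_v$. Finally, choose any $\beta\in B_u=B_v$. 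By construction $\beta$ is Galois conjugate to one of $\{u,u^{-1}\}$ and simultaneously to one of $\{v,v^{-1}\}$; transitivity of Galois conjugacy (two elements with a common Galois conjugate share a minimal polynomial, and inverting preserves that relationship) forces $v$ to be Galois conjugate to $u$ or to $u^{-1}$, which is $(2)$.

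There is no serious obstacle once Theorem \ref{thm_guGlobalAnalyticContinuation} is granted; the only non-mechanical step is the shell-by-shell recovery of $B_u$ from $P_u$. Its subtlety is precisely that the pole locus sees only the unordered pair $\{u,u^{-1}\}$ and not $u$ itself, and this is exactly the ambiguity already present in the conclusion of the theorem, so nothing more is lost.
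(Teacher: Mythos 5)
Your argument is correct, and since the paper states this theorem without a written proof (it is left, like Theorem \ref{thm_guGlobalAnalyticContinuation} itself, as a variation of the earlier arguments), there is nothing to compare line by line; your route is precisely the paper's general method, as in its new proofs of Fried's and Hillar's theorems: continue the generating function, invoke uniqueness of meromorphic continuation to equate the pole sets, and read the Galois orbit of $\{u,u^{-1}\}$ off the poles. Two small points deserve care. First, in $(2)\Rightarrow(1)$ the phrase \emph{Galois associates} is loose when $K/\mathbf{Q}$ is not Galois: an isomorphism $\mathbf{Q}(u)\rightarrow\mathbf{Q}(v)$ need not extend to an automorphism of $K$. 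What you actually need is only that $\mathcal{N}(1-u^{r})=\bigl\vert N_{\mathbf{Q}(u)/\mathbf{Q}}(1-u^{r})\bigr\vert^{[K:\mathbf{Q}(u)]}$ depends only on the minimal polynomial of $u$ and on the degree $[K:\mathbf{Q}(u)]=[K:\mathbf{Q}(v)]$, which is immediate; the reduction $G_{u}=G_{u^{-1}}$ via $(1-u^{-r})=(1-u^{r})$ as ideals is fine. Second, your claim that the shell-by-shell peeling recovers \emph{all} of $B_{u}$ tacitly assumes that no element of $B_{u}$ is a proper power of an element of an earlier shell; this is true, but needs an argument (if some conjugate of $u^{\pm1}$ equalled the $m$-th power, $\vert m\vert\geq2$, of another, then iterating a suitable automorphism of the normal closure would produce conjugates $\sigma(u)^{m^{k}}$ of pairwise distinct moduli, contradicting finiteness, using $\vert\sigma(u)\vert\neq1$). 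Fortunately you do not need the full recovery: your own first-shell argument already shows that the set of minimal-modulus points of $P_{u}\setminus\{1\}=P_{v}\setminus\{1\}$ is a nonempty subset of $B_{u}\cap B_{v}$, and any single common element forces, exactly as in your last step, that $v$ is conjugate to $u$ or $u^{-1}$. So the proof stands as written, with the peeling claim either pruned or supplemented by the finiteness argument above.
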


The infinity of the poles implies that the function $G_{u}$ cannot be
rational. We deduce:

\begin{corollary}
Let $u\in\mathcal{O}_{K}^{\times}$ be a unit such that no $\sigma
:K\hookrightarrow\mathbf{C}$ sends it into the unit circle. Then the sequence%
\[
a_{n}:=\log\mathcal{N}(1-u^{n})
\]
does not satisfy any linear recurrence equation with constant coefficients.
\end{corollary}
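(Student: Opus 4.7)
The plan is to reduce the claim to the non-rationality of the generating function $G_u(z)$ defined in line \ref{lai1}, and then to read off infinitely many poles from Theorem \ref{thm_guGlobalAnalyticContinuation}. Recall the classical fact that a complex sequence $(a_n)_{n\geq 1}$ satisfies a homogeneous linear recurrence with constant coefficients if and only if its ordinary generating function $\sum_{n\geq 1} a_n z^n$ is a rational function $P(z)/Q(z)$ in $z$; an inhomogeneous recurrence likewise forces rationality (since the resulting particular solution is itself rational). Since a rational function has only finitely many poles, it suffices to exhibit an infinite collection of poles of $G_u$.

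I first observe that the hypothesis forces $u$ to have infinite order in $\mathcal{O}_K^{\times}$: every torsion unit is a root of unity and is therefore sent to the unit circle by every embedding $\sigma\colon K\hookrightarrow\mathbf{C}$, contradicting the assumption. Furthermore, since $u$ is a unit, $|N_{K/\mathbf{Q}}(u)| = \prod_{\sigma}|\sigma(u)| = 1$; combined with $|\sigma(u)|\neq 1$ for every embedding $\sigma$, this forces the existence of at least one embedding $\sigma_0$ with $|\sigma_0(u)|>1$.

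Now I apply Theorem \ref{thm_guGlobalAnalyticContinuation}, whose hypothesis is exactly the present one: it provides a meromorphic continuation of $G_u$ to the entire complex plane whose pole set is the collection of Galois conjugates of the powers $u^n$ ($n\in\mathbf{Z}$) lying outside the open unit disc. For the embedding $\sigma_0$ identified above, the points $\sigma_0(u)^n = \sigma_0(u^n)$ with $n\geq 1$ have moduli $|\sigma_0(u)|^n > 1$, which form a strictly increasing sequence in $n$. Hence these points are pairwise distinct and all sit outside the closed unit disc, producing infinitely many pairwise distinct poles of $G_u$. This contradicts rationality, completing the proof. The only step that warrants any care — and hence the main (and only) obstacle — is verifying that the pole list produced by the theorem is genuinely infinite; once the embedding $\sigma_0$ is identified, this follows instantly from taking positive integer powers.
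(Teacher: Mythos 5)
Your argument is correct and is essentially the paper's own: Theorem \ref{thm_guGlobalAnalyticContinuation} yields infinitely many poles of the meromorphic continuation of $G_u$ (you just make explicit, via $|N_{K/\mathbf{Q}}(u)|=1$, why some embedding lands outside the closed unit disc and hence why the pole set is infinite), so $G_u$ cannot be rational, and a linear recurrence with constant coefficients would force rationality of the generating function. No gaps; the extra details you supply are exactly the ones the paper leaves implicit.
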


%\begin{example}
%We may consider the polynomial%
%\[
%x^{3}-x^{2}+24x-1\text{.}%
%\]
%It generates a cubic number field.%
%\[%
%{\includegraphics[
%natheight=3.840600in,
%natwidth=3.840600in,
%height=2.1006in,
%width=2.1006in
%]%
%{gu5HIQ_Original.eps}%
%}
%%%
%\qquad%
%{\includegraphics[
%natheight=3.840600in,
%natwidth=3.838900in,
%height=2.1006in,
%width=2.1006in
%]%
%{gu5_GlobalContinuation.eps}%
%}
%%%
%\]
%On the left, we see the evaluation of the series $G_{u}$ (which has radius of
%convergence one). On the right, we see the meromorphic continuation.
%\end{example}

\subsection{Further variations}

\begin{example}
By work of Boden and Friedl, one can also count irreducible metabelian
representations of $\pi_{1}(X_{K})$ to $\operatorname*{SL}_{n}(\mathbf{C})$ in
terms of a formula similar to Fox' Formula, Theorem \ref{thm_Fox}, so our
methods also apply to these values, ranging over $n$. See \cite{MR2443505},
Theorem 1.2 and most explicitly Corollary 1.3. We have not worked out the details.
\end{example}

\section{Special $L$-values}

There is a well-known relation between (multi-variable log-)Mahler measures
and special $L$-values. This was realized, first experimentally, by the
surprising computations of Smyth in \cite{MR615132}, e.g.%
\[
\mathcal{M}(1+x+y)=\frac{3\sqrt{3}}{4\pi}L(2,\chi)
\]
(where $\chi$ is a certain Dirichlet character) and later theoretically
explained through the Beilinson conjectures by Deninger \cite{MR1415320}. We
will not re-tell this story and refer to \cite{MR1618282}, \cite{MR1691309}
for explanations. Inspired by this, it feels noteworthy that there is a
\textit{genuinely different} way how special $L$-values appear in our
computations, related to the function $R_{x}$ when $x$ is a root of unity. We
may re-interpret Proposition \ref{prop_Rx_AtRootOfUnity} as follows:

\begin{proposition}
\label{Prop_RxAtRootOfUnity_ViaLValuesAtOne}Suppose $x\in\mu_{m}$ is an $m$-th
root of unity. Then%
\[
R_{x}\in\mathbf{Q}(\zeta_{m})(\{L(1,\chi)\}_{\chi\in M})
\]
for some set $M$ of Dirichlet characters $\chi$ modulo $m$. That is: $R_{x}$
is a rational function over a finitely generated field extension of the
rationals, generated by the $m$-th roots of unity and a finite number of
special $L$-values of Dirichlet characters at $s=1$.
\end{proposition}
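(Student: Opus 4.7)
The plan is to reduce the proposition, via the explicit rational form
$$R_x(z) \;=\; \frac{1}{1-z^m}\sum_{l=1}^{m-1}\log|1-\zeta_m^l|\,z^l$$
of Proposition~\ref{prop_Rx_AtRootOfUnity} (taking $x = \zeta_m$), to the claim that each of the real numbers $\log|1-\zeta_m^l|$, for $1\le l\le m-1$, lies in $\mathbf{Q}(\zeta_m)(\{L(1,\chi)\}_{\chi\in M})$ for some finite set $M$ of Dirichlet characters. The denominator and powers of $z$ in the formula are $\mathbf{Q}$-rational, so this is literally the only thing to verify.

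The engine of the argument is the conditionally convergent expansion
$$\log(1-\zeta_m^l)\;=\;-\sum_{n\ge1}\frac{\zeta_m^{ln}}{n}\qquad(\zeta_m^l\neq 1),$$
valid for the principal branch by Abel summation; taking the real part gives $\log|1-\zeta_m^l|=-\sum_{n\ge1}\cos(2\pi ln/m)/n$. Restricted to the subset of $n$ coprime to $m$, the function $n\mapsto\zeta_m^{ln}$ decomposes by character orthogonality as a finite $\mathbf{Q}(\mu_\infty)$-combination of Dirichlet characters $\chi$ modulo $m$, with coefficients of Gauss-sum type lying in $\mathbf{Q}(\zeta_m,\zeta_{\varphi(m)})\subset\mathbf{Q}(\mu_\infty)$. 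Plugging this back into the logarithm series and summing exhibits the coprime part of $\log(1-\zeta_m^l)$ as a $\mathbf{Q}(\mu_\infty)$-linear combination of the values $L(1,\chi)$ for non-principal $\chi\bmod m$. Taking real parts gives the analogous statement for $\log|1-\zeta_m^l|$.

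The remaining terms, those with $\gcd(n,m) = d > 1$, assemble after rescaling into a series of the same shape at modulus $m/d$. We handle them by induction on $m$, appealing to the result at each proper divisor. A primitive character of conductor $f\mid m$ inflates to a character modulo $m$ with $L$-value differing only by the explicit Euler factor $\prod_{p\mid m,\,p\nmid f}(1-\chi^{\ast}(p)/p)\in\mathbf{Q}(\mu_\infty)$, so $L(1,\chi)$-values at smaller moduli fit without pain into the family parametrised by $\chi\bmod m$.

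The main subtlety, and the only genuine obstacle, is the divergent tail from the principal character $\chi_0$ in the orthogonality decomposition, which corresponds to the cyclotomic contribution $\sum_{(a,m)=1}\log|1-\zeta_m^a|=\log\Phi_m(1)$, equal to $\log p$ for $m = p^k$ a prime power and to $0$ otherwise. Since $\log p$ is not literally a Dirichlet $L$-value at $s=1$, we expect to eliminate it by either (a)~cancelling the $\chi_0$-tail against matching tails from the $d>1$ inductive subseries at the level of the Abel-summed series, or (b)~enlarging $M$ to contain Dirichlet characters of conductor up to $2m$ (in the spirit of the $2v$-convention of Lemma~\ref{Lemma_LerchTranscendent} and Proposition~\ref{prop_W_m_FractionalVals}) and exploiting Gauss's digamma identity to express $\log m$ through $\psi(a/m)+\gamma$, which in turn lies in $\mathbf{Q}(\mu_\infty)(\{L(1,\chi)\})$ by Lemma~21 of \cite{MR2332591}, as already invoked in the proof of Proposition~\ref{prop_W_m_FractionalVals}. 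Everything else is standard character orthogonality, so this final bookkeeping of the $\log p$ contribution will be the delicate point.
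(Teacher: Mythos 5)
Your opening reduction (to showing $\log|1-\zeta_m^l|\in\mathbf{Q}(\zeta_m)(\{L(1,\chi)\}_{\chi\in M})$ for each $l$) is the right first move, but the point you yourself flag as ``the delicate point'' is a genuine gap, and neither of your two proposed fixes closes it. Splitting $\sum_{n\ge1}\zeta_m^{ln}/n$ by $d=\gcd(n,m)$ and decomposing the coprime pieces into multiplicative characters, the divergences do cancel (the principal-character poles at $s=1$ cancel because $\sum_{n=1}^{m}\zeta_m^{ln}=0$), but their \emph{finite} parts do not. Carrying the computation out for $m=p$ prime and $(l,p)=1$ gives $\log|1-\zeta_p^l|=\tfrac{\log p}{p-1}-\operatorname{Re}\sum_{\chi\neq\chi_0}c_\chi(l)\,L(1,\chi)$, and in general the leftover is $\Lambda(m')/\varphi(m')$ with $m'$ the exact order of $\zeta_m^l$ (consistent with $\sum_{(a,m')=1}\log|1-\zeta_{m'}^a|=\Lambda(m')$). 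So option (a) removes the poles but leaves exactly this $\log p$ term whenever $m'$ is a prime power. Option (b) is circular: Gauss's digamma formula writes $\psi(a/v)+\gamma$ back in terms of $\log(2v)$, $\pi\cot(\pi a/v)$ and the very numbers $\log|1-\zeta_v^j|$ you are trying to control, so it reduces the claim to itself; passing to modulus $2v$ does not manufacture $\log p$ either. Moreover the obstruction is not an artifact of your method: for $\chi$ mod $p$ the even non-principal $L(1,\chi)$ are $\overline{\mathbf{Q}}$-linear combinations of logarithms of cyclotomic \emph{units} and the odd ones are algebraic multiples of $\pi$, so by Baker's theorem $\log p$ is not a $\overline{\mathbf{Q}}$-linear combination of the $L(1,\chi)$; no amount of linear bookkeeping will place it in their span.

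The paper's own proof takes a shorter route that never meets this obstruction, and it is worth seeing why. It works with the $L$-series $L(s,f)=\sum_{n\ge1}f(n)n^{-s}$ of mean-zero periodic functions on $\mathbf{Z}/m$, expands $f$ in the \emph{additive} characters $n\mapsto e^{2\pi i ln/m}$, uses $L(1,f)=-\sum_{l}\widehat{f}(l)\log(1-\zeta_m^l)$, and with $\widehat{f}=\delta_{l=a}$ obtains $\log|1-\zeta_m^a|=-\tfrac12 L(1,f+\overline{f})$; the ``characters'' entering the statement are these periodic functions, not Dirichlet characters in the strict multiplicative sense, which is precisely why no principal-character term ever appears there. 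Your more scrupulous multiplicative decomposition in fact shows that, reading ``Dirichlet character modulo $m$'' literally, the asserted containment would force $\log p$ (for prime-power order roots) into $\mathbf{Q}(\zeta_m)(\{L(1,\chi)\})$, which is out of reach by these methods (and presumably false under standard transcendence conjectures). To salvage your argument, either adopt the paper's reading of the statement (allowing $L(1,\cdot)$ of mean-zero periodic functions mod $m$), or enlarge the generating set by the finitely many $\log p$ with $p\mid m$; with that amendment, your orthogonality-plus-induction scheme does go through.
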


\begin{proof}
Let $m\geq2$ be any integer and $f:\mathbf{Z}/m\rightarrow\mathbf{C}$ be a
function. Using the Fourier theory of the group $(\mathbf{Z}/m,+)$, we get%
\begin{equation}
f(n)=\sum_{l=0}^{m-1}\widehat{f}(l)e^{2\pi i\frac{ln}{m}}\qquad\text{for}%
\qquad\widehat{f}(n)=\frac{1}{m}\sum_{l=0}^{m-1}f(l)e^{-2\pi i\frac{ln}{m}%
}\text{.}\label{lFourierOnFG}%
\end{equation}
In particular, $\widehat{f}(0)=\frac{1}{m}\sum_{l=0}^{m-1}f(l)$. Now, suppose
we have $\widehat{f}(0)=0$. In this case, the Dirichlet series associated to
$f$ has the shape%
\[
L(s,f)=\sum_{n\geq1}\frac{f(n)}{n^{s}}\text{.}%
\]
Expanding $f$ as its Fourier series over $\mathbf{Z}/m$, this becomes%
\[
L(s,f)=\sum_{l=0}^{m-1}\widehat{f}(l)\sum_{n\geq1}\frac{(e^{2\pi i\frac{l}{m}%
})^{n}}{n^{s}}=\sum_{l=0}^{m-1}\widehat{f}(l)L(s,\chi_{l})
\]
with $\chi(n):=e^{2\pi i\frac{l}{m}n}$. Since $\widehat{f}(0)=0$ by
assumption, only the summands with $l\neq0$ appear in the sum, and for these
$\chi_{l}$ is a non-principal character. Thus, each $L(s,\chi_{l})$ admits a
holomorphic continuation to the entire complex plane and the value at $s=1$ is
given by the convergent series%
\begin{align*}
L(1,f)  & =\sum_{l=1}^{m-1}\widehat{f}(l)L(1,\chi_{l})=\sum_{l=1}%
^{m-1}\widehat{f}(l)\sum_{n\geq1}\frac{\chi_{l}(n)}{n}\\
& =\sum_{l=1}^{m-1}\widehat{f}(l)\sum_{n\geq1}\frac{(e^{2\pi i\frac{l}{m}%
})^{n}}{n}=-\sum_{l=1}^{m-1}\widehat{f}(l)\log(1-e^{2\pi i\frac{l}{m}%
})\text{.}%
\end{align*}
Now, if all the Fourier coefficients are real, i.e. $\widehat{f}%
(l)=\overline{\widehat{f}(l)}$, then
\[
-\frac{1}{2}L(1,f+\overline{f})=-\frac{1}{2}L(1,f)-\frac{1}{2}\overline
{L(1,f)}=\sum_{l=1}^{m-1}\widehat{f}(l)\log\left\vert 1-e^{2\pi i\frac{l}{m}%
}\right\vert \text{.}%
\]
Given $1\leq a\leq m-1$, define $\widehat{f}(l):=\delta_{l=a}$ (which
determines $f$ by Fourier inversion). It follows that $\log\left\vert
1-e^{2\pi i\frac{l}{m}}\right\vert =-\frac{1}{2}L(1,f+\overline{f})$ for this
particular $f$, and by Fourier expansion, $f+\overline{f}$ can itself be
expanded in terms of characters. Hence, every $\log\left\vert 1-e^{2\pi
i\frac{l}{m}}\right\vert $ is a finite linear combination of special
$L$-values of Dirichlet characters at $s=1$ with coefficients in the
cyclotomic field $\mathbf{Q}(\zeta_{m})$ (by Equation \ref{lFourierOnFG}).
Thus, Prop. \ref{prop_Rx_AtRootOfUnity} implies that%
\[
R_{x}\in\mathbf{Q}(\zeta_{m})(\{L(1,\chi)\}_{\chi\in M})\text{,}%
\]
where the set $M$ encompasses the Dirichlet characters appearing in the
Fourier expansion of $f+\overline{f}$.
\end{proof}

\begin{remark}
Moreover, the somewhat unwieldy expression $A_{m}:=\sum_{l=1}^{m-1}l\cdot
\log\left\vert 1-\zeta_{m}^{l}\right\vert $ in the expansion of $R_{x}$ at
$z=1$ can be interpreted this way. One gets $A_{m}=-\frac{1}{2}L(1,f+\overline
{f})$ for $f$ determined by $\widehat{f}(l)=l$ for $l=0,1,\ldots,m-1$.
\end{remark}

\bibliographystyle{amsalpha}
\bibliography{ollinewbib}

\def\cprime{$'$} \def\polhk#1{\setbox0=\hbox{#1}{\ooalign{\hidewidth
  \lower1.5ex\hbox{`}\hidewidth\crcr\unhbox0}}} \def\cprime{$'$}
  \def\cprime{$'$} \def\cprime{$'$} \def\cprime{$'$}
\providecommand{\bysame}{\leavevmode\hbox to3em{\hrulefill}\thinspace}
\providecommand{\MR}{\relax\ifhmode\unskip\space\fi MR }
% \MRhref is called by the amsart/book/proc definition of \MR.
\providecommand{\MRhref}[2]{%
  \href{http://www.ams.org/mathscinet-getitem?mr=#1}{#2}
}
\providecommand{\href}[2]{#2}
\begin{thebibliography}{GAnS91}

\bibitem[Ahl78]{MR510197}
L.~V. Ahlfors, \emph{Complex analysis}, third ed., McGraw-Hill Book Co., New
  York, 1978, An introduction to the theory of analytic functions of one
  complex variable, International Series in Pure and Applied Mathematics.
  \MR{510197}

\bibitem[Bax05]{MR2189068}
C.~Baxa, \emph{Calculation of improper integrals using uniformly distributed
  sequences}, Acta Arith. \textbf{119} (2005), no.~4, 373--406. \MR{2189068}

\bibitem[BF08]{MR2443505}
H.~U. Boden and S.~Friedl, \emph{Metabelian {${\rm SL}(n,\Bbb C)$}
  representations of knot groups}, Pacific J. Math. \textbf{238} (2008), no.~1,
  7--25. \MR{2443505}

\bibitem[BF15]{bfprofinite}
M.~Boileau and S.~Friedl, \emph{The profinite completion of 3-manifold groups,
  fiberedness and the thurston norm}, arXiv:1505.07799 [math.GT] (2015).

\bibitem[BF16]{bfprofinite2}
\bysame, \emph{Epimorphisms of 3-manifold groups}, arXiv:1602.06779 [math.GT]
  (2016).

\bibitem[Boy98]{MR1618282}
D.~Boyd, \emph{Mahler's measure and special values of {$L$}-functions},
  Experiment. Math. \textbf{7} (1998), no.~1, 37--82. \MR{1618282}

\bibitem[BS02]{MR1914805}
C.~Baxa and J.~Schoi{\ss}engeier, \emph{Calculation of improper integrals using
  {$(n\alpha)$}-sequences}, Monatsh. Math. \textbf{135} (2002), no.~4,
  265--277, Dedicated to Edmund Hlawka on the occasion of his 85th birthday.
  \MR{1914805}

\bibitem[Bun79]{MR555344}
P.~Bundschuh, \emph{Zwei {B}emerkungen \"uber transzendente {Z}ahlen}, Monatsh.
  Math. \textbf{88} (1979), no.~4, 293--304. \MR{555344}

\bibitem[BV13]{MR3028790}
N.~Bergeron and A.~Venkatesh, \emph{The asymptotic growth of torsion homology
  for arithmetic groups}, J. Inst. Math. Jussieu \textbf{12} (2013), no.~2,
  391--447. \MR{3028790}

\bibitem[BW93]{MR1234835}
A.~Baker and G.~W{\"u}stholz, \emph{Logarithmic forms and group varieties}, J.
  Reine Angew. Math. \textbf{442} (1993), 19--62. \MR{1234835}

\bibitem[BZ03]{MR1959408}
G.~Burde and H.~Zieschang, \emph{Knots}, second ed., de Gruyter Studies in
  Mathematics, vol.~5, Walter de Gruyter \& Co., Berlin, 2003. \MR{1959408}

\bibitem[Cho61]{MR0142538}
S.~Chowla, \emph{Proof of a conjecture of {J}ulia {R}obinson}, Norske Vid.
  Selsk. Forh. (Trondheim) \textbf{34} (1961), 100--101. \MR{0142538 (26
  \#107)}

\bibitem[CK65]{MR0173754}
F.~W. Carroll and J.~H.~B. Kemperman, \emph{Noncontinuable analytic functions},
  Duke Math. J. \textbf{32} (1965), 65--83. \MR{0173754}

\bibitem[CKM14]{MR3298204}
A.~Champanerkar, I.~Kofman, and T.~Mullen, \emph{The 500 simplest hyperbolic
  knots}, J. Knot Theory Ramifications \textbf{23} (2014), no.~12, 1450055, 34.
  \MR{3298204}

\bibitem[dBP68]{MR0225946}
N.~G. de~Bruijn and K.~A. Post, \emph{A remark on uniformly distributed
  sequences and {R}iemann integrability}, Nederl. Akad. Wetensch. Proc. Ser. A
  71=Indag. Math. \textbf{30} (1968), 149--150. \MR{0225946}

\bibitem[Den97]{MR1415320}
C.~Deninger, \emph{Deligne periods of mixed motives, {$K$}-theory and the
  entropy of certain {${\bf Z}^n$}-actions}, J. Amer. Math. Soc. \textbf{10}
  (1997), no.~2, 259--281. \MR{1415320}

\bibitem[Dub14]{MR3193953}
A.~Dubickas, \emph{Nonreciprocal units in a number field with an application to
  {O}eljeklaus-{T}oma manifolds (with an appendix by {L}aurent {B}attisti)},
  New York J. Math. \textbf{20} (2014), 257--274. \MR{3193953}

\bibitem[EEW00]{MR1783409}
M.~Einsiedler, G.~Everest, and T.~Ward, \emph{Primes in sequences associated to
  polynomials (after {L}ehmer)}, LMS J. Comput. Math. \textbf{3} (2000),
  125--139 (electronic). \MR{1783409}

\bibitem[Enn91]{MR1151859}
V.~Ennola, \emph{Cubic number fields with exceptional units}, Computational
  number theory ({D}ebrecen, 1989), de Gruyter, Berlin, 1991, pp.~103--128.
  \MR{1151859}

\bibitem[Eve84]{MR735341}
J.-H. Evertse, \emph{On equations in {$S$}-units and the {T}hue-{M}ahler
  equation}, Invent. Math. \textbf{75} (1984), no.~3, 561--584. \MR{735341
  (85f:11048)}

\bibitem[Fox56]{MR0095876}
R.~H. Fox, \emph{Free differential calculus. {III}. {S}ubgroups}, Ann. of Math.
  (2) \textbf{64} (1956), 407--419. \MR{0095876}

\bibitem[Fri88]{MR980956}
D.~Fried, \emph{Cyclic resultants of reciprocal polynomials}, Holomorphic
  dynamics ({M}exico, 1986), Lecture Notes in Math., vol. 1345, Springer,
  Berlin, 1988, pp.~124--128. \MR{980956}

\bibitem[GAnS91]{MR1142552}
F.~Gonz\'alez-Acu\~na and H.~Short, \emph{Cyclic branched coverings of knots
  and homology spheres}, Rev. Mat. Univ. Complut. Madrid \textbf{4} (1991),
  no.~1, 97--120. \MR{1142552}

\bibitem[Gor72]{MR0295327}
C.~McA. Gordon, \emph{Knots whose branched cyclic coverings have periodic
  homology}, Trans. Amer. Math. Soc. \textbf{168} (1972), 357--370.
  \MR{0295327}

\bibitem[Hil05]{MR2167674}
C.~Hillar, \emph{Cyclic resultants}, J. Symbolic Comput. \textbf{39} (2005),
  no.~6, 653--669. \MR{2167674}

\bibitem[HK60]{MR0125579}
F.~Hosokawa and S.~Kinoshita, \emph{On the homology group of branched cyclic
  covering spaces of links}, Osaka Math. J. \textbf{12} (1960), 331--355.
  \MR{0125579}

\bibitem[HL07]{MR2286068}
C.~Hillar and L.~Levine, \emph{Polynomial recurrences and cyclic resultants},
  Proc. Amer. Math. Soc. \textbf{135} (2007), no.~6, 1607--1618. \MR{2286068}

\bibitem[HS00]{MR1745599}
M.~Hindry and J.~H. Silverman, \emph{Diophantine geometry}, Graduate Texts in
  Mathematics, vol. 201, Springer-Verlag, New York, 2000, An introduction.
  \MR{1745599}

\bibitem[Khi97]{MR1451873}
A.~Ya. Khinchin, \emph{Continued fractions}, russian ed., Dover Publications,
  Inc., Mineola, NY, 1997, With a preface by B. V. Gnedenko, Reprint of the
  1964 translation. \MR{1451873}

\bibitem[KN74]{MR0419394}
L.~Kuipers and H.~Niederreiter, \emph{Uniform distribution of sequences},
  Wiley-Interscience [John Wiley \& Sons], New York-London-Sydney, 1974, Pure
  and Applied Mathematics. \MR{0419394}

\bibitem[Lan60]{MR0130219}
S.~Lang, \emph{Integral points on curves}, Inst. Hautes \'Etudes Sci. Publ.
  Math. (1960), no.~6, 27--43. \MR{0130219 (24 \#A86)}

\bibitem[Lau08]{MR2457264}
M.~Laurent, \emph{Linear forms in two logarithms and interpolation
  determinants. {II}}, Acta Arith. \textbf{133} (2008), no.~4, 325--348.
  \MR{2457264}

\bibitem[Le14]{MR3260847}
T.~Le, \emph{Homology torsion growth and {M}ahler measure}, Comment. Math.
  Helv. \textbf{89} (2014), no.~3, 719--757. \MR{3260847}

\bibitem[MM82]{MR648083}
J.~Mayberry and K.~Murasugi, \emph{Torsion-groups of abelian coverings of
  links}, Trans. Amer. Math. Soc. \textbf{271} (1982), no.~1, 143--173.
  \MR{648083}

\bibitem[Mor65]{MR0183707}
L.~J. Mordell, \emph{Irrational power series. {III}}, Proc. Amer. Math. Soc.
  \textbf{16} (1965), 819--821. \MR{0183707}

\bibitem[MS07]{MR2332591}
M.~R. Murty and N.~Saradha, \emph{Transcendental values of the digamma
  function}, J. Number Theory \textbf{125} (2007), no.~2, 298--318.
  \MR{2332591}

\bibitem[Nag64]{MR0190128}
T.~Nagell, \emph{Sur une propri\'et\'e des unit\'es d'un corps alg\'ebrique},
  Ark. Mat. \textbf{5} (1964), 343--356 (1964). \MR{0190128 (32 \#7542)}

\bibitem[NS98]{MR1464147}
G.~Niklasch and N.~P. Smart, \emph{Exceptional units in a family of quartic
  number fields}, Math. Comp. \textbf{67} (1998), no.~222, 759--772.
  \MR{1464147}

\bibitem[Olv97]{MR1429619}
F.~Olver, \emph{Asymptotics and special functions}, AKP Classics, A K Peters,
  Ltd., Wellesley, MA, 1997, Reprint of the 1974 original [Academic Press, New
  York; MR0435697 (55 \#8655)]. \MR{1429619 (97i:41001)}

\bibitem[Osk90]{MR1044053}
V.~A. Oskolkov, \emph{Hardy-{L}ittlewood problems on the uniform distribution
  of arithmetic progressions}, Izv. Akad. Nauk SSSR Ser. Mat. \textbf{54}
  (1990), no.~1, 159--172, 222. \MR{1044053}

\bibitem[Osk94]{MR1275906}
\bysame, \emph{The {H}ardy-{L}ittlewood problem for regular and uniformly
  distributed number sequences}, Izv. Ross. Akad. Nauk Ser. Mat. \textbf{58}
  (1994), no.~2, 153--166. \MR{1275906}

\bibitem[Por04]{MR2073320}
J.~Porti, \emph{Mayberry-{M}urasugi's formula for links in homology 3-spheres},
  Proc. Amer. Math. Soc. \textbf{132} (2004), no.~11, 3423--3431 (electronic).
  \MR{2073320}

\bibitem[Rai12]{MR2966689}
J.~Raimbault, \emph{Exponential growth of torsion in abelian coverings},
  Algebr. Geom. Topol. \textbf{12} (2012), no.~3, 1331--1372. \MR{2966689}

\bibitem[RG72]{MR0357299}
H.~Rademacher and E.~Grosswald, \emph{Dedekind sums}, The Mathematical
  Association of America, Washington, D.C., 1972, The Carus Mathematical
  Monographs, No. 16. \MR{0357299}

\bibitem[Ril90]{MR1041145}
R.~Riley, \emph{Growth of order of homology of cyclic branched covers of
  knots}, Bull. London Math. Soc. \textbf{22} (1990), no.~3, 287--297.
  \MR{1041145}

\bibitem[Sch80]{MR568710}
W.~M. Schmidt, \emph{Diophantine approximation}, Lecture Notes in Mathematics,
  vol. 785, Springer, Berlin, 1980. \MR{568710}

\bibitem[Sie21]{MR1544471}
C.~Siegel, \emph{Approximation algebraischer {Z}ahlen}, Math. Z. \textbf{10}
  (1921), no.~3-4, 173--213. \MR{1544471}

\bibitem[Sil95]{MR1359419}
J.~H. Silverman, \emph{Exceptional units and numbers of small {M}ahler
  measure}, Experiment. Math. \textbf{4} (1995), no.~1, 69--83. \MR{1359419
  (96j:11150)}

\bibitem[Smy81]{MR615132}
C.~Smyth, \emph{On measures of polynomials in several variables}, Bull.
  Austral. Math. Soc. \textbf{23} (1981), no.~1, 49--63. \MR{615132}

\bibitem[Ste12]{MR2997578}
C.~L. Stewart, \emph{Exceptional units and cyclic resultants}, Acta Arith.
  \textbf{155} (2012), no.~4, 407--418. \MR{2997578}

\bibitem[SW02]{MR1923995}
D.~S. Silver and S.~G. Williams, \emph{Mahler measure, links and homology
  growth}, Topology \textbf{41} (2002), no.~5, 979--991. \MR{1923995}

\bibitem[Tur86]{MR832411}
V.~G. Turaev, \emph{Reidemeister torsion in knot theory}, Uspekhi Mat. Nauk
  \textbf{41} (1986), no.~1(247), 97--147, 240. \MR{832411}

\bibitem[Uek17]{ueki}
J.~Ueki, \emph{The profinite completions of knot groups determine the
  {Alexander} polynomials}, arXiv:1702.03836 [math.GN] (2017).

\bibitem[Vil99]{MR1691309}
F.~Rodriguez Villegas, \emph{Modular {M}ahler measures. {I}}, Topics in number
  theory ({U}niversity {P}ark, {PA}, 1997), Math. Appl., vol. 467, Kluwer Acad.
  Publ., Dordrecht, 1999, pp.~17--48. \MR{1691309}

\bibitem[Web79]{MR570312}
C.~Weber, \emph{Sur une formule de {R}. {H}. {F}ox concernant l'homologie des
  rev\^etements cycliques}, Enseign. Math. (2) \textbf{25} (1979), no.~3-4,
  261--272 (1980). \MR{570312}

\end{thebibliography}

\end{document}